\documentclass[reqno]{amsart}
\usepackage[top=1.1in, bottom=1in, left=1in, right=1in]{geometry}
\usepackage{amsfonts}
\usepackage{amssymb} 
\usepackage{amsthm}
\usepackage{amsmath}
\usepackage{mathrsfs}
\usepackage{color}
\usepackage{enumerate}
\usepackage[numbers,sort&compress]{natbib}
\usepackage{pdfsync}
\usepackage{esint}
\usepackage{graphicx}
\usepackage{float}
\usepackage{caption}
\usepackage{subfigure}
\usepackage{stmaryrd}
\usepackage{yhmath}
\usepackage[titletoc]{appendix}

\allowdisplaybreaks

\usepackage{tikz} 
\usetikzlibrary{calc}
\usetikzlibrary{intersections}
\usetikzlibrary{patterns}
\usetikzlibrary{positioning, shapes.geometric}

\usepackage[colorlinks,
linkcolor=black,      
anchorcolor=blue,      
citecolor=blue,      
]{hyperref}

\makeatother

\numberwithin{equation}{section}

\newtheorem{theorem}{Theorem}[section]
\newtheorem{definition}[theorem]{Definition}

\newtheorem{lemma}[theorem]{Lemma}
\newtheorem{remark}[theorem]{Remark}

\newtheorem{proposition}[theorem]{Proposition}
\newtheorem{corollary}[theorem]{Corollary}
\numberwithin{equation}{section}

\newcommand*{\Id}{\ensuremath{\mathrm{Id}}}
\newcommand*{\supp}{\ensuremath{\mathrm{supp\,}}}

\newcommand{\T}{{\mathbb{T}}}

\newcommand{\rr}{\mathring{R}}
\newcommand{\ru}{\mathring{R}_q}

\newcommand{\p}{\partial}
\renewcommand{\P}{\mathbb{P}}
\renewcommand{\div}{{\mathrm{div}}}
\newcommand{\curl}{{\mathrm{curl}}}
\renewcommand{\u}{{u_q}}

\renewcommand{\d}{{\rm d}}

\newcommand{\norm}[1]{\lVert#1\rVert}
\newcommand{\abs}[1]{|#1|}

\newcommand{\la}{\lambda_q}
\newcommand{\laq}{\lambda_{q+1}}

\newcommand{\rs}{r_{\perp}}

\newcommand{\R}{{\mathbb R}}

\def\a{{\alpha}}

\def\lbb{\lambda}
\def\wt{\widetilde}
\def\9{{\infty}}
\def\ve{{\varepsilon}}
\def\na{{\nabla}}

\def\({\left(}
\def\){\right)}
\def\tq{{\theta_q}}
\def\tq1{{\theta_{q+1}}}

\def\cq{\chi_{q+1}}
\def\ut{\underline{T}}
\def\ot{\overline{T}}

\begin{document}
	
\title[] {Admissible solutions of the Euler equations}

\author{Alexey Cheskidov}
\address{School of Sciences, Westlake University, China.}
\email[Alexey Cheskidov]{cheskidov@westlake.edu.cn}
\thanks{}

\author{Zirong Zeng}
\address{School of Mathematics and Key Laboratory of MIIT, Nanjing University of Aeronautics and Astronautics, China.}
\email[Zirong Zeng]{beckzzr@nuaa.edu.cn}
\thanks{}

\author{Deng Zhang}
\address{School of Mathematical Sciences, CMA-Shanghai, Shanghai Jiao Tong University, China.}
\email[Deng Zhang]{dzhang@sjtu.edu.cn}
\thanks{}

\keywords{Euler equations; Convex integration; Admissible solutions; Weak-strong uniqueness.}

\subjclass[2020]{35A02,\ 35D30,\ 35Q31.}

\begin{abstract}

In this paper, we prove the existence of an extensive class of initial data--characterized by intermittency dimension and roughness--for which the $d$-dimensional Euler equations ($d \ge 2$) admit a decreasing energy channel with infinitely many admissible solutions arising from the same initial data. This class includes initial data in $C^{1-\kappa}(\mathbb{T}^d)$ and $W^{1,p}(\mathbb{T}^d)$ for any $0<\kappa < 1$ and  $2< p < \infty$, thereby demonstrating the sharpness of the Lions and Yudovich uniqueness principles. Our proof is based on a new backward convex integration scheme employing approximate $\Lambda$-Euler equations to reduce the initial error and on uncovering a quantitative relationship between regularity, energy, and intermittency.
\end{abstract}

\maketitle

{\small
	\tableofcontents
}
\section{Introduction}

\subsection{Sharpness of weak-strong uniqueness}

We consider the $d$-dimensional Euler equations 
\begin{equation}\label{equa-euler}
\left\{\aligned
& \partial_t u +(u\cdot \nabla )u  + \nabla p = 0, \\
& \div\, u = 0, \\
& u(0)=v_0,
\endaligned
\right.
\end{equation}
where $d \geq 2$, 
$u:[0,\9)\times\T^d\rightarrow \R^d$ stands for the velocity field, 
and
$p:[0,\9)\times\T^d\rightarrow \R$ is the pressure of the fluid.  

The Euler equations are a fundamental model in continuum mechanics, governing the motion of inviscid fluids. Despite their physical importance, their rigorous mathematical analysis remains highly challenging—especially in three dimensions. Classical local well-posedness is established for sufficiently smooth initial data, e.g., in $C^{1,\alpha}$ spaces. However, during the past few decades, numerous constructions of fluid solutions exhibiting pathological or ``wild'' behavior have emerged.
These include the loss of energy balance, non-uniqueness, singularity formation, and anomalous dissipation. From the mathematical point of view such examples provide counterexamples to well-posedness, while from the physical point of view they are closely related to turbulence and the energy cascade conjectured by Kolmogorov, observed experimentally and numerically yet long elusive analytically.

The lack of well-posedness at low regularity is intrinsically linked to the transition from laminar (smooth, predictable) to turbulent (chaotic, irregular) flow regimes. At this level, weak solutions, defined in the distributional sense, typically with $L^2$ regularity, provide a natural framework for studying turbulent flows. However, weak solutions suffer from a fundamental drawback: \emph{non-uniqueness}. This motivates the search for additional admissibility criteria to select physically meaningful solutions.

The non-uniqueness of weak solutions for the Euler equations, especially in low regularity spaces, has attracted significant interest in literature. Pioneering constructions by Scheffer \cite{Scheffer93} and Shnirelman \cite{Shnirelman97} show the existence of nontrivial weak solutions with compact temporal support for 2D incompressible Euler. 
More recently, in a major breakthrough, De Lellis and Szk\'elyhidi \cite{dls09} introduced the convex integration framework, improving these results. This approach led to striking developments such as 
the non-uniqueness of weak solutions satisfying energy inequalities for 
Euler systems \cite{dls10, DS2017, SW2012}, the failure of uniqueness for entropy solutions in compressible isentropic flow \cite{dls10, cdk15}, the construction of $C^{1/3-}_{t,x}$ wild solutions of the Euler equations by Isett \cite{I18}, and dissipative solutions by Buckmaster-De Lellis-Sz\'{e}kelyhidi, Jr.-Vicol \cite{bdsv19}. We also refer to the work by Giri and Radu \cite{gr24} on the Onsager conjecture for the 2D Euler equations, and non-uniqueness of $C^{1/2-}$ solutions of the 3D forced Euler equations \cite{bhp23} by Bulut, Huynh, and Palasek. Recently, Giri-Kwon-Novack \cite{gkn26} proved the $L^3$-based  
strong Onsager theorem, in which weak solutions to the 3D Euler equations are constructed that dissipate total kinetic energy, satisfy the local energy inequality, and belong to the class $C_t(W^{1/3-,3}_x\cap L^{\infty-}_x)$.

On the positive side of solvability theory for fluid equations, 
the study of weak-strong uniqueness has a rich history. 
Foundational contributions include criteria by Leray, Prodi, and Serrin for Navier-Stokes \cite{leray1934, Prodi59, Serrin63} and Dafermos for conservation laws \cite{Dafermos10}. 
For Euler equations in dimensions  $d\geq 2$, 
Lions \cite{lions69} proved that if a classical (``strong") solution exists (typically in $L^1_tC^1_x$), 
then any {\it dissipative weak solution} sharing the same initial data must coincide with the classical solution.   
It  was further pointed out by De~Lellis and Sz\'{e}kelyhidi \cite{dls10} that any {\it admissible solution}, 
that is, weak solution in $C_tL^2_x$ satisfying the weak energy inequality 
\begin{align}\label{w-en-inq}
    \|u(t)\|_{L^2}^2\leq \|u(0)\|_{L^2}^2\quad \text{for a.e.}\quad t\in [0,T],
\end{align}
is a dissipative solution. 
Thus, according to the Lions weak-strong uniqueness condition, the following holds.

\begin{theorem}[Lions' weak-strong uniqueness, \cite{dls10,lions69}]\label{thm-weak-strong} 
Let $d\geq 2$. 
   If there exists a solution $$v\in C([0,T];L^2(\T^d))$$ of the Euler equations \eqref{equa-euler} satisfying $\nabla v+ \nabla v^T\in L^1([0,T];L^\infty(\T^d))$, then any admissible solution $u$ of \eqref{equa-euler} is equal to $v $ on $[0,T]\times \T^d$.
\end{theorem}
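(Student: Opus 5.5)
The plan is the classical relative energy argument: compute the evolution of $\frac12\|u(t)-v(t)\|_{L^2}^2$, control it using only the admissibility of $u$ and the bound on the symmetric gradient of $v$, and finish with Grönwall. Write $w=u-v$ and split
\[
\tfrac12\|w(t)\|_{L^2}^2=\tfrac12\|u(t)\|_{L^2}^2+\tfrac12\|v(t)\|_{L^2}^2-\int_{\T^d}u(t)\cdot v(t)\,\d x .
\]

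\emph{The two energies.} By \eqref{w-en-inq}, $\|u(t)\|_{L^2}^2\le\|v_0\|_{L^2}^2$. For $v$, I expect exact conservation, $\|v(t)\|_{L^2}^2=\|v_0\|_{L^2}^2$. I would test the equation of $v$ against mollified $v$ and pass to the limit. The commutator term $\int \big((v\cdot\nabla v)_\epsilon-v_\epsilon\cdot\nabla v_\epsilon\big)\cdot v_\epsilon$ should be handled by a Constantin--E--Titi type estimate. The hypothesis is only on the symmetric part $\nabla v+\nabla v^T$, so I would use Korn-type reasoning to get $v\in L^1_tW^{1,p}_x$ for all $p<\infty$ (Riesz transforms express $\nabla v$ through $\nabla v+\nabla v^T$ for divergence-free fields). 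With $v\in C_tL^2$ this suffices to make the flux vanish. If this step is delicate, an inequality $\|v(t)\|^2\le\|v_0\|^2$ is enough for the argument.

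\emph{The cross term.} I would use $v$ (suitably mollified in space and time) as a test function in the weak formulation of $u$, and $u$ as a test function in the weak formulation of $v$, then add. Formally this gives
\[
\int u\cdot v\,(t)-\|v_0\|_{L^2}^2=\int_0^t\!\!\int \big(u\otimes u:\nabla v+v\otimes v:\nabla u\big)\,\d x\,\d s ,
\]
with the pressure terms vanishing by incompressibility. Next, use $\int v\otimes v:\nabla u=-\int (v\cdot\nabla v)\cdot u$ and $\int v\otimes v:\nabla v=0$, and rearrange as in Lions' dissipative-solution computation, to reach
\[
\int_0^t\!\!\int\big(u\otimes u:\nabla v+v\otimes v:\nabla u\big)=\int_0^t\!\!\int (w\otimes w):\nabla v .
\]
Since $w\otimes w$ is symmetric, this equals $\int_0^t\!\int (w\otimes w):\tfrac12(\nabla v+\nabla v^T)$.

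Combining the pieces,
\[
\tfrac12\|w(t)\|_{L^2}^2\le \int_0^t \big\|\tfrac12(\nabla v+\nabla v^T)(s)\big\|_{L^\infty}\|w(s)\|_{L^2}^2\,\d s .
\]
Since $w(0)=0$, Grönwall gives $w\equiv0$ on $[0,T]$.

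The main obstacle is justifying the cross-term identity at this low regularity. Only $u\in C_tL^2_x$ is known, so $u\otimes u\in L^\infty_tL^1_x$ must be paired with $\nabla v\in L^1_tL^\infty_x$, which is borderline. The skew part of $\nabla v$ is not controlled, so $\nabla v$ itself is not in $L^1L^\infty$. I would first mollify $v$ in $x$ and $t$, then pass to the limit using strong convergence of $v_\epsilon$ in $C_tL^2$ and weak-$*$ convergence for the gradient terms. The key algebraic point is to replace $\nabla v_\epsilon$ by its symmetric part before taking limits; this is legitimate because it is always contracted against the symmetric tensors $u\otimes u$ and $w\otimes w$. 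For the $v\otimes v:\nabla u$ term, I would move the derivative onto $v_\epsilon$ so that $u$ is never differentiated. Time continuity of both solutions in $L^2$ then lets me evaluate the identity at every $t$.
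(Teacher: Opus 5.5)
The paper does not prove this statement; it quotes it from Lions and De Lellis--Sz\'ekelyhidi. Your relative-energy plus Gr\"onwall strategy is exactly the mechanism behind those results, and your formal algebra and Gr\"onwall step are correct. However, the justification of the cross term, which you yourself single out as the main obstacle, has a genuine gap.

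\textbf{Where the argument fails.} You claim that $\nabla v_\epsilon$ is always contracted against the symmetric tensors $u\otimes u$ and $w\otimes w$. This is false for the term coming from $v\otimes v:\nabla u$. After you move the derivative onto $v$, that term is $-\int u\cdot\div(v\otimes v)=-\int u\cdot(v\cdot\nabla)v$. Here $\nabla v$ is paired with the non-symmetric tensor $u\otimes v$, so the antisymmetric part of $\nabla v$ (the vorticity) enters.
\begin{itemize}
\item Nothing in the hypotheses bounds the antisymmetric part in $L^1_tL^\infty_x$. Your Korn/Riesz remark gives only $\nabla v\in L^1_tL^p_x$ for $p<\infty$. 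H\"older would then need $v\in L^\infty_tL^q_x$ for some $q>2$, which you do not have.
\item Concretely, testing the equation of $u$ with $v_\epsilon$ gives $\int_0^t\!\int w_\epsilon\otimes w_\epsilon:\nabla v_\epsilon$, where $w_\epsilon=u-v_\epsilon$, plus the commutator $\int_0^t\!\int u\cdot\div\big((v\otimes v)_\epsilon-v_\epsilon\otimes v_\epsilon\big)$. With $u$ only in $L^\infty_tL^2_x$, the natural way to kill it is to show this divergence tends to $0$ in $L^1_tL^2_x$. The standard DiPerna--Lions commutator estimate gives that only when $\nabla v\in L^1_tL^\infty_x$, which is not assumed.
\item Weak-$*$ convergence does not rescue this. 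Only $(\nabla v+\nabla v^T)_\epsilon$ is bounded in $L^1_tL^\infty_x$, not $\nabla v_\epsilon$.
\end{itemize}

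\textbf{The missing idea: symmetrize before taking limits.} Set $Dv:=\tfrac12(\nabla v+\nabla v^T)$ and use
\[
\div(v\otimes v)=(v\cdot\nabla)v=2(Dv)v-\nabla\tfrac{|v|^2}{2}.
\]
This holds for a.e.\ $s$: Plancherel gives $\|\nabla v\|_{L^2}^2=2\|Dv\|_{L^2}^2$ for divergence-free fields, so $v(s)\in H^1$. The gradient term is annihilated by $\div u=0$, also after mollification. Using the equation of $v$ (the pressure drops since $\div u=0$), you get
\[
\int_0^t\!\!\int u\cdot\partial_s v_\epsilon=-\int_0^t\!\!\int u\cdot\big(2(Dv)v\big)_\epsilon\longrightarrow-\int_0^t\!\!\int (u\otimes v+v\otimes u):Dv .
\]
The limit follows by dominated convergence, with majorant $2\|u\|_{L^2}\|v\|_{L^2}\|Dv\|_{L^\infty}\in L^1(0,t)$. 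Combine this with two further facts:
\begin{itemize}
\item $\int u\otimes u:\nabla v_\epsilon=\int u\otimes u:(Dv)_\epsilon$, which converges by the same dominated convergence;
\item $\int v\otimes v:Dv=0$, obtained as the limit of $\int v_\epsilon\otimes v_\epsilon:\nabla v_\epsilon=0$.
\end{itemize}
This yields $\int u(t)\cdot v(t)-\|v_0\|_{L^2}^2=\int_0^t\!\int w\otimes w:Dv$, and only $Dv$ ever appears.

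\textbf{The energy step.} Your route via Korn plus Constantin--E--Titi is borderline here. The gradient is only $L^1$ in time, and the natural majorants for the flux are superlinear in $\|Dv\|_{L^\infty}$. The same observation fixes this. The tensor $r_\epsilon:=(v\otimes v)_\epsilon-v_\epsilon\otimes v_\epsilon$ is symmetric, so the flux $\int r_\epsilon:\nabla v_\epsilon$ equals $\int r_\epsilon:(Dv)_\epsilon$. This tends to $0$ by dominated convergence, with majorant $2\|v\|_{L^2}^2\|Dv\|_{L^\infty}$, and gives exact conservation of $\|v(t)\|_{L^2}$.
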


For the 2D Euler equations, another well-known uniqueness principle was 
proved by Yudovich  \cite{Yudo63} in terms of the boundedness of vorticity. 

\begin{theorem}[Yudovich's uniqueness principle, \cite{Yudo63}]\label{wsu-Yudo}
Let $v_0 \in L^2(\mathbb{T}^2)$ with $\omega_0:=\curl v_0 \in L^{\infty}(\mathbb{T}^2)$ and $\div v_0=0$. Then, there exists a unique global solution $v \in C([0,T];L^2(\T^2))$ with $\omega:=\curl v\in L^{\infty} ([0,T];L^\infty(\T^2))$ to the Euler equations \eqref{equa-euler}.
\end{theorem}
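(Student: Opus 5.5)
We prove Yudovich's theorem in the classical way: build a solution by compactness from smooth approximations, then show uniqueness by an $L^2$ energy estimate with $L^p$ bounds on the velocity gradient whose constants grow linearly in $p$. The key tools are the vorticity formulation $\partial_t\omega+v\cdot\nabla\omega=0$ in 2D and the Biot--Savart law $v=\nabla^\perp\Delta^{-1}\omega$ on $\T^2$. We first handle the mean: $\int_{\T^2} v\,dx$ is conserved, so we fix it and subtract it.

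\emph{Existence.} Mollify the data, $v_0^\ve=v_0*\rho_\ve$, so that $\|\omega_0^\ve\|_{L^\infty}\le\|\omega_0\|_{L^\infty}$ and $\|v_0^\ve\|_{L^2}\le\|v_0\|_{L^2}$. Classical global well-posedness of smooth 2D Euler gives smooth solutions $v^\ve$. Since vorticity is transported by a divergence-free field, every $L^q$ norm of $\omega^\ve$ is conserved, and in particular $\|\omega^\ve(t)\|_{L^\infty}\le\|\omega_0\|_{L^\infty}$. The energy is also conserved.

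Calder\'on--Zygmund theory gives $\|\nabla v^\ve\|_{L^p}\le Cp\|\omega^\ve\|_{L^p}$ for $2\le p<\infty$. Hence $v^\ve$ is bounded in $L^\infty_tW^{1,p}_x$ for every $p$, and in particular uniformly log-Lipschitz in space. The equation bounds $\partial_t v^\ve$ in $L^\infty_tH^{-1}_x$ after Leray projection. Aubin--Lions then gives a subsequence converging in $C([0,T];L^2)$, and $\omega^\ve\rightharpoonup^*\omega$ in $L^\infty_{t,x}$. Strong $L^2$ convergence suffices to pass to the limit in the nonlinearity $v\otimes v$, so the limit is a weak solution. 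It lies in $C([0,T];L^2)$ with $\|\omega\|_{L^\infty_{t,x}}\le\|\omega_0\|_{L^\infty}$ by lower semicontinuity.

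\emph{Uniqueness.} This is the main step. Let $v_1,v_2$ be two such solutions with the same data, and set $w=v_1-v_2$. Testing the difference equation with $w$ is justified because $v_i\in L^\infty_tW^{1,p}$ for all $p$; one can mollify and pass to the limit. This gives
\[
\frac12\frac{d}{dt}\|w\|_{L^2}^2=-\int (w\cdot\nabla v_2)\cdot w\,dx\le \|\nabla v_2\|_{L^p}\|w\|_{L^{2p'}}^2 .
\]
Since $w$ is bounded in $L^\infty$ (via $W^{1,p}\subset L^\infty$ in 2D), interpolation gives
\[
\|w\|_{L^{2p'}}^2\le \|w\|_{L^\infty}^{2/p}\|w\|_{L^2}^{2-2/p}.
\]
With $E=\|w\|_{L^2}^2$ we obtain $E'\le C_0\,p\,E^{1-1/p}$, where $C_0$ depends only on $\|\omega_0\|_{L^\infty}$ and $\|v_0\|_{L^2}$. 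Integrating from $E(0)=0$ gives $E(t)\le (2C_0t)^p$ for every $p$. Letting $p\to\infty$ yields $E\equiv0$ on $[0,T_*]$ with $T_*<1/(2C_0)$. Because $C_0$ depends only on conserved quantities, iterating covers any $[0,T]$.

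The main obstacle is this uniqueness step. The velocity field is only log-Lipschitz, not Lipschitz, so a plain Gr\"onwall argument fails. The argument works only because the Calder\'on--Zygmund constant grows exactly linearly in $p$, and this linear growth must be paired with the $E^{1-1/p}$ structure. The technical points are checking that this linear-in-$p$ bound holds on the torus with the mean removed, and justifying the energy identity for $w$, which follows from the $W^{1,p}$ regularity of both solutions.
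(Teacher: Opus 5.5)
The paper does not prove Theorem~\ref{wsu-Yudo}. It quotes the result from \cite{Yudo63} as classical background for the sharpness statement, Theorem~\ref{thm-Nonuniq-2}, so there is no in-paper argument to compare yours against.

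Your proposal is essentially Yudovich's original energy method, and it is correct: compactness from mollified data for existence, then the linear-in-$p$ Calder\'on--Zygmund bound combined with the $E^{1-1/p}$ interpolation for uniqueness. Three minor precisions are worth adding.

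\begin{enumerate}
\item \emph{The constant $C_0$.} For an arbitrary solution in the uniqueness class you have not shown that $\|\omega_i(t)\|_{L^\infty}$ or the energy is conserved, although both are in fact true. So build $C_0$ from $\sup_{[0,T]}\|\omega_i(t)\|_{L^\infty}$ and the spatial mean of $v_i$. The mean is conserved for weak solutions on $\T^2$, as you note. Both quantities are finite by hypothesis and uniform on $[0,T]$, which is all the iteration needs.
\item \emph{Justifying the energy identity for $w$.} The cleanest route is through the time derivative. By the equation, $\partial_t v_i$ is the Leray projection of $-\div(v_i\otimes v_i)$, and this lies in $L^\infty_tL^q_x$ for every $q<\infty$. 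Hence $w\in W^{1,\infty}_tL^2_x$, and testing with $w$ is legitimate.
\item \emph{Growth in $p$.} Only at most linear growth of the Calder\'on--Zygmund constant in $p$ is needed, not exactly linear growth.
\end{enumerate}
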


Recently, Mengual \cite{M24} considered a truncated power-law vorticity initial data for which he showed the non-uniqueness of bounded admissible solutions to the 2D Euler equations in 
$C_tL^2(\R^2)\cap L_t^\infty L^\infty(\R^2)$. This result shows the sharpness of Lions weak–strong
uniqueness principle, as well as sharpness of Yudovich’s proof of uniqueness in the class
of bounded admissible solutions in the 2D case. The proof in \cite{M24} relies primarily on constructing a suitable self-similar subsolution with the initial data given by \eqref{mi} below.

\medskip 
The aim of this paper is to introduce a very rich class of initial data that give rise to non-unique admissible solutions of the Euler equations in any dimension $d\geq 2$. In particular, our class contains rather general initial data with almost constant intermittency on the borderline of the Lions weak-strong uniqueness principle and the Yudovich's uniqueness principle. Also, our solutions have decreasing energy.

\medskip 
Our first main result addresses the sharpness of the Lions weak-strong uniqueness in every dimension. 
It holds for a rather general class of initial data  in any dimension $d\geq 2$. This class  can be characterized by the 
{\it intermittency dimension} and {\it roughness condition}. 

We define stability under radial multipliers as follows:
\begin{definition}
\label{def-radial-stability}
A divergence-free field $v_0\in L^2(\T^d)$ is said to be
\emph{stable under radial multipliers} if, for every bounded
real-valued radial symbol
\[
\rho:\mathbb Z^d\to\mathbb R, \qquad
\rho(\xi)=\rho(|\xi|),
\]
the field
\[
\mathcal T_\rho v_0(x) := \sum_{\xi\in\mathbb Z^d}
\rho(\xi)\widehat v_0(\xi)e^{i2\pi x\cdot\xi}
\]
is a stationary weak solution of the Euler equation.
\end{definition}

\begin{theorem} [Sharpness of the Lions weak-strong uniqueness] \label{thm-Nonuniq} 
Let $d\geq 2$, $  0< \kappa< 1 $, and  
\[
v_0 \in C^{1-\kappa}(\T^d)
\]
be stable under radial multipliers in
the sense of Definition~\ref{def-radial-stability}. If $v_0$ has intermittency and sufficient roughness, i.e., satisfying {\rm (H1)} and {\rm (H2)} below,  then there exist infinitely many distinct dissipative solutions of the Euler equations \eqref{equa-euler} with the initial data $v_0$. 
\end{theorem}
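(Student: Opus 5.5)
The plan is a backward-in-time convex integration scheme anchored at the stationary solution generated by $v_0$. Radial stability lets us mollify the data (take $\rho$ a radial Fourier cutoff at frequency $\lambda_q$) and still have an exact stationary Euler solution $v_{0,q}:=\mathcal T_{\rho_q}v_0$. These fields act as the large-scale, time-independent backbone of the construction.

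The iteration runs on approximate solutions $(u_q,\mathring R_q)$ of the Euler--Reynolds system on $[0,T]$. We require $u_q(0)=v_0$ exactly, or at least up to an error that vanishes as $q\to\infty$. The Reynolds stress $\mathring R_q$ must be supported away from $t=0$, more precisely on $t\gtrsim \theta_q$ with $\theta_q\to 0$, so that near the initial time $u_q$ coincides with a stationary smooth truncation of $v_0$. The role of the approximate $\Lambda$-Euler equations is exactly this. In a thin time layer $[0,\theta_q]$ we replace the nonlinearity by its frequency-truncated version. The truncation error at frequency $\lambda_q$ is controlled by the tail $\|v_0-\mathcal T_{\rho_q}v_0\|$. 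Because $v_0\in C^{1-\kappa}$ with intermittency (H1), this tail decays like $\lambda_q^{-(1-\kappa)}$ in $L^\infty$ and better in $L^2$. The error is then moved into the stress at positive times through a temporal cutoff $\chi_{q+1}$, and the convex integration perturbation $w_{q+1}$ (intermittent building blocks adapted to the $d$-dimensional setting) cancels it only on $t\ge\theta_{q+1}$. This keeps $u_{q+1}(0)=v_0$.

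Energy is handled in the standard way. We prescribe a smooth strictly decreasing profile $e(t)$ with $e(0)=\|v_0\|_{L^2}^2$. At each step the perturbation must pump energy so that $\|u_q(t)\|_{L^2}^2\to e(t)$. Since $e(t)\le e(0)$, the limit satisfies \eqref{w-en-inq}, so it is admissible and hence dissipative by the remark of De~Lellis--Sz\'ekelyhidi preceding Theorem~\ref{thm-weak-strong}. Infinitely many solutions come from varying $e$ in an infinite family of profiles, for example $e_\beta(t)=e(0)-\beta t$ for small $\beta$. Different profiles yield different energy curves and therefore distinct solutions. All of them are $C_tL^2$ limits, because $\sum_q\|w_{q+1}\|_{L^2}\lesssim\sum_q\delta_{q+1}^{1/2}<\infty$.

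The main obstacle is the quantitative matching near $t=0$. The energy gap $e(t)-\|v_{0,q}\|_{L^2}^2$ must be positive and of size about $\delta_{q+1}$ at times $t\sim\theta_q$. This is where roughness (H2) enters. It should provide a lower bound on the high-frequency energy $\|v_0-\mathcal T_{\rho_q}v_0\|_{L^2}^2\gtrsim\lambda_q^{-2(1-\kappa)-\epsilon}$, which creates room to inject energy. At the same time the intermittency dimension (H1) keeps the $L^1$ stress norms small enough that the iterative estimates close. I would first fix the parameters $\lambda_q=a^{b^q}$, $\delta_q=\lambda_q^{-2\beta}$ and $\theta_q=\lambda_q^{-\gamma}$, then check the compatibility inequalities among $\kappa$, the intermittency exponent and $\beta$. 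Verifying that these admit a nonempty range is the step I expect to be hardest.
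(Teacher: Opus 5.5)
Your backward structure (perturbing only on $t\ge\theta_{q+1}$) matches the paper, but the background you attach it to cannot work. A convex integration step only adds energy. The amplitudes need $e(t)-\|u_q(t)\|_{L^2}^2>0$, and $\|u_{q+1}\|_{L^2}^2\approx\|u_q\|_{L^2}^2+\|w_{q+1}\|_{L^2}^2$. Suppose $\mathring R_q=0$ on $[0,\theta_q]$ and $u_q$ is steady there with $u_q(0)=v_0$. Then $\|u_q(t)\|_{L^2}^2=\|v_0\|_{L^2}^2=e(0)>e(t)$ on $(0,\theta_q]$ for every strictly decreasing $e$. So the energy gap is negative exactly where the later perturbations must live, and the limit can never reach an energy $e(t)<\|v_0\|_{L^2}^2$ near $t=0$.

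Replacing $v_0$ near $t=0$ by a fixed truncation $\mathcal T_{\rho_q}v_0$ does not fix this. It abandons $u_q(0)=v_0$ and forces a time-dependent passage $\mathcal T_{\rho_{q+1}}v_0\to\mathcal T_{\rho_q}v_0$, whose $\partial_t$-error is a Reynolds stress at small times that you never identify or estimate. ``Truncating the nonlinearity in a thin layer'' produces nothing at all, since by radial stability every radial truncation of $v_0$ is already steady.

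The missing idea is a background that starts at $v_0$, is a steady Euler field at each fixed time, and loses energy. Take $u(t)=\mathcal T_{m(t)}v_0$ with a radial symbol $m(0,\cdot)\equiv1$ that is non-increasing in $t$; its only error is $\mathring R_0=\mathcal R\,\partial_t u$. The paper chooses $m$ generated by hypodissipation switched on only above $\Lambda(t)=t^{-\eta}$ (see \eqref{equa-nse-2}, \eqref{def-solu}). Then $\mathring R_0$ is \emph{not} supported away from $t=0$, but decays like $t^{k_q}$ (Proposition~\ref{Prop-R0}, via Lemma~\ref{bd-m} and the Bernstein/intermittency summation). The geometric backward times \eqref{Tq-def} and the amplitudes $\delta_{q+2}\sim T_q^{k_q}$ in \eqref{def-delta} are fitted to this decay. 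As a result, $\mathring R_q=\mathring R_0$ on $[0,T_{q+1}]$ is already small without any perturbation (Claim~2).

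This also changes the energy part. Since $u_q=\widetilde u$ on $[0,T_{q+1}]$, every admissible profile is pinned to the background: $\frac34\delta_{q+2}\le e-\|\widetilde u\|_{L^2}^2\le\delta_{q+1}$ on $[\underline T_{q+1},\overline T_q]$ (cf.\ \eqref{asp-e-q}). Thus $e=\|\widetilde u\|_{L^2}^2+f$ with $f\ge0$ increasing away from $f(0)=0$. So $e$ cannot be prescribed freely, and a family like $e(0)-\beta t$ does not fit. Non-uniqueness comes from the freedom inside this channel.

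The real quantitative obstacle is that the channel be decreasing. The dissipation rate of $\widetilde u$ is bounded below through (H2) by $\Lambda^{2\alpha}\|\Delta_{m_*(t)}v_0\|_{L^2}^2\gtrsim t^{2\eta(s_{m_*(t)}-\alpha)}$, as in \eqref{eq-e-u0-2}. It must dominate $f'\lesssim t^{k_{q-1}-1}$ from \eqref{energy-up-per}, and this is the exponent gap \eqref{en-dis-inq-5}.

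Your lower bound $\lambda_q^{-2(1-\kappa)-\epsilon}$ for the high-frequency $L^2$ energy overlooks intermittency. (H2) is an $L^\infty$ bound, and converting it to energy costs a factor $2^{-(d-D_{n,2,\infty})n}$, while the stress is measured through $D_{n,1,\infty}$. That is why (H1) compares the $L^2$ and $L^1$ intermittency dimensions, rather than merely keeping stresses small, and why (H2) needs $\kappa_2>(\kappa+\kappa_1)/2$. The paper's solutions are moreover global. That requires the second, stabilized-cutoff construction of Section~\ref{Sec-frozen-outer}, because after $T^*$ the dissipation of $\widetilde u$ is no longer quantified.
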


Theorem \ref{thm-Nonuniq} shows that the Lions weak-strong uniqueness principle fails if the $C^1_x$ regularity is weakened to $C^{1-\kappa}_x$ for any $\kappa\in (0,1)$, and hence establishes the sharpness of the principle.

\medskip 
Our second result is related to 
the sharpness of the Yudovich uniqueness criterion.

\begin{theorem} [Sharpness of the Yudovich uniqueness principle] \label{thm-Nonuniq-2}
Let $d\geq 2$, $2< p<\infty$, and
\[
v_0\in W^{1,p}(\T^d)
\]
be stable under radial multipliers in
the sense of Definition~\ref{def-radial-stability}. If $v_0$ has intermittency and roughness, i.e., satisfying  {\rm (H1)'} and {\rm (H2)'} below, then there exist infinitely many distinct dissipative solutions of the Euler equations \eqref{equa-euler} with the initial data $v_0$. 
\end{theorem}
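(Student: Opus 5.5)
Since Theorem~\ref{thm-Nonuniq-2} has the same structure as Theorem~\ref{thm-Nonuniq}, I will run the same backward convex integration scheme. The only change is how (H1)' and (H2)' are turned into the needed decay of the initial error: there the input was H\"older regularity $C^{1-\kappa}$, here it is the Sobolev scale $W^{1,p}$ with $p<\infty$.

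\textbf{Step 1: starting subsolution.} Take a radial Fourier truncation of $v_0$. Fix frequencies $\lambda_q$ increasing super-exponentially and set $v_q := \mathcal T_{\rho_q} v_0$, where $\rho_q$ is a smooth radial cutoff to $|\xi|\le \lambda_q$. By Definition~\ref{def-radial-stability}, each $v_q$ is a smooth stationary solution of Euler. I then work backward in time. On a time interval $[0,\tau_q]$, with $\tau_q\to 0$, I want a solution $u$ that is close to $v_q$ for $t\gtrsim \tau_q$ and equals $v_0$ at $t=0$. The tail $v_0 - v_q$ is small in $L^2$. Its size is quantified through (H1)' (intermittency, giving $\|v_0-v_q\|_{L^2}$ versus $\|v_0-v_q\|_{L^\infty}$ type bounds) and (H2)' (roughness, giving a lower bound on the energy of $v_0-v_q$, so that there is room to dissipate). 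The first-order approximate system is the ``$\Lambda$-Euler equations'': Euler with a frequency-localized nonlinearity whose Reynolds error stays small near $t=0$. The iterate is
\[
u_q(t)=v_{q}+\text{(temporal interpolation of tails)}, \qquad \mathring R_q \text{ supported on } [\tau_{q+1},T].
\]

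\textbf{Step 2: iteration.} I will add intermittent building blocks (Mikado flows or intermittent jets, in any $d\ge2$) at frequency $\lambda_{q+1}$, with time cutoffs adapted to $\tau_{q+1}$. The goals are:

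1. reduce the Reynolds stress;
2. prescribe an energy profile $e(t)$ that is strictly decreasing, which yields the admissibility \eqref{w-en-inq};
3. keep $u(0)=v_0$ unchanged, since the perturbation vanishes at $t=0$.

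Infinitely many solutions come from infinitely many choices of energy profile $e$ inside the decreasing channel allowed by (H2)'. The dissipative property then follows from admissibility, as recalled before Theorem~\ref{thm-weak-strong}.

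\textbf{Step 3: main obstacle.} The key step is the quantitative balance linking $W^{1,p}$ regularity, energy, and intermittency. Replacing the $L^\infty$ gradient control of the $C^{1-\kappa}$ case by $L^p$ control, I expect estimates of the form
\[
\|\nabla v_q\|_{L^\infty}\lesssim \lambda_q^{d/p}\|v_0\|_{W^{1,p}},
\]
obtained via Bernstein inequalities. These must be absorbed by the intermittency gain $\lambda_{q+1}^{-\gamma}$ of the building blocks. I would first try the parameter choice $\tau_q \approx (\lambda_q^{d/p}\|v_0\|_{W^{1,p}})^{-1}$ and check that the oscillation error and the transport error stay below the energy gap. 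That energy gap is $\|v_0-v_q\|_{L^2}^2$, bounded below by (H2)'. The hypothesis $p<\infty$ should be exactly what leaves a positive margin in this comparison.
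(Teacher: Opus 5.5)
Your outline has the right skeleton: radially stable data, so that radial multipliers of $v_0$ are steady Euler flows; backward convex integration with perturbations switched off near $t=0$; and energy profiles inside a channel. But it lacks the mechanism that makes the solutions admissible, and the quantitative step you single out is not the decisive one.

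The approximate system you name, Euler with a frequency-localized nonlinearity and no dissipation, cannot serve as the background. For radially stable $v_0$ it is solved by $u\equiv v_0$, an exact steady solution of constant energy. Since the scheme needs $e(t)-\|u_0(t)\|_{L^2_x}^2\ge\frac12\delta_{q+2}>0$ wherever perturbations are added, you would get $e(t)>\|v_0\|_{L^2_x}^2=e(0)$. What is required is a background that is a steady Euler flow at each time, loses energy at a quantified rate, and has an error vanishing at a quantified rate as $t\to0^+$. In the paper this is $\widetilde u(t)=\mathcal S(t)v_0$, generated by the truncated hypo-dissipation $\nu(-\Delta)^\alpha\P^2_{\geq\Lambda(t)}$ with $\Lambda(t)=t^{-\eta}$, whose error is $\mathring R_0=-\nu\mathcal R(-\Delta)^\alpha\P^2_{\geq\Lambda(t)}\widetilde u$. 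Your ``temporal interpolation of tails'' could be recast as such a time-dependent radial multiplier. Everything then hinges on two estimates for it that the proposal never formulates.

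Those estimates enter through a \emph{rate} comparison, not through ``oscillation and transport errors below the gap $\|v_0-v_q\|_{L^2_x}^2$''. Those errors are absorbed by the super-exponential growth of $\lambda_{q+1}$ whatever $v_0$ is, and $\|\nabla u_q\|_{L^\infty}$ comes from positive-time smoothing, not from $W^{1,p}$. In the backward scheme, $e-\|u_0\|_{L^2_x}^2$ must \emph{increase} from about $\delta_{q+2}$ to about $\delta_{q+1}$ on $[T_{q+1},T_q]$, where $\delta_{q+2}$ is fixed by $\sup_{t\le T_q}\|\mathring R_0(t)\|_{L^1_x}$. So $\delta_{q+1}/T_q$ must be dominated by $-\frac{\d}{\d t}\|u_0(t)\|_{L^2_x}^2$.

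For the first quantity the paper proves
\[
\|\mathring R_0(t)\|_{L^1_x}\le C_R\,t^{\eta\left(2-2\alpha+(d-D^\sharp_{n_*(t),1,p})(1-\frac1p)-\frac{\kappa_*C_*}{p}\right)}.
\]
This is where $W^{1,p}$ is used, via $\sum_n 2^{2(1-\kappa_*C_*/p)n}\|\Delta_nv_0\|_{L^p_x}^2<\infty$, together with Bernstein from $L^p$ to $L^1$ through $D_{n,1,p}$. The second quantity is bounded below by $\Lambda^{2\alpha}(t)\|\Delta_{n_*(t)+3}v_0\|_{L^2_x}^2$, using (H2)' and Bernstein from $L^p$ to $L^2$ through $D_{n,2,p}$. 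The relevant pairs are $L^1$--$L^p$ and $L^2$--$L^p$, not $L^2$--$L^\infty$. The exponent gap is at least $\frac{\kappa_*}{p}(1-4C_*)>\frac1\eta$. It comes from genuine intermittency $d-D_{n,2,p}\ge\kappa_*$ combined with $|D_{n,1,p}-D_{m,2,p}|\le\kappa_*C_*/p$, not from $p<\infty$ alone.

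Two further issues:
\begin{enumerate}
\item Your choice $\tau_q\approx(\lambda_q^{d/p}\|v_0\|_{W^{1,p}})^{-1}$ ties the background's active frequency to the building-block frequency. Near $\tau_{q+1}$ the stress you cancel at step $q$ then oscillates at frequency $\sim\lambda_{q+1}$, so mollification at scale $\ell_q$ (which needs $\ell_q^{-1}\ll\lambda_{q+1}$) no longer controls it. The paper keeps $T_q\sim4^{-(\theta+q)}$ geometric and independent of $\lambda_q$.
\item The rate comparison only yields a decreasing channel on $[0,T^*]$. The paper needs a second construction with a stabilized cutoff $\Lambda^{\mathrm{sf}}$, which lowers the outer Reynolds floor to the scale of $\delta_3$, to obtain globally non-increasing profiles and the infinite family of distinct energies.
\end{enumerate}
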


\begin{remark} 
We note that 
condition \eqref{asp-reg-2} in {\rm (H1)'} excludes the homogeneous case where the intermittency dimension $D_{n,p,\infty} \equiv d$ (no intermittency), 
which is consistent with the Yudovich's uniqueness principle.  
Indeed, if $D_{n,p,\infty} \equiv d$, then Bernstein identity \eqref{ber-ineq} yields that $\|\Delta_n v_0\|_{W^{1,p}_x}\simeq\|\Delta_n v_0\|_{W^{1,\infty}_x}$ for all $1\leq p<\infty$ and $n$, which is consistent with uniqueness expectations in the Yudovich class. 
\end{remark}

The proof of Theorems~\ref{thm-Nonuniq} and~\ref{thm-Nonuniq-2} proceeds in two parts.
In the first part, we prove Theorem~\ref{Thm-Non-euler-Local}, which shows that there exist infinitely many distinct weak solutions to the Euler equations \eqref{equa-euler} with the same initial data, whose energy decreases over a local time interval. The proof of Theorem~\ref{Thm-Non-euler-Local} is given in Sections~\ref{Sec-Lam-Euler}--\ref{Sec-prf-thm}.
Based on Theorem~\ref{Thm-Non-euler-Local}, in Sections~\ref{Sec-global-diss}--\ref{Sec-sf-closure} we extend the local result to a global one, see Theorems~\ref{Thm-Non-euler}, and thereby complete the proof of Theorems~\ref{thm-Nonuniq} and~\ref{thm-Nonuniq-2}.

\subsection{General result: weak-strong uniqueness and intermittency}

Intermittency describes the spatial non-uniformity of energy transfer in fully developed turbulence. Physically, intermittent turbulence deviates from Kolmogorov's 1941 theory—which assumes a self-similar, space-filling energy cascade—by concentrating the energy transfer into sparse, intense fluid structures (``active eddies"). Mathematically, intermittency is quantified via the intermittency dimension $D\in [0,d]$ at scale $\ell$, such that
\begin{align*}
    \text{Number of active eddies}\sim \left(\frac{L}{\ell}\right)^D,
\end{align*}
where $L$ stands for the size of the domain. Here, active eddies are fluid structures of size $\ell$ (within the filtered velocity field) that participate in the energy transfer to the next scale $\ell/2$. For further physical interpretations, we refer to \cite{CS2023}.

Intermittency can also be characterized in terms of the level of saturation of the Bernstein inequality. 
More precisely, 
let $\eta_0 \in C_c^{\infty}(\mathbb{R})$ be an even function  such that $0 \leq \eta_0 \leq 1, \eta_0(r)=1$ for $|r| \leq 1$, and $\eta_0(r)=0$ for $|r| \geq 2$. 
Let  
\begin{align}\label{def-chin}
\chi(r):=\eta_0(r)-\eta_0(2r),\quad \chi_n(r):=\eta_0(r / 2^n)-\eta_0(r / 2^{n-1}), \quad \chi_{\leq n}(r)=\eta_0(r / 2^n), 
\quad n\in \mathbb{N}. 
\end{align}
The standard Littlewood-Paley projector $\Delta_n$, $n \in \mathbb{N}$, is defined by
\begin{align}\label{def-fn}
\Delta_n f(x) & :=\sum_{\xi \in \mathbb{Z}^d} \chi_n (|\xi| ) \widehat{f}(\xi) e^{i2\pi x \cdot \xi},\quad \text{if}\ n\geq 1,
\end{align} 
and if $n=0$, 
\begin{align}
\Delta_0 f(x) & :=\sum_{\xi \in \mathbb{Z}^d} \chi_{\leq 0} (|\xi| ) \widehat{f}(\xi) e^{i 2\pi x \cdot \xi}.
\end{align} 
Then intermittency can be defined as the saturation level of the  Bernstein inequality
\[
	\|\Delta_n v_0\|_{L^p(\T^d)} \leq C_B 2^{d(\frac1q-\frac1p)n} \|\Delta_n v_0\|_{L^q(\T^d)}, \quad   n\in \mathbb{N}, \ 1\leq q \leq p\leq \infty,
\]
for some constant $C_B>0$.

We also have
\[
	\|\Delta_n v_0\|_{L^q(\T^d)} \leq   \|\Delta_n v_0\|_{L^p(\T^d)}, \quad   n\in \mathbb{N}, \ 1\leq q \leq p\leq \infty.
\]
For $1\leq q\leq p\leq \infty$, define
\begin{align}\label{ber-ineq}
D_{n,q,p} := \sup\Bigl\{ D\in[0,d]:
\|\Delta_n v_0\|_{L^p(\T^d)} 
\leq C_B\,2^{(d-D)(\frac1q-\frac1p)n}
\|\Delta_n v_0\|_{L^q(\T^d)} \Bigr\}.
\end{align}
We refer to $D_{n,q,p}$ as the intermittency dimension of
$\Delta_n v_0$. If $D_{n,q,p}\in [0,d)$, then clearly
\[
\|\Delta_n v_0\|_{L^p(\T^d)}
=
C_B\,2^{(d-D_{n,q,p})(\frac1q-\frac1p)n}
\|\Delta_n v_0\|_{L^q(\T^d)}.
\]
If $D_{n,q,p}=d$, then
\[
\|\Delta_n v_0\|_{L^p(\T^d)} \simeq
\|\Delta_n v_0\|_{L^q(\T^d)}.
\]
\medskip 

We consider 
the following class of initial data 
characterized by intermittency and regularity (or roughness).  
Fix any 
\begin{align}  \label{ve-ve0}
\kappa \in (0,1).
\end{align}
In the main results, 
the parameter $\kappa$ measures the regularity distance 
between the regularity $C^{1-\kappa}_x$ of initial data 
and the uniqueness class $C^1_x$ 
in the weak-strong uniqueness criteria. 

\begin{enumerate}  
   \item[{\rm (H1)}]  {\bf Intermittency dimension}:   
   There exist $\kappa_1 \in[0,\kappa)$ and $N \in\mathbb{N}$  
   such that 
   \begin{align}\label{asp-int}
       |D_{n,2,\infty}-D_{m,1,\infty}|\leq \kappa_
       1,
   \end{align} 
for all $n,m\geq N$.
    \item[{\rm (H2)}] 
    {\bf Roughness}: 
    There exists $N \in\mathbb{N}$ 
    such that for all $n\geq N$, 
    \begin{align}\label{asp-rou}
\|\Delta_n v_0\|_{L^\infty(\T^d)}\gtrsim 2^{-(1-\kappa_2)n},
\end{align} 
for some $\kappa_2>(\kappa+\kappa_1)/2$.
\end{enumerate} 

For initial data in the space $W^{1,p}_x$, 
 $2< p<\infty$, we assume the initial data satisfy the following conditions: 
  \begin{enumerate}  
   \item[{\rm (H1)'}]  {\bf Intermittency dimension}:   
   There exist $C_*\in (0,1/4)$, $\kappa_*\in (0,d]$, and $N \in\mathbb{N}$ 
  such that 
    \begin{align}\label{asp-reg-2}
   d-D_{n,2,p} \geq \kappa_*,
\end{align} 
for $n\geq N$, and for $n,m\geq N$,
\begin{align}\label{asp-int-2}
     |D_{n,1,p}-D_{m,2,p}|\leq \frac{\kappa_*C_*}{p}. 
 \end{align} 
 \item[{\rm (H2)'}] 
    {\bf Roughness}: 
    There exist $N \in\mathbb{N}$  
    such that 
for all $n\geq N$, 
 \begin{align}\label{asp-rou-2}
\|\Delta_n v_0\|_{L^{p}(\T^d)} \gtrsim 2^{-(1+\frac{\kappa_*C_*}{p})n}. 
\end{align} 
\end{enumerate}

\begin{remark}   
The underlying fact here is 
that the intermittency condition  {\rm (H1)} characterizes the decay rate of the initial Reynolds stress \eqref{R0-0},  
as well as the growth order of the upper and lower energy profiles 
(see, e.g., \eqref{oe-ineq-1} 
and \eqref{ue-ineq} below). 
Moreover, 
the roughness condition {\rm (H2)} ensures a sufficient decay rate of the initial relaxed solution, 
i.e., the solution to the approximate $\Lambda$-Euler equations, 
see \eqref{eq-e-u0-2} below.  
As a consequence, these
two conditions together enable us to compare the 
perturbation energy 
and the dissipation energy, 
which are critical for constructing 
dissipative solutions to the Euler equations.
\end{remark} 
  
\medskip 
We have the following general result that 
provides infinitely many admissible solutions 
with global decreasing energy profiles 
and with prescribed initial data 
satisfying the above intermittency dimension 
and regularity roughness conditions. 

\begin{theorem}[Global dissipative solutions under radial stability]\label{Thm-Non-euler}
Let $d\ge2$. Assume either that
\begin{align*}
0<\kappa<1,\qquad v_0\in C^{1-\kappa}(\T^d),
\end{align*}
satisfies {\rm (H1)} and {\rm (H2)}, or that
\begin{align*}
v_0\in W^{1,p}(\T^d),\qquad  2< p<\infty,
\end{align*}
satisfies {\rm (H1)'} and {\rm (H2)'}.
Assume moreover that $v_0$ is stable under radial multipliers in
the sense of Definition~\ref{def-radial-stability}. Then there exist infinitely many distinct global dissipative weak
solutions $u\in C([0,\infty);L^2_x)$ of \eqref{equa-euler} with initial data $v_0$.
\end{theorem}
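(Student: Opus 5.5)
The plan has two stages, in the order the paper announces. First I construct infinitely many dissipative solutions on a short interval $[0,T_0]$; that is the content of Theorem~\ref{Thm-Non-euler-Local}, which I take as given. Then I extend each local solution to $[0,\infty)$ without destroying the energy inequality.

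For the local stage, the radial stability hypothesis (Definition~\ref{def-radial-stability}) enters as follows. Choose a time-dependent radial damping symbol, e.g. $\rho_t(\xi)=\eta_0(|\xi|/\lambda(t))$ with $\lambda(t)\to\infty$ as $t\to0^+$, and set $\bar u(t):=\mathcal T_{\rho_t}v_0$. For each fixed $t$ this field is a stationary weak solution. Hence the only error left in the Euler equations is the time derivative $\partial_t\bar u$, which is frequency-localized near $\lambda(t)$. This is the ``approximate $\Lambda$-Euler'' relaxed solution, and its error is small as $t\to0$.

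Next, (H1)/(H2) (respectively (H1)'/(H2)') quantify two things: the decay of $\|\Delta_n v_0\|_{L^\infty}$ (or $L^p$), and the comparability of the $L^1$, $L^2$ and $L^\infty$ norms. Together these bound the initial Reynolds stress $\mathring R_0=\mathcal R\partial_t\bar u$ in $L^1$. They also show that the energy lost by truncation, $\|v_0\|_{L^2}^2-\|\bar u(t)\|_{L^2}^2$, dominates the $L^1$ size of the stress; the roughness lower bound is exactly what guarantees this dominance.

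A backward convex integration scheme, with perturbations supported away from $t=0$ and amplitudes of size $|\mathring R|^{1/2}$, then yields solutions $u=\bar u+w$ on $[0,T_0]$. Their energy stays between a lower profile and an upper profile that are both strictly below $\|v_0\|_{L^2}^2$ for $t>0$. Varying a free parameter in the energy profile produces infinitely many distinct solutions, all continuous into $L^2$ at $t=0$ with $u(0)=v_0$.

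For the global stage, fix one local solution $u$ and a time $t_1\in(0,T_0)$ at which $u(t_1)$ is a weak solution state with $\|u(t_1)\|_{L^2}^2<\|v_0\|_{L^2}^2$. By construction this energy gap is strictly positive, say $\delta>0$. On $[t_1,\infty)$ I continue by an admissible solution starting from $u(t_1)$. The simplest route is convex integration again, applied to a smooth subsolution starting from $u(t_1)$ with prescribed energy $e(t)$ non-increasing and bounded by $\|u(t_1)\|^2_{L^2}$. The gap $\delta$ supplies room for the energy pumped in by perturbations, so $\|u(t)\|_{L^2}^2\le\|v_0\|^2_{L^2}$ holds for all $t$. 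Gluing at $t_1$ preserves the weak formulation, because both pieces are $C_tL^2_x$ and agree at $t_1$. The glued solution is admissible in the sense of \eqref{w-en-inq}, hence dissipative by the remark following Theorem~\ref{thm-Nonuniq}'s discussion in \cite{dls10}. Distinctness is inherited from the local stage.

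The main obstacle is the local estimate comparing the dissipated truncation energy with the convex-integration energy budget. One must show that, uniformly in $t$ small, the $L^1$ norm of $\mathring R$ at frequency $\lambda(t)$ is at most a small multiple of $\sum_{n\gtrsim\log\lambda(t)}\|\Delta_nv_0\|_{L^2}^2$. I would attack this through Bernstein estimates at the borderline regularity, using $|D_{n,2,\infty}-D_{m,1,\infty}|\le\kappa_1$ to convert $L^1$ bounds into $L^2$ bounds with loss $2^{\kappa_1 n}$. The condition $\kappa_2>(\kappa+\kappa_1)/2$ should then close the comparison. In the $W^{1,p}$ case the analogous steps use $L^p$ norms and (H1)'.
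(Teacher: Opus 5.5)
Citing Theorem~\ref{Thm-Non-euler-Local} for the local stage is fine. The global stage, however, has a genuine gap.

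\textbf{Why the restart at $t_1$ fails.} The state $u(t_1)$ is not smooth. Since $T_q\to0$ and $\chi_{q+1}\equiv1$ on $[T_{q+1},\infty)$, infinitely many Mikado layers $w_{q+1}(t_1)$ with frequencies $\lambda_{q+1}\to\infty$ are present at any fixed $t_1>0$. So $u(t_1)$ is merely some $L^2$ field, and there is no ``smooth subsolution starting from $u(t_1)$''.
\begin{itemize}
\item Restarting convex integration from rough prescribed data, with strong $L^2$ continuity at the restart time and controlled energy, is exactly the difficulty the paper is built to solve.
\item Suppose you relax $u(t_1)$ by mollification, heat flow or a Fourier truncation. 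The nonlinear defect of the non-stationary state $u(t_1)$ is not removed, so in general the Reynolds stress does not vanish as $t\to t_1^+$.
\item The perturbations switched on right after $t_1$ then carry energy of order $\|\mathring R(t_1^+)\|_{L^1_x}$, and nothing relates this quantity to your margin $\delta$.
\item Worse, the energy jumps at $t_1$, so the continuation is only weakly continuous there. This is the obstruction recalled in the introduction (cf.\ \cite{bcv21,bms21}). The glued field is then not in $C([0,\infty);L^2_x)$; your sentence ``both pieces are $C_tL^2_x$'' assumes exactly what is missing.
\end{itemize}
The plan is also internally inconsistent. If the continuation realizes an energy $e(t)\le\|u(t_1)\|_{L^2}^2$, then $\delta$ plays no role. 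You would then have produced an admissible solution from a datum with no usable structure, which is not known for general $L^2$ data.

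Nor can you rerun the paper's local argument at time $t_1$. That argument needs the datum to be stable under radial multipliers, so that the background is a steady state at every time and its only error is a dissipation term vanishing at the initial time. It also needs (H1)/(H2) for the quantitative comparison between stress decay and dissipation. Nothing of this kind is known for $u(t_1)$.

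\textbf{What the paper does instead.} It never glues; it redoes the whole construction from $v_0$ with a modified background.
\begin{itemize}
\item The cutoff is frozen shortly after $T_*$: $\Lambda^{\mathrm{sf}}=\Lambda$ on $(0,T_*]$ and $\Lambda^{\mathrm{sf}}\equiv\Lambda(T_{\mathrm{fr}})$ after $T_{\mathrm{fr}}$.
\item By radial stability, $u_0^{\mathrm{sf}}(t)=\mathcal S^{\mathrm{sf}}(t)v_0$ is still a steady Euler state at each time, and it agrees with $u_0$ up to $T_*$.
\item Its energy $E_0^{\mathrm{sf}}$ is non-increasing on all of $[0,\infty)$ by the energy balance.
\item With the dyadic index frozen at $n_1$, its Reynolds stress on $[T_1,\infty)$ stays at the scale $\mu_{\mathrm{fr}}\delta_3$ with $\mu_{\mathrm{fr}}$ close to $1$ (Lemmas~\ref{lem-sf-multiplier} and \ref{lem-sf-short-tail}). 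This replaces the possibly large outer amplitude $\delta_2$.
\item The perturbation-energy profile can then be taken constant on $[\ut_1,\infty)$.
\item The single remaining transition on $[\ot_2,\ut_1]$ is slaved to the drop of $E_0^{\mathrm{sf}}$ through $\Theta^\sharp$ and $\psi_r$, using $E_0^{\mathrm{sf}}(\ot_2)-E_0^{\mathrm{sf}}(\ut_1)\ge4(\delta_2^{\mathrm{sf}}-\delta_3)$ (Lemma~\ref{prop-sf-closure}).
\end{itemize}
Hence $e=E_0^{\mathrm{sf}}+f_r^\sharp$ is globally non-increasing (Proposition~\ref{prop-global-diss}). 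The iteration with $\widehat\delta_2=\delta_2^{\mathrm{sf}}$ realizes this profile exactly (Proposition~\ref{prop-iterat-sharp}). Distinct $r$ give distinct solutions, so non-uniqueness comes from this family of energy profiles rather than from a local-to-global gluing.
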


\subsection{Comments on main results}  \label{Subsec-Com-Main}
We conclude this section with several comments on the main results.

\medskip 

\paragraph{\bf $\bullet$ 
Sharpness of weak-strong uniqueness principles} 
Theorem \ref{Thm-Non-euler} 
provides, for the first time, a relationship between the invalidity of the weak-strong uniqueness principle 
and intermittency dimension 
and roughness 
of the initial data.  

\medskip 

\paragraph{\bf $\bullet$ Regularity and energy related via intermittency} 
Our proof reveals that the two key ingredients -- the {\it regularity} 
and {\it energy} -- 
in the weak-strong uniqueness principle have a deep relationship via {\it intermittency}. 

To be precise, 
on one hand,  
the high $C^{1-\kappa}_x$-regularity of initial data 
accelerates, via intermittency, 
the fast $L^1_x$ decay of the Reynolds stress near the initial time, 
see estimate \eqref{est-r0-linfty} below. 
While in the convex integration construction,  
as reflected in \eqref{asp-0-energy} and \eqref{asp-e-q} below,  
the $L^1_x$ decay order $\delta_{q}$ of Reynolds stress 
is indeed consistent with the order 
of the perturbation of energy. 
See also \eqref{rl1s} below. 
Thus, 
the high $C^{1-\kappa}_x$-regularity gives rise to 
small energy perturbations. 
This is favorable for the construction of dissipative solutions. 

On the other hand, since the initial data do not belong to $C^{1-\kappa+}$, 
this roughness amplifies, via  intermittency again, 
the dissipation of the energy of  
initial relaxed solution, 
that is, 
the solution to approximate $\Lambda$-Euler equations 
\eqref{equa-nse-2}. 
This fact can also be seen from the derivation of estimate \eqref{eq-e-u0-2} below.

As a result, 
the regularity of initial data leads to 
a competition between two energies: 
the unfavorable energy perturbation caused by the Reynolds stress 
and the favorable energy dissipation of the approximate $\Lambda$-Euler equations. 
We derive the quantitative estimates of both energies. 
It turns out that 
the dissipation of the total energy 
can be characterized by intermittency dimension 
and the regularity of initial data. 
See the proof of \eqref{en-dis-inq-5} below.
\medskip 

\paragraph{\bf $\bullet$ Lacunary parallel shear} One example of the initial data in Theorem~\ref{thm-Nonuniq} is the lacunary parallel shear flow. We choose
\[
\frac{\kappa}{2}<\kappa_2\leq\kappa
\]
and define
\begin{align}\label{Diri-kernel}
v_0(x) := \sum_{n=1}^{\infty} 2^{-(1-\kappa_2)n}
\cos(2\pi2^n x_1)\,e_d.
\end{align}
By definition \eqref{def-chin},
\[
\chi_n(2^m)=\delta_{nm}, \qquad m,n\geq1,
\]
and hence
\[
\Delta_n v_0 = 2^{-(1-\kappa_2)n} \cos(2\pi2^n x_1)\,e_d.
\]
Then
\[
\sup_{n\geq1}
2^{(1-\kappa)n}\|\Delta_n v_0\|_{L^\infty}<\infty,
\]
so $v_0\in C^{1-\kappa}(\T^d)$. Moreover,
\[
\|\Delta_n v_0\|_{L^\infty} = 2^{-(1-\kappa_2)n},
\]
and thus {\rm (H2)} holds.

The ratios
\[
\frac{\|\Delta_n v_0\|_{L^\infty}} {\|\Delta_n v_0\|_{L^1}}
\qquad\text{and}\qquad \frac{\|\Delta_n v_0\|_{L^\infty}}
{\|\Delta_n v_0\|_{L^2}}
\]
are independent of $n$. Increasing the Bernstein constant
$C_B$ in \eqref{ber-ineq} if needed, we obtain
\[
D_{n,1,\infty}=D_{n,2,\infty}=d, \qquad n\geq1.
\]
Thus {\rm (H1)} holds with $\kappa_1=0$, while {\rm (H2)} holds
because $\kappa_2>\kappa/2$.

Furthermore, $v_0$ is a parallel shear flow. More generally, let
\begin{align}\label{parasf}
v(x)=g(P_{e^\perp}x)e,
\end{align}
where $g\in L^2_x$, $e$ is a coordinate vector, and
$P_{e^\perp}$ is the orthogonal projection onto $e^\perp$.
For every bounded real-valued radial symbol $\rho$, there exists
$g_\rho\in L^2$ such that
\[
\mathcal T_\rho v(x)=g_\rho(P_{e^\perp}x)e.
\]
Therefore
\begin{align}\label{para-flow}
\div\mathcal T_\rho v=0, \qquad
(\mathcal T_\rho v\cdot\nabla)\mathcal T_\rho v=0
\end{align}
in the sense of distributions. Hence every parallel shear flow,
including \eqref{Diri-kernel}, is stable under radial multipliers.

\medskip

\paragraph{\bf $\bullet$ Extension to the Euclidean space $\mathbb{R}^2$: 
Taylor-Couette flow}  
The idea of constructing admissible solutions to Euler equations in the present work is very robust, 
we would expect that it can be extended to the case of Euclidean space $\R^2$, 
which will be studied in another paper. 
 In this case, a primary model is the following Taylor-Couette flow, which, under the linear propagation
$\mathcal{S}(t)$ given by \eqref{multi-m} is a steady state of Euler equations.
\begin{itemize} 
    \item[$\bullet$] 2D Taylor–Couette flow:
\begin{align}\label{tcflow}
      v_0(x)=g(r)e_{\theta},
  \end{align}
  where 
  $x=(x_1, x_2) = 
  (r\cos\theta, r\sin\theta)\in \R^2$, 
   $g : [0,+\infty) \to \mathbb{R}$ is a scalar function 
  such that 
  $\int_{0}^{\infty}g^2(s)s\,\d s<\infty$,
  and 
  $e_{\theta}=(-\sin\theta, \cos\theta )$, $e_r=(\cos\theta, \sin\theta )$ 
  are the orthonormal basis  of $\mathbb{R}^2$.  
\end{itemize}
 
Fix $r_0>0$ and define, for $r>0$,
\[
p(r):=\int_{r_0}^{r}\frac{g(s)^2}{s}\,\mathrm ds.
\]
For $0<r<r_0$, by Fubini's theorem, 
\[
2\pi\int_0^{r_0}|p(r)|r\,\mathrm dr = \pi\int_0^{r_0}g(s)^2s\,\mathrm ds<\infty,
\]
so $p\in L^1_{\mathrm{loc}}(\mathbb R^2)$. On
$\mathbb R^2\setminus\{0\}$,
\[
\operatorname{div}(v_0\otimes v_0)
=-\frac{g(r)^2}{r}e_r, \qquad
\nabla p=\frac{g(r)^2}{r}e_r.
\]
Moreover, $(v_0\otimes v_0)e_r=0$, and, since $|p|$ is non-increasing
on $(0,r_0)$ and $p\in L^1_{\mathrm{loc}}(\mathbb R^2)$, this implies that
$r^2|p(r)|\to 0$ as $r \to 0$. Therefore
yields
\[
\operatorname{div}(v_0\otimes v_0)+\nabla p=0
\qquad\text{in }\mathcal D'(\mathbb R^2),
\]
and hence $v_0$ is a stationary weak solution of the Euler equations.

Moreover, the following lemma shows that the Taylor-Couette flow 
is preserved under the linear propagation 
of the approximate $\Lambda$-Euler equations.

\begin{lemma}\label{lem-tcflow}
   Let $\mathcal{S}(t)$ be the propagation operator defined by a radial multiplier $m(t,\cdot)$ and $v_0$ is a Taylor-Couette flow on $\R^2$. Then, for every $t\geq 0$, $\mathcal{S}(t)v_0$ is a Taylor-Couette flow, 
   and thus, 
   is still a steady solution 
   of Euler equations. 
\end{lemma}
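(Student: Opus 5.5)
The plan is to exploit the rotation covariance of radial Fourier multipliers on $\R^2$. Write $R_\alpha$ for the rotation by angle $\alpha$, acting on functions by $(\tau_\alpha f)(x):=R_\alpha f(R_\alpha^{-1}x)$. A vector field $w\in L^2(\R^2)$ is a Taylor--Couette flow exactly when it has two properties. First, it is rotation invariant, $\tau_\alpha w=w$ for every $\alpha$. Second, its radial component vanishes, $w\cdot e_r=0$. Indeed, a rotation-invariant field has the form $w(x)=a(r)e_r+g(r)e_\theta$, so killing the $e_r$ part leaves $g(r)e_\theta$. Moreover, for a rotation-invariant field, $\div w=0$ forces $\frac{1}{r}\partial_r(ra(r))=0$, so $ra(r)$ is constant. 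Local square integrability near the origin then forces that constant to be zero. It therefore suffices to show that $\mathcal S(t)v_0$ is rotation invariant and divergence free. After that, the steady-state property follows from the computation already carried out above for general $g$.

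\textbf{Step 1 (commutation with rotations).} Since $m(t,\xi)=m(t,|\xi|)$ and $\widehat{\tau_\alpha f}(\xi)=R_\alpha\widehat f(R_\alpha^{-1}\xi)$, we get
\[
\widehat{\mathcal S(t)\tau_\alpha f}(\xi)=m(t,|\xi|)R_\alpha\widehat f(R_\alpha^{-1}\xi)=\widehat{\tau_\alpha\mathcal S(t)f}(\xi).
\]
Here we use $|R_\alpha^{-1}\xi|=|\xi|$, and the matrix $R_\alpha$ commutes with the scalar multiplier. Hence $\mathcal S(t)\tau_\alpha=\tau_\alpha\mathcal S(t)$, and $\tau_\alpha v_0=v_0$ gives $\tau_\alpha\mathcal S(t)v_0=\mathcal S(t)v_0$.

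\textbf{Step 2 (divergence-free).} The divergence corresponds to the multiplier $i2\pi\xi\cdot$, which commutes with the scalar multiplier $m$. Therefore $\div\mathcal S(t)v_0=\mathcal S(t)\div v_0=0$ in $\mathcal S'$. This uses that $\div v_0=0$ distributionally, which holds for $g(r)e_\theta$: pairing with $\nabla\varphi$ gives $\int g(r)\partial_\theta\varphi\,\d x=0$. We also need $\mathcal S(t)v_0\in L^2$, which holds provided $m(t,\cdot)$ is bounded, as is implicit in the definition \eqref{multi-m}.

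\textbf{Step 3 (rotation-invariant $L^2$ fields).} This is the step I expect to be the main obstacle, because we only have $L^2$ regularity and the pointwise representation must be justified. The approach is to average. Since $w=\tau_\alpha w$ almost everywhere for every $\alpha$, a Fubini argument in $(\alpha,x)$ shows that $w$ agrees almost everywhere with the field obtained by freezing the polar-coordinate representation along almost every circle. This yields $w=a(r)e_r+g(r)e_\theta$ with $\int(a^2+g^2)r\,\d r<\infty$. Next, test the divergence condition against radial functions $\varphi(|x|)$; this gives $\int_0^\infty a(r)\varphi'(r)\,r\,\d r=0$ for all $\varphi\in C_c^\infty(0,\infty)$. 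Hence $ra(r)=c$ almost everywhere. Finally, $\int_0^1 c^2 r^{-1}\,\d r<\infty$ forces $c=0$.

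With $a\equiv 0$, the field $\mathcal S(t)v_0=g_t(r)e_\theta$ is Taylor--Couette with $\int g_t^2 r\,\d r<\infty$. The preceding pressure construction, $p_t(r)=\int_{r_0}^r g_t(s)^2 s^{-1}\,\d s$, then shows that it is a stationary weak solution of the Euler equations.
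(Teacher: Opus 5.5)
Your proof is correct, but it takes a different route from the paper. The paper writes $\mathcal S(t)v_0=K_t*v_0$ with a radial kernel $K_t(x)=\kappa_t(|x|)$ and passes to polar coordinates. It kills the $e_r$-component using the oddness of $\sin\phi$ in $\int_{-\pi}^{\pi}\kappa_t(\sqrt{r^2+r'^2-2rr'\cos\phi})\sin\phi\,\d\phi=0$. It then reads off an explicit angular profile $G_t(r)=\int_0^\infty g(r')r'f_t(r,r')\,\d r'$. You never form the kernel. Instead you use that radial symbols commute with the rotation action $\tau_\alpha$ and with $\div$, so $\mathcal S(t)v_0$ is a rotation-invariant, divergence-free $L^2$ field. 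A structural lemma then identifies it as a Taylor--Couette flow: the Fubini representation $a(r)e_r+g_t(r)e_\theta$, then $ra\equiv c$ from radial test functions, then $c=0$ by square integrability.

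The paper's computation gives an explicit formula for $G_t$. However, it tacitly needs $K_t$ to be a genuine function with enough integrability to write and interchange the polar-coordinate integrals. This holds for the paper's own $m(t,\cdot)$ at $t>0$, which decays rapidly in $\xi$. It fails for an arbitrary bounded radial symbol: for $m\equiv 1$ the kernel is $\delta$. Your argument works verbatim for every bounded real radial symbol, which matches the generality of Definition~\ref{def-radial-stability}. It also records $\int_0^\infty g_t^2\,r\,\d r<\infty$, which the subsequent pressure construction needs.

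The paper's $\sin\phi$ cancellation is the concrete form of a reflection symmetry. Take $P=\mathrm{diag}(1,-1)$ and $(\tau_P f)(x):=Pf(Px)$. Then $\tau_P(g(r)e_\theta)=-g(r)e_\theta$ and $\tau_P(a(r)e_r)=a(r)e_r$, and $\tau_P$ commutes with radial multipliers. So once you have the representation, you could remove $a$ in one line instead of via the divergence ODE.

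One small slip in Step 2. Pairing $v_0$ with $\nabla\varphi$ gives $\int g(r)\,r^{-1}\partial_\theta\varphi\,\d x=\int_0^\infty g(r)\int_0^{2\pi}\partial_\theta\varphi\,\d\theta\,\d r$, not $\int g(r)\partial_\theta\varphi\,\d x$. It vanishes by the same Fubini argument, which is justified because $|v_0|\,|\nabla\varphi|\in L^1$.
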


A typical example of Taylor-Couette flow that satisfies Hypothesis {\rm (H1)} and {\rm (H2)} in Theorem~\ref{thm-Nonuniq} is the truncated power-law vortices constructed by Mengual \cite{M24}.
In the recent work \cite{M24},  
by using a suitable self-similar subsolution and the $L^\infty$ convex integration scheme, 
Mengual proved the failure of the weak-strong uniqueness for the 2D Euler equations on $\R^2$ 
with the initial data given by the truncated power-law vortex 
\begin{align}\label{mi-1}
v_0(x)=\psi(|x|) |x|^{-\alpha}x^{\perp}, 
 \ \ x=(x_1, x_2)\in \mathbb{R}^2. 
\end{align} 
Here, $\alpha\in (0,1)$ is a parameter, $x^\perp:=(-x_2,x_1)$ and $\psi$ is a smooth cut-off function used to guarantee the boundedness of $L^2(\R^2)$ norm of $v_0$.

The following lemma shows that the truncated power-law vortex satisfies Hypothesis {\rm (H1)} and {\rm (H2)}  
(with straightforward modifications to the Euclidean space $\mathbb{R}^2$).

\begin{lemma}\label{lem-m-ini}
 Let $\alpha\in (0,1)$ and
\begin{align}\label{mi}
f(x)= \psi(x)|x|^{-\alpha}x^{\perp},\quad x\in \R^2,
\end{align} 
where $\psi: \R^2\rightarrow [0,1]$ is a smooth radial cut-off function such that
\begin{align*}
	\psi(x)=\begin{cases}
		1,\quad |x|\in [0,1],\\
		0,\quad |x|\in [2,+\infty).
	\end{cases}
\end{align*}  

Then, for any $p\in [1,\infty]$, we have
\begin{align}\label{est-n-f}
    \|\Delta_n f\|_{L^p(\R^2)}\simeq 2^{-(1-\alpha+\frac2p)n},\quad n\geq 1.
\end{align}
In particular, for every $\kappa$ satisfying
\[
\alpha\le\kappa<\min\{2\alpha,1\},
\]
and every
\[
0<\kappa_1<\min\{\kappa,2\alpha-\kappa\},
\]
\eqref{asp-int} 
holds for all sufficiently large dyadic indices, and \eqref{asp-rou} holds with
$\kappa_2=\alpha$.
\end{lemma}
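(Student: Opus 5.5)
The plan is to exploit the homogeneity of $f$ away from the cut-off and to get an exact scaling relation for the Littlewood--Paley pieces. I will split $f=f_1+f_2$ with
\[
f_1(x):=|x|^{-\alpha}x^{\perp},\qquad f_2(x):=(\psi(x)-1)|x|^{-\alpha}x^{\perp}.
\]
The first piece $f_1$ is homogeneous of degree $1-\alpha$ and is a tempered distribution on $\R^2$. Its Fourier transform is therefore homogeneous of degree $-(3-\alpha)$ and smooth away from the origin: writing $f_1=\nabla^\perp\big(c|x|^{2-\alpha}\big)$, we get $\widehat{f_1}(\xi)=c'\,\xi^\perp|\xi|^{-(4-\alpha)}$ with $c'\neq0$ since $\alpha\in(0,1)$.

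Let $\varphi$ denote the Littlewood--Paley symbol $\chi_1$ (now a function on $\R^2$). Then $\Delta_n f_1=\check{\chi}_n*f_1$, and changing variables $\xi=2^n\eta$ gives the scaling identity
\[
\Delta_n f_1(x)=2^{-(1-\alpha)n}(\Delta_0' f_1)(2^n x),
\]
where $\Delta_0'$ has symbol $\chi(\cdot)$ supported in an annulus. The profile $g:=\Delta_0'f_1$ is a Schwartz function: its Fourier transform $\chi(|\xi|)\widehat{f_1}(\xi)$ is smooth and compactly supported away from $0$. It is also nonzero, since $\widehat{f_1}\neq0$ on the annulus. Hence $\|g\|_{L^p}\in(0,\infty)$ for every $p\in[1,\infty]$, and
\[
\|\Delta_n f_1\|_{L^p(\R^2)}=2^{-(1-\alpha)n}2^{-2n/p}\|g\|_{L^p}.
\]
This is exactly the claimed rate in \eqref{est-n-f}.

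Next I show that the correction $f_2$ is negligible at high frequency. The function $f_2$ is smooth, since it vanishes near $0$ and the singularity is removed there. For $|x|\ge2$ it equals $-f_1$, which is not integrable but is smooth and homogeneous, with derivatives $\partial^\beta f_2=O(|x|^{1-\alpha-|\beta|})$. Thus $\partial^\beta f_2\in L^p$ for $|\beta|$ large, say $|\beta|\ge 3$. Since $\Delta_n$ localizes to $|\xi|\sim2^n$, for $n\ge1$ we have
\[
\|\Delta_n f_2\|_{L^p}\lesssim 2^{-Mn}\,\|\nabla^M f_2\|_{L^p}
\]
for any large $M$, with $M$ chosen so that $\nabla^M f_2\in L^1\cap L^\infty$. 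This gives $\|\Delta_n f_2\|_{L^p}\lesssim_M 2^{-Mn}$. The upper bound follows at once, and the lower bound holds for all $n\ge n_0$. For the finitely many remaining $n\ge1$, I will check $\Delta_n f\neq0$ directly, since $\widehat f$ cannot vanish identically on an annulus by analyticity of $\widehat f$ ($f$ is compactly supported). This gives $\simeq$ for all $n\ge1$ with constants depending on $\alpha$ and $p$. The one delicate step is making the $f_1$ computation rigorous as a distribution whose growth at infinity is like $|x|^{1-\alpha}$. I will handle it by testing against Schwartz functions; the homogeneity of $\widehat{f_1}$ then follows from uniqueness of the homogeneous extension for degree $-(3-\alpha)>-2$.

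For the consequences, I insert \eqref{est-n-f} into \eqref{ber-ineq}. The ratio is
\[
\|\Delta_n f\|_{L^\infty}/\|\Delta_n f\|_{L^q}\simeq 2^{2n/q},
\]
uniformly in $n$. After enlarging $C_B$ this gives $D_{n,1,\infty}=D_{n,2,\infty}=0$ for large $n$. I expect the point to check here is that the two-sided constants bound the sup in \eqref{ber-ineq} up to an $o(1)$ defect, namely $O(1/n)$. So $|D_{n,2,\infty}-D_{m,1,\infty}|\le\kappa_1$ for $n,m$ large, and any $\kappa_1>0$ suffices. Finally, $\|\Delta_n f\|_{L^\infty}\simeq2^{-(1-\alpha)n}$ gives \eqref{asp-rou} with $\kappa_2=\alpha$. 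The requirement $\kappa_2>(\kappa+\kappa_1)/2$ is exactly $\kappa_1<2\alpha-\kappa$, and $\kappa\ge\alpha$ ensures $f\in C^{1-\kappa}$.
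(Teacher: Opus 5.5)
Your proof is correct, but it follows a different route from the paper's. The paper keeps $f$ whole and writes $\Delta_n f(x)=2^{-n(1-\alpha)}F_n(2^nx)$ with $F_n=\check\chi*(\psi(2^{-n}\cdot)f_*)$. It proves the two-sided bound only on the ball $|x|\le 2^{1-n}$, using the locally uniform convergence $F_n\to F_*=\check\chi*f_*$ (which is your profile $g$) together with $F_*\not\equiv0$. It treats the exterior $|x|>2^{1-n}$ separately, by Taylor-expanding $f$ against the vanishing moments of $\check\chi_n$, which gives $|\Delta_nf(x)|\lesssim 2^{-n(M+1)}|x|^{-\alpha-M}$ there.

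Since $\Delta_nf_2(x)=2^{-n(1-\alpha)}(F_n-F_*)(2^nx)$, your reverse Bernstein bound $\|\Delta_nf_2\|_{L^p}\lesssim 2^{-Mn}\|\nabla^Mf_2\|_{L^p}$, $M\ge3$, is a quantitative, global version of the paper's dominated-convergence step. It is legitimate even though $f_2\notin L^p$, because $\chi_n(\xi)=\sum_{|\beta|=M}2^{-nM}m_\beta(2^{-n}\xi)\xi^\beta$ with $m_\beta\in C_c^\infty$. Your step gives upper and lower bounds for all $p$ at once, with the asymptotic constant $\|g\|_{L^p}$ and an $O(2^{-Mn})$ error, and it removes the need for the exterior analysis. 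In exchange, the paper's route shows pointwise that the $n$-th shell is concentrated in a ball of radius $\sim2^{-n}$ around the vortex center. That is the physical picture behind the near-zero intermittency dimension.

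You also check (H1)--(H2), which the paper leaves implicit. There $D_{n,q,\infty}=O(q/n)$ rather than $0$, since enlarging $C_B$ increases $D$; as you note yourself, this suffices.

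One slip: $\widehat{f_1}$ is homogeneous of degree $\alpha-3<-2$, not $>-2$, so it is not locally integrable at the origin. This is harmless. The homogeneous extension is still unique because the degree is not an integer, and only the smooth restriction of $\widehat{f_1}$ to the annulus carrying $\chi$ enters your argument.
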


The proofs of Lemmas \ref{lem-tcflow} 
and \ref{lem-m-ini} are postponed to the Appendix 
\ref{App-TC-Flow} below. 
\medskip

\subsection{Novelties of the proof} 
\label{Subsec-Novel}
Our proof introduces a  {\it two-step backward}  construction of intermittent convex integration. 
The main novelties of the proof 
are summarized as follows. 

\medskip 
{\bf $\bullet$ Approximate $\Lambda$-Euler equation}: 
We introduce a new  
approximate Euler equation, 
called the {\it approximate $\Lambda$-Euler equation}. See \eqref{equa-nse-2} below. 

In contrast to the classical convex integration schemes, 
where the dissipation is usually treated perturbatively, 
the approximate $\Lambda$-Euler equation
contains a dissipation term acting only on wavenumbers above a suitable time-dependent cutoff.
This truncation plays a crucial role in dissipating the energy of solutions to the Euler equations.

The approximate $\Lambda$-Euler equation 
is different from the $\Lambda$-Navier-Stokes equation (NSE) recently 
introduced in \cite{czz24,czz25}, 
where the truncation was applied to the quadratic nonlinearity, 
instead of the dissipation part. 
The advantage of the approximate $\Lambda$-Euler equation 
is that it behaves like the target Euler equations near the initial time while simultaneously providing energy dissipation for positive times (see Theorem \ref{thm-solu-le}).

The first feature allows us to control the initial Reynolds stress in the convex-integration scheme near the initial time, which is usually difficult when the initial data are prescribed. In the NSE context, the free Stokes flow could be regarded as a possible approximation to the NSE; however, the corresponding Reynolds stress is merely bounded and may not vanish at the initial time, which can lead to weak solutions losing continuity at that moment (see \cite{bcv21,bms21}).

The second, dissipative feature is essential for constructing admissible solutions to the Euler equations with decreasing energy profiles. A delicate point here is that, due to the high-wavenumber truncation, the energy dissipates only at high frequencies, which may not suffice to dominate the energy perturbations generated by Reynolds stresses. This leads to a competition between the two types of energy—the dissipative and the Reynolds-stress energies. Further details will be provided in Subsection \ref{Subsec-Energy-Chan}.

\medskip 

{\bf $\bullet$ Backward construction of solutions 
with prescribed initial data}:  
Constructing continuous-energy solutions with prescribed initial data presents a fundamental challenge in convex integration. In the Navier–Stokes context, Burczak, Modena, and Székelyhidi \cite{bms21}  employed a truncation scheme preserving the initial data and established non-uniqueness of solutions in $C((0,1];L^2)$ for arbitrary $L^2_x$ initial data.  
Buckmaster, Colombo, and Vicol \cite{bcv21} later achieved non-uniqueness in $C([0,\infty); L^2_x)$ for any $H^3_x$ initial data, as well as in $C((0,\infty); L^2_x)$ for general $L^2_x$ data, by means of a gluing method ensuring compatibility of the approximate solutions near $t=0$.

A fundamental obstruction in such constructions is the non-vanishing Reynolds stress at the initial time, which typically causes a discontinuity of the energy profile. This issue was resolved in our earlier work \cite{czz24} by introducing a new initial approximation and a backward convex integration scheme based on a suitable choice of backward times, which enforces continuity of the energy at $t=0$. 

In the present work, we refine this scheme by selecting backward times and Reynolds-stress amplitudes according to the decay rate of the initial Reynolds stress. This decay rate depends quantitatively on the intermittency dimension, the blow-up rate of the wavenumber near $t=0$, and the regularity gap from the critical $C^1_x$ class (see~\eqref{R0-0}). As a result, our backward construction yields continuous-energy solutions $u \in C([0,\infty);L^2_x)$ corresponding to prescribed initial data satisfying the intermittency and roughness constraints.

\medskip 
{\bf $\bullet$ Construction of decreasing energy channel}:
To construct admissible solutions to the Euler equations, we adopt the approach developed in our recent work \cite{czz25} to build a decreasing energy channel. In contrast to the NSE and MHD settings of \cite{czz25}, the Euler equations possess no intrinsic dissipation, necessitating reliance on a very weak dissipation mechanism present in the approximate $\Lambda$–Euler equations. 

As discussed in Subsection \ref{Subsec-Com-Main}, the principal difficulty lies in controlling the competition between the perturbation energy generated by the Reynolds stress and the dissipation energy associated with the approximate $\Lambda$–Euler dynamics, a balance that is particularly delicate near the critical Lions' $C^1_x$ threshold, where the approximate $\Lambda$–Euler equation approaches the classical Euler system.
To ensure admissibility, we balance these two energy components so that the total energy of the constructed solution decreases monotonically in time. Under intermittency and prescribed roughness of the initial data, we obtain quantitative dissipation estimates, thereby proving the existence of a nontrivial decreasing energy channel.

\medskip 
{\bf $\bullet$ Two-step backward convex integration}: 
  With the aforementioned objects, 
the convex integration scheme allows us to construct distinct dissipative solutions with prescribed initial data, but only up to a local time $T^*$. 
The obstruction to a global construction lies in the delicate fact that we lose a smallness control of the Reynolds stress loses  in the outer region. 

More precisely, when a solution of the $\Lambda$-Euler system is taken as the initial step of the iteration, the energy decay rate must be sufficient to control the growth rate of the perturbation energy caused by the Reynolds error. However, since the time when the energy of the initial-step solution decays to zero is unknown, a local time $T^*$ is chosen such that the approximate solution is non-zero on $[0,T^*]$. Hence, by the energy identity \eqref{eq-e-2}, its energy decay rate is also non-zero on this interval. By adjusting the wavenumber $\Lambda$, the growth rate of the perturbation energy before $T^*$ can be controlled, thereby preserving the decreasing property of total energy (of the approximate solution plus the perturbation).

After $T^*$, a lower bound on the energy decay rate of the initial-step solution is no longer available. Thus, the main difficulty in constructing a globally dissipative solution is to ensure that, after $T^*$, the perturbation energy arising from the Reynolds stress can be controlled or remains non-increasing.

In order to construct global-in-time dissipative solutions, 
we shall perform a second convex integration. 
The key idea is to construct a refined background approximate profile 
by carefully selecting a smooth stabilized cutoff (see \eqref{def-Lambda-fr-val}). 
It turns out that 
the Euler-Reynolds system structure of the smooth stabilized background approximate
solution, the $q=0$ 
regularity package, 
and a refined smallness control of Reynolds stress can be preserved 
on the problematic outer region. In particular, after the local time $T^*$, such improvements ensure that the perturbation energy arising from the Reynolds error no longer increases.
This allows us to construct distinct global dissipative solutions 
in the second convex integration scheme. 
In the proof, 
we also give a general criterion for 
global dissipative solutions, 
see Theorem \ref{thm-global-diss-abstract} below. 
 
\section{Two-step backward convex integration scheme}   \label{Sec-Strategy} 

This section presents the key ingredients 
in the two-step backward convex integration scheme. 
For simplicity, 
we prove the main results 
for the initial data $v_0\in C^{1-\kappa}_x$. 
The case where $v_0\in W^{1,p}_x$ can be proved in an analogous manner with slight modifications.  
In the sequel, 
we consider the following relaxed Euler-Reynolds system,
\begin{equation}\label{equa-nsr}
	\left\{\aligned
	&\p_t \u+ \div(\u\otimes\u)+\nabla P_q=\div \ru,  \\
	&\div \u = 0,\\
	& \u(0)=v_0, 
	\endaligned
	\right.
\end{equation} 
where $\ru$ stands for the symmetric traceless $d\times d$ Reynolds stress. 

\subsection{Approximate $\Lambda$-Euler equations}    

The first step in the backward convex integration scheme 
is to construct an appropriate initial relaxed solution.  
The crucial role here is played by 
the approximate $\Lambda$-Euler equation.

\medskip 
To be precise, choose $\kappa_0\in (0,1)$ such that
\begin{align}\label{def-k0}
\kappa_0>\kappa,\quad  \kappa_2>\frac12(\kappa_0+\kappa_1),
\end{align} 
where $\kappa_1$ and  $\kappa_2$ are given by assumptions {\rm (H1)} and {\rm (H2)}, respectively. We introduce the wavenumber  $\Lambda: (0,+\infty)\rightarrow \R^+$ 
satisfying the following properties:
\begin{itemize}
\item[{\rm (W1)}] $\Lambda: (0,+\infty)\rightarrow \R^+$ is a  strictly decreasing smooth function; 
\item[{\rm (W2)}] $\Lambda(t) \geq 2 $ for all $t\in (0,+\infty)$ and $\Lambda'(t)$ is uniformly bounded on $[1/2,+\infty)$;
\item[{\rm (W3)}] $\Lambda(t)=t^{-\eta}$ on $(0,1/2]$ 
with $\eta$ being an integer 
such that 
\begin{align}\label{def-eta}
\eta \in ( \frac{1}{4 \alpha}, \frac{1}{2 \alpha}),
\end{align}
and $\alpha$ is a small positive constant satisfying
\begin{align}\label{def-alpha}
        0<\alpha<\min\{\frac{2\kappa_2-\kappa_0-\kappa_1}{4 }, \frac{2-\kappa_0}{6}\}.
    \end{align}  
\end{itemize}

For convenience, we extend $\Lambda$ to $[0,+\infty)$ by defining
\begin{align*}
	\Lambda(0)=\infty.
\end{align*}

\medskip 
Let $\varphi:\R^d\to[0,1]$ be a smooth radial function such that
\begin{align*}
\varphi(x)=0\quad \text{for } |x|\le1,\qquad
0<\varphi(x)<1\quad \text{for } 1<|x|<2,\qquad
\varphi(x)=1\quad \text{for } |x|\ge2.
\end{align*}
 We choose $\varphi$ to be flat at $|x|=1$ and $|x|=2$, 
such that for each finite order $M$, with
$h:=\varphi^2$, there exist $\varepsilon_M\in(0,(M+1)^{-1})$ and $C_M>0$ such that
\begin{align}\label{flat-est}
|\partial^\beta h(x)|\le C_M h(x)^{1-\varepsilon_M},
\qquad 1\le |\beta|\le M,
\end{align}
with the convention that the right-hand side is zero where $h=0$. 
 For instance, 
we may choose $\varphi(x)=f(|x|)$ 
with 
\begin{align*}
    f(r)=\begin{cases}
        0,\quad & r\leq 1,\\[4pt]
        \dfrac{e^{-\frac{1}{r-1}}}{e^{-\frac{1}{r-1}}+e^{-\frac{1}{2-r}}},\quad & 1<r<2,\\[8pt]
        1,\quad & r\geq 2.
    \end{cases}
\end{align*} 

Let $\varphi_{\Lambda(t)}(\xi):= \varphi(\xi/\Lambda(t))$  
be the corresponding scaled function, 
$t\in [0,+\infty)$. 
The frequency truncated operator $\P_{\geq\Lambda(t)}$ is defined by
\begin{align}  
	\widehat{\P_{\geq\Lambda(t)}u}(\xi)
    := \widehat u(\xi) \varphi_{\Lambda(t)}(\xi), \ \ \xi\in \mathbb{Z}^d
\end{align} 
for any $u \in \mathscr{D}'(\mathbb{T}^d)$. 
In an analogous way, 
the operator $\P_{< \Lambda(t)}$ is defined by 
\begin{align}
	\widehat{\P_{<\Lambda(t)}u}(\xi):= \widehat u(\xi)(1-\varphi_{\Lambda(t)}(\xi)), \ \ \xi\in \mathbb{Z}^d. 
\end{align} 
Moreover, we define
\begin{align*}
	\P_{<\Lambda(t)}:=\Id,\qquad \P_{\geq \Lambda(t)}:=0
\end{align*}
whenever $\Lambda(t)=\infty$.

Now, 
we consider the approximate $\Lambda$-Euler equation
\begin{align}\label{equa-nse-2}
	\begin{cases}
		\p_t   u+\nu (-\Delta)^\alpha \P^2_{\geq \Lambda(t)} u + ( u\cdot \nabla )u+\nabla p=0,\\
		\div u=0,\\
		u(0)=v_0, 
	\end{cases}
\end{align}
where $\nu$ is a positive viscosity coefficient, $\alpha$ is given by {\rm (W3)}, 
the hypo-dissipative operator $(-\Delta)^{\alpha}$ is defined via the Fourier transform on the flat torus
\begin{align}
\mathcal{F}((-\Delta)^{\alpha}u)(\xi)= (2\pi|\xi|)^{2\alpha}\mathcal{F}( u)(\xi), 
\ \ \xi\in \mathbb{Z}^d, 
\end{align} 
and $\P^2_{\geq \Lambda(t)}$ 
stands for the composition of operators, 
i.e., 
$\P^2_{\geq \Lambda(t)} = \P_{\geq \Lambda(t)} \circ \P_{\geq \Lambda(t)}$.

In the approximate $\Lambda$-Euler equation \eqref{equa-nse-2},  
the dissipation acts only on frequencies above $\Lambda(t)$, while the nonlinearity remains unchanged. As a result, 
for any $t>0$, the viscosity 
is effective only at high wavenumbers exceeding $\Lambda(t)$, 
but vanishes at the initial time $t = 0$ 
due to the blow-up of the wavenumber.

This feature is different from that of the 
$\Lambda$-NSE in \cite{czz24,czz25} 
for the 3D NSE.    
In the $\Lambda$-NSE, 
the dissipation is not truncated, 
while the nonlinearity is 
truncated below the wavenumber. 
As a result, 
the $\Lambda$-NSE behaves like the NSE near the initial time, 
but not the Euler equations.

\medskip 
    Recall that $\Lambda$ is strictly decreasing and denote its inverse by $\Lambda^{-1}$. We define $\tau(0)=\infty$ if $\xi=0$. For $\xi\in\mathbb{Z}^d$ we define
    \begin{equation} \label{tau-def}
        \tau(\xi):=
        \begin{cases}
            \Lambda^{-1}(|\xi|), & \text{if} \quad |\xi|> \inf_t \Lambda(t),\\
            \infty, & \text{if} \quad |\xi|\le \inf_t \Lambda(t).
        \end{cases}
    \end{equation}
    
    Note that, in the large temporal regime 
    where $t> \tau(\xi)$, the solution $\wt u$ of 
    the approximate $\Lambda$-Euler equation \eqref{equa-nse-2}
    satisfies
    \begin{align}\label{equa-le-out}
        \p_t \widehat{\wt u}(\xi)+\nu (2\pi|\xi|)^{2\alpha} \varphi^2_{\Lambda(t)}(\xi)
        \widehat{\wt u}(\xi ) 
        +\sum_{ l+h=\xi} 2\pi i  (\xi\cdot\widehat{\wt u}(l) )\widehat{\wt u}(h) + i\widehat{p}(\xi)\xi=0,
        \end{align} 
    where $\widehat{\wt u}$ denotes the Fourier transform of $\wt u$, 
    while in the small temporal regime where 
    $t\leq \tau(\xi)$,  
    \begin{align}\label{equa-le-in}
        \p_t \widehat{\wt u}(\xi)+\sum_{ l+h=\xi}2\pi i  (\xi\cdot\widehat{\wt u}(l) )\widehat{\wt u}(h) + 2\pi i\widehat{p}(\xi)\xi=0.
        \end{align}
    In order to solve \eqref{equa-nse-2}, we introduce the propagation operator $\mathcal{S}(t):L^2_x \mapsto L^2_x$, 
    defined 
    by the multiplier 
    \begin{align}\label{multi-m} 
     m (t,\xi):=
    \begin{cases}
     1, &\text{if}\quad t\leq \tau(\xi),\\
        e^{-\nu\int_{\tau(\xi)}^t\varphi^2_{\Lambda(s)}(\xi)\d s(2\pi|\xi|)^{2\alpha}}, &\text{if}\quad t>\tau(\xi).
    \end{cases}
    \end{align}  
 Since $m$ is a radial multiplier, the assumption in Theorem~\ref{Thm-Non-euler} implies that $\mathcal{S}(t) v_0$ is a steady state of Euler equations for every $t\geq 0$. 
Hence, we may take
$\widetilde u(t) = \mathcal{S}(t) v_0$
as a solution to the $\Lambda$-Euler equation \eqref{equa-nse-2}.
More precisely, the explicit expression of $\wt u$
in the Fourier modes can be given by 
\begin{align}\label{def-solu}
\widehat {\wt u} (t,\xi):=
\begin{cases}
 \widehat v_0(\xi), &\text{if}\quad t\leq \tau(\xi),\\
    e^{-\nu\int_{\tau(\xi)}^t\varphi^2_{\Lambda(s)}(\xi)\d s(2\pi|\xi|)^{2\alpha}}\widehat v_0(\xi), &\text{if}\quad t>\tau(\xi).
\end{cases}
\end{align}

\medskip  
Theorem \ref{thm-solu-le} below contains the key properties of $\wt u$ 
used in this work. 

\begin{theorem}[Approximate $\Lambda$-Euler equations]  \label{thm-solu-le}  
Let $v_0$ be the initial data given by Theorem~\ref{Thm-Non-euler}. 
Then, 
the solution $\wt u$ given by \eqref{def-solu} of the approximate $\Lambda$-Euler equations \eqref{equa-nse-2}  is smooth for positive times 
and satisfies the energy balance:
\begin{align}\label{eq-e-2}
	\frac12 \|\wt u(t)\|_{L^2_x}^2 + \nu \int_{0}^{t}\|(-\Delta)^{\frac{\alpha}{2}} \P_{\geq \Lambda(s)}\wt u(s)\|_{L^2_x}^2\d s = \frac12 \|v_0\|_{L^2_x}^2, \qquad t\in [0,+\infty).
\end{align}
Moreover, $\wt u$ is continuous in $L^2_x$ at the initial time:
\begin{align}\label{s-con-v0}
	\wt u(t) \rightarrow v_0 \quad {\rm strongly\  in}\  L^2_x\ {\rm as}\  t\to 0^+.
\end{align}
\end{theorem}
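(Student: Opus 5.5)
Everything will be read off the explicit Fourier representation \eqref{def-solu}, using that $\widetilde u(t)=\mathcal S(t)v_0$ with the radial multiplier $m(t,\xi)$ from \eqref{multi-m}. Since $m$ is radial and bounded by $1$, the assumption of radial stability (Definition~\ref{def-radial-stability}) gives that $\widetilde u(t)$ is a stationary weak solution of Euler for each fixed $t$. That is,
\[
\div(\widetilde u\otimes\widetilde u)+\nabla p=0 \quad \text{in } \mathcal D'
\]
for a suitable pressure. Hence \eqref{equa-nse-2} reduces, modewise, to the linear ODE
\[
\partial_t\widehat{\widetilde u}(t,\xi)=-\nu(2\pi|\xi|)^{2\alpha}\varphi^2_{\Lambda(t)}(\xi)\,\widehat{\widetilde u}(t,\xi),
\]
which is exactly what \eqref{def-solu} solves. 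The key point is that $\varphi_{\Lambda(t)}(\xi)=0$ whenever $|\xi|\le\Lambda(t)$, i.e. for $t\le\tau(\xi)$, so the two branches in \eqref{equa-le-out}–\eqref{equa-le-in} glue continuously at $t=\tau(\xi)$. I will note that $m(\cdot,\xi)$ is Lipschitz in $t$ (indeed smooth, by flatness of $\varphi$ at $|x|=1$), so the equation holds in the distributional sense in $t$ as well.

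For smoothness at positive times, I fix $t_0>0$. For $|\xi|\ge 2\Lambda(t_0/2)$ we have $\varphi_{\Lambda(s)}(\xi)=1$ for all $s\in[t_0/2,t]$ because $\Lambda$ is decreasing. This gives
\[
|m(t,\xi)|\le e^{-\nu(t-t_0/2)(2\pi|\xi|)^{2\alpha}}\le e^{-c\,t_0|\xi|^{2\alpha}} \quad\text{for } t\ge t_0 .
\]
Stretched-exponential decay beats every polynomial weight, so $\widetilde u(t)\in H^s$ for all $s$, uniformly for $t\ge t_0$. Differentiating in time only produces polynomial factors $|\xi|^{2\alpha k}$ together with derivatives of $\varphi_{\Lambda(t)}$, which are bounded on $[t_0,\infty)$ because $\Lambda'$ is bounded there by (W2)–(W3). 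Hence $\widetilde u\in C^\infty((0,\infty)\times\T^d)$.

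The energy balance \eqref{eq-e-2} follows from Parseval applied mode by mode. For each $\xi$,
\[
\frac{d}{dt}\tfrac12|\widehat{\widetilde u}(t,\xi)|^2=-\nu(2\pi|\xi|)^{2\alpha}\varphi^2_{\Lambda(t)}(\xi)|\widehat{\widetilde u}(t,\xi)|^2,
\]
and this identity holds on both regimes, since the right side vanishes for $t<\tau(\xi)$. Integrating from $0$ to $t$ and summing over $\xi$ by monotone convergence (all terms are nonnegative) gives \eqref{eq-e-2}. Here $\|(-\Delta)^{\alpha/2}\P_{\ge\Lambda}\widetilde u\|_{L^2}^2=\sum_\xi(2\pi|\xi|)^{2\alpha}\varphi^2_{\Lambda}|\widehat{\widetilde u}|^2$, and the time integral converges because it is bounded by $\frac12\|v_0\|^2_{L^2}$.

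Continuity at $t=0$ is the main subtlety, since the solution has no regularity there; I will use dominated convergence on the Fourier side. We have
\[
\|\widetilde u(t)-v_0\|_{L^2}^2=\sum_\xi|1-m(t,\xi)|^2|\widehat v_0(\xi)|^2 .
\]
Each term is bounded by $4|\widehat v_0(\xi)|^2$, which is summable. For fixed $\xi$, once $t<\tau(\xi)$ (equivalently $\Lambda(t)>|\xi|$, which holds for all small $t$ since $\Lambda(t)=t^{-\eta}\to\infty$), we have $m(t,\xi)=1$. So each term vanishes for small $t$, and the sum tends to $0$. The only care needed is to check that $\tau(\xi)>0$ for every $\xi$, including $\tau(0)=\infty$. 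This follows from $\Lambda$ being a strictly decreasing bijection onto $(\inf\Lambda,\infty)$ with $\Lambda(0^+)=\infty$.
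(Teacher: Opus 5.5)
Your proposal is correct and follows essentially the same route as the paper: the explicit multiplier \eqref{def-solu}, stretched-exponential damping of the modes $|\xi|>2\Lambda(t_0/2)$ together with smoothness of the finitely many remaining modes for positive-time regularity, and dominated convergence on the Fourier side for \eqref{s-con-v0}. The only real difference is that you derive \eqref{eq-e-2} mode by mode from the linear ODE satisfied by $m(\cdot,\xi)$ and then sum with Tonelli, whereas the paper cites skew-symmetry of the nonlinearity; your route is slightly more direct, since it needs neither the cancellation argument nor a limit $t\to0^+$.
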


The proof of Theorem \ref{thm-solu-le} 
is given in Section \ref{Sec-Lam-Euler} below. 

\medskip 
Now, we choose the solution $\wt u$ to 
the approximate $\Lambda$-Euler equations \eqref{equa-nse-2} 
as the initial relaxed 
solution to the Euler-Reynolds system \eqref{equa-nsr} 
at the initial step $q=0$.

Correspondingly, 
the initial Reynolds stress has the expression 
\begin{align}
	&\mathring{R}_0(t) := -\nu\mathcal{R} (-\Delta)^\alpha \P^2_{\geq\Lambda(t)} \wt u(t),  \label{r0}
\end{align}  
where $\mathcal{R}$ is the inverse divergence operator defined by (see \cite{dls13}) 
\begin{align}
	(\mathcal{R} v)^{kl}
	:= \partial_k \Delta^{-1} v^l + \partial_l \Delta^{-1} v^k
	-\frac{1}{d-1}\Big(\delta_{kl}+(d-2)\partial_k \partial_l \Delta^{-1}\Big)\div \Delta^{-1} v
	\label{operaru}
\end{align} 
for mean-free functions $v$. 

As mentioned above, the key fact here is that the initial Reynolds stress is uniformly bounded and vanishes at the initial time. For later use, we introduce the tail envelope of the intermittency dimensions by 
\begin{align}
    D^\sharp_{n,1,\9}:= \sup_{j\geq n}D_{j,1,\9},\qquad n\geq N.
\end{align}
By definition, $\{D^\sharp_{n,1,\9}\}_{n\geq N}$ is non-increasing and 
\[
0\leq D_{n,1,\9}\leq D^\sharp_{n,1,\9}\leq d.
\]
Moreover, \eqref{asp-int} implies
\begin{align}\label{est-1.8-2}
    |D_{n,2,\9}-D^\sharp_{m,1,\9}|\leq \kappa_1,\qquad n,m\geq N.
\end{align}

\begin{proposition}[Initial Reynolds stress]  \label{Prop-R0}  Let $d\geq 2$, 
$\kappa \in (0,1)$ and
$v_0\in C^{1-\kappa}(\T^d)$
be stable under radial multipliers in
the sense of Definition~\ref{def-radial-stability} 
and 
satisfy {\rm (H1)} and {\rm (H2)}. 
Then, the following holds:  

$(i)$ Uniform boundedness: 
there exists $C_0>0$ such that 
\begin{align}\label{est-r0-l1}
    \|\mathring{R}_{0}\|_{C_tL^{1}_x} \leq C_0 \|v_0\|_{L^2_x}. 
\end{align}

$(ii)$ Decay estimate: 
let $t_0\in (0,1/2)$ be such that $ t_0 \leq \Lambda^{-1} (2^N)$, 
where $N\in \mathbb{N}$ is as in {\rm (H1)} and {\rm (H2)}. 
For $t\in (0,t_0)$, 
let $n_*(t) \in \mathbb{N}$ be such that 
$2^{n_*(t)} \leq \Lambda(t) <2^{n_*(t)+1}$. 
Then, we have 
\begin{align} \label{R0-0}
\|\mathring{R}_{0}(t)\|_{L^{1}_x}
\leq C_R t^{\eta(2-2\alpha+d-D^\sharp_{n_*(t),1,\infty}-\kappa_0)},
\quad t\in (0,t_0).
\end{align}
where 
$\eta$ and $\alpha$ are as in the property {\rm (W3)} of the wavenumber,  $\kappa_0$ is given by \eqref{def-k0}
and $D^\sharp_{n_*(t),1,\infty}$ is the tail envelope of the
intermittency dimensions at the scale $n_*(t)$, and $C_R$ is a constant independent of $t$, chosen sufficiently large to absorb
the fixed universal constants appearing in the multiplier estimates below.

In particular, 
$\mathring{R}_{0}(t)$ vanishes strongly in $L^1_x$ 
as $t\to 0^+$. 
\end{proposition}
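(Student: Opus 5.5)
The plan is to write $\mathring R_0(t)$ as a dyadic sum of Fourier multipliers applied to the Littlewood--Paley pieces of $v_0$, and to estimate each piece in $L^1_x$ separately. By \eqref{def-solu} and \eqref{r0}, the Fourier symbol of $\mathring R_0(t)$ acting on $\widehat v_0$ is
\[
\xi\mapsto -\nu\,\widehat{\mathcal R}(\xi)\,(2\pi|\xi|)^{2\alpha}\varphi^2_{\Lambda(t)}(\xi)\,m(t,\xi).
\]
This symbol is supported on $|\xi|\ge \Lambda(t)\ge 2$, so only the blocks $\Delta_n v_0$ with $2^{n+1}\ge\Lambda(t)$ contribute. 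In particular, for $t<t_0$ these are the blocks with $n\ge n_*(t)-1$, and the zero mode never appears.

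\textbf{Uniform $L^1$ bound for each dyadic block.} For each such $n$, I will show that the operator $\mathcal R(-\Delta)^\alpha\P^2_{\ge\Lambda(t)}\mathcal S(t)\widetilde\Delta_n$, with $\widetilde\Delta_n$ a fattened projector, has a convolution kernel with $L^1(\T^d)$ norm $\lesssim 2^{(2\alpha-1)n}$, uniformly in $t$ and $n$. I expect this to be the main obstacle. After rescaling $\xi=2^n\zeta$, it suffices to bound finitely many derivatives (order about $d+1$) of the rescaled symbol on the annulus $|\zeta|\sim1$ and then apply the standard Mikhlin/Bernstein kernel estimate.
\begin{itemize}
\item The factors $\widehat{\mathcal R}$ and $|\xi|^{2\alpha}$ are homogeneous of orders $-1$ and $2\alpha$, so they are harmless.
\item The factor $\varphi^2(\xi/\Lambda(t))$ is harmless because the ratio $2^n/\Lambda(t)$ is either comparable to $1$, or large, in which case $\varphi\equiv1$ on the block.
\item The delicate factor is $m(t,\xi)=\exp\bigl(-\nu(2\pi|\xi|)^{2\alpha}\int_{\tau(\xi)}^t h(\xi/\Lambda(s))\,\d s\bigr)$ with $h=\varphi^2$. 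Its derivatives involve $\partial_\xi\tau$ and $\partial_\xi h$. I will first try the flatness estimate \eqref{flat-est}: it gives $|\partial^\beta h|\le C_M h^{1-\varepsilon_M}$, so each derivative falling on the exponent produces at most a factor $E^{1-\varepsilon}$, where $E$ is the exponent itself. This factor is then absorbed by $e^{-E}$ via $\sup_{E\ge0}E^ke^{-E}<\infty$.
\item The boundary term at $s=\tau(\xi)$ vanishes because $h(\xi/\Lambda(\tau(\xi)))=h(\xi/|\xi|)=0$.
\item On $[1/2,\infty)$, property (W2) controls $\Lambda'$. On $(0,1/2]$, the explicit power law $\Lambda(t)=t^{-\eta}$ from (W3) controls derivatives of $\tau=\Lambda^{-1}$ at the correct scale $|\xi|^{-1}\tau$.
\end{itemize}

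\textbf{Part (i).} With the block estimate, $\|\mathring R_0(t)\|_{L^1}\lesssim \nu\sum_{n\ge1}2^{(2\alpha-1)n}\|\Delta_n v_0\|_{L^1}\le \nu\sum_n 2^{(2\alpha-1)n}\|\Delta_n v_0\|_{L^2}$. Since $\alpha<(2-\kappa_0)/6<1/3$ by \eqref{def-alpha}, we have $2\alpha-1<0$, so this sum is at most $C_0\|v_0\|_{L^2}$ uniformly in $t$. Continuity in time follows from dominated convergence in the blocks, which gives $C_tL^1_x$.

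\textbf{Part (ii).} For $t<t_0$, every contributing $n$ satisfies $n\ge n_*(t)-1\ge N$ (up to an index shift absorbed into constants). For these $n$ I will bound $\|\Delta_n v_0\|_{L^1}$ via the intermittency dimension. If $D_{n,1,\infty}<d$, the equality case of \eqref{ber-ineq} gives $\|\Delta_n v_0\|_{L^1}=C_B^{-1}2^{-(d-D_{n,1,\infty})n}\|\Delta_n v_0\|_{L^\infty}$. If $D_{n,1,\infty}=d$, the trivial inequality $\|\cdot\|_{L^1}\le\|\cdot\|_{L^\infty}$ suffices. In either case, using $D_{n,1,\infty}\le D^\sharp_{n_*(t),1,\infty}$ for $n\ge n_*(t)$ (monotonicity of the tail envelope) and $\|\Delta_n v_0\|_{L^\infty}\lesssim 2^{-(1-\kappa)n}$ from $v_0\in C^{1-\kappa}$, we get
\[
\|\mathring R_0(t)\|_{L^1}\lesssim\sum_{n\ge n_*-1}2^{-(2-2\alpha-\kappa+d-D^\sharp_{n_*,1,\infty})n}.
\]
The exponent here is at least $2-2\alpha-\kappa>0$, so the sum is geometric and is dominated by its first term, $\lesssim 2^{-(2-2\alpha-\kappa+d-D^\sharp)n_*}$. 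Finally, $2^{n_*}\sim\Lambda(t)=t^{-\eta}$ converts this into $t^{\eta(2-2\alpha-\kappa+d-D^\sharp)}$, which is at most $t^{\eta(2-2\alpha+d-D^\sharp-\kappa_0)}$ since $\kappa<\kappa_0$ and $t<1$. The $n_*-1$ boundary block and all universal multiplier constants go into $C_R$. Since $2-2\alpha-\kappa_0>0$ by \eqref{def-alpha}, the right-hand side tends to $0$ as $t\to0^+$, which gives the final claim.
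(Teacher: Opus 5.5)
Your argument is correct, and for (ii) its core is the paper's. The paper also expands $\mathring R_0(t)$ over the blocks $\Delta_n v_0$, $n\ge n_*(t)$. Your ``main obstacle'' is exactly Lemma~\ref{bd-m}, which the paper proves by Poisson summation and a uniform $W^{d+1,1}$ bound on the rescaled symbol $m(t,2^n\zeta)\varphi^2_{\Lambda(t)}(2^n\zeta)\Psi(\zeta)$.

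There the paper does not use the flatness \eqref{flat-est}; that is reserved for the stabilized cutoff in Lemma~\ref{lem-sf-multiplier}. It only uses that $\nabla^N_\xi\varphi^2_{\Lambda(s)}(\xi)$ lives on an $s$-window of length $\lesssim\tau(\xi)\simeq|\xi|^{-1/\eta}$, so that $|\xi|^{2\alpha}|\nabla^N_\xi A|\lesssim|\xi|^{-N}|\xi|^{2\alpha-1/\eta}\le|\xi|^{-N}$. Your flatness route needs the same input. Turning $\int h^{1-\varepsilon}\,\d s$ into $E^{1-\varepsilon}$ requires H\"older on that window, which leaves a factor $(|\xi|^{2\alpha}\tau(\xi))^{\varepsilon}$, bounded only because $2\alpha\eta<1$.

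You differ from the paper in two places.
\begin{itemize}
\item Final summation. You use $\|\Delta_n v_0\|_{L^\infty}\lesssim 2^{-(1-\kappa)n}$ and a geometric series. This is simpler and even gives the exponent with $\kappa$ instead of $\kappa_0$. The paper instead uses Cauchy--Schwarz and $B^{1-\kappa}_{\infty,\infty}\hookrightarrow B^{1-\kappa_0}_{\infty,2}$, see \eqref{asp-reg}, in a form that parallels the $W^{1,p}$ case.
\item Placement of $2^{(2\alpha-1)n}$. Keeping this factor inside each block makes explicit a step the paper writes as pulling $\Lambda^{2\alpha-1}(t)$ out of an $L^1$ norm.
\end{itemize}

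For (i) the paper avoids blocks entirely. On $\T^d$ one has $\|\mathring R_0(t)\|_{L^1_x}\le\|\mathring R_0(t)\|_{L^2_x}$, and the symbol is bounded by $\Lambda(t)^{2\alpha-1}\le 1$, so Plancherel gives $C_0\|v_0\|_{L^2_x}$.

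Your block route needs the block bound uniformly in all $t>0$, including large $t$ at low frequencies, and ``(W2) controls $\Lambda'$'' does not deliver that. For $\inf_s\Lambda(s)<|\xi|\le 2\inf_s\Lambda(s)$, the window where $\varphi_{\Lambda(s)}(\xi)\in(0,1)$ never closes, and the $\xi$-derivatives of the continuous symbol need not stay bounded in $t$. Lemma~\ref{bd-m} handles these finitely many shells by the trivial bound $\|g\|_{L^\infty_x}\le\sum_\xi|\widehat g(\xi)|\le|\Gamma_n|\,\|g\|_{L^1_x}$ for $g$ with Fourier support in the finite set $\Gamma_n$. Either that or Plancherel repairs (i). In (ii) the issue does not arise, since $t<t_0<1/2$ keeps $[\tau(\xi),t]$ in the power-law regime.

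Lastly, the block $n=n_*(t)-1$ is identically zero: its frequencies satisfy $|\xi|<2^{n_*}\le\Lambda(t)$, where $\varphi_{\Lambda(t)}$ vanishes. So nothing needs to be absorbed into $C_R$. This matters, because $D_{n_*-1,1,\infty}$ is not controlled by $D^\sharp_{n_*,1,\infty}$.
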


\subsection{Backward times} 
 
We construct relaxed solutions backward in time. 
That is, 
at each step $q\in \mathbb{N}$, we add the velocity perturbation 
on the time interval $[T_q, \infty)$,  
while no perturbations are put on $[0,T_q]$, 
where $\{T_q\}$ decreases to zero. 
The backward construction allows us to maintain the prescribed initial data in iterative steps, 
and thus, for the final limit solution. 
In order to keep the iterative estimates,  
the key subtlety lies in choosing the backward times and the amplitudes of the Reynolds stresses so that they are compatible with the decay rate 
\eqref{R0-0} of the initial Reynolds stress.

To be precise, 
for every $q\in \mathbb{N}$,  
we choose the parameters $\lbb_q$ and $\ell_q$ by 
\begin{equation}\label{la}
	\la=a ^{(b^q)}, \ \ \ell_q:=\la^{-30d}, 
\end{equation} 
where  $b\in 7\mathbb{N}$ is a large integer such that 
\begin{align} \label{b}
 b> 10^4 d^2\varepsilon^{-1} 
\end{align}
with $\ve<10^{-2}\in\mathbb Q$ sufficiently small.  Choose a real number $\theta>2$,
sufficiently large for all later lower-bound requirements, such that \eqref{sf-delta1-prep-11} and \eqref{theta-local-end} below hold and
\begin{align} \label{cho-theta}
4^{-\theta}\le t_0,\qquad
\|\wt u(4^{-\theta})\|_{L^2_x}>0,\qquad
\Lambda(4^{-\theta})\notin2^{\mathbb N}.
\end{align}
This is possible because $\wt u(t)\to v_0$ strongly in $L^2_x$, $v_0\ne0$, and the forbidden set of
$\theta$'s is discrete. Then choose the integer $a$ sufficiently large so that
\begin{align}
\ln a>(\theta+2)(d+2)\alpha^{-1}\ln4+\ln(1+\|v_0\|_{L^2_x}^2)+\ln C=:\ln a_0,
\label{def-a-0}
\end{align}
where $C$ is a large universal constant.

We divide the time interval $(0,+\infty)$ into infinitely many sub-intervals 
\begin{align*}  
 (0,+\infty) = 
  \bigcup\limits_{q\geq 1} [T_{q+1}, T_q] 
  \cup  [T_1,+\infty), 
\end{align*}
where $T_1:=4^{-\theta}-\ell_0\in (0,t_0]$ with $t_0$ as in Proposition \ref{Prop-R0},   
and 
\begin{align} \label{Tq-def}
T_{q+1}=4^{-(\theta+q)}-\ell_{q},\quad q\geq 1.
\end{align}
Note that by \eqref{la} and \eqref{def-a-0}, $T_{q}>4^{-(\theta+q)}$ for all $q\geq 1$.

The amplitude parameters $\{\delta_{q}\}_{q\geq 0}$ of Reynolds stress are chosen 
as follows. 
Let $\delta_1$ be large enough such that the 
upper bound of the energy profile  
in \eqref{asp-0-energy} below holds. 
Then, take $\delta_0$ and $\delta_2$ large enough such that 
\begin{align} \label{delta0}
     \delta_0\geq \max\{2\delta_1, \|\wt u\|^2_{C_tL^2_x}\} 
\end{align}
and 
\begin{align}\label{def-delta2} 
\delta_2 \geq \frac{8C_R}{c_*}C_0\|v_0\|_{L^2_x}, \end{align} 
where $C_0$ is given by \eqref{R0-L1} below and 
$c_*>0$ is a small parameter such that
\begin{align}\label{def-c8}
	0< c_*< \min\left\{1, \frac{\ve_u}{500d}\right\},
\end{align}
with $\ve_u$ being the geometric constant given by Geometric Lemma \ref{geometric lem 2}.  
Moreover, let 
\begin{align}\label{def-delta}
    &\delta_{q+ 2}:= \frac{8C_R}{c_*} (T_q+\ell_{q-1})^{k_q}= \frac{8C_R}{c_*} 4^{-(\theta+q-1)k_q }, \quad q\geq 1.
\end{align}  
Here, $C_R$ is given by \eqref{R0-0} and
\begin{align} \label{kq-def}
k_q := \eta\bigl(
2-2\alpha+d-D^\sharp_{n_q,1,\infty}-\kappa_0
\bigr).
\end{align}
is exactly the decay rate of the initial Reynolds stress in \eqref{R0-0},   
where $n_q\in \mathbb{N}$ satisfies  
\begin{align}\label{def-nq}
    \Lambda(T_q+\ell_{q-1})\in [2^{n_q}, 2^{n_q+1}). 
\end{align}
Moreover, we choose $\theta$ sufficiently large, such that
\begin{align}
\frac98\delta_3<\delta_1.
\label{sf-delta1-prep-11}
\end{align}

\medskip 
\paragraph{\bf Claim $1$:} 
The amplitude parameters 
$\{\delta_{q+2}\}_{q\geq 1}$ are strictly decreasing on $q$ and 
$\delta_{q+2} \to 0^+$ as $q\to +\infty$. 

Since $\eta\in (\frac{1}{4\alpha},\frac{1}{2\alpha})$,
\[
0<\alpha<
\min\left\{ \frac{2\kappa_2-\kappa_0-\kappa_1}{4},
\frac{2-\kappa_0}{6}
\right\},
\]
and $D^\sharp_{n_q,1,\infty}\leq d$, we have
\begin{align}\label{kq-1}
k_q &= \eta\bigl(
2-2\alpha+d-D^\sharp_{n_q,1,\infty}-\kappa_0
\bigr)>
\frac{1}{4\alpha}
\bigl(2-2\alpha-\kappa_0\bigr)
\geq 1, \qquad q\geq 1.
\end{align}
Since $0\leq D^\sharp_{n_q,1,\infty}\leq d$, we also have the
uniform upper bound
\begin{equation}\label{eq:k_q_upper_bound}
k_q \leq K_*:=
\eta(2-2\alpha+d-\kappa_0)
\leq \eta(d+2), \qquad q\geq 1.
\end{equation}
Taking into account \eqref{def-delta}  
and that 
the backward time $T_q+\ell_{q-1}$ decreases to $0$  
we thus infer that $\delta_{q+2} \to 0^+$ 
as $q\to +\infty$. Moreover, since $\Lambda$ is decreasing in time and
$T_q+\ell_{q-1}$ is decreasing in $q$, definition
\eqref{def-nq} implies
\[
n_q\geq n_{q-1}, \qquad q\geq 2.
\]
Since $\{D^\sharp_{n,1,\infty}\}_{n\geq N}$ is non-increasing,
we have
\begin{align}\label{kq-kq1}
k_q &= \eta\bigl(
2-2\alpha+d-D^\sharp_{n_q,1,\infty}-\kappa_0
\bigr)\geq \eta\bigl(
2-2\alpha+d-D^\sharp_{n_{q-1},1,\infty}-\kappa_0 \bigr)
=k_{q-1}.
\end{align}
Thus, $k_q$ is non-decreasing in $q$, and consequently
\[
(T_q+\ell_{q-1})^{k_q} \leq
(T_q+\ell_{q-1})^{k_{q-1}}.
\]
Taking into account that $k_q>1$ by \eqref{kq-1}, together with
the definition of the backward times in \eqref{Tq-def}, we obtain
\begin{align*}
    \delta_{q+ 2}= \frac{8C_R}{c_*} (T_q+\ell_{q-1})^{k_q}  
    < \frac{8C_R}{c_*}  
    \big(4(T_q+\ell_{q-1}) \big)^{k_{q-1}} 
    = \delta_{q+1}, 
    \qquad q\geq 2. 
\end{align*} 
Increasing $\delta_2$ if necessary, this yields the Claim $1$.

\medskip
\paragraph{\bf Claim $2$:}  
The initial Reynolds stress satisfies the following decay estimate
\begin{align}\label{est-r0-0}
\|\mathring{R}_0\|_{C_{\left[0, T_q+\ell_{q-1}\right]} L_x^1} \leq C_R\left(T_q+\ell_{q-1}\right)^{k_q}=\frac{1}{8} c_* \delta_{q+2}.
\end{align} 

For every $q \geq 1$ and $0<t \leq T_q+\ell_{q-1}$, the monotonicity of $\Lambda$ and (2.33) imply
$$
n_*(t) \geq n_q .
$$
Since $\left\{D_{n, 1, \infty}^{\sharp}\right\}_{n \geq N}$ is non-increasing, it follows that
$$
D_{n_*(t), 1, \infty}^{\sharp} \leq D_{n_q, 1, \infty}^{\sharp} .
$$
Consequently,
$$
\eta(2-2 \alpha+d-D_{n_*(t), 1, \infty}^{\sharp}-\kappa_0) \geq k_q .
$$
Since $0<t \leq T_q+\ell_{q-1}<1$, \eqref{R0-0} yields
$$
\|\mathring{R}_0(t)\|_{L_x^1} \leq C_R t^{k_q} \leq C_R\left(T_q+\ell_{q-1}\right)^{k_q} .
$$
Taking the supremum over $t \in\left[0, T_q+\ell_{q-1}\right]$ and using \eqref{def-delta}, we obtain 
\eqref{est-r0-0} and prove Claim $2$. 

\medskip 
In the backward construction,  
no perturbations are added over the time interval $[0,T_{q+1}]$ at step $q$.  
Thus, the Reynolds error $\mathring{R}_{q}$ 
at step $q$ is exactly the initial 
Reynolds stress, i.e., 
$\mathring{R}_{q} = \mathring{R}_{0}$ 
on $[0,T_{q+1}]$, 
and estimate \eqref{est-r0-0} indeed bounds 
the Reynolds error on  $[0,T_{q+1}]$  at each step $q$.  
More importantly, 
according to the claim,  
estimate \eqref{est-r0-0} 
yields that the Reynolds stress strictly decreases to $0$ along the iteration. 
This is important to guarantee that 
the constructed solutions 
in convex integration 
have a continuous energy profile at the initial time.

\subsection{Energy channels} \label{Subsec-Energy-Chan}
Concerning the construction of energy profiles, let us denote
\begin{align}\label{def-out}
    \overline{T}_{q+1} :=T_{q+1} + \ell_q,\quad\text{and}\quad \underline{T}_{q+1} :=T_{q+1} - \ell_q ,\quad q\geq 0.
\end{align}
Then, we choose a continuous function $e$, which is smooth on positive times, and a positive constant $\delta_1$ large enough 
such that
\begin{align}
	\frac34\delta_{2}\leq  e(t)- \|u_0(t)\|_{L^2_x}^2 \leq \delta_{1},\ \
	 & \text{for }\ t\in [\underline{T}_1,+\infty), \label{asp-0-energy} \\
	 \frac34\delta_{q+2}\leq  e(t)- \|u_0(t)\|_{L^2_x}^2 \leq \delta_{q+1},\ \
      &\text{for }\ t\in [ \underline{T}_{q+1},\overline{T}_{q}], \label{asp-e-q}
\end{align} 
and
\begin{align}\label{bdd-e-c1}
\| e\|_{C_{[T_{q+2},+\infty)}^1}\lesssim \lambda_q^{d+1},
\end{align}
where the implicit constant is independent of $q$.

\begin{remark}
    We note that, the set of 
    energy profiles satisfying \eqref{asp-0-energy}-\eqref{bdd-e-c1} is non-empty. In fact, 
    one can choose the energy profile 
    in the energy channel $(e^{\mathrm{h}},  e^{\mathrm{l}})$ 
    as in Section \ref{Sec-prf-thm} below, 
    where the slope of 
    $e^{\mathrm{h}}$ and $ e^{\mathrm{l}}$ 
    on $[T_{q+2}, +\infty)$ 
    can be bounded by 
    \begin{align*}
        \frac{\delta_{q+1}- \frac34\delta_{q+2} }{\overline{T}_{q}-\underline{T}_{q+1} }\lesssim \frac{\delta_{q+1} }{\overline{T}_{q}} 
        \lesssim 
        4^{-(\theta+q-2)k_{q-1}+(\theta+q-1)} 
        \lesssim \la^{d+1}, 
    \end{align*}   
via our choice of 
    $\delta_q$ in \eqref{def-delta} and  $\lambda_q$ in \eqref{la}-\eqref{def-a-0} below.
\end{remark}

The upper and lower energy estimates 
in \eqref{asp-0-energy} and \eqref{asp-e-q} 
differ in their orders. 
This discrepancy is important to construct 
a non-empty energy channel 
$(e^{\mathrm{h}}, e^{\mathrm{l}})$, 
such that the upper and lower energy profiles 
$e^{\mathrm{h}}$, $e^{\mathrm{l}}$   
decrease in time. 

To be precise, we construct 
the upper and lower energy profiles 
$e^{\mathrm{h}}$ and $ e^{\mathrm{l}}$ 
as follows 
\begin{align} \label{upe-undere-def}
	e^{\mathrm{h}}:= \|\wt u(t)\|_{ L^2_x}^2+\overline f(t),\quad  e^{\mathrm{l}}:= \|\wt u(t)\|_{L^2_x}^2+\underline f(t), 
	\quad t\in [0,+\infty), 
\end{align}  
where 
\begin{align} \label{upf-def}
	&\overline f|_{[\ot_1,+\infty)}=\delta_2,\quad \overline{f}|_{[\ot_2,\ot_1]}=\delta_2+\frac{\delta_3-\delta_2}{\ot_2-\ot_1}(t-\ot_1),\quad \overline{f}|_{[\ot_{q+1},\ot_{q}]}= \delta_{q+1}+\frac{\delta_{q+2}-\delta_{q+1}}{\ot_{q+1}-\ot_q}(t-\ot_q),
\end{align} 
and
\begin{align}  \label{underf-def}
	&\underline f|_{[\ut_1,+\infty)}=\frac34\delta_2,\quad \underline{f}|_{[\ut_2,\ut_1]}=\frac34\delta_2+\frac34\frac{\delta_3-\delta_2}{\ut_2-\ut_1}(t-\ut_1),\quad \underline{f}|_{[\ut_{q+1},\ut_{q}]}= \frac34\delta_{q+1}+\frac34\frac{\delta_{q+2}-\delta_{q+1}}{\ut_{q+1}-\ut_q}(t-\ut_q).
\end{align}
Here, $\ot_{q+1}$ and $\ut_{q+1}$ are given by \eqref{def-out}. 

It is not difficult to check that the energy channel is not empty. 
Moreover, 
the quantitative estimate of the dissipation energy and the perturbation energy are derived in Section \ref{Sec-prf-thm} below, 
which are important to ensure the decreasing property of the energy channel.  

We also note that the Reynolds stress 
on the outer region $[\ut_{1}, \infty)$ 
has a possibly large size $\delta_2$, 
which may lead to dissipative solutions 
up to a local time.  
This is the content of Theorem \ref{Thm-Non-euler-Local} in the next subsection.

\subsection{Local dissipative solutions} 

Now, we are ready to state the 
local version of Theorem \ref{Thm-Non-euler}. 

\begin{theorem} \label{Thm-Non-euler-Local} Let  $d\geq 2$, $0< \kappa< 1$, 
and $v_0$ be the initial data given by Theorem~\ref{Thm-Non-euler}. Then, 
there exist $T^*\in (0,+\infty)$ depending on $v_0$ and a pair of continuous functions $(e^{\mathrm{l}}, e^{\mathrm{h}}): [0,\infty)\rightarrow [0,\9)$, decreasing on $[0,T^*]$, with  
\begin{align}   
& e^{\mathrm{l}}(t) < e^{\mathrm{h}}(t), \qquad \forall\  0 < t\leq T^*, \label{el-eh}  \\ 
& e^{\mathrm{l}}(0)=e^{\mathrm{h}}(0)=\|v_0\|_{L^2_x}^2,  \label{el0-eh0}
\end{align} 
such that for every $e \in C([0,\infty))$, smooth on $(0,\infty)$, satisfying \eqref{bdd-e-c1} and
\[
e^{\mathrm{l}}(t) \leq e(t) \leq  e^{\mathrm{h}}(t), \qquad \forall t\in [0,\infty),
\]
there exists a global weak solution $u\in C({[0,\infty)}; L_x^2)$ to 
Euler equation \eqref{equa-euler} 
	satisfying 
  \begin{align}\label{energy-1}
 \|u(t)\|_{L^2_x}^2=e(t) \quad\text{for all} \quad t\in [0,\infty).
\end{align} 
\end{theorem}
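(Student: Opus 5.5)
The proof runs as a backward convex integration iteration in $q$, started from the relaxed solution $u_0:=\wt u$ of Theorem~\ref{thm-solu-le} with Reynolds stress $\mathring R_0$ from \eqref{r0}. I take $(e^{\mathrm l},e^{\mathrm h})$ exactly as in \eqref{upe-undere-def}--\eqref{underf-def}. The condition $e^{\mathrm l}<e^{\mathrm h}$ on $(0,\infty)$ holds because $\overline f>\underline f$, by Claim~1 and $\ut_{q+1}<\ot_{q+1}$. For \eqref{el0-eh0}: $\delta_q\to0$ and $\wt u(t)\to v_0$ in $L^2$ by \eqref{s-con-v0}, so $\overline f(t),\underline f(t)\to0$ and $e^{\mathrm l}(0)=e^{\mathrm h}(0)=\|v_0\|^2_{L^2}$. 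Any admissible $e$ in the channel then satisfies \eqref{asp-0-energy}--\eqref{asp-e-q} (using \eqref{sf-delta1-prep-11} on the overlaps) as well as \eqref{bdd-e-c1}.

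\textbf{Inductive scheme.} At each step I construct $(u_q,\mathring R_q)$ solving \eqref{equa-nsr} with the following properties:
\begin{enumerate}[(i)]
\item $u_q=u_{q-1}$ on $[0,T_{q+1}]$, so $u_q(0)=v_0$ and $\mathring R_q=\mathring R_0$ there;
\item $\|\mathring R_q\|_{C_{[\ut_{q+1},\infty)}L^1_x}\le c_*\delta_{q+2}$;
\item $0\le e(t)-\|u_q(t)\|^2_{L^2}\le \delta_{q+1}$ on $[\ut_{q+1},\infty)$;
\item the usual $C^1_{t,x}$ bounds of size $\lambda_q^{C}$.
\end{enumerate}
The perturbation $w_{q+1}=u_{q+1}-u_q$ is supported in time on $[T_{q+2},\infty)$ via a temporal cutoff equal to $1$ on $[\ot_{q+2},\infty)$. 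It is built from mollified $u_q,\mathring R_q$ at scale $\ell_q$ and intermittent building blocks (Mikado/jets) with amplitudes
\[
\rho_q\sim \frac{e(t)-\|u_q\|^2_{L^2}}{d}+|\mathring R_{q,\ell}|,
\]
so that the geometric lemma (Lemma~\ref{geometric lem 2}) cancels $\mathring R_q$ and pumps the energy gap. The resulting bounds $\|w_{q+1}\|_{L^2}\lesssim\delta_{q+1}^{1/2}$ make $u_q$ Cauchy in $C_tL^2_x$. On the cutoff transition layer $[T_{q+2},\ot_{q+2}]$, the unabsorbed part of the stress is $\mathring R_0$, which is controlled by Claim~2 through $\frac18 c_*\delta_{q+3}$. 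This is exactly why $\delta_{q+2}$ was tied to the decay rate $k_q$ of Proposition~\ref{Prop-R0}.

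\textbf{Passage to the limit.} The limit $u$ is a weak solution with $\|u(t)\|^2_{L^2}=e(t)$, because the energy gap tends to zero at every $t>0$, and $u\in C([0,\infty);L^2)$. Continuity at $t=0$ follows since $\|u(t)-\wt u(t)\|_{L^2}\lesssim \sum_{q:T_{q+1}\le t}\delta_{q+1}^{1/2}\to0$.

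\textbf{Main obstacle.} Everything so far gives weak solutions for every admissible $e$; the real content of the theorem is that the channel is \emph{decreasing} on some $[0,T^*]$. This requires
\[
\frac{d}{dt}\|\wt u\|^2_{L^2}=-2\nu\|(-\Delta)^{\alpha/2}\P_{\ge\Lambda}\wt u\|^2_{L^2}
\]
from \eqref{eq-e-2} to dominate the positive slopes of $\overline f$ and $\underline f$, which are of size $\delta_{q+1}/\ot_q$ on $[\ot_{q+1},\ot_q]$. The plan is to bound the dissipation from below using (H2) and (H1). With $2^{n}\sim\Lambda(t)\sim t^{-\eta}$ and the $L^\infty$--$L^2$ intermittency relation, one gets
\[
\|\P_{\ge\Lambda(t)}\wt u(t)\|^2_{L^2}\gtrsim \bigl(t^{\eta(1-\kappa_2)}\bigr)^2\, t^{\eta(d-D_{n,2,\infty})},
\]
where some care is needed because the multiplier $m$ damps modes above $\tau(\xi)$; I would handle this by restricting to the shell $|\xi|\in[2\Lambda(t),4\Lambda(t)]$, where $m\simeq1$ for $t$ small. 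The dissipation rate is then of order $t^{-2\alpha\eta}$ times this quantity. Comparing it with $\delta_{q+1}/\ot_q\sim t^{k_{q-1}-1}$, using $|D_{n,2,\infty}-D^\sharp_{m,1,\infty}|\le\kappa_1$ from \eqref{est-1.8-2} and the choice \eqref{def-alpha} ($2\kappa_2>\kappa_0+\kappa_1+4\alpha$), the exponents should favor the dissipation for $t\le T^*$ small, up to the loss of a factor $4^{k_q}$. I would first check the exponent inequality
\[
\eta\bigl(2-2\kappa_2+d-D_{n,2,\infty}\bigr)-2\alpha\eta< k_{q-1}-1,
\]
which reduces to $2\kappa_2-\kappa_0-\kappa_1>2\alpha+1/\eta$, guaranteed by $\eta>1/(4\alpha)$. $T^*$ is chosen where $\|\wt u\|_{L^2}>0$, consistent with \eqref{cho-theta}.
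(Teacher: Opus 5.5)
Your architecture is the paper's: the channel \eqref{upe-undere-def}, a backward iteration from $\wt u$ with perturbations switched on only after $T_{q+2}$, Claim~2 \eqref{est-r0-0} absorbing the leftover $\mathring R_0$, and dissipation of $\wt u$ played against the slopes of $\overline f,\underline f$. However, two steps of the iteration, as you specify them, would not close.

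\textbf{Cutoff layer.} Your temporal cutoff rises from $0$ to $1$ on $[T_{q+2},\ot_{q+2}]$. That interval has length $\ell_{q+1}=\lambda_{q+1}^{-30d}$, so $|\chi'|\gtrsim\lambda_{q+1}^{30d}$. The cut-off error is then
\[
\mathcal R\bigl(\chi'(w^{(p)}_{q+1}+w^{(c)}_{q+1})\bigr)=\mathcal R\div\bigl(\chi'\textstyle\sum_k a_{(k)}W^c_{(k)}\bigr),
\]
of order $\lambda_{q+1}^{30d}\|W^c_{(k)}\|_{L^p}\approx\lambda_{q+1}^{30d-1-\frac37(d-1)+\varepsilon}$ up to powers of $\ell_q^{-1}$. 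That is huge, not $\le c_*\delta_{q+3}$. The transition must occupy a macroscopic part of $[T_{q+2},T_{q+1}]$, as in \eqref{chiq-def}, where $|\chi_{q+1}'|\lesssim\ell_q^{-1}$. Claim~2 still controls $(1-\chi_{q+1}^2)\mathring R_0$ there, since $[\frac{T_{q+2}+T_{q+1}}2,T_{q+1}]\subset[0,T_{q+1}+\ell_q]$. On that interval \eqref{asp-e-q} supplies the energy gap of $u_0$.

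\textbf{Energy residual.} Your amplitude pumps the whole gap, $\rho_q\sim\frac1d(e-\|u_q\|^2_{L^2})+|\mathring R_{q,\ell}|$, and your hypothesis (iii) only carries $e-\|u_q\|^2_{L^2}\ge0$. But the part of the amplitude that keeps $\mathring R_{q,\ell}/\rho_q$ in $B_{\varepsilon_u}(\Id)$ also carries energy. So $\|w^{(p)}_{q+1}\|_{L^2}^2$ exceeds the gap by a positive multiple of $\|\mathring R_{q,\ell}\|_{L^1}$. The remaining terms ($I_{11}$, $I_{13}$, $I_2$ in Section~\ref{Sec-Energy}) have no sign either. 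Hence $e-\|u_{q+1}\|^2_{L^2}$ can become negative, and (iii) does not propagate.

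The paper instead pumps $e-\|u_q\|^2_{L^2}-f_q$, leaving the residual $f_q$ of \eqref{def-fq}. It takes $c_*<\varepsilon_u/(500d)$ in \eqref{def-c8}, so that the $\rho$-energy and all errors are small fractions of that residual. It then propagates the two-sided bound \eqref{energy-est}. This is exactly where the lower channel bound $\frac34\delta_{q+2}$ in \eqref{asp-e-q} is used.

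\textbf{Decreasing channel.} This part matches the paper, with two corrections.
\begin{itemize}
\item To compare with $\|\Delta_nv_0\|_{L^2}$ you need a region containing a full Littlewood--Paley support. The paper uses $[2\Lambda(t),16\Lambda(t)]\supset\mathrm{supp}\,\chi_{n_*(t)+3}$, where $m\gtrsim1$ still holds because $\nu t(32\pi\Lambda(t))^{2\alpha}\lesssim t^{1-2\alpha\eta}$. Your shell $[2\Lambda(t),4\Lambda(t)]$ is too narrow for this.
\item Your exponent reduction has a slip. Since $k_{q-1}$ already contains $-2\alpha\eta$, that term cancels, and the condition is $D_{n,2,\infty}-D^\sharp_{n_{q-1},1,\infty}+2\kappa_2-\kappa_0>1/\eta$, i.e.\ \eqref{en-dis-inq-5}. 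This follows from $1/\eta<4\alpha<2\kappa_2-\kappa_0-\kappa_1$. The condition you wrote, with the extra $2\alpha$, is not implied by \eqref{def-eta}--\eqref{def-alpha}.
\end{itemize}
With the resulting uniform gap $\sigma_*>0$ and $q$-independent constants, taking $T^*$ small for fixed $\theta$ is a valid alternative to the paper's choice $T^*=\ut_2$ with $\theta$ enlarged.
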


The proof of Theorem \ref{Thm-Non-euler-Local} relies crucially on the following 
iterative estimates.

\begin{proposition} [Main iterative estimates]\label{Prop-Iterat} Take $a$ sufficiently large to satisfy \eqref{def-a-0}. Then, the following holds:
	
	Suppose that for some $q\in \mathbb{N}$, $(\u, \ru)$ solves  \eqref{equa-nsr}
	and satisfies  
   \begin{align}
	&  \| u_q \|_{ C_{t}L^2_x}\lesssim  \sum\limits_{n=0}^q \delta_{n}^{\frac12} ,\label{ul2}\\
	&  \| u_q  \|_{C_{[T_{q+2},+\infty),x}^{1}} \lesssim \lambda_{q}^{d+1},\label{uc} \\
	&  \| \mathring{R}_{q}\|_{C_{t} L^1_x}\leq  c_*\delta_{q+ 2} ,  \label{rl1s}\\
	&  \| \mathring{R}_{q} \|_{C^{1}_{[T_{q+2},+\infty)}L^1_x}+ \| \mathring{R}_{q}  \|_{C_{[T_{q+2},+\infty)}W^{1,1}_x} \lesssim \lambda_{q}^{2d+2},  \label{rl1b-s}\\
    &
u_q(t)\equiv u_0(t)\quad\text{for }0\le t\le T_{q+1},
\qquad
\mathring R_q(t)\equiv\mathring R_0(t)
\quad\text{for }0\le t\le\overline T_{q+1},
\label{rq=r0}\\
	&  \frac12\delta_{q+ 2}\leq  e(t)-\|u_q(t)\|_{L^2_x}^2 \leq 2\delta_{q+1},\ \ t\in [T_{q+1},+\infty),  \label{energy-est}
 \end{align}
where the implicit constants are independent of $q$.

	Then, there exists $(u_{q+1}, \mathring{R}_{q+1})$
	which solves \eqref{equa-nsr} and satisfies \eqref{ul2}-\eqref{energy-est} with $q+1$ replacing $q$.
	In addition,  we have
	\begin{align}\label{u-B-L2tx-conv}
		\|u_{q+1}-u_{q}\|_{C_tL^{2}_{x}}\lesssim	 \delta_{q+ 1}^{\frac12}, 
	\end{align} 
    where the implicit constants are independent of $q$.
\end{proposition}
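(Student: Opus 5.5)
The plan is to follow the standard intermittent convex integration step, adapted to the backward-in-time framework. Given $(u_q,\mathring R_q)$, first mollify in space and time at scale $\ell_q$ to obtain $(u_\ell,\mathring R_\ell)$. The $C^1$ bounds \eqref{uc} and \eqref{rl1b-s} on $[T_{q+2},\infty)$ control the commutator error. That error is of size $\ell_q\lambda_q^{O(d)}\ll\delta_{q+3}$, because $\ell_q=\lambda_q^{-30d}$ and $\delta_{q+3}$ decays only like $4^{-(\theta+q)K_*}$, which \eqref{def-a-0} makes much larger than any negative power $\lambda_q^{-1}$.

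Next, introduce a temporal cutoff $\chi_{q+1}(t)$ that equals $1$ on $[T_{q+2}+\ell_{q+1},\infty)$ (roughly $[\overline T_{q+2},\infty)$) and vanishes on $[0,T_{q+2}]$. All perturbations are multiplied by $\chi_{q+1}$, so $u_{q+1}=u_q=u_0$ on $[0,T_{q+2}]$, and the new stress equals $\mathring R_q=\mathring R_0$ wherever $\chi_{q+1}=0$. This gives \eqref{rq=r0} at level $q+1$. In the transition layer, \eqref{est-r0-0} (Claim 2) with $q+1$ gives $\|\mathring R_0\|_{L^1}\le\frac18c_*\delta_{q+3}$ on $[0,T_{q+1}+\ell_q]$. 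The new stress will therefore satisfy \eqref{rl1s} there, and this explains the choice of backward times.

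The principal perturbation is
\[
w^{(p)}_{q+1}=\sum_k a_{(k)}W_{(k)},\qquad a_{(k)}=\chi_{q+1}\,\rho^{1/2}\,\gamma_k\bigl(\Id-\mathring R_\ell/\rho\bigr),
\]
built from intermittent jets or flows at frequency $\lambda_{q+1}$, with the geometric lemma (Lemma \ref{geometric lem 2}) providing the $\gamma_k$. Here $\rho$ is chosen so that $\int|w^{(p)}|^2\approx e(t)-\|u_q\|_{L^2}^2-\frac{\delta_{q+3}}{2}$, which is legitimate by \eqref{energy-est}. Add the incompressibility corrector and the temporal corrector, and take the new stress to be the sum of the linear, oscillation, corrector and commutator errors. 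The oscillation error is handled by the geometric identity plus inverse divergence on high-frequency terms, gaining $\lambda_{q+1}^{-1}$ times intermittency factors. With $\lambda_{q+1}=\lambda_q^b$ and $b$ huge, each $L^1$ error term is $\le \frac{c_*}{8}\delta_{q+3}$ on $[T_{q+2},\infty)$. The bounds \eqref{uc} and \eqref{rl1b-s} at level $q+1$ follow from counting derivatives, giving $\lambda_{q+1}^{d+1}$ and $\lambda_{q+1}^{2d+2}$ from the amplitudes of the concentrated building blocks. The $L^2$ increment \eqref{u-B-L2tx-conv} follows from $\|a_{(k)}\|_{L^2}\lesssim\rho^{1/2}\lesssim\delta_{q+1}^{1/2}$ via the $L^p$ decorrelation lemma, which in turn gives \eqref{ul2}.

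I expect the main obstacle to be the energy estimate \eqref{energy-est} at level $q+1$, specifically the lower bound $\frac12\delta_{q+3}$ and the upper bound $2\delta_{q+2}$ on $[T_{q+2},\infty)$, in particular across the cutoff layer where $\chi_{q+1}<1$. On $[\overline T_{q+2},\infty)$ the energy is matched up to errors $\ll\delta_{q+3}$ by the choice of $\rho$. In the layer $[T_{q+2},\overline T_{q+2}]$, the argument instead relies on $u_q=u_0$ and on the channel bounds \eqref{asp-e-q}, i.e. $\frac34\delta_{q+3}\le e-\|u_0\|^2\le\delta_{q+2}$ on $[\underline T_{q+2},\overline T_{q+1}]$. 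These imply that $\chi_{q+1}^2$ times the added energy stays between the required bounds, provided the cross terms $2\langle u_q,w\rangle$ and the corrector energy are $O(\lambda_{q+1}^{-\epsilon}\lambda_q^{C})$. The lower bound on $\rho$ must also be checked so that $\Id-\mathring R_\ell/\rho$ lies in the geometric lemma's ball, using \eqref{rl1s} with $c_*<\varepsilon_u/500d$. The smoothness \eqref{bdd-e-c1} of $e$ is then used to bound the time derivatives of $\rho$ by $\lambda_q^{d+1}$.
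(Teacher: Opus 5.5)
\paragraph{Main gap: the temporal cutoff.}
You let $\chi_{q+1}$ rise from $0$ to $1$ on $[T_{q+2},T_{q+2}+\ell_{q+1}]$. Then $|\partial_t\chi_{q+1}|\gtrsim\ell_{q+1}^{-1}=\lambda_{q+1}^{30d}$, and nothing in the scheme compensates for this.

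\begin{itemize}
\item Whether $\chi_{q+1}$ multiplies $w_{q+1}$ or sits inside $a_{(k)}$, the new stress contains $\mathcal R\bigl(\partial_t\chi_{q+1}(w^{(p)}_{q+1}+w^{(c)}_{q+1})\bigr)$. Even after writing $w^{(p)}_{q+1}+w^{(c)}_{q+1}=\div\sum_k a_{(k)}W^c_{(k)}$ and using \eqref{ew}, the best bound for this term is of order $\lambda_{q+1}^{30d}\|W^c_{(k)}\|_{L^p}\sim\lambda_{q+1}^{30d-1-\frac37(d-1)+\ve}$. That is nowhere near $c_*\delta_{q+3}$, so \eqref{rl1s} fails at level $q+1$.
\item Likewise $\chi_{q+1}'$ and $\chi_{q+1}''$ enter $\partial_t u_{q+1}$ and $\partial_t\mathring R_{q+1}$, which breaks \eqref{uc} and \eqref{rl1b-s} at level $q+1$.
\item Contrary to your claim, the cutoff does not give \eqref{rq=r0} at level $q+1$. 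That estimate requires $\mathring R_{q+1}=\mathring R_0$ on all of $[0,\overline T_{q+2}]$, but you perturb on $(T_{q+2},\overline T_{q+2})$.
\end{itemize}

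The missing idea is that the backward times make a slow cutoff affordable. By \eqref{rq=r0} and Claim 2 (i.e.\ \eqref{est-r0-0} with $q+1$), the old stress on $[0,\overline T_{q+1}]$ is $\mathring R_0$ and is already at most $\frac18c_*\delta_{q+3}$. So nothing needs correcting before $T_{q+1}$.

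The paper therefore takes $\chi_{q+1}=0$ on $[0,T^{\rm med}_{q+1}]$, with $T^{\rm med}_{q+1}=\frac{T_{q+2}+T_{q+1}}2>\overline T_{q+2}$, and $\chi_{q+1}=1$ on $[T_{q+1},\infty)$. The latter interval is exactly where \eqref{energy-est} is available. The transition has length $\sim 4^{-(\theta+q)}$, so $\|\chi_{q+1}\|_{C^N_t}\lesssim\ell_q^{-N}$, a loss absorbed by the intermittency gain $\lambda_{q+1}^{-\frac37(d-1)+2\ve}$. The leftover $(1-\chi_{q+1}^2)\mathring R_q$ is controlled by \eqref{est-r0-0}. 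The energy on $[T^{\rm med}_{q+1},T_{q+1})$ follows from the channel bounds \eqref{asp-e-q} together with the step function $f_q$ of \eqref{def-fq}.

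\paragraph{Two further points need repair.}
First, the ball condition in Lemma~\ref{geometric lem 2} is pointwise, so it cannot come from the $L^1$ bound \eqref{rl1s}. A spatially constant $\rho(t)$ would have to dominate $\|\mathring R_{\ell_q}\|_{L^\infty}$. The only available bound for that is $\ell_q^{-d}c_*\delta_{q+2}$, far above the energy gap. The amplitude therefore needs the $x$-dependent part $2\ve_u^{-1}(\ell_q^2+|\mathring R_{\ell_q}|^2)^{1/2}$ of \eqref{rhob}. Smallness of $c_*$ is used only to make its integral, at most $2\ve_u^{-1}(\ell_q+c_*\delta_{q+2})$, negligible in the energy.

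Second, for $d=2$ Mikado flows in different directions necessarily intersect. The cross terms $\sum_{k\neq k'}a_{(k)}a_{(k')}W_{(k)}\otimes W_{(k')}$ in \eqref{vel oscillation cancellation calculation} then do not vanish and are not accounted for by the geometric identity. They enter both the stress ($\mathring R_{osc.3}$) and the energy ($I_{13}$). Your oscillation argument does not cover them. The paper makes them small with the intersection estimate of Lemma~\ref{product estimate lemma}, $\|\phi_{(k)}\phi_{(k')}\|_{L^p}\lesssim r_\perp^{\frac2p-1}$.
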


The proof of Proposition \ref{Prop-Iterat} occupies Sections \ref{Sec-Interm-Flow}, 
\ref{Sec-Rey-stress} and \ref{Sec-Energy}. 
In the convex integration scheme, 
we use  the Mikado flows, 
initially introduced in \cite{DS2017},  
to construct velocity perturbations. 
In dimensions higher than three, 
the Mikado flows 
are concentrated near straight lines.   
Suitably adjusting the line directions, the Mikado flows can be chosen with disjoint supports. This technique was first introduced by Daneri and Sz\'ekelyhidi \cite{DS2017} and subsequently utilized by Isett \cite{I18}, Buckmaster, De Lellis, Sz\'ekelyhidi, and Vicol \cite{bdsv19}, and Novack and Vicol \cite{nv23} to address the Onsager conjecture.

In dimension two, however, standard Mikado flows are infeasible for achieving non-interaction, since any two non-parallel lines must intersect. To circumvent this difficulty, 
temporal oscillations are exploited 
by Cheskidov and Luo \cite{cl21}, 
and by Giri and Radu \cite{gr24}. 
In the present work, 
we construct dissipative solutions 
by using {\it intermittent Mikado flows}. 
The key observation is that 
the intersection of two differently directed Mikado flows 
concentrates on a much smaller spatial support than either flow individually.  
As a result, the Mikado flows 
have strong {\it intersection estimate} 
(see Lemma \ref{product estimate lemma} below) 
which suffices to control the Reynolds oscillation error.

\subsection{Extension to global dissipative solutions}  
\label{Subsec-Ext-Global}

As mentioned in Subsection \ref{Subsec-Novel}, 
in order to construct global dissipative solutions, 
we will choose a refined background approximate profile 
with a new scale $\delta_2^\sharp$, 
the same scale as $\delta_3$, 
in place of the previous possibly large one $\delta_2$.

For a divergence-free background approximate solution
$U_0\in C([0,+\infty);L^2_x)\cap C^1((0,+\infty);C_x^{\infty})$ and a number
$\delta_2^\sharp>0$, we write
\begin{align}
\mathcal E_{\mathrm{ch}}[U_0,\delta_2^\sharp](v_0)
:=\Bigl\{e\in C([0,+\infty))\cap C^1((0,+\infty)):
&e(0)=\|v_0\|_{L^2_x}^2,
\notag\\
&e\text{ satisfies \eqref{asp-0-energy}, \eqref{asp-e-q}, and \eqref{bdd-e-c1}}
\notag\\
&\text{with }u_0\text{ replaced by }U_0
\text{ and }\delta_2\text{ replaced by }\delta_2^\sharp
\Bigr\}.
\label{def-Ech-sharp}
\end{align}
Let \begin{align}\label{def-t_*}
    T_*:=T_1+\ell_0=4^{-\theta}.
\end{align}

The following theorem provides 
an abstract criterion 
to construct global dissipative solutions. 
Its proof will be given in Section \ref{Sec-global-diss} below. 

\begin{theorem}[Abstract global dissipative criterion]
\label{thm-global-diss-abstract}
Assume the hypotheses of Theorem~\ref{Thm-Non-euler}. Let
$(u_0^\sharp,p_0^\sharp,\mathring R_0^\sharp)$ be a smooth solution of the
Euler-Reynolds system \eqref{equa-nsr} at level $q=0$, with
$u_0^\sharp(0)=v_0$, and let $E_0^\sharp(t):=\|u_0^\sharp(t)\|_{L^2_x}^2$.
Assume that
\begin{align}
u_0^\sharp(t)=u_0(t),
\qquad
\mathring R_0^\sharp(t)=\mathring R_0(t),
\qquad 0\le t\le T_* 
\label{gd-agree-full}
\end{align}
and that $E_0^\sharp$ is non-increasing on $[0,+\infty)$. Assume moreover that
there exists $\delta_2^\sharp$ such that
\begin{align}
&\delta_3\le \delta_2^\sharp\le \frac32\delta_3,
\label{gd-delta-scale}
\\
&\delta_1\ge \frac34\delta_2^\sharp,
\label{gd-delta1-abstract}
\\ 
&\frac{8}{c_*}\|\mathring R_0^\sharp\|_{C_{[T_1,+\infty)}L^1_x}
\le \delta_2^\sharp,
\label{gd-floor}
\\
&E_0^\sharp(\ot_2)-E_0^\sharp(\ut_1)
\ge 4(\delta_2^\sharp-\delta_3),
\label{gd-bridge-abstract}
\\
&\|u_0^\sharp\|_{C_{[T_2,+\infty),x}^{1}}\lesssim \lambda_0^{d+1},
\label{gd-u0-c1}
\\
&\|\mathring R_0^\sharp\|_{C^{1}_{[T_2,+\infty)}L^1_x}
+\|\mathring R_0^\sharp\|_{C_{[T_2,+\infty)}W^{1,1}_x}
\lesssim \lambda_0^{2d+2}.
\label{gd-r0-c1w11}
\end{align}
Then, for every $r\in [0,\tfrac12]$, there exists a global weak Euler solution
$u_r\in C([0,+\infty);L^2_x)$ with initial data $v_0$ such that
\begin{align}
\|u_r(t)\|_{L^2_x}^2=e_{\mathrm{gd},r}^\sharp(t),
\qquad t\ge 0,
\label{gd-energy-exact}
\end{align}
where $e_{\mathrm{gd},r}^\sharp\in
\mathcal E_{\mathrm{ch}}[u_0^\sharp,\delta_2^\sharp](v_0)$. In particular, there exists a global
dissipative weak Euler solution with initial data $v_0$. If
$\delta_2^\sharp>\delta_3$, then there are infinitely many distinct global
dissipative weak Euler solutions with initial data $v_0$.
\end{theorem}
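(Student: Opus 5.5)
The plan is to rerun the backward convex integration scheme of Proposition~\ref{Prop-Iterat}, with $(u_0^\sharp,\mathring R_0^\sharp)$ as the initial relaxed solution in place of $(u_0,\mathring R_0)$ and with $\delta_2$ replaced by $\delta_2^\sharp$. The remaining amplitudes $\delta_1$ and $\delta_{q+2}$, $q\ge1$, and the backward times $T_q$ are kept. The scheme then only needs an energy profile $e\in\mathcal E_{\mathrm{ch}}[u_0^\sharp,\delta_2^\sharp](v_0)$; the solutions are indexed by $r\in[0,\tfrac12]$.

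\textbf{Step 1: energy profiles.} Mimicking \eqref{upe-undere-def}--\eqref{underf-def}, I set
\[
e_{\mathrm{gd},r}^\sharp(t):=E_0^\sharp(t)+f_r(t),
\]
where $f_r$ is a smoothed piecewise-linear function interpolating between $\frac34\delta_{q+2}$ and $\delta_{q+1}$ on $[\ut_{q+1},\ot_q]$, exactly as before, for $q\ge1$. On $[\ut_1,\infty)$ I set $f_r\equiv(1-\tfrac r2)\delta_2^\sharp$, and on $[\ot_2,\ut_1]$ I use a linear bridge. The lower and upper bounds $\frac34\delta_2^\sharp\le f_r\le\delta_1$ follow from \eqref{gd-delta-scale} and \eqref{gd-delta1-abstract}. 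The $C^1$ bound \eqref{bdd-e-c1} follows from \eqref{gd-u0-c1}, the slope estimate in the Remark after \eqref{bdd-e-c1}, and a mollification at scale $\ell_q$. The condition $e(0)=\|v_0\|_{L^2_x}^2$ holds since $f_r(0^+)=0$, because $\delta_q\to0$ by Claim~1.

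For monotonicity, on $[0,\ot_2]$ the profile coincides with the decreasing profile built for Theorem~\ref{Thm-Non-euler-Local} (by \eqref{gd-agree-full}, provided $T_*\ge\ot_2$). On $[\ut_1,\infty)$, $f_r$ is constant and $E_0^\sharp$ is non-increasing. On the bridge $[\ot_2,\ut_1]$, $f_r$ may increase by at most $\delta_2^\sharp-\delta_3$ (up to the factor $\frac34$ or $1$), and \eqref{gd-bridge-abstract} says $E_0^\sharp$ drops by at least four times this amount. Hence the sum is non-increasing, and also $e_{\mathrm{gd},r}^\sharp(t)\le\|v_0\|_{L^2_x}^2$, which gives dissipativity. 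Distinct values of $r$ give distinct energies on $[\ut_1,\infty)$ as soon as $\delta_2^\sharp>\delta_3$, which yields infinitely many distinct solutions. (If $\delta_2^\sharp=\delta_3$, the $r$-dependence must be dropped and only existence is claimed.)

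\textbf{Step 2: inductive hypotheses at $q=0$.} I verify \eqref{ul2}--\eqref{energy-est} for $(u_0^\sharp,\mathring R_0^\sharp)$ with $\delta_2^\sharp$:
\begin{itemize}
\item \eqref{ul2} holds by \eqref{delta0} together with monotonicity of $E_0^\sharp$.
\item \eqref{uc} and \eqref{rl1b-s} are exactly \eqref{gd-u0-c1} and \eqref{gd-r0-c1w11}.
\item \eqref{rq=r0} is trivial at $q=0$.
\item For \eqref{rl1s}, on $[T_1,\infty)$ I use \eqref{gd-floor}. On $[0,T_1]$, $\mathring R_0^\sharp=\mathring R_0$, and Claim~2 gives $\|\mathring R_0\|\le\frac18c_*\delta_3\le\frac18 c_*\delta_2^\sharp$.
\item \eqref{energy-est} follows from the construction of $f_r$.
\end{itemize}

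\textbf{Step 3: iteration and limit.} Proposition~\ref{Prop-Iterat} is applied verbatim, since its proof only uses these hypotheses and not the specific form of $u_0$. The increments \eqref{u-B-L2tx-conv} are summable, so $u_q\to u_r$ in $C_tL^2_x$. Since $\mathring R_q\to0$ in $C_tL^1_x$, the limit $u_r$ is a weak Euler solution. By \eqref{rq=r0}, $u_r(0)=v_0$. By \eqref{energy-est}, $\|u_r(t)\|_{L^2_x}^2=e_{\mathrm{gd},r}^\sharp(t)$. Together with the decreasing profile from Step 1, the De~Lellis--Sz\'ekelyhidi remark shows that admissibility implies dissipativity.

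\textbf{Main obstacle.} I expect the hardest part to be the first inductive step with the modified level $\delta_2^\sharp$ on the bridge interval $[\ut_2,\ot_1]$. There the old scheme used $\delta_2$ as the upper energy gap, so the perturbation amplitude $\delta_2^{1/2}$ and the first-step energy gap must be recalibrated. The key check is that Proposition~\ref{Prop-Iterat} at $q=0$ only requires $e-\|u_0^\sharp\|_{L^2_x}^2\le2\delta_1$, which holds. A secondary difficulty is the smoothing of $f_r$ near the kinks, which must preserve monotonicity; I would handle it by mollifying at scale $\ell_q\ll\ot_q-\ut_{q+1}$.
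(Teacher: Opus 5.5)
Your architecture matches the paper's: a non-increasing profile with a modified first outer bridge, then the iteration rerun from level $0$ with $\widehat\delta_2=\delta_2^\sharp$. However, two steps fail as written.

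\textbf{The first iteration step.} This is the real obstruction, and it is not the one you check. You apply Proposition~\ref{Prop-Iterat} verbatim and verify only the upper bound $e-\|u_0^\sharp\|_{L^2_x}^2\le 2\delta_1$. What fails is the lower bound of \eqref{energy-est} at level $1$. In Section~\ref{Sec-Energy} the $q=0$ perturbation is calibrated by $f_0$ from \eqref{def-fq}, which equals $\frac12\delta_2^\sharp$ on $[T_1,+\infty)$. On $[T_1+\ell_0,+\infty)$ the estimate leading to \eqref{e-t-2} then gives only $e-\|u_1\|_{L^2_x}^2\ge\frac{11}{24}\delta_2^\sharp-\frac18\delta_3$, whereas level $1$ requires $\ge\frac12\delta_3$. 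In the original scheme this is harmless, because $\delta_2$ is large and $\delta_{q+3}\le 4^{-k_q}\delta_{q+2}$. Under \eqref{gd-delta-scale}, however, $\delta_2^\sharp/\delta_3\le\frac32$. For $\delta_2^\sharp=\delta_3$ you get only $\frac13\delta_3$, and there is no margin to absorb errors, since the $\rho$-contribution to $I_{12}$ is strictly negative. Without this floor, $\gamma_1$ in \eqref{def-gamma} can become negative on $[T_2,+\infty)$, and the amplitudes \eqref{akb} at the next step are undefined.

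The missing idea is to change the first target. Run the $q=0$ step with $f_0^\sharp:=\frac34\delta_3 1_{[T_2+\ell_0,+\infty)}$ in place of $f_0$. This keeps $\gamma_0\ge0$, because the channel gives $e-E_0^\sharp\ge\frac34\delta_3$ there. The level-$1$ gap then equals $\frac34\delta_3$ up to $\frac16\delta_3$ on all of $[T_1,+\infty)$, which lies inside $[\frac12\delta_3,2\delta_2^\sharp]$. From $q=1$ on, the original $\delta_j$ and $f_q$ are used.

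\textbf{The bridge on $[\ot_2,\ut_1]$.} Your monotonicity argument here is a non sequitur. Hypothesis \eqref{gd-bridge-abstract} bounds only the total drop $E_0^\sharp(\ot_2)-E_0^\sharp(\ut_1)$. Monotonicity of $E_0^\sharp+f_r$ needs $f_r'\le-(E_0^\sharp)'$ pointwise. A linear-in-time bridge violates this wherever $E_0^\sharp$ is nearly flat, and nothing in the hypotheses bounds $-(E_0^\sharp)'$ from below on the bridge.

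Your bound $\delta_2^\sharp-\delta_3$ on the rise also presupposes that the inner profile equals $\delta_3$ at $\ot_2$. If it sits at or near the lower level $\frac34\delta_3$ there, as the lower interpolation does, then at $r=0$ the rise to $(1-\frac r2)\delta_2^\sharp$ is about $\delta_2^\sharp-\frac34\delta_3\ge\frac14\delta_3$. This holds even when $\delta_2^\sharp=\delta_3$, in which case \eqref{gd-bridge-abstract} provides no drop at all.

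The paper fixes the tail at $\frac34\delta_2^\sharp$ and the plateau at $\frac34\delta_3$, and parametrizes the bridge by the energy itself:
$f=\frac34\delta_3+\frac34(\delta_2^\sharp-\delta_3)\psi_r(\Theta^\sharp)$, where $\Theta^\sharp(t)=\frac{E_0^\sharp(\ot_2)-E_0^\sharp(t)}{E_0^\sharp(\ot_2)-E_0^\sharp(\ut_1)}$ and $0\le\psi_r'\le5$.
This gives $f'\le\frac{15}{16}(-(E_0^\sharp)')$ pointwise, and $f$ is constant near both endpoints, so it matches $C^1$. The $r$-family lives in the shape $\psi_r$ of this bridge rather than in the tail level. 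That is why distinctness requires $\delta_2^\sharp>\delta_3$.
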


In order to apply the abstract criterion 
to construct global dissipative solutions, 
we shall choose a smooth stabilized cutoff $\Lambda^{\mathrm{sf}}$ 
in the approximate $\Lambda^{\mathrm{sf}}$-Euler equations, 
and reduce the construction to verify the Euler-Reynolds system, 
the level-$0$ estimates 
and the floor as the same scale as $\delta_3$. See Theorem \ref{thm-sf-conditional} for the conditional reduction.

Finally, 
all these estimates are verified in Section \ref{Sec-sf-closure} 
with the new smooth stabilized cutoff 
in Section \ref{Sec-sf-closure} below.

\subsection{Final remark on the Sobolev case $W^{1,p}$} 
The case where $v_0\in W^{1,p}_x$ 
can be treated in an analogous way as above. 
The slight modifications are summarized as follows. 

For the Sobolev case, we similarly introduce
\begin{align}\label{def-dsp}
D_{n, 1, p}^{\sharp}:=\sup _{j \geq n} D_{j, 1, p}, \quad n \geq N .
\end{align}
Then $\{D_{n, 1, p}^{\sharp}\}_{n \geq N}$ is non-increasing and
$$
0 \leq D_{n, 1, p} \leq D_{n, 1, p}^{\sharp} \leq d .
$$
Moreover, \eqref{asp-int-2} implies
\begin{align}\label{1.11-2}
    |D_{n, 1, p}^{\sharp}-D_{m, 2, p}| \leq \frac{\kappa_* C_*}{p}, \quad n, m \geq N .
\end{align}
Indeed, this follows by applying \eqref{asp-int-2} to every
$j\geq n$ and then taking the supremum over $j$.

We choose the wavenumber $\Lambda:(0,+\infty) \to \mathbb{R}^+$ to still  satisfy properties 
{\rm (W1)}, {\rm (W2)}, 
while {\rm (W3)}  is replaced by 
\begin{itemize}
\item[{\rm (W3)'}] $\Lambda(t)=t^{-\eta}$ on $(0,\tfrac12]$, where
$\eta\ge2$ is an integer satisfying
\[
\frac1{4\alpha}<\eta<\frac1{2\alpha}, \qquad
0<\alpha< \min\left\{
\frac{\kappa_*(1-4C_*)}{4p},\frac13 \right\},
\]
and $\kappa_*$ is as in {\rm(H1)'}.
\end{itemize}

Now, for the substitution of \eqref{R0-0},  
we have the decay estimate of the initial Reynolds stress 
  \begin{align}\label{R0-0-2}
\|\mathring{R}_{0}(t)\|_{L^{1}_x}
\leq
C_R t^{\eta\left( 2-2\alpha + (d-D^\sharp_{n_*(t),1,p})
\left(1-\frac1p\right) - \frac{\kappa_*C_*}{p}
\right)}, \quad t\in(0,t_0).
\end{align}
Correspondingly, 
the amplitude parameter $\delta_{q+2}$ 
is chosen as in \eqref{def-delta}, 
but with $k_q$ replaced by 
\begin{align}
k_q := \eta\left( 2-2\alpha +(d-D^\sharp_{n_q,1,p})
\left(1-\frac1p\right) -\frac{\kappa_*C_*}{p}
\right).
\end{align}
where $n_q$ is given by \eqref{def-nq}.

It still holds that $\{\delta_{q+2}\}_{q \geq 1}$ is strictly decreasing in $q$ and $\delta_{q+2} \to 0^{+}$ as $q \to+\infty$. Indeed, since $T_q+\ell_{q-1} \leq t_0$, we have $n_q \geq N$. By \eqref{asp-reg-2} and \eqref{1.11-2},
$$
d-D_{n_q, 1, p}^{\sharp} \geq d-D_{n_q, 2, p}-\frac{\kappa_* C_*}{p} \geq \kappa_*(1-\frac{C_*}{p}).
$$
Therefore,
$$
(d-D_{n_q, 1, p}^{\sharp})(1-\frac{1}{p})-\frac{\kappa_* C_*}{p} \geq \kappa_*(\frac{1}{2}-C_*)>0 .
$$
Since $\eta>1 /(4 \alpha)$ and $\alpha<1 / 3$, it follows that
$$
k_q=\eta(2-2 \alpha+(d-D_{n_q, 1, p}^{\sharp})(1-\frac{1}{p})-\frac{\kappa_* C_*}{p})>1 .
$$
Hence, $\delta_{q+2} \to 0^{+}$as $q \to+\infty$.

Moreover, as above, $n_q \geq n_{q-1}$ for $q \geq 2$. Since $\{D_{n, 1, p}^{\sharp}\}_{n \geq N}$ is non-increasing,
\begin{align}\label{kq-kq1-2}
k_q=\eta (2-2 \alpha+ (d-D_{n_q, 1, p}^{\sharp} ) (1-\frac{1}{p} )-\frac{\kappa_* C_*}{p} ) \geq \eta (2-2 \alpha+ (d-D_{n_{q-1}, 1, p}^{\sharp} ) (1-\frac{1}{p} )-\frac{\kappa_* C_*}{p} )=k_{q-1} . 
\end{align}
Thus,
$$
 (T_q+\ell_{q-1} )^{k_q} \leq (T_q+\ell_{q-1} )^{k_{q-1}}
$$
and, using \eqref{Tq-def},
$$
\delta_{q+2}=\frac{8 C_R}{c_*} (T_q+\ell_{q-1})^{k_q}<\frac{8 C_R}{c_*}(4(T_q+\ell_{q-1}))^{k_{q-1}}=\delta_{q+1}\ 
\qquad q\geq 2. 
$$

\section{Approximate \texorpdfstring{$\Lambda$}{Lambda}-Euler equations} \label{Sec-Lam-Euler}

In this section, 
we prove Theorem \ref{thm-solu-le} 
and Proposition \ref{Prop-R0} concerning the 
approximate $\Lambda$-Euler equations 
and the corresponding initial Reynolds stress, 
respectively. 

Let us first prove that 
the propagation operator $\mathcal{S}(t)$ 
is uniformly bounded on $L^p_x$ 
for $p\in [1,+\infty]$.

\begin{lemma}[$L^p$-bound of propagation operator]\label{bd-m} 
For any $1\le p\le\infty$ and $n\ge1$, there exists $C_{p,d}>0$, independent of
$n$, $t$, and the later parameter $\theta$, such that for any $t\in(0,+\infty)$,
\begin{align}  \label{St-Lp-bound}
  \|\mathcal {S}(t) \P_{\geq \Lambda(t)}^2\Delta_n f\|_{L^p_x} 
  \leq C_{p,d} \|\Delta_n f\|_{L^p_x} 
\end{align}  
for all mean-free functions $f\in L^p_x$. 
\end{lemma}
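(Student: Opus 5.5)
The plan is to write the operator as a Fourier multiplier localized to the dyadic annulus $|\xi|\sim 2^n$ and to prove a Mikhlin/Hörmander-type bound for its symbol. This bound will be uniform in $n$ and $t$ after rescaling to unit frequency, so the associated kernel is uniformly in $L^1$ and Young's inequality gives the $L^p$ estimate for every $p\in[1,\infty]$.

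\textbf{Setting up the symbol.} Let $\tilde\chi_n$ be a fattened annular cutoff equal to $1$ on $\supp\chi_n$, so that $\Delta_n f=\tilde\Delta_n\Delta_n f$. The operator is then the multiplier with symbol
\[
M_{n,t}(\xi):=m(t,\xi)\,\varphi_{\Lambda(t)}(\xi)^2\,\tilde\chi_n(|\xi|)
\]
applied to $\Delta_n f$. It suffices to show that the kernel $K_{n,t}=\sum_\xi M_{n,t}(\xi)e^{i2\pi x\cdot\xi}$ satisfies $\|K_{n,t}\|_{L^1(\T^d)}\le C_{p,d}$. Since $M_{n,t}$ extends to a smooth function on $\R^d$, by Poisson summation it is enough to bound the $L^1(\R^d)$ norm of $\check M_{n,t}$.

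\textbf{Scaling and derivative bounds.} Rescale $\xi=2^n\zeta$ with $|\zeta|\sim1$. The claim is then
\[
|\partial_\zeta^\beta M_{n,t}(2^n\zeta)|\le C_\beta,\qquad |\beta|\le d+1,
\]
uniformly in $n$ and $t$. The factors $\tilde\chi_n(2^n\zeta)$ and $\varphi(2^n\zeta/\Lambda(t))$ are harmless: the first is a fixed profile, and the second is nontrivial only where $2^n\sim\Lambda(t)$, in which case the ratio $2^n/\Lambda(t)$ is bounded.

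The main obstacle is the factor $m(t,\xi)$. It is $1$ for $t\le\tau(\xi)$, that is for $|\xi|\le\Lambda(t)$, and equals $e^{-\nu g(t,\xi)}$ otherwise, where
\[
g(t,\xi)=(2\pi|\xi|)^{2\alpha}\int_{\tau(\xi)}^t\varphi^2_{\Lambda(s)}(\xi)\,\d s .
\]

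\textbf{Smoothness of $g$.} First, $g$ is smooth across $|\xi|=\Lambda(t)$: for $s$ near $\tau(\xi)$ we have $|\xi|/\Lambda(s)$ near $1$, where $\varphi$ vanishes to infinite order. So $g$ and all its derivatives vanish as $|\xi|\downarrow\Lambda(t)$, and $m$ is $C^\infty$ in $\xi$.

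Second, differentiate under the integral. The boundary term at $\tau(\xi)$ vanishes, and each $\xi$-derivative of $\varphi^2(\xi/\Lambda(s))$ gains a factor $\Lambda(s)^{-1}$. On the support, $|\xi|\ge\Lambda(s)$, and this is the direction we need. The crude bound on a derivative of $g$ is $|\xi|^{2\alpha-|\beta|}$ times an integral in $s$, and the time integral can be very large. To handle this, I would use the flatness estimate \eqref{flat-est} with $h=\varphi^2$ to get
\[
|\partial^\beta_\xi m|\lesssim e^{-\nu g}\sum\prod|\partial g|\lesssim |\xi|^{-|\beta|}\,e^{-\nu g}\,(1+g)^{|\beta|}(1+\ldots).
\]
The quantity $(1+g)^{k}e^{-\nu g}$ is uniformly bounded. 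The delicate part is to check that every derivative falling on the integral is controlled by $g$ itself times $|\xi|^{-1}$, up to a power $h^{-\varepsilon_M}$. This is where \eqref{flat-est} and the $s$-integration come in. I would use $|\partial_\xi\varphi^2_{\Lambda(s)}|\lesssim\Lambda(s)^{-1}h^{1-\varepsilon}$, then Hölder in $s$ to get $\lesssim |\xi|^{-1}(\ldots)\,g^{1-\varepsilon}$. Combined with $|\xi|\sim 2^n$, this produces scale-invariant bounds.

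\textbf{Conclusion.} With the derivative bounds in hand, integrating by parts $d+1$ times gives $|\check M_{n,t}(x)|\lesssim 2^{nd}(1+2^n|x|)^{-d-1}$. Hence $\|\check M_{n,t}\|_{L^1}\le C_d$, uniformly in $n$, $t$ and $\theta$. The constant is independent of $\theta$ because only the profiles $\varphi$ and $\chi$ and the parameter $\nu$ enter.
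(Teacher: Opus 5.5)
Your reduction (Poisson summation, rescaling to $|\zeta|\sim1$, uniform symbol bounds, Young) matches the paper. The gap is in the decisive step, the derivative bound for $m(t,\xi)$: the flatness-plus-H\"older route does not close it. Let $A(t,\xi):=\int_{\tau(\xi)}^t\varphi^2_{\Lambda(s)}(\xi)\,\d s$, and let $J_\xi$ be the set of $s$ on which $\nabla_\xi\varphi^2_{\Lambda(s)}(\xi)\neq0$. Your argument gives at best
\[
(2\pi|\xi|)^{2\alpha}\,|\nabla_\xi A(t,\xi)|\lesssim |\xi|^{-1}\bigl(|\xi|^{2\alpha}|J_\xi|\bigr)^{\varepsilon}g^{1-\varepsilon}.
\]
After multiplying by $e^{-\nu g}$, this is uniformly bounded only if $|\xi|^{2\alpha}|J_\xi|\lesssim1$. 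That is exactly what your ``$(\ldots)$'' hides, and it is the whole content of the lemma. The large $s$-integral you worry about is harmless: it enters only through $g$, which $e^{-\nu g}$ absorbs.

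The paper gets $|\xi|^{2\alpha}|J_\xi|\lesssim1$ from two facts missing from your outline.
\begin{enumerate}
\item Derivatives of $\varphi^2_{\Lambda(s)}(\xi)$ vanish unless $\Lambda(s)<|\xi|<2\Lambda(s)$. It is the upper bound $|\xi|<2\Lambda(s)$, not $|\xi|\ge\Lambda(s)$, that gives $\Lambda(s)^{-1}\lesssim|\xi|^{-1}$. By {\rm (W3)}, $J_\xi\subset[\tau(\xi),2^{1/\eta}\tau(\xi)]$, an interval of length $\lesssim|\xi|^{-1/\eta}$.
\item By \eqref{def-eta}, $\eta<1/(2\alpha)$, so $|\xi|^{2\alpha}|J_\xi|\lesssim|\xi|^{2\alpha-1/\eta}\le1$.
\end{enumerate}
With these, $|\partial_\xi^\beta G|\lesssim(G+1)|\xi|^{-|\beta|}$ for $G=(2\pi|\xi|)^{2\alpha}A$ follows directly. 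Fa\`a di Bruno then gives $|\nabla^N m|\lesssim|\xi|^{-N}$, and the quantitative flatness estimate \eqref{flat-est} is not needed at all.

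The relation $2\alpha\eta<1$ is not cosmetic. Suppose $2\alpha\eta>1$ and set $r=|\xi|/\Lambda(t)\in(1,2)$. In the power-law regime, $m=\exp\bigl(-cK\,r^{2\alpha}F(r)\bigr)$ with $K=\Lambda(t)^{2\alpha-1/\eta}\to\infty$ as $t\to0$, where $F(r)=\int_{r^{-1/\eta}}^1\varphi^2(r\sigma^\eta)\,\d\sigma$ is positive on $(1,2)$. Hence $m$ drops from $1$ to below $\tfrac12$ within a window $r-1\to0$, which forces $\sup|\partial_r m|\to\infty$; for the explicit $\varphi$ in the paper the rate is $\sim(\log K)^2$. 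So the bound on $m$ that you aim for is false in that case. A flatness argument could only help by exploiting the prefactor $\varphi^2_{\Lambda(t)}(\xi)$, as the paper does in \eqref{fixed-scale-heat-mihlin}, and you set that factor aside as harmless. Your closing claim that only $\varphi$, $\chi$, $\nu$ enter the constant is not something your argument delivers.

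There is a second, smaller omission. {\rm (W3)} gives the power law only on $(0,1/2]$, so the short-window bound is available only when $\Lambda^{-1}(|\xi|/2)\le1/2$, i.e.\ for $|\xi|\ge2\Lambda(1/2)$. On the remaining shells, {\rm (W1)}--{\rm (W2)} allow $\Lambda(s)$ to decrease to a limit $L$ (for instance $\Lambda(s)=L+e^{-s}$ for large $s$). Then the activation window is unbounded for $L<|\xi|<2L$, and the same mean-value argument shows that uniform-in-$t$ symbol bounds fail for the $\R^d$-extension of $m$. The paper treats the finitely many shells $n<n_0$, with $2^{n_0-2}\ge2\Lambda(1/2)$ and $n_0$ independent of $\theta$, directly on the lattice. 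Using only $|m\varphi^2_{\Lambda(t)}|\le1$, it bounds $\|\mathcal S(t)\P^2_{\ge\Lambda(t)}\Delta_nf\|_{L^p}\le\sum_{\xi\in\Gamma_n}|\widehat{\Delta_nf}(\xi)|\le|\Gamma_n|\,\|\Delta_nf\|_{L^p}$. Your proof needs this split, or an equivalent, as well.
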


\begin{proof}
Let $\Psi:\R^d \rightarrow [0,1]$ be a smooth radial function such that
\begin{align}\label{def-Psi-lem31}
	\Psi(x)=\begin{cases}
		0,\quad |x|\in [0,1/4]\cup [4,+\infty),\\
		1,\quad |x|\in [1/2,2], \\
	\end{cases}
\end{align}  
and $\Psi_{n}(x):= \Psi(x/2^{n})$.
In view of Young's inequality
\begin{align}\label{est-snv}
    \| \mathcal{S}(t)\P_{\geq \Lambda(t)}^2\Delta_nf \|_{ L^p_x}
    = \| \mathcal{F}^{-1}(m(t)\varphi_{\Lambda(t)}^2 \Psi_n \chi_n \widehat f)\|_{L^p_x}\leq \| \mathcal{F}^{-1}(m(t)\varphi_{\Lambda(t)}^2\Psi_n)\|_{L^1_x}\|\Delta_nf\|_{L^p_x},
\end{align}
it suffices to prove that $\mathcal{F}^{-1}(m(t)\varphi_{\Lambda(t)}^2\Psi_{n})$ is bounded in $L^1_x$, uniformly with respect to $t$. 

By the Poisson summation formula, 
we have the following identity 
relating the Fourier series and the Fourier transform, 
\begin{align}\label{tr-tran}
    \mathcal{F}^{-1}(m(t)\varphi^2_{\Lambda(t)}\Psi_n)(x)= \sum_{k\in \mathbb{Z}^d} \mathcal{F}^{-1}_{\R^d}(m(t)\varphi^2_{\Lambda(t)}\Psi_n)(x+k)\quad \forall x\in \T^d,
\end{align}
where $\mathcal{F}_{\R^d}^{-1} f(x):= \int_{\R^d} e^{2\pi ix\cdot \xi} f(\xi) \d \xi$.
This yields that 
\begin{align}\label{mvp-l1}
    \| \mathcal{F}^{-1}(m(t)\varphi^2_{\Lambda(t)}\Psi_n)\|_{L^1_x} 
    \leq \sum_{k\in \mathbb{Z}^d}\| \mathcal{F}^{-1}_{\R^d}(m(t)\varphi^2_{\Lambda(t)}\Psi_n)(x+k)\|_{L^1_x} \leq \| \mathcal{F}^{-1}_{\R^d}(m(t)\varphi^2_{\Lambda(t)}\Psi_n)\|_{L^1(\R^d)}. 
\end{align} 
Note that
\begin{align}\label{mvp-l1-2}
    \| \mathcal{F}^{-1}_{\R^d}(m(t)\varphi^2_{\Lambda(t)}\Psi_n)\|_{L^1(\R^d)}
    &=2^{dn} \| \mathcal{F}^{-1}_{\R^d}(m(t, 2^n\cdot)\varphi^2_{\Lambda(t)}(2^n\cdot)\Psi(\cdot))(2^n \cdot )\|_{L^1(\R^d)}\notag\\
    &= \| \mathcal{F}^{-1}_{\R^d}(m(t, 2^n\cdot)\varphi^2_{\Lambda(t)}(2^n\cdot)\Psi(\cdot))\|_{L^1(\R^d)}.
\end{align}

Let
\begin{align*}
    b_{n,t}(\zeta):=m(t,2^n\zeta)\varphi_{\Lambda(t)}^2(2^n\zeta)\Psi(\zeta).
\end{align*}
Since $\Psi$ is compactly supported, 
it is enough to prove that
\begin{align}\label{symbol-w11}
\|b_{n,t}\|_{W^{d+1,1}(\R^d)}\lesssim 1
\end{align}
uniformly in $n\geq 1$ and $t>0$, because then
\begin{align}\label{sob-kernel}
\|\mathcal{F}^{-1}_{\R^d}(b_{n,t})\|_{L^1(\R^d)}\lesssim \|b_{n,t}\|_{W^{d+1,1}(\R^d)}\lesssim 1.
\end{align}

Choose $n_0\in\mathbb N$ such that
\begin{align}\label{def-n0-bd}
2^{n_0-2}\ge 2\Lambda(1/2).
\end{align}
We distinguish two regimes.

\medskip
\noindent{\it Low shells: $1\le n<n_0$.}
For every fixed $n<n_0$, let
\[
\Gamma_n:=\{\xi\in\mathbb Z^d:\chi_n(\xi)\neq0\}.
\]
Since $\Gamma_n$ is finite, for $g:=\Delta_n f$ we have
\begin{align*}
\mathcal S(t)\P_{\geq\Lambda(t)}^2g(x)
=\sum_{\xi\in\Gamma_n}
m(t,\xi)\varphi_{\Lambda(t)}^2(\xi)\widehat g(\xi)e^{2\pi i x\cdot\xi}.
\end{align*}
Using $|m(t,\xi)\varphi_{\Lambda(t)}^2(\xi)|\le1$, we obtain
\begin{align*}
\|\mathcal S(t)\P_{\geq\Lambda(t)}^2g\|_{L^p_x}
&\le \|\mathcal S(t)\P_{\geq\Lambda(t)}^2g\|_{L^\infty_x}
\le \sum_{\xi\in\Gamma_n}|\widehat g(\xi)|  \\
&\le |\Gamma_n|\|g\|_{L^1_x}
\le |\Gamma_n|\|g\|_{L^p_x}.
\end{align*}
Since there are only finitely many integers $1\le n<n_0$, and $n_0$ is fixed independently of
$\theta$, we obtain \eqref{St-Lp-bound} for all low shells after enlarging $C_{p,d}$ if necessary.

\medskip
\noindent{\it High shells: $n\ge n_0$.}
In this case, for $\xi\in\supp\Psi_n$,
\begin{align}\label{high-shell-support}
|\xi|\ge 2^{n-2}\ge 2\Lambda(1/2).
\end{align}

Note that $g(\xi):= |\xi|^{2\alpha}$ is  homogeneous of degree $2\alpha$. 
Then, 
by Euler's Homogeneous Function Theorem (see, e.g., \cite[p287]{Apostol}), 
$\nabla^N g$ is homogeneous of degree $2\alpha-N$ 
for any $N\in \mathbb{N}^+$. 
This yields that,  
if $\xi:=|\xi|\omega$ with $\omega\in \mathbb{S}^{d-1}$, 
\begin{align*}
    (\nabla^N g)(\xi)= (\nabla^N g)(|\xi|\omega)=|\xi|^{2\alpha-N } (\nabla^N g)(\omega).
\end{align*}
Because $(\nabla^N g)(\omega)$ is uniformly bounded for all $\omega\in \mathbb{S}^{d-1}$, 
	it follows that
\begin{align}\label{est-f-a}
    |\nabla^N ( |\xi|^{2\alpha})|& = | |\xi|^{ 2\alpha-N} (\nabla^N g)(\omega) |\lesssim |\xi|^{ 2\alpha-N}.
\end{align}

For $t>\tau(\xi)$, 
Let
\begin{align*}
    A(t,\xi):=\int_{\tau(\xi)}^t\varphi^2_{\Lambda(s)}(\xi)\d s.
\end{align*}
Since $\varphi$ is smooth and identically zero on $\{|x|\leq 1\}$, 
all derivatives of $\varphi$ vanish on $\{|x|=1\}$.
Hence every boundary term generated by differentiating the lower limit $\tau(\xi)$ vanishes at $s=\tau(\xi)$.
Therefore, for every $N\in \mathbb{N}$,
\begin{align*}
     \nabla^N A(t,\xi)&=\int_{\tau(\xi)}^t \nabla^N \big(\varphi^2_{\Lambda(s)}(\xi)\big) \d s.
\end{align*}
It follows that for $N\geq 1$,
 \begin{align}\label{est-N}
\big|\nabla^N A(t,\xi)\big|
&\lesssim \int_{\tau(\xi)}^t \Lambda^{-N}(s)
\sum_{N_{1}+N_{2}=N}\big|(\nabla^{N_{1}}\varphi)(\xi/\Lambda(s))\big|
\big|(\nabla^{N_{2}}\varphi)(\xi/\Lambda(s))\big| \d s\notag\\
&\lesssim |\xi|^{-N}\int_{\tau(\xi)}^{\Lambda^{-1}(|\xi|/2)} \sum_{N_1+N_2=N}
\big|(\xi/\Lambda(s))^N(\nabla^{N_{1}}\varphi)(\xi/\Lambda(s))(\nabla^{N_{2}}\varphi)(\xi/\Lambda(s))\big| \d s\notag\\
&\lesssim |\xi|^{-N}(\Lambda^{-1}(|\xi|/2)-\tau(\xi) ) \notag\\
&= |\xi|^{-N}(2^{\frac1\eta}\tau(\xi)-\tau(\xi))
\lesssim \tau(\xi)|\xi|^{-N}\lesssim  |\xi|^{-N-\frac{1}{\eta}}, 
\end{align} 
where we used \eqref{high-shell-support} to ensure that
\begin{align*}
\Lambda^{-1}(|\xi|/2)\le \frac12,
\end{align*}
so property {\rm (W3)} applies on
\begin{align*}
[\tau(\xi),\Lambda^{-1}(|\xi|/2)]
=[\tau(\xi),2^{\frac1\eta}\tau(\xi)].
\end{align*}
The implicit constant is independent of $t$.

Let
\[
G(t,\xi):=A(t,\xi)(2\pi|\xi|)^{2\alpha}, \qquad
B(\xi):=|\xi|^{2\alpha-\frac1\eta}.
\]
For every multi-index $\beta$ with $|\beta|=N\geq1$, estimates
\eqref{est-f-a} and \eqref{est-N} give
\begin{align}\label{est-G-der}
|\partial_\xi^\beta G(t,\xi)| \lesssim
\bigl(G(t,\xi)+B(\xi)\bigr)|\xi|^{-N}.
\end{align}
Indeed, the $G$ term is the contribution in which no derivative falls on $A$;
every other contribution contains at least one derivative of $A$.

By the Faà di Bruno formula, every term in
$\partial_\xi^\beta e^{-\nu G}$ has the form
\[
e^{-\nu G}
\prod_{j=1}^m\partial_\xi^{\beta_j}G, \qquad
1\leq m\leq N,\quad \beta_j\neq0,\quad
\sum_{j=1}^m\beta_j=\beta.
\]
After applying \eqref{est-G-der}, a term containing $r$ factors of $B$ is bounded by
\[
e^{-\nu G}G^{m-r}B^r|\xi|^{-N}
\lesssim |\xi|^{-N}.
\]
Here $e^{-\nu G}G^{m-r}\lesssim1$, while
$B(\xi)\leq1$ on the high shells because
$2\alpha-\frac1\eta<0$. Consequently,
\begin{align}\label{est-Ne}
|\nabla^Nm(t,\xi)|
\lesssim|\xi|^{-N},
\qquad N\geq1,\quad t>\tau(\xi),
\end{align}
uniformly in $t$.
	
We also note that 
since $\nabla^N \varphi$ is supported in the annulus
$\{x\in \R^d: 1<|x|<2\}$ for $N\geq1$, 
\begin{align}\label{est-phi-2}
    |\nabla^N \varphi^2_{\Lambda(t)}(\xi)|\leq 
    \Lambda^{-N}(t)  \sum_{N_{1}+N_{2}=N}| (\nabla^{N_{1}}\varphi)(\frac{
    \xi}{\Lambda(t)})(\nabla^{N_{2}}\varphi)(\frac{
    \xi}{\Lambda(t)})|\lesssim |\xi|^{-N},\quad N\geq 1,
\end{align}
where the implicit constant is independent of $t$. 

Thus, 
combining \eqref{est-Ne} and \eqref{est-phi-2} we arrive at
\begin{align}\label{est-n-mv}
    |\nabla^N (m(t,\xi)\varphi^2_{\Lambda(t)}(\xi))|&\leq |\nabla^N (m(t,\xi))\varphi^2_{\Lambda(t)}(\xi)|+|m(t,\xi)\nabla^N (\varphi^2_{\Lambda(t)}(\xi))|\notag\\
&\quad+\sum_{N_{1}+N_{2}=N,\, N_{1}N_{2}\neq 0} |\nabla^{N_1} (m(t,\xi))||\nabla^{N_2} (\varphi^2_{\Lambda(t)}(\xi))|\notag\\
    &\lesssim |\xi|^{-N},
\end{align}
for all $t>\tau(\xi)$, 
uniformly in $t$.

Now, applying \eqref{est-n-mv} and the chain rule on the support of $\Psi$, 
we obtain for any multi-index $\beta$ with $|\beta|\leq d+1$,
\begin{align}\label{high-deri-b}
\|\partial^\beta_\zeta b_{n,t}\|_{L^\infty_\zeta}
\lesssim 1, \qquad n\geq n_0,\ \ t>0.
\end{align}
Indeed, if $t\leq \tau(2^n\zeta)$, then $b_{n,t}(\zeta)=0$, while if $t>\tau(2^n\zeta)$, then each term in the chain rule has the form
\[
2^{n|\beta_1|}\big|\nabla^{|\beta_1|}(m(t,\xi)\varphi_{\Lambda(t)}^2(\xi))\big|_{\xi=2^n\zeta}
\big|\partial^{\beta_2}\Psi(\zeta)\big|\lesssim 2^{n|\beta_1|}(2^n)^{-|\beta_1|}\lesssim 1,
\qquad \beta_1+\beta_2=\beta.
\]
Since $\Psi$ is compactly supported, \eqref{high-deri-b} yields
\begin{align}\label{high-shell-bound}
\|b_{n,t}\|_{W^{d+1,1}(\R^d)}\lesssim 1,
\qquad n\geq n_0,\ \ t>0.
\end{align}

Therefore,  
combining \eqref{high-shell-bound}, \eqref{sob-kernel}, \eqref{mvp-l1} and
\eqref{mvp-l1-2}, we derive that
$\mathcal{F}^{-1}(m(t)\varphi^2_{\Lambda(t)}\Psi_n)$
is bounded in $L^1_x$ uniformly with respect to $t$ for all high shells.
Together with the low-shell estimate above and \eqref{est-snv},
we consequently obtain \eqref{St-Lp-bound} and finish the proof.
\end{proof}

Next we prove Theorem \ref{thm-solu-le} concerning the approximate $\Lambda$-Euler equations. 

\medskip 
\paragraph{\bf Proof of Theorem \ref{thm-solu-le}}  
We recall that $\wt u$ has the explicit expression \eqref{def-solu} 
in the Fourier modes. 

Let us first treat the spatial regularity of $\wt u$.
Let $t'>0$ and $t=t'/2$. If $|\xi|>2\Lambda(t)$, then
$\tau(\xi)<t$ and, since $\Lambda$ is decreasing,
\[
\varphi_{\Lambda(r)}(\xi)=1,
\qquad t\leq r\leq t'.
\]
Consequently,
\[
|m(t',\xi)|^2 \leq
e^{-2\nu(t'-t)(2\pi|\xi|)^{2\alpha}} =
e^{-\nu t'(2\pi|\xi|)^{2\alpha}}.
\]
Using also $|m(t',\xi)|\leq1$, we obtain, for every $s\geq0$,
\begin{align}\label{est-u-hs}
\|\widetilde u(t')\|_{\dot H^s_x}^2
&\lesssim \sum_{|\xi|\leq2\Lambda(t)}
|\xi|^{2s}|\widehat{\widetilde u}(t',\xi)|^2
+\sum_{|\xi|>2\Lambda(t)}
|\xi|^{2s}e^{-\nu t'(2\pi|\xi|)^{2\alpha}} |\widehat v_0(\xi)|^2
\notag\\
&\lesssim \left( \Lambda(t'/2)^{2s}
+(t')^{-\frac{s}{\alpha}} \right)\|v_0\|_{L^2_x}^2
\notag\\
&\lesssim \left( 1+(t')^{-\frac{s}{\alpha}} \right)\|v_0\|_{L^2_x}^2.
\end{align}
Indeed, for $0<t'\leq1$, property {\rm (W3)} and
$\eta<1/(2\alpha)$ give
\[
\Lambda(t'/2)^{2s}\lesssim(t')^{-\frac{s}{\alpha}},
\]
whereas $\Lambda(t'/2)$ is uniformly bounded for $t'\geq1$.
Hence, for every $t_0>0$,
\begin{equation}\label{spa-regu}
\|\widetilde u\|_{C([t_0,+\infty);\dot H^s_x)}
\lesssim \left( 1+t_0^{-\frac{s}{2\alpha}}
\right)\|v_0\|_{L^2_x}.
\end{equation}

Regarding the first temporal derivative of $\wt u$, 
by equation \eqref{equa-nse-2} and \eqref{spa-regu}, 
we get
\begin{align}\label{tem-regu-1}
\|\p_t \wt u\|_{C([t,+\infty);\dot{H}^s_x)}
& \leq \nu\|(-\Delta)^\alpha \P^2_{\geq \Lambda(\cdot)} \wt u\|_{C([t,+\infty);\dot{H}^s_x)}
+ \|\div( \wt u\otimes \wt u)\|_{C([t,+\infty);\dot{H}^s_x)}\notag\\
&\lesssim \| \wt u \|_{C([t,+\infty);\dot{H}^{s+2\alpha}_x)}+ \| \wt u\otimes \wt u\|_{C([t,+\infty);{H}^{s+1}_x)}\notag\\
&\lesssim \| \wt u \|_{C([t,+\infty);\dot{H}^{s+2\alpha}_x)}+ \| \wt u \|_{C([t,+\infty);{H}^{s+d}_x)}^2\notag\\
&\lesssim (1+t^{-\frac{s+2\alpha}{2\alpha}})\|v_0\|_{L^2_x}+(1+t^{-\frac{s+d}{\alpha}})\|v_0\|_{L^2_x}^2  \notag \\
&\lesssim (1+ t^{-\frac{s+d}{\alpha}}) (\|v_0\|_{L^2_x}+\|v_0\|_{L^2_x}^2) 
  <\infty
\end{align} 
for any $t>0$, 
where the implicit constants are universal.

We next prove the higher temporal regularity of $\wt u$ directly from \eqref{def-solu}. 
Fix
\begin{align*}
	0<t_0<+\infty, \qquad t_1:=\frac{t_0}{2},
\end{align*}
and split $\wt u=\wt u_{\rm low}+\wt u_{\rm high}$ by
\begin{align*}
	\widehat{\wt u}_{\rm low}(t,\xi)
	&:=1_{\{|\xi|\leq 2\Lambda(t_1)\}}\widehat{\wt u}(t,\xi),\\
	\widehat{\wt u}_{\rm high}(t,\xi)
	&:=1_{\{|\xi|> 2\Lambda(t_1)\}}\widehat{\wt u}(t,\xi).
\end{align*}
Since the set $\{\xi\in \mathbb{Z}^d: |\xi|\leq 2\Lambda(t_1)\}$ is finite, 
it suffices to study each Fourier mode separately for $\wt u_{\rm low}$. 
For every fixed $\xi$, the coefficient in \eqref{def-solu} is smooth on $[t_0,+\infty)$: 
away from $t=\tau(\xi)$ this is immediate, while at $t=\tau(\xi)$ the two pieces match to all orders because $\Lambda$ and $\varphi$ are smooth and $\varphi$ vanishes on $\{|x|\leq 1\}$. 
Hence,
\begin{align}\label{tem-regu-low}
	\wt u_{\rm low}\in C^\infty([t_0,+\infty)\times \mathbb{T}^d).
\end{align}

For the high-frequency part, if $|\xi|>2\Lambda(t_1)$ and $t\in [t_0,+\infty)$, then $\tau(\xi)<t_1$, and, since $\Lambda$ is decreasing,
\begin{align*}
	\varphi_{\Lambda(t)}(\xi)=1,\qquad t\in [t_1,+\infty).
\end{align*}
Therefore, using \eqref{def-solu}, we obtain
\begin{align}\label{def-solu-high}
	\widehat{\wt u}_{\rm high}(t,\xi)
	=e^{-\nu(t-t_1)(2\pi|\xi|)^{2\alpha}}\widehat{\wt u}(t_1,\xi),
	\qquad t\in [t_0,+\infty).
\end{align}
It follows that for any $m\in \mathbb{N}$ and $s\geq 0$,
\begin{align}\label{tem-regu-high}
	\|\p_t^m \wt u_{\rm high}(t)\|_{\dot{H}^s_x}^2
	&\lesssim \sum_{|\xi|>2\Lambda(t_1)} |\xi|^{2s+4\alpha m} e^{-2\nu(t-t_1)(2\pi|\xi|)^{2\alpha}} |\widehat{\wt u}(t_1,\xi)|^2\notag\\
	&\leq C_{m,s,t_0}\sum_{|\xi|>2\Lambda(t_1)} |\widehat{\wt u}(t_1,\xi)|^2\notag\\
	&\leq C_{m,s,t_0}\|v_0\|_{L^2_x}^2,
\end{align}
since $t-t_1\geq t_0/2>0$ on $[t_0,+\infty)$ and $\mathcal{S}(t_1)$ is bounded on $L^2_x$. 
Moreover, \eqref{def-solu-high} shows that $\p_t^m \wt u_{\rm high}\in C([t_0,+\infty);\dot{H}^s_x)$ for all $m\in \mathbb{N}$ and $s\geq 0$.
Combining this with \eqref{tem-regu-low}, we infer that
\begin{align*}
\p_t^m \wt u\in C([t_0,+\infty);H^s_x),
	\qquad m\in \mathbb{N},\ s\geq 0.
\end{align*}
Since $0<t_0<+\infty$ are arbitrary, 
$\wt u$ is smooth for positive times.

The energy balance \eqref{eq-e-2} 
follows easily from the skew-symmetry of the nonlinearity.

Concerning the $L^2_x$ continuity \eqref{s-con-v0},  
for any $\xi\in \mathbb{Z}^d$,  
since $\tau(\xi)\in (0,+\infty]$ and
$\widehat{\wt u}(t,\xi)=\widehat{v}_0(\xi)$ for every $0<t\leq \tau(\xi)$,
we infer that
\begin{align*}
\widehat{\wt u}(t,\xi)-\widehat{v}_0(\xi)\to 0,
\quad {\rm as}\ t\to 0^+.
\end{align*}
Moreover,
\begin{align*}
\big|\widehat{\wt u}(t,\xi)-\widehat v_0(\xi)\big|^2
\leq 4|\widehat v_0(\xi)|^2,
\end{align*}
and
\begin{align*}
\sum_{\xi\in\mathbb{Z}^d} |\widehat v_0(\xi)|^2=\|v_0\|_{L^2_x}^2<\infty.
\end{align*}
Thus, by the Lebesgue dominated convergence theorem we derive 
\begin{align*}
  \| {\wt u}(t)-{v}_0 \|^2_{L^2_x}
  =\sum\limits_{\xi\in \mathbb{Z}^d} 
   |\widehat{\wt u}(t,\xi)-\widehat v_0(\xi)|^2 
   \to 0,\quad \text{as}\quad t\to 0^+. 
\end{align*}
Therefore, 
the proof of Theorem \ref{thm-solu-le} is complete. 
\hfill $\square$

\medskip 
Then, 
we prove Proposition \ref{Prop-R0} 
for the initial Reynolds stress. 

\paragraph{\bf Proof of Proposition \ref{Prop-R0}}  
By \eqref{def-alpha}, we have $2\alpha<1$. Since $\Lambda(t)\geq 2$ by {\rm (W2)},
the Fourier multiplier of $\mathcal{R} (-\Delta)^\alpha \P_{\geq\Lambda(t)}^2$
is bounded by
\[
|\xi|^{2\alpha-1}1_{\{|\xi|\geq \Lambda(t)\}}
\leq \Lambda^{2\alpha-1}(t)\leq 2^{2\alpha-1}\lesssim 1.
\]
Therefore,
\begin{align}  \label{R0-L1}
    \|\mathring{R}_{0}(t)\|_{L^{1}_x} &\lesssim \|\mathcal{R} (-\Delta)^\alpha \P_{\geq\Lambda(t)}^2 \wt u (t)\|_{ L^{2}_x} \lesssim \|\wt u(t)\|_{L^2_x} \leq  C_0\|v_0\|_{L^2_x}, 
\end{align} 
where the last step follows from the $L^2$-boundedness of the propagation operator $\mathcal{S}$, 
and $C_0$ is a universal constant.

Regarding the decay estimate,  
we infer from Lemma~\ref{bd-m}, \eqref{def-solu} and \eqref{r0} that for any $t\in (0,t_0]$, 
\begin{align} \label{est-r0-linfty}
	\|\mathring{R}_{0}(t)\|_{L^{1}_x} &\lesssim \|\mathcal{R} (-\Delta)^\alpha \P_{\geq\Lambda(t)}^2\wt u (t)\|_{ L^1_x} \notag\\
       &\lesssim  \Lambda^{-1+2\alpha}(t)\sum_{n\geq n_*} \| \mathcal{S}(t)\P_{\geq\Lambda(t)}^2\Delta_n v_0\|_{ L^1_x} \notag\\
	       &\lesssim  \Lambda^{-1+2\alpha}(t)\sum_{n\geq n_*}\| \Delta_n v_0 \|_{ L^1_x}, 
	\end{align}
	where $n_* = n_*(t)$ is such that $2^{n_*}\leq \Lambda(t)<2^{n_*+1}$.

Then, using the characterization of intermittency in terms of Bernstein's inequality \eqref{ber-ineq}, we get
\begin{align*}
\Lambda^{-1+2 \alpha}(t) \sum_{n \geq n_*}\left\|\Delta_n v_0\right\|_{L_x^1} & \lesssim \Lambda^{-1+2 \alpha}(t) \sum_{n \geq n_*} 2^{-\left(d-D_{n, 1, \infty}\right) n}\left\|\Delta_n v_0\right\|_{L_x^{\infty}} \\
& \lesssim \Lambda^{-1+2 \alpha}(t) \sum_{n \geq n_*} 2^{-\left(d-D_{n_*, 1, \infty}^{\sharp}\right) n}\left\|\Delta_n v_0\right\|_{L_x^{\infty}} .
\end{align*}
Here we used
$$
D_{n, 1, \infty} \leq D_{n_*, 1, \infty}^{\sharp}, \quad n \geq n_* .
$$
Let
$$
A_*(t):=d-D_{n_*(t), 1, \infty}^{\sharp}+1-\kappa_0 .
$$
Since $D_{n_*(t), 1, \infty}^{\sharp} \leq d$ and $\kappa_0<1$, we have
$$
A_*(t) \geq 1-\kappa_0>0 .
$$
Consequently, by the Cauchy-Schwarz inequality,
\begin{align*}
\Lambda^{-1+2 \alpha}(t) \sum_{n \geq n_*} 2^{- (d-D^{\sharp}_{n_*, 1, \infty}) n}\|\Delta_n v_0\|_{L_x^{\infty}} & =\Lambda^{-1+2 \alpha}(t) \sum_{n \geq n_*} 2^{-A_*(t) n} 2^{(1-\kappa_0) n}\|\Delta_n v_0\|_{L_x^{\infty}} \\
& \lesssim \Lambda^{-1+2 \alpha}(t) 2^{-A_*(t) n_*}(\sum_{n \geq n_*} 2^{2(1-\kappa_0) n}\|\Delta_n v_0\|_{L_x^{\infty}}^2)^{1 / 2} \\
& \lesssim \Lambda^{-2+2 \alpha-(d-D_{n_*, 1, \infty}^{\sharp})+\kappa_0}(t)(\sum_{n \geq n_*} 2^{2(1-\kappa_0) n}\|\Delta_n v_0\|_{L_x^{\infty}}^2)^{1 / 2} .
\end{align*}

Note that, since $v_0\in C^{1-\kappa}_x=B^{1-\kappa}_{\infty,\infty}$ and $\kappa<\kappa_0< 1$, by
the standard Besov embedding,
$B^{1-\kappa}_{\infty,\infty}\hookrightarrow B^{1-\kappa_0}_{\infty,2}$.
Hence
\begin{align}\label{asp-reg}
	\sum_{n=0}^\infty (2^{(1-\kappa_0)n} \|\Delta_n v_0\|_{L^\infty_x})^2 =\|v_0\|_{B^{1-\kappa_0}_{\infty,2}}^2\lesssim \|v_0\|_{C^{1-\kappa}_x}^2< \infty.
\end{align} 

Taking into account the property 
{\rm (W3)} of the wavenumber  
we thus obtain  \eqref{R0-0} 
and finish the proof.   
\hfill $\square$

\begin{remark}
We note that in the case where $v_0\in W^{1,p}_x$, 
estimate \eqref{est-r0-linfty} is replaced by
\begin{align}\label{est-r0-lp}
\|\mathring{R}_0(t) \|_{L_x^1} & \lesssim \Lambda^{-1+2 \alpha}(t) \sum_{n \geq n_*} 2^{- (d-D_{n, 1, p} ) (1-\frac{1}{p} ) n} \|\Delta_n v_0 \|_{L_x^p} \notag\\
& \lesssim \Lambda^{-1+2 \alpha}(t) \sum_{n \geq n_*} 2^{- (d-D_{n_*, 1, p}^{\sharp} ) (1-\frac{1}{p} ) n} \|\Delta_n v_0 \|_{L_x^p}\notag\\
& \lesssim \Lambda^{-2+2 \alpha- (d-D_{n_*, 1, p}^{\sharp} ) (1-\frac{1}{p} )+\frac{\kappa_* C_*}{p}}(t) (\sum_{n \geq n_*} 2^{2 (1-\frac{\kappa_* C_*}{p} ) n} \|\Delta_n v_0 \|_{L_x^p}^2 )^{1 / 2}
\end{align}
in the proof of Proposition \ref{Prop-R0}, where $\kappa_* \in(0, d]$ and $C_* \in\left(0, \frac{1}{4}\right)$ are positive constants given by {\rm (H1)'}.
Indeed, for $n\geq n_*$,
$$
D_{n, 1, p} \leq D_{n_*, 1, p}^{\sharp}
$$ 
and 
$$
(d-D_{n_*, 1, p}^{\sharp})(1-\frac{1}{p})+1-\frac{\kappa_* C_*}{p}>0,
$$
so the last inequality of \eqref{est-r0-lp} follows from the Cauchy-Schwarz inequality, exactly as in the $C^{1-\kappa}$ case.

Moreover, \eqref{asp-reg} is replaced by
\begin{align}\label{asp-reg-3}
\sum_{n=0}^\infty (2^{(1-\frac{\kappa_*C_*}{p})n} \|\Delta_n v_0\|_{L^{p}(\T^d)})^2 \lesssim  \|v_0\|_{W^{1,p}_x}^2< \infty.
\end{align}
\end{remark}

\section{Velocity fields} \label{Sec-Interm-Flow}

In this section, 
we use the Mikado flows introduced in \cite{DS2017} 
to construct the velocity fields at level $q+1$ 
and verify the corresponding inductive estimates. 
The building blocks 
will be indexed by the following three parameters
\begin{equation}\label{larsrp}
\rs := \lambda_{q+1}^{-\frac67},  \quad 
\lambda := \lambda_{q+1},  \quad  \sigma=\lambda_{q+1}^{\frac17}.
\end{equation}

Let us recall from \cite{DS2017} the following Geometric Lemma. 
\begin{lemma} [Geometric Lemma, \cite{DS2017}, Lemma 4.2]
	\label{geometric lem 2}
	There exist a finite set $\mathcal{K} \subseteq \mathbb{S}^{d-1} \cap \mathbb{Q}^d$ and smooth positive functions
    $\gamma_{(k)}: B_{\varepsilon_u}(\Id) \to \mathbb{R}$,
	where $B_{\varepsilon_u}(\Id)$ is the ball of radius $\varepsilon_u(>0)$ centered at the identity
	in the space of $d \times d$ symmetric matrices,
	such that for  $S \in B_{\varepsilon_u}(\Id)$ we have the following identity:
	\begin{equation}
		\label{sym}
		S = \sum_{k \in \mathcal{K}} \gamma_{(k)}^2(S) k \otimes k.
	\end{equation}
\end{lemma}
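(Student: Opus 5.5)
The Geometric Lemma is a statement purely about symmetric matrices, and the plan is to reduce it to linear algebra on the space $\mathcal S^{d}$ of symmetric $d\times d$ matrices together with the implicit function theorem.

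First, we choose a finite set of rational unit vectors $\mathcal K_0\subset \mathbb S^{d-1}\cap\mathbb Q^d$ such that the rank-one matrices $\{k\otimes k\}_{k\in\mathcal K_0}$ form a basis of $\mathcal S^d$, which has dimension $d(d+1)/2$. Natural candidates are the coordinate vectors $e_i$ and the normalized vectors $(e_i+e_j)/|e_i+e_j|$ for $i<j$. The normalization $1/\sqrt2$ is irrational, so we replace these by rational unit vectors such as $(3e_i+4e_j)/5$. The products $e_i\otimes e_i$ span the diagonal part. For the off-diagonal part, $(3e_i+4e_j)\otimes(3e_i+4e_j)$ equals $24(e_i\otimes e_j+e_j\otimes e_i)$ plus diagonal terms, so the off-diagonal directions are also obtained, and the whole family spans $\mathcal S^d$.

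Second, we want the identity $\Id$ to have a decomposition with strictly positive coefficients, since $\gamma_{(k)}$ must be positive. The plain basis may give nonpositive coefficients for $\Id$, so we enlarge the family. We take $\mathcal K=\mathcal K_0\cup\mathcal K_1$, where $\mathcal K_1$ is chosen so that $\Id=\sum_{k\in\mathcal K}c_k\,k\otimes k$ with all $c_k>0$. One way is to add the reflected vectors $(3e_i-4e_j)/5$. Summing $(3e_i+4e_j)\otimes(3e_i+4e_j)$ and $(3e_i-4e_j)\otimes(3e_i-4e_j)$ cancels the off-diagonal terms and leaves the positive diagonal matrix $18e_i\otimes e_i+32e_j\otimes e_j$. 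Adding the $e_i\otimes e_i$ with positive weights then shows that $\Id$ is a positive combination of all the chosen $k\otimes k$. More generally, any positive-definite diagonal matrix is attainable, with some freedom left over to tune the weights.

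Third, we extend this to a neighborhood of $\Id$. For this we use a linear right inverse. Fix the positive weights $c_k$ for $\Id$, and fix a linear map $L:\mathcal S^d\to\R^{\mathcal K}$ with $\sum_k L(A)_k\,k\otimes k=A$; such a map exists because the $k\otimes k$ span $\mathcal S^d$, for instance by solving on the basis $\mathcal K_0$. Define
\[
\gamma_{(k)}^2(S):=c_k+L(S-\Id)_k .
\]
Then the identity \eqref{sym} holds by linearity, since $\sum_k(c_k+L(S-\Id)_k)\,k\otimes k=\Id+(S-\Id)=S$. Because the $c_k$ are strictly positive, we choose $\varepsilon_u$ so small that $|L(S-\Id)_k|<\min_k c_k/2$ on $B_{\varepsilon_u}(\Id)$. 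The right-hand side is then an affine function of $S$ bounded below by a positive constant, so its square root $\gamma_{(k)}$ is smooth and positive.

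The main point requiring care is the rationality constraint combined with strict positivity at $\Id$, which is exactly what the second step handles through the Pythagorean-triple vectors. Smoothness of $\gamma_{(k)}$ and the size of $\varepsilon_u$ are routine once positivity is in hand.
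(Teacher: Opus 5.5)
Your proof is correct and is the standard argument behind this lemma, which the paper simply quotes from \cite{DS2017}: you write $\Id$ as a strictly positive combination of $k\otimes k$ over rational unit directions whose rank-one matrices span the symmetric matrices, perturb the coefficients affinely via a fixed linear right inverse, and take square roots on a small ball. The only blemish is arithmetic: $(3e_i+4e_j)\otimes(3e_i+4e_j)$ contains $12(e_i\otimes e_j+e_j\otimes e_i)$, not $24(e_i\otimes e_j+e_j\otimes e_i)$, which does not affect the spanning claim, and no implicit function theorem is needed since the construction is affine.
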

Let $\{k,k_1,...,k_{d-1}\}$ be an orthonormal basis associated with $k$.
Note that, there exists $N_{\mathcal{K}} \in \mathbb{N}$ such that
\begin{equation*}
	\{ N_{\mathcal{K}} k,N_{\mathcal{K}}k_1,...,
    N_{\mathcal{K}}k_{d-1} \} \subseteq N_{\mathcal{K}} \mathbb{S}^{d-1} \cap \mathbb{Z}^d.
\end{equation*}
If necessary, identify antipodal pairs in $\mathcal K$, preserving \eqref{sym}, by replacing the
coefficients of $k$ and $-k$ with
\begin{align*}
\widetilde\gamma_{(k)}(S)
:=\bigl(\gamma_{(k)}^2(S)+\gamma_{(-k)}^2(S)\bigr)^{1/2}.
\end{align*}
After relabeling, and renaming $\widetilde\gamma$ as $\gamma$, no two distinct elements of
$\mathcal K$ are antipodal. For each $k\in\mathcal K$, let
\begin{align*}
Q_k=
\begin{cases}
\Id-2\dfrac{(e_1-k)\otimes(e_1-k)}{|e_1-k|^2},& k\ne e_1,\\[1ex]
\Id,& k=e_1 .
\end{cases}
\end{align*}
Then $Q_k\in O(d)\cap M_d(\mathbb Q)$ and $Q_ke_1=k$. Let $\{k,k_1,\ldots,k_{d-1}\}$ be the columns of $Q_k$. Since $\mathcal K$ is finite, there exists
$N_{\mathcal K}\in\mathbb N$ such that
\begin{equation}\label{NLambda}
\{N_{\mathcal K}k,N_{\mathcal K}k_1,\ldots,
N_{\mathcal K}k_{d-1}\}
\subset\mathbb Z^d
\qquad\text{for every }k\in\mathcal K.
\end{equation}

In dimension $d=2$, no two distinct directions are parallel, and hence
\begin{align*}
c_{\mathcal K}:=\min_{k\ne k'\in\mathcal K}|k^\perp\cdot k'|>0 .
\end{align*}

Let $M_*$ denote the geometric constant such that
\begin{align}	\label{M bound}
	\sum_{k\in\mathcal K} \norm{\gamma_{(k)}}_{C^4(B_{\varepsilon_u}(\Id))} \leq  M_*.
\end{align}

\subsection{Mikado flows}
Let $\Phi : \mathbb{R}^{d-1} \to \mathbb{R}$ be any smooth cut-off function supported in a ball with center at $1/2$ 
and radius sufficiently small. 
We normalize $\Phi$ such that $\phi := - \Delta\Phi$  
has the unit $L^2_x$-norm 
\begin{equation}\label{e4.91}
\int_{\T^{d-1}} \phi^2(x)\d x = 1.
\end{equation}
Next, we rescale $\phi$ and $\Phi$ by
\begin{equation*}
	\phi_{\rs}(x) := \rs^{-\frac{d-1}{2}}\phi\left(\frac{x}{\rs}\right), \quad
	\Phi_{\rs}(x):=    \rs^{-\frac{d-1}{2}}\Phi\left(\frac{x}{\rs}\right).
\end{equation*}
By an abuse of notation, let $\phi_{\rs}$ and $\Phi_{\rs}$ also denote their
periodic extensions to $\mathbb{T}^{d-1}$. Since $b\in 7\mathbb{N}$, we have
$\sigma=\lambda_{q+1}^{\frac17}\in \mathbb{N}$, and thus \eqref{NLambda}
implies
\begin{equation*}
    \sigma N_{\mathcal K}k_j\in \mathbb{Z}^d,\qquad j=1,\dots,d-1.
\end{equation*}
Hence the functions introduced below are well defined and periodic on
$\mathbb{T}^d$.

The {\it intermittent Mikado flows} are defined by
\begin{equation*}
	W_{(k)}(x) :=  \phi_{\rs} ( \sigma  N_{\mathcal K}k_1\cdot (x-\alpha_k),\dots,\sigma N_{\mathcal K}k_{d-1}\cdot (x-\alpha_k))k,\ \ x\in \T^d,\ \  k\in\mathcal K,
\end{equation*}
where
$\alpha_k\in \mathbb{R}^d$, $k\in\mathcal K$, 
are suitable shifts such that 
$\{W_{(k)}\}$ have mutually
disjoint supports in dimensions $d\geq 3$. The existence of such $\{\a_k\}$ can be guaranteed by taking $\rs$ sufficiently small. 
In the low dimensional case where d=2, 
$\{W_{(k)}\}$ always have intersections, 
and thus, 
we simply take $\a_k\equiv0$ for all $k\in\mathcal K$.

For brevity of notations, let
\begin{equation}\label{snp}
	\begin{array}{ll}
		&\phi_{(k)}(x) := \phi_{\rs} ( \sigma N_{\mathcal K}k_1\cdot (x-\alpha_k),\dots,\sigma N_{\mathcal K}k_{d-1}\cdot (x-\alpha_k)), \\
		&\Phi_{(k)}(x) :=  (\sigma\rs^{-1}N_{\mathcal K})^{-2} \Phi_{\rs} ( \sigma N_{\mathcal K}k_1\cdot (x-\alpha_k),\dots,\sigma  N_{\mathcal K}k_{d-1}\cdot(x-\alpha_k)), 
        \quad x\in \T^d, \ \ k\in\mathcal K.
		\end{array}
\end{equation}
In particular, 
\begin{equation}\label{snwd}
	W_{(k)} = \phi_{(k)} k,\quad  k\in\mathcal K.
\end{equation}

Furthermore, 
to construct the incompressible corrector of velocity  perturbations,
we also use the skew-symmetric potential $W_{(k)}^c$,
defined by
\begin{align}\label{potential vector}
		W_{(k)}^c := \nabla \Phi_{(k)}\otimes k-k\otimes \nabla \Phi_{(k)}, \quad k\in\mathcal K.
	\end{align}
It holds that
\begin{equation}\label{wcwc}
	 \div W_{(k)}^c=W_{(k)}.
\end{equation}

The estimates of building blocks are contained in the following Lemma. 

\begin{lemma} [Estimates of Mikado flows, \cite{S2013}] \label{buildingblockestlemma}
	For any $p \in [1,\infty]$, $k\in\mathcal K$,
 and $N,\,M \in \mathbb{N}$, 
it follows that 
	\begin{align}
		&\left\|\nabla^{N} \phi_{(k)}\right\|_{L^{p}_{x}}+\left\|\nabla^{N} \Phi_{(k)}\right\|_{L^{p}_{x}}
		\lesssim (r_{\perp}^{d-1})^{\frac 1p- \frac12} \lambda^{N},  \label{intermittent estimates2}   
	\end{align}  
and 
	\begin{align}
		&\displaystyle \|\nabla^{N} W_{(k)} \|_{C_t  L^{p}_{x}}+ \sigma\rs^{-1} \|\nabla^{N} W_{(k)}^c  \|_{C_t  L^{p}_{x}}
	 \lesssim (r_{\perp}^{d-1})^{\frac 1p- \frac12} \lambda^{N}.   \label{ew}
	\end{align}
	The implicit constants above are independent of $\rs,\,\lambda$ and $\sigma$.
\end{lemma}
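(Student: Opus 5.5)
The plan is a direct scaling computation: treat $\phi_{(k)}$ and $\Phi_{(k)}$ as compositions of a rescaled profile on $\T^{d-1}$ with a linear map, and track the powers of $\rs$ and $\sigma$. Fix $k\in\mathcal K$ and write $L_k x:=\sigma N_{\mathcal K}(k_1\cdot(x-\alpha_k),\dots,k_{d-1}\cdot(x-\alpha_k))$. Then $\phi_{(k)}=\phi_{\rs}\circ L_k$. Since $N_{\mathcal K}k_j\in\mathbb Z^d$ are mutually orthogonal with common length $N_{\mathcal K}$, the map $x\mapsto L_kx$ sends $\T^d$ onto $\T^{d-1}$ as a measure-preserving covering: its fibers are cosets and its Jacobian structure is uniform, so for every $p$ we get $\|f\circ L_k\|_{L^p(\T^d)}=\|f\|_{L^p(\T^{d-1})}$. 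This first reduces everything to the one-profile estimate on $\T^{d-1}$.

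\textbf{Step 1: derivatives.} By the chain rule, $\nabla^N(f\circ L_k)=(\nabla^N f)\circ L_k$ contracted with $N$ copies of the constant matrix $\sigma N_{\mathcal K}(k_1,\dots,k_{d-1})^T$. Hence
\[
\|\nabla^N(f\circ L_k)\|_{L^p}\lesssim (\sigma N_{\mathcal K})^N\|\nabla^N f\|_{L^p(\T^{d-1})}.
\]

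\textbf{Step 2: the profile.} Because $\phi$ has compact support in a small ball and $\rs\ll1$, the periodization has no overlaps. Changing variables $y=x/\rs$ gives
\[
\|\nabla^N\phi_{\rs}\|_{L^p(\T^{d-1})}=\rs^{-\frac{d-1}{2}}\rs^{-N}\rs^{\frac{d-1}{p}}\|\nabla^N\phi\|_{L^p}\lesssim (\rs^{d-1})^{\frac1p-\frac12}\rs^{-N}.
\]
Combining with Step 1, $\|\nabla^N\phi_{(k)}\|_{L^p}\lesssim (\rs^{d-1})^{\frac1p-\frac12}(\sigma\rs^{-1})^N$. By \eqref{larsrp}, $\sigma\rs^{-1}=\lambda_{q+1}^{1/7+6/7}=\lambda$, which is the claimed $\lambda^N$ bound for $\phi_{(k)}$.

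\textbf{Step 3: the potentials.} For $\Phi_{(k)}$ the same computation applies, now with the extra prefactor $(\sigma\rs^{-1}N_{\mathcal K})^{-2}=N_{\mathcal K}^{-2}\lambda^{-2}$. This gives the sharper bound $\lambda^{N-2}\le\lambda^N$, completing \eqref{intermittent estimates2}. Since $W_{(k)}=\phi_{(k)}k$ with $|k|=1$, the $W_{(k)}$ part of \eqref{ew} is immediate. For $W^c_{(k)}$, its entries are components of $\nabla\Phi_{(k)}$, so
\[
\|\nabla^NW^c_{(k)}\|_{L^p}\lesssim\|\nabla^{N+1}\Phi_{(k)}\|_{L^p}\lesssim(\rs^{d-1})^{\frac1p-\frac12}\lambda^{N-1}.
\]
Multiplying by $\sigma\rs^{-1}=\lambda$ yields the second term of \eqref{ew}. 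Time independence makes the $C_t$ norm trivial.

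\textbf{Main obstacle.} The only point needing care is the measure-preserving claim for $L_k$, which underlies Step 1. I would justify it by noting that the Fourier coefficients of $f\circ L_k$ are those of $f$ relabeled via the injective integer map $m\mapsto \sigma N_{\mathcal K}\sum_j m_jk_j$, shifted by a phase. For $p=2$ this gives the identity directly. For general $p$, I would instead decompose $\T^d$ into fibers of the linear surjection; since the matrix rows are orthogonal with equal length, the pushforward of Lebesgue measure on $\T^d$ is Lebesgue measure on $\T^{d-1}$. All implicit constants then depend only on $\phi,\Phi,N,p,d,N_{\mathcal K}$.
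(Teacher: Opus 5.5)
Your proposal is correct. It is the standard scaling computation behind this estimate; the paper gives no proof of its own and only cites the reference. The steps are: reduce to the profile on $\T^{d-1}$ through the affine map $L_k$, rescale $\phi_{\rs},\Phi_{\rs}$, and use $\sigma\rs^{-1}=\lambda$ together with the prefactor $(\sigma\rs^{-1}N_{\mathcal K})^{-2}$ for $\Phi_{(k)}$.

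One small correction to your justification: $L_k$ preserves measure not because its rows are orthogonal, but because $\sigma N_{\mathcal K}(k_1,\dots,k_{d-1})^T$ is an integer matrix of rank $d-1$. Hence $\int_{\T^d}e^{-2\pi i m\cdot L_k x}\,\d x=0$ for every $0\neq m\in\mathbb Z^{d-1}$, so the induced surjective torus homomorphism pushes Lebesgue measure on $\T^d$ to Lebesgue measure on $\T^{d-1}$.
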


In the 2D case,
as mentioned above, 
the Mikado flows have intersections with each other, 
which will affect the Reynolds stresses. 
The key point here is that the intersections 
have much smaller spatial supports. 
This is important to construct the dissipative energy solutions 
in the 2D case.

\begin{lemma}[Intersection estimate]  \label{product estimate lemma} 
In the 2D case where $d=2$, 
for every $k \neq k'\in \mathcal K$ and $p \in [1, \infty]$, we have
\begin{equation}  \label{intersect-phik1-phik1'}
\|\phi_{(k)}\phi_{(k')} \|_{C_tL^p_x}\lesssim \rs^{\frac{2}{p}-1},
\end{equation}
where the implicit constant is independent of $q$.
\end{lemma}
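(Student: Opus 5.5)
The idea is to exploit the product structure in two dimensions: each $\phi_{(k)}$ depends only on one linear form of $x$. For two non-parallel directions, the two linear forms together define a measure-preserving covering map of $\T^2$, under which the product $\phi_{(k)}\phi_{(k')}$ becomes a tensor product $\phi_{\rs}(y_1)\phi_{\rs}(y_2)$. Its $L^p$ norm then factorizes.

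\emph{Reduction to one-variable functions.} Since $d=2$ and $\alpha_k\equiv 0$, definition \eqref{snp} gives $\phi_{(k)}(x)=\phi_{\rs}(\sigma N_{\mathcal K}k_1\cdot x)$. Here $\phi_{\rs}$ is the $1$-periodic extension of $\rs^{-1/2}\phi(\cdot/\rs)$, and $k_1=\pm k^\perp$. The functions are time independent, so the $C_t$ norm is just the spatial norm.

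\emph{The change of variables.} For fixed $k\neq k'\in\mathcal K$, consider the $2\times 2$ matrix $A$ with rows $\sigma N_{\mathcal K}k_1$ and $\sigma N_{\mathcal K}k'_1$.
\begin{itemize}
\item By \eqref{NLambda} and $\sigma\in\mathbb N$, the matrix $A$ has integer entries, so $x\mapsto Ax$ is a well-defined map $\T^2\to\T^2$.
\item Since no two elements of $\mathcal K$ are equal or antipodal, $k$ and $k'$ are not parallel. Hence $|\det A|=(\sigma N_{\mathcal K})^2|k^\perp\cdot k'|\ge(\sigma N_{\mathcal K})^2c_{\mathcal K}>0$.
\end{itemize}
A nonsingular integer matrix induces a surjective group endomorphism of $\T^2$ that is a $|\det A|$-to-one covering with constant Jacobian $|\det A|$. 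It therefore preserves Haar measure: $\int_{\T^2}F(Ax)\,\d x=\int_{\T^2}F(y)\,\d y$ for any integrable $F$. This can be checked directly on characters $e^{2\pi i \xi\cdot y}$ (both sides vanish unless $\xi=0$, since $A^T\xi\neq 0$ for $\xi\neq0$), and then extended by density.

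\emph{Conclusion for $p<\infty$.} Apply the measure-preservation identity with $F(y)=|\phi_{\rs}(y_1)|^p|\phi_{\rs}(y_2)|^p$:
\begin{align*}
\|\phi_{(k)}\phi_{(k')}\|_{L^p_x}^p
=\int_{\T^2}|\phi_{\rs}(y_1)|^p|\phi_{\rs}(y_2)|^p\,\d y
=\|\phi_{\rs}\|_{L^p(\T)}^{2p}.
\end{align*}
By scaling, for $\rs$ small the support of $\phi(\cdot/\rs)$ lies in one period, so
\begin{align*}
\|\phi_{\rs}\|_{L^p(\T)}=\rs^{\frac1p-\frac12}\|\phi\|_{L^p}.
\end{align*}
Taking $p$-th roots gives $\|\phi_{(k)}\phi_{(k')}\|_{L^p_x}\lesssim \rs^{\frac2p-1}$, with implicit constant $\|\phi\|_{L^p}^2$, independent of $q$.

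\emph{The case $p=\infty$.} Directly, $\|\phi_{(k)}\phi_{(k')}\|_{L^\infty}\le\|\phi_{\rs}\|_{L^\infty}^2\lesssim\rs^{-1}$, which is the claimed bound $\rs^{2/p-1}$ at $p=\infty$.

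\emph{Main obstacle.} The only delicate step is justifying that $x\mapsto Ax$ is measure-preserving on $\T^2$. This needs two facts: the integrality of $\sigma N_{\mathcal K}k_1$, and the non-parallelism of the directions. The latter is guaranteed by the earlier step that removed antipodal pairs from $\mathcal K$.
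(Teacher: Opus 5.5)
Your proof is correct, but it takes a different route from the paper.

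The paper argues by size and support. It first bounds $\|\phi_{(k)}\phi_{(k')}\|_{C_x}\lesssim r_\perp^{-1}$. It then counts, in each cell of side $\sigma^{-1}$, at most $N_{\mathcal K}$ strips of width $\sim\sigma^{-1}r_\perp$ per direction, and uses the transversality bound $|k^\perp\cdot k'|\ge c_{\mathcal K}$ to show that each pairwise strip intersection has area $\lesssim(\sigma^{-1}r_\perp)^2$. Summing over the $\sigma^2$ cells gives $|\mathrm{supp}(\phi_{(k)}\phi_{(k')})|\lesssim r_\perp^2$, and H\"older finishes.

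You instead use the integer matrix $A$ with rows $\sigma N_{\mathcal K}k_1$ and $\sigma N_{\mathcal K}k_1'$. You pull the product back to the tensor product $\phi_{r_\perp}(y_1)\phi_{r_\perp}(y_2)$ through a Haar-measure-preserving endomorphism of $\mathbb T^2$, which gives the exact identity $\|\phi_{(k)}\phi_{(k')}\|_{L^p_x}=r_\perp^{\frac2p-1}\|\phi\|_{L^p}^2$.

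What your route buys:
\begin{itemize}
\item It yields an equality rather than an upper bound.
\item The constant is independent of $N_{\mathcal K}$ and $c_{\mathcal K}$. You only need $\det A\neq 0$, i.e.\ non-parallelism, not the quantitative lower bound on $|\det A|$ that you also record.
\item It replaces the somewhat informal strip counting with a clean check on characters.
\item As a byproduct it recovers the paper's support bound with an exact constant: the set where the product is nonzero has measure $|\{\phi_{r_\perp}\neq0\}|^2\sim r_\perp^2$.
\end{itemize}

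What the paper's route buys: it uses only $L^\infty$ size, support geometry and transversality. It therefore does not depend on the product factoring exactly through an integer linear map, and it would survive modifications of the building blocks for which no such measure-preserving change of variables is available.
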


\begin{proof}
Let us start with the $L^\infty$-estimate.
By \eqref{snp}, 
it is clear that 
\begin{equation}   \label{inter-psik-psik'-bdd}
\|\phi_{(k)}\phi_{(k')}\|_{C_{x}}\lesssim \rs^{-1}.
\end{equation}

Next, we estimate the volume of the support of $\phi_{(k)}\phi_{(k')}$.
As $ \phi_{(k)}\phi_{(k')}$ is $(\mathbb{T}/\sigma)^2$-periodic,
it suffices to consider the support on a 
small square with side length $\sigma^{-1}$ and then multiply the result by $\sigma^2$.

In each such square, $\operatorname{supp}\phi_{(k)}$ is contained in at most
$N_{\mathcal K}$ strips of width $\sim\sigma^{-1}r_\perp$ and length
$\sim\sigma^{-1}$, and similarly for $k'$. By
$|k^\perp\cdot k'|\ge c_{\mathcal K}$, the intersection of any two such strips has area
$\lesssim(\sigma^{-1}r_\perp)^2$.

As there are at most $N_{\mathcal{K}}$ thin rectangles in the support of $\phi_{(k)}$,
the number of intersections for two distinct rectangles is at most $N_{\mathcal{K}}^2$.
Thus, the area of the support of $ \phi_{(k)}\phi_{(k')}$ is bounded by
\begin{equation} \label{psupp}
\abs{\supp(\phi_{(k)}\phi_{(k')})}
     \lesssim (\sigma^{-1}\rs)^2\sigma^2\lesssim \rs^2.
\end{equation}

Thus, using \eqref{inter-psik-psik'-bdd}  and \eqref{psupp} 
we get
\begin{align}
\|\phi_{(k)}\phi_{(k')}\|_{C_tL^p_x}
 \leq& |\supp\,(\phi_{(k)}\phi_{(k')})|^{\frac 1p}\,
       \| \phi_{(k)}\phi_{(k')}\|_{C_{x}} 
 \lesssim  \rs^{\frac 2p -1},
\end{align}
which yields \eqref{intersect-phik1-phik1'} and finishes the proof.
\end{proof}

\subsection{Velocity perturbations}   \label{Sec-Pert}
We are now in stage to define the velocity  perturbations, which consist of two components: the principal part and the incompressibility corrector. 

\medskip
\paragraph{\bf Principal part.} 
We begin with the amplitude function $a_{(k)}$ 
of the velocity flows. 
We mollify the Reynolds errors in space and time  
by 
\begin{equation}\label{mol}
	\begin{array}{ll}
		\mathring{R}_{\ell_q}:= (\ru *_{x} \phi_{\ell_q}) *_{t} \varphi_{\ell_q}, 
	\end{array}
\end{equation}
where $\phi_{\varepsilon}$ is  a family of standard mollifiers on $\T^d$, supported on a ball of radius $\ve$ in $\T^d$ , 
$\varphi_{\varepsilon}$ is a family of mollifiers on $\R$ supported on $[0,\ve]$, 
$0<\ve<1$, 
and $\ell_q$ is the mollification parameter given by \eqref{la}.

Let
\begin{align}
&	\rho (t,x) :=   2 \varepsilon_u^{-1} (\ell_q^2+|\mathring{R}_{\ell_q}(t,x) |^2)^{\frac12}
 ,\label{rhob}\\
 &	\gamma_{q}(t):=  \frac1d (e -f_q   - \|u_q \|_{L^2_x}^2 )*_{t}  \varphi_{\ell_q}(t), \label{def-gamma}
\end{align}
where $\varepsilon_{u}$ is given by 
Geometric Lemma \ref{geometric lem 2} and 
$f_q$ is the temporal step function
\begin{align}\label{def-fq}
f_q(t):= \frac34\delta_{q+3}\, 1_{[T_{q+2}+\ell_q,T_{q+1})}(t)
+\frac12\delta_{q+2}\, 1_{[T_{q+1},+\infty)}(t).
\end{align}

We define the amplitude of velocity perturbation by
\begin{equation}\label{akb}
	a_{(k)}(t,x):= (\rho (t,x)+\gamma_{q} (t))^{\frac{1}{2} }\gamma_{(k)}
       \left(\Id-\frac{\mathring{R}_{\ell_q}(t,x)}{\rho(t,x)+\gamma_{q} (t)}\right), \quad k\in\mathcal K,
\end{equation}
where $\gamma_{(k)}$ and $\mathcal K$ are as in Geometric Lemma~\ref{geometric lem 2}.

The analytic estimates of the amplitudes 
are summarized in Lemma~\ref{mae} below.  

\begin{lemma} [Estimates of amplitudes] \label{mae}
For every $N\geq 1$, $k\in\mathcal K$ and $t\in [0,+\infty)$, we have
	\begin{align}
		&\norm{a_{(k)}}_{C_tL^2_{x}} \lesssim \delta_{q+1}^{\frac12},\label{est-akbl2}\\
			&\norm{ a_{(k)} }_{C_{t,x} } \lesssim  \ell_q^{-\frac{d+1}{2}} , \label{est-akbc}\\
		& \norm{ a_{(k)} }_{C_{t,x}^N} \lesssim  \ell_q^{-(d+5)(N+3)}. \label{est-akbn}
	\end{align}
\end{lemma}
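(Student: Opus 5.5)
The plan is to estimate $\rho$ and $\gamma_q$ separately and then feed them into the smooth map $S\mapsto\gamma_{(k)}(S)$ from the Geometric Lemma~\ref{geometric lem 2}.

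\emph{Well-definedness.} By construction $\rho\ge 2\varepsilon_u^{-1}|\mathring R_{\ell_q}|$. Hence, as long as $\gamma_q\ge 0$,
\[
\Bigl|\frac{\mathring R_{\ell_q}}{\rho+\gamma_q}\Bigr|\le \frac{\varepsilon_u}{2},
\]
so the argument of $\gamma_{(k)}$ lies in $B_{\varepsilon_u}(\Id)$. Nonnegativity of $\gamma_q$ comes from the inductive energy bound \eqref{energy-est}. On $[T_{q+1},\infty)$ we have $e-\|u_q\|^2_{L^2_x}\ge \tfrac12\delta_{q+2}=f_q$. On the earlier layer the same bound holds with $\tfrac34\delta_{q+3}$, using \eqref{asp-e-q} together with \eqref{rq=r0}, which gives $u_q=u_0$ there. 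The mollification in time only shifts by $\ell_q$, and this shift is absorbed by the choice of the supports in \eqref{def-fq}. The upper bound \eqref{energy-est} also gives $0\le\gamma_q\le 2\delta_{q+1}$.

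\emph{$L^2$ bound \eqref{est-akbl2}.} Since $\gamma_{(k)}$ is bounded by $M_*$,
\[
|a_{(k)}|^2\lesssim \rho+\gamma_q\lesssim \ell_q+|\mathring R_{\ell_q}|+\delta_{q+1}.
\]
Integrating in $x$ and using that mollification is bounded on $L^1$, we get
\[
\|a_{(k)}\|_{L^2_x}^2\lesssim \ell_q+\|\mathring R_q\|_{C_tL^1_x}+\delta_{q+1}\lesssim \ell_q+c_*\delta_{q+2}+\delta_{q+1}.
\]
Here we used \eqref{rl1s}. Since $\ell_q\ll\delta_{q+1}$ by \eqref{la} and the choice of $a$, this yields \eqref{est-akbl2}.

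\emph{Sup bound \eqref{est-akbc}.} Mollification at scale $\ell_q$ in $d$ space variables and one time variable gives
\[
\|\mathring R_{\ell_q}\|_{C_{t,x}}\lesssim \ell_q^{-d-1}\|\mathring R_q\|_{C_tL^1_x}\lesssim \ell_q^{-d-1}.
\]
Therefore $|a_{(k)}|\lesssim(\ell_q^{-d-1}+\delta_{q+1})^{1/2}\lesssim \ell_q^{-(d+1)/2}$.

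\emph{Derivative bound \eqref{est-akbn}.} This is the routine but most bookkeeping-heavy step, and I would handle it with the Faà di Bruno formula. Each derivative of $\mathring R_{\ell_q}$ costs one extra factor $\ell_q^{-1}$, so
\[
\|\mathring R_{\ell_q}\|_{C^N_{t,x}}\lesssim \ell_q^{-d-1-N}.
\]
For $\gamma_q$, time derivatives fall on the mollifier and give $|\partial_t^N\gamma_q|\lesssim \ell_q^{-N}\sup(e+\|u_q\|^2_{L^2_x})\lesssim\ell_q^{-N}$. The denominator is controlled from below by $\rho+\gamma_q\ge 2\varepsilon_u^{-1}\ell_q$. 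Consequently each derivative of $(\rho+\gamma_q)^{1/2}$, of $\rho$, or of the quotient $\mathring R_{\ell_q}/(\rho+\gamma_q)$ costs at most a polynomial factor. Concretely, one factor of $\ell_q^{-1}$ comes from each inverse power of $\rho+\gamma_q$, and a factor $\ell_q^{-d-2}$ comes from each differentiated numerator. The $C^4$ bound \eqref{M bound} on $\gamma_{(k)}$ handles the composition with $\gamma_{(k)}$, at least for the derivative orders used later. Collecting the powers crudely, the total loss per derivative is at most $\ell_q^{-(d+5)}$, which gives the exponent $(d+5)(N+3)$ with room to spare.

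The only genuinely delicate point is the nonnegativity and $\delta_{q+1}$-size of $\gamma_q$ near the backward times, where $f_q$ jumps. Everything else reduces to standard mollifier estimates.
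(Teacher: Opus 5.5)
Your proposal is correct and follows the standard route the paper relies on; the paper itself states Lemma~\ref{mae} without proof. The ingredients match that route: nonnegativity of $\gamma_q$ from \eqref{energy-est}, \eqref{asp-e-q} and \eqref{rq=r0}; the lower bound $\rho+\gamma_q\ge 2\varepsilon_u^{-1}\ell_q$; the $L^1$ bound \eqref{rl1s} for the $L^2$ estimate; and mollifier plus Fa\`a di Bruno bounds for the sup and $C^N$ estimates.

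Two things should be made explicit. First, to cover \emph{every} $N$ and not only $N\le 4$, use that $\Id-\mathring R_{\ell_q}/(\rho+\gamma_q)$ stays in the closed ball of radius $\varepsilon_u/2$ about $\Id$, on which the smooth $\gamma_{(k)}$ has bounded derivatives of all orders. Second, in the bookkeeping, pair each undifferentiated $\mathring R_{\ell_q}$ with one factor $(\rho+\gamma_q)^{-1}$ via $|\mathring R_{\ell_q}|\le\frac{\varepsilon_u}{2}(\rho+\gamma_q)$; bounding it by $\ell_q^{-d-1}$ instead makes each derivative of the quotient cost $\ell_q^{-(2d+5)}$, and then the exponent $(d+5)(N+3)$ fails for large $N$.
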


Now, we define the principal part of the velocity perturbations by
	\begin{align}\label{pv}
		w_{q+1}^{(p)} &:= \sum_{k\in\mathcal K} a_{(k)}W_{(k)}.
	\end{align}
In view of Geometric Lemma \ref{geometric lem 2}, 
we have the algebraic identity 
\begin{align}  \label{vel oscillation cancellation calculation}
	 w_{q+ 1}^{(p)} \otimes  w_{q+ 1}^{(p)}+ \mathring{R}_{\ell_q}
	=& (\rho+\gamma_{q}) \Id
	 +\sum_{k\in\mathcal K} a_{(k)}^2\P_{\neq0}( W_{(k)}\otimes W_{(k)}) \notag\\
	&\quad + \sum_{k\neq k'\in\mathcal K} a_{(k)}a_{(k')}W_{(k)}\otimes W_{(k')}.
\end{align}
The last sum vanishes in dimensions $d\geq 3$ because of the disjointness of
the supports of Mikado flows. In the 2D case, it is the intersection
contribution that will later be absorbed into $\mathring{R}_{osc.3}$.

\paragraph{\bf Incompressibility corrector} 
In order to maintain the divergence free property of 
relaxed solutions, 
we also introduce the incompressible corrector, defined by
	\begin{align}  \label{wqc}
		w_{q+1}^{(c)}
		&:=   \sum_{k\in\mathcal K } W^c_{(k)} \nabla a_{(k)}  ,
	\end{align}
where $W^c_{(k)}$ is the potential given by  \eqref{potential vector}, $k\in\mathcal K$.
Note that, by \eqref{wcwc},
	\begin{align} \label{div free velocity}
		&  w_{q+1}^{(p)} + w_{q+1}^{(c)}
		=\div\left(  \sum_{k\in\mathcal K} a_{(k)}W^c_{(k)} \right).
	\end{align}
By virtue of the fact that $a_{(k)}W^c_{(k)}$ is skew-symmetric for all $k\in\mathcal K$, we have
\begin{align} \label{div-wpc-dpc-0}
	\div (w_{q+1}^{(p)} + w_{q +1}^{(c)})= 0.
\end{align}

\medskip 
\paragraph{\bf Velocity fields} Now we define the velocity
perturbations at level $q+1$. Note that, in view of \eqref{est-r0-0}, the initial Reynolds error already satisfies the inductive estimate \eqref{rl1s} before time $T_{q+1}$. This allows us to add perturbations only on the outer temporal interval $[T_{q+1},+\infty)$ to balance the errors. In order to maintain the smoothness of approximate solutions, we use the smooth cut-off function $\chi_{q+1}$ such that
\begin{align}  \label{chiq-def}
	\chi_{q+1}(t)=\begin{cases}
		1,\quad t\in [T_{q+1},+\infty),\\
		0,\quad t\in [0,\frac{T_{q+2}+T_{q+1}}{2}],\\
	\end{cases}
\end{align}
and for  $0\leq N\leq 2$ 
\begin{align}\label{est-chiq}
\|	\chi_{q+1}\|_{C^N_t}\lesssim \ell_{q}^{-N}.
\end{align}

The velocity perturbation  $w_{q+1}$ at level $q+1$ is now defined by	
\begin{align}
w_{q+1} := \wt w_{q+1}^{(p)} +\wt  w_{q+1}^{(c)}, 
\end{align}
where
\begin{align}
	\wt w_{q+1}^{(p)}:= \chi_{q+1}w_{q+1}^{(p)}, \quad \wt w_{q+1}^{(c)} :=\chi_{q+1} w_{q+1}^{(c)} 
\end{align}  
denote the temporal truncated versions of the principal and incompressible components, respectively. 
Hence, it follows that 
\begin{align}
	w_{q+1} &:= \chi_{q+1}w_{q+1}^{(p)}+\chi_{q+1} w_{q+1}^{(c)}.
		\label{velocity perturbation}
	\end{align}

The velocity field at level $q+1$ is thus defined by
\begin{align}  \label{q+1 iterate}
		  u_{q+1}:= u_{q} + w_{q+1}.
\end{align}

Lemma \ref{totalest} below provides the  analytic estimates of the velocity perturbations.

\begin{lemma}  [Estimates of velocity perturbations] \label{totalest}
	For any $\rho \in[1,\9]$, $0\leq N\leq 1$, we have 
	\begin{align}
		&\norm{ \na^N w_{q+1}^{(p)}}_{C_tL^\rho_x } \lesssim \ell_q^{-\frac{d+1}{2}}  \lambda^N(\rs^{d-1} )^{\frac{1}{\rho}-\frac12} ,\label{uprinlp}\\
		&\norm{ \na^N w_{q+1}^{(c)} }_{C_tL^\rho_x  } \lesssim \ell_q^{-4(d+5)}  \lambda^{N-1} ( \rs^{d-1}   )^{\frac{1}{\rho}-\frac12},   \label{divcorlp}
	\end{align}
and 
\begin{align}
& \norm{ w_{q+1}^{(p)} }_{C_{t,x}^1 }  + \norm{ w_{q+1}^{(c)} }_{C_{t,x}^1 }
\lesssim \lambda^{d+1} .\label{principal c1 est}
\end{align}
\end{lemma}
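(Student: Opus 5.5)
The plan is to prove the three estimates of Lemma~\ref{totalest} directly from the product structure $w^{(p)}_{q+1}=\sum_k a_{(k)}W_{(k)}$ and $w^{(c)}_{q+1}=\sum_k W^c_{(k)}\nabla a_{(k)}$. We combine the amplitude bounds of Lemma~\ref{mae} with the building block bounds of Lemma~\ref{buildingblockestlemma}, using the parameter choices \eqref{larsrp} and $\ell_q=\lambda_q^{-30d}$ from \eqref{la}. The sums over $k\in\mathcal K$ are finite, so it suffices to treat one $k$ at a time.

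\textbf{Principal part, \eqref{uprinlp}.} For $N=0$ we use H\"older's inequality with the amplitude in $L^\infty$:
\[
\|a_{(k)}W_{(k)}\|_{C_tL^\rho_x}\le \|a_{(k)}\|_{C_{t,x}}\|W_{(k)}\|_{C_tL^\rho_x}.
\]
We then invoke \eqref{est-akbc} and \eqref{ew}. For $N=1$ the Leibniz rule gives
\[
\nabla(a_{(k)}W_{(k)})=\nabla a_{(k)}\otimes W_{(k)}+a_{(k)}\nabla W_{(k)}.
\]
The second term is bounded by $\ell_q^{-\frac{d+1}{2}}\lambda(\rs^{d-1})^{\frac1\rho-\frac12}$, again by \eqref{est-akbc} and \eqref{ew}. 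The first term is bounded by $\ell_q^{-4(d+5)}(\rs^{d-1})^{\frac1\rho-\frac12}$, using \eqref{est-akbn} with $N=1$. Since $\ell_q^{-4(d+5)}=\lambda_q^{120d(d+5)}\ll \lambda_{q+1}^{1/2}$ once $b$ is large as in \eqref{b}, this term is absorbed into the main term with the extra factor $\lambda$.

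\textbf{Corrector, \eqref{divcorlp}.} For $N=0$ we write
\[
\|W^c_{(k)}\nabla a_{(k)}\|_{L^\rho_x}\le\|a_{(k)}\|_{C^1_{t,x}}\|W^c_{(k)}\|_{L^\rho_x}.
\]
By \eqref{ew}, $\|W^c_{(k)}\|_{L^\rho_x}\lesssim \sigma^{-1}\rs(\rs^{d-1})^{\frac1\rho-\frac12}$. By \eqref{larsrp}, $\sigma^{-1}\rs=\lambda^{-1}$, and \eqref{est-akbn} provides $\ell_q^{-4(d+5)}$. For $N=1$ we distribute one derivative. Either it falls on $W^c_{(k)}$, gaining a factor $\lambda$ by \eqref{ew}, or it falls on $\nabla a_{(k)}$, producing $\|a_{(k)}\|_{C^2}\lesssim\ell_q^{-5(d+5)}$. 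The latter term carries the factor $\lambda^{-1}$, so it is dominated by the allowed $\ell_q^{-4(d+5)}\lambda^0$, again because $\ell_q^{-(d+5)}\ll\lambda_{q+1}$. If necessary, this is where I would slightly enlarge the power of $\ell_q$ as allowed by the implicit constant conventions.

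\textbf{$C^1_{t,x}$ bound, \eqref{principal c1 est}.} We take $\rho=\infty$ in the above and also bound time derivatives, which fall only on $a_{(k)}$ because the Mikado flows are time independent. This gives
\[
\|w^{(p)}_{q+1}\|_{C^1_{t,x}}+\|w^{(c)}_{q+1}\|_{C^1_{t,x}}\lesssim \ell_q^{-5(d+5)}\lambda(\rs^{d-1})^{-\frac12}=\ell_q^{-5(d+5)}\lambda^{1+\frac{3(d-1)}{7}}.
\]
Since $1+\frac{3(d-1)}{7}<d$ and $\ell_q^{-5(d+5)}\le\lambda_{q+1}^{1/7}$ by the choice of $b$, the right-hand side is at most $\lambda^{d+1}$.

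\textbf{Main obstacle.} The only nontrivial point is the bookkeeping that powers of $\ell_q^{-1}$ are absorbed by small powers of $\lambda_{q+1}=\lambda_q^b$. This is exactly where the largeness condition \eqref{b} on $b$ (together with $a$ large) is used. The rest is routine Leibniz and H\"older estimates.
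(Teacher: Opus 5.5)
Your proposal is correct and follows the paper's proof: Leibniz plus H\"older, with Lemma~\ref{mae} for the amplitudes and \eqref{ew} for the building blocks (using $\sigma^{-1}\rs=\lambda^{-1}$), and the lower-order Leibniz terms absorbed via $\ell_q^{-(d+5)}\ll\lambda_{q+1}$, which follows from \eqref{la}--\eqref{b}. Your explicit separation of the two Leibniz terms is cleaner than the paper's bookkeeping. The hedge about enlarging the power of $\ell_q$ is unnecessary, since the absorption you describe already closes the $N=1$ corrector bound with exactly $\ell_q^{-4(d+5)}$.
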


\begin{proof}
Applying Lemmas \ref{buildingblockestlemma} and \ref{mae}, for any $\rho \in[1,\9]$, we get
\begin{align}\label{uplp}
\norm{\nabla^N w_{q+1}^{(p)} }_{C_tL^\rho_x }  \lesssim& \, \sum_{k\in\mathcal K }
\sum\limits_{N_1+N_2 = N}
\|a_{(k)}\|_{C_tC^{N_1}_{x}}
\norm{ \nabla^{N_2}  W_{(k)} }_{C_tL^\rho_x } \notag\\
\lesssim & \, \sum\limits_{N_1+N_2 = N>0}\ell_q^{-(d+5)(N_1+3)}\lambda^{N_2}(  \rs^{d-1} )^{\frac{1}{\rho}-\frac12}+ \ell_q^{-\frac{d+1}{2}}  \lambda^N (\rs^{d-1} )^{\frac{1}{\rho}-\frac12}    \notag\\
\lesssim & \, 	\ell_q^{-\frac{d+1}{2}}  \lambda^N (\rs^{d-1} )^{\frac{1}{\rho}-\frac12} 
\end{align}
and 
\begin{align} \label{lp vel corrector estimate}
 \norm{\na^N w_{q+1}^{(c)}}_{C_tL^\rho_x}
	 \lesssim&
	\sum\limits_{k\in\mathcal K } \sum_{N_1+N_2=N}
         \norm{ \nabla^{N_1+1} a_{(k)} }_{C_{t,x}} \norm{\na^{N_2} W^c_{(k)}}_{C_tL^{\rho}_x  }
	   \nonumber   \\
       \lesssim & \, \sum\limits_{N_1+N_2 = N}\ell_q^{-(d+5)(N_1+4)}\lambda^{N_2-1}(  \rs^{d-1} )^{\frac{1}{\rho}-\frac12}\notag\\
	 \lesssim&  \ell_q^{-4(d+5)}  \lambda^{N-1} (  \rs^{d-1})^{\frac{1}{\rho}-\frac12}.
\end{align}
	
Regarding the $C^1$-estimate \eqref{principal c1 est}, using Lemmas \ref{buildingblockestlemma} and \ref{mae} again we get
\begin{align} \label{wprincipal c1 est}
	\norm{ w_{q+1}^{(p)} }_{C_{t,x}^1}
	\lesssim&  \, \sum_{k\in\mathcal K }
	\|a_{(k)}\|_{C_{t,x}^{1} }\norm{   W_{(k)}  }_{C_{x}^{1}} \lesssim \ell_q^{-4(d+5)} \lambda^{\frac{3}{7}d+\frac47}
    \lesssim \lambda^{d+1} ,
  \end{align}
and, similarly,
\begin{align} \label{uc c1 est}
	\norm{ w_{q+1}^{(c)} }_{C_{t,x}^1 }
	& \lesssim   \sum_{k\in\mathcal K }
	\left\| W^c_{(k)} \nabla a_{(k)} \right\|_{C_{t,x}^1 }  \lesssim \lambda^{d+1}.
\end{align}

Thus, combining \eqref{wprincipal c1 est}-\eqref{uc c1 est} 
we verify \eqref{principal c1 est} and finish the proof.
\end{proof}

\subsection{Inductive estimates} \label{Subsec-induc-vel-mag} Below we verify the inductive estimates \eqref{ul2}, \eqref{uc} and \eqref{u-B-L2tx-conv} of the velocity fields 
at level $q+1$. 
Let us recall the following Lemma from \cite{cl21}.

\begin{lemma}[$L^p$ decorrelation \cite{cl21}]   \label{Decorrelation1}
	Let $\sigma\in \mathbb{N}$ and $f,g:\mathbb{T}^d\rightarrow \R$ be smooth functions. Then for every $p\in[1,+\9]$,
	\begin{equation}\label{lpdecor}
\big|\|fg(\sigma\cdot)\|_{L^p(\T^d)}-\|f\|_{L^p(\T^d)}\|g\|_{L^p(\T^d)} \big|\lesssim \sigma^{-\frac{1}{p}}\|f\|_{C^1(\T^d)}\|g\|_{L^p(\T^d)}.
	\end{equation}
\end{lemma}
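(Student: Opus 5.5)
We prove the $L^p$ decorrelation estimate \eqref{lpdecor} by reducing to a comparison between $f$ and a piecewise-constant approximation on the cubes of side $\sigma^{-1}$. The idea is that $g(\sigma\cdot)$ is $(\mathbb{T}/\sigma)^d$-periodic and equidistributes on each such cube. Partition $\mathbb{T}^d$ into the $\sigma^d$ closed cubes $Q_j$ of side length $\sigma^{-1}$. On each cube choose a point $x_j\in Q_j$ and set $f_j:=f(x_j)$.

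The case $p=\infty$ should be handled separately. There the claimed error bound $\sigma^{0}\|f\|_{C^1}\|g\|_{L^\infty}$ already dominates both $\|fg(\sigma\cdot)\|_{L^\infty}$ and $\|f\|_{L^\infty}\|g\|_{L^\infty}$, so the inequality is trivial. Note that it does not even require any decay in $\sigma$. For $p<\infty$, we first compute the $p$-th power exactly for the frozen coefficient. By periodicity and the change of variables $y=\sigma x$,
\begin{align*}
\int_{Q_j}|g(\sigma x)|^p\,\d x=\sigma^{-d}\|g\|_{L^p}^p.
\end{align*}
Hence
\begin{align*}
\sum_j\int_{Q_j}|f_j|^p|g(\sigma x)|^p\,\d x=\Big(\sigma^{-d}\sum_j|f_j|^p\Big)\|g\|_{L^p}^p.
\end{align*}

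Rather than compare $p$-th powers, which would produce $\sigma^{-1}$ factors with the wrong scaling in $p$, I would work directly at the level of norms using the triangle inequality in $L^p$. Define the step function $F:=\sum_j f_j 1_{Q_j}$. Since $|f-F|\le \sqrt d\,\sigma^{-1}\|\nabla f\|_{L^\infty}$ pointwise,
\begin{align*}
\big|\|fg(\sigma\cdot)\|_{L^p}-\|Fg(\sigma\cdot)\|_{L^p}\big|\le \|(f-F)g(\sigma\cdot)\|_{L^p}\lesssim \sigma^{-1}\|f\|_{C^1}\|g\|_{L^p}.
\end{align*}
Here I used $\|g(\sigma\cdot)\|_{L^p(\mathbb{T}^d)}=\|g\|_{L^p}$. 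Similarly,
\begin{align*}
\big|\|f\|_{L^p}-\|F\|_{L^p}\big|\lesssim\sigma^{-1}\|f\|_{C^1}.
\end{align*}
By the exact computation above, $\|Fg(\sigma\cdot)\|_{L^p}=\|F\|_{L^p}\|g\|_{L^p}$.

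Combining these three facts gives an error of order $\sigma^{-1}\|f\|_{C^1}\|g\|_{L^p}$. Since $\sigma\ge1$ and $p\ge1$, this is bounded by $\sigma^{-1/p}\|f\|_{C^1}\|g\|_{L^p}$, which proves \eqref{lpdecor}. The main point to check is the exact identity $\|Fg(\sigma\cdot)\|_{L^p}=\|F\|_{L^p}\|g\|_{L^p}$. It relies on $\sigma\in\mathbb{N}$, so that the cubes $Q_j$ exactly tile the torus and each contains one full period of $g(\sigma\cdot)$. That is where the integrality hypothesis enters. Everything else is routine.
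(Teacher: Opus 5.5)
The paper does not prove this lemma; it quotes it from \cite{cl21}. The standard proof there works with $p$-th powers. One freezes $f$ at $x_j$ on each cube and uses $\bigl||f(x)|^p-|f(x_j)|^p\bigr|\le p\|f\|_{L^\infty}^{p-1}|f(x)-f(x_j)|$ to get $\big|\|fg(\sigma\cdot)\|_{L^p}^p-\|f\|_{L^p}^p\|g\|_{L^p}^p\big|\lesssim\sigma^{-1}\|f\|_{C^1}^p\|g\|_{L^p}^p$. The elementary inequality $|a-b|^p\le|a^p-b^p|$ for $a,b\ge0$ then converts this into exactly the rate $\sigma^{-1/p}$ in \eqref{lpdecor}.

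Your proof is correct and takes a genuinely different route. You stay at the level of norms, using the reverse triangle inequality against the step function $F$ and the exact factorization $\|Fg(\sigma\cdot)\|_{L^p}=\|F\|_{L^p}\|g\|_{L^p}$, which is where $\sigma\in\mathbb{N}$ enters. Because you never take a $p$-th root, you obtain the strictly stronger bound $2\sqrt d\,\sigma^{-1}\|\nabla f\|_{L^\infty}\|g\|_{L^p}$. Its constant is independent of $p$, and only $\nabla f$, not the full $C^1$ norm, appears on the right.

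Your remark that comparing $p$-th powers gives ``the wrong scaling'' is therefore not quite accurate. That route gives precisely the stated $\sigma^{-1/p}$, just not the sharp rate.

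You also do not need to treat $p=\infty$ separately. Each cube $Q_j$ contains a full period of $g(\sigma\cdot)$, so $\|Fg(\sigma\cdot)\|_{L^\infty}=\|F\|_{L^\infty}\|g\|_{L^\infty}$, and the same argument gives $\sigma^{-1}$ there too.

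The paper uses the lemma only with $p=2$, in \eqref{Lp decorr vel-1}. There your version simply replaces $\sigma^{-1/2}$ by $\sigma^{-1}$, which is harmless.
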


Applying Lemmas~\ref{buildingblockestlemma} and \ref{Decorrelation1} with $p=2$ we get
\begin{align}
	\label{Lp decorr vel-1}
	\norm{ w^{(p)}_{q+1}}_{C_t L^2_{x}}
	&\lesssim \sum\limits_{k\in\mathcal K }
       \Big(\|a_{(k)} \|_{C_tL^2_{x}}\norm{\phi_{(k)}}_{C_tL^2_x} +\sigma^{-\frac12}\|a_{(k)}\|_{C^1_{t,x}} \norm{  \phi_{(k)} }_{C_tL^2_x}\Big)\notag\\
       &\lesssim  \delta_{q+ 1}^{\frac12}+ \lambda_{q+1}^{-\frac{1}{28}}.
\end{align}
Note that, by \eqref{def-a-0} and the fact that $\eta\in  (\frac{1}{4\alpha}, \frac{1}{2\alpha})$, we get
\begin{align}
    \lambda_{q+1}^{-\frac{1}{28}}< \la^{-1}< (\frac{8C_R}{c_*}4^{(-\theta-q+2)k_{q-1}})^{\frac12}= \delta_{q+1}^{\frac12}. 
\end{align}
Hence, we have
\begin{align}
	\label{Lp decorr vel}
	\norm{ w^{(p)}_{q+1}}_{C_t L^2_{x}}
       \lesssim  \delta_{q+ 1}^{\frac12}.
\end{align}
Then, using \eqref{est-chiq}, \eqref{velocity perturbation}, \eqref{Lp decorr vel} and Lemma \ref{totalest}  we get
\begin{align}  \label{e3.41}
	\norm{ w_{q+1} }_{C_tL^2_{x}}&\leq \|\chi_{q+1}\|_{C_t} \norm{ w_{q+1}^{(p)} }_{C_tL^2_{x}} +\|\chi_{q+1}\|_{C_t}\norm{ w_{q+1}^{(c)} }_{C_tL^2_{x}}\notag\\
     &\lesssim \delta_{q+ 1}^{\frac12}+   \ell_q^{-4(d+5)}\lambda_{q+1}^{-1}\lesssim  \delta_{q+ 1}^{\frac12},
\end{align}
which verifies \eqref{u-B-L2tx-conv}. 

Taking into account the inductive estimate \eqref{ul2} 
we also obtain 
\begin{align}\label{verubl2}
	&  \| u_{q+1} \|_{C_tL^2_x}\leq \| u_q \|_{C_tL^2_x}+\| w_{q+1} \|_{C_t L^2_{x}} \lesssim \sum_{n=0}^{q+1}\delta_{n}^{\frac{1}{2}},
\end{align}
which verifies \eqref{ul2} at level $q+1$.

Regarding the $C^1$-estimate \eqref{uc}, since $\widetilde u$ is given by the explicit multiplier,
\[
\partial_t\widetilde u
=-\nu(-\Delta)^\alpha\P_{\geq\Lambda(t)}^2\widetilde u.
\]
Because $2\alpha<1$, Sobolev embedding and \eqref{spa-regu} give
\begin{align}\label{est-wtu-c-1}
\|\widetilde u\|_{C^1_{[T_{q+3},+\infty),x}}
&\lesssim \|\widetilde u\|_{C_{[T_{q+3},+\infty)}H^{d+2}_x}
+\|\partial_t\widetilde u\|_{C_{[T_{q+3},+\infty)}H^{d+1}_x}\notag\\
&\lesssim \|\widetilde u\|_{C_{[T_{q+3},+\infty)}H^{d+2}_x}\notag\\
&\lesssim \left(1+T_{q+3}^{-\frac{d+2}{2\alpha}}\right)\|v_0\|_{L^2_x}.
\end{align}
By \eqref{def-a-0}, we get
\begin{align}\label{est-wtu-c}
    \|\wt u \|_{ C_{[T_{q+3},+\infty),x}^{1}} &\lesssim \lambda_{q+1}^{d+1}.
\end{align} 
Hence, using \eqref{tem-regu-1}, Lemma~\ref{totalest},  \eqref{est-wtu-c} and \eqref{def-a-0} we derive that
\begin{align}\label{verifyuc1}
	\norm{u_{q+1}}_{C^1_{[T_{q+3},+\infty),x}}&  \lesssim  \norm{u_{q}}_{C^1_{[T_{q+3},+\infty),x}}+\norm{ w_{q+1}}_{C^1_{t,x}}\notag\\
 &\lesssim \norm{\wt u}_{C^1_{[T_{q+3},+\infty),x}}+ \sum_{n=1}^{q}\norm{w_{n}}_{C^1_{t,x}} + \norm{w_{q+1}}_{C^1_{t,x}} \notag\\
 &\lesssim \lambda_{q+1}^{d+1}+ \sum_{n=1}^{q}\lambda_{n}^{d+1}+ \lambda_{q+1}^{d+1} \notag\\
 &\lesssim \lambda_{q+1}^{d+1},
\end{align}
which verifies \eqref{uc} at level $q+1$.

\section{Reynolds stress}   \label{Sec-Rey-stress}
This section is devoted to proving the inductive estimates 
\eqref{rl1s} and \eqref{rl1b-s} of Reynolds stress
at level $q+1$. 
To begin with, we first decompose the Reynolds stress by
\begin{align}\label{ru}
\div\mathring{R}_{q+1}  - \nabla P_{q+1} 
		&\displaystyle = \underbrace{\chi_{q+1} \partial_t (w_{q+1}^{(p)}+w_{q+1}^{(c)} )  +\div \big(u_{q} \otimes w_{q+1} + w_{q+ 1} \otimes u_{q}\big) }_{ \div\mathring{R}_{lin} +\nabla P_{lin} }   \notag\\
		&\displaystyle\quad+ \underbrace{\div (\wt  w_{q+1}^{(p)} \otimes \wt  w_{q+1}^{(p)}  +  \chi_{q+1}^2\mathring{R}_{q} ) }_{\div\mathring{R}_{osc}  +\nabla P_{osc}}  
        +\underbrace{\div\Big(\wt w_{q+1}^{(c) }\otimes w_{q+1}+ \wt  w_{q+1}^{(p) }\otimes   \wt w_{q+1}^{(c)} \Big)}_{\div\mathring{R}_{cor}  +\nabla P_{cor}}\notag\\
		&\quad +  \underbrace{  \partial_t\chi_{q+1}(w^{(p)}_{q+ 1}+w^{(c)}_{q+ 1}) +  (1-\chi_{q+1}^2)\div \mathring{R}_{q} }_{\div\mathring{R}_{cut} +\nabla P_{cut}} .
\end{align}

Applying the inverse divergence operator $\mathcal{R}$ 
given by \eqref{operaru} 
we can choose the Reynolds stress at level $q+1$ by
\begin{align}\label{rucom}
	\mathring{R}_{q+1}  := \mathring{R}_{lin}  +   \mathring{R}_{osc} + \mathring{R}_{cor} +\mathring{R}_{cut} ,
\end{align}
where $\mathring{R}_{lin}$ is the linear error
\begin{align}\label{rbp}
\mathring{R}_{lin}:= &  \mathcal{R} ( \chi_{q+1}\partial_t (w^{(p)}_{q+ 1}+w^{(c)}_{q+ 1}) ) +  u_{q}\mathring\otimes w_{q+1} + w_{q + 1} \mathring\otimes u_{q},
\end{align}
$\mathring{R}_{osc}$ is the oscillation error
\begin{align} \label{rob}
	\mathring{R}_{osc}  &:=  \chi_{q+1}^2\sum_{k\in\mathcal K}\mathcal{R}\P_{\neq 0}\left ( \P_{\neq 0}(W_{(k)}\otimes W_{(k)})\nabla (a_{(k)}^2)\right) +\cq^2(\mathring{R}_{q}-\mathring{R}_{\ell_q}) \notag\\
    &\quad + \chi_{q+1}^2\sum_{k \neq k' \in \mathcal K }\mathcal{R}\div(a_{(k)}a_{(k')}W_{(k)}\otimes W_{(k')}),
\end{align}
$\mathring{R}_{cor}$ is the corrector error
\begin{align}\label{rbp2}
	\mathring{R}_{cor}  := &\,  \wt w_{q+1}^{(p)} \mathring\otimes \wt w_{q+1}^{(c)} +\wt w_{q+1}^{(c)} \mathring\otimes w_{q+1},
\end{align}
and $\mathring{R}_{cut}$ is the cut-off error
\begin{align} \label{RBcut-def}
	\mathring{R}_{cut}
	& := \mathcal{R}   (  \partial_t\chi_{q+1}(w^{(p)}_{q+ 1}+w^{(c)}_{q+ 1})) +  (1-\chi_{q+1}^2)\mathring{R}_{q}.
\end{align}

The oscillation error  $\mathring{R}_{osc}$ 
can be further decomposed into three components 
\begin{align*}
	\mathring{R}_{osc}  = \mathring{R}_{osc.1}  +  \mathring{R}_{osc.2}+\mathring{R}_{osc.3} ,
\end{align*}
where $\mathring{R}_{osc.1} $ contains the low-high spatial  oscillations
\begin{align*}
	\mathring{R}_{osc.1}
	&:=  \cq^2 \sum_{k\in\mathcal K } \mathcal{R} \P_{\neq 0}\left(\P_{\neq 0}(W_{(k)}\otimes W_{(k)})\nabla (a_{(k)}^2)\right) ,
\end{align*}
$\mathring{R}_{osc.2} $  contains the mollification error
\begin{align*}
	\mathring{R}_{osc.2}
	&:= \cq^2 (\mathring{R}_{q}-\mathring{R}_{\ell_q}),
\end{align*}
and $\mathring{R}_{osc.3} $  contains the intersection error
\begin{align*}
	\mathring{R}_{osc.3}
	&:= \chi_{q+1}^2\sum_{k \neq k' \in \mathcal K }\mathcal{R}\div(a_{(k)}a_{(k')}W_{(k)}\otimes W_{(k')}).
\end{align*} 
We note that in dimensions $d\geq 3$, 
the intersection error $\mathring{R}_{osc.3}$ does not appear, 
because of the disjointness of the supports of Mikado flows. 
In contrast, 
we do have the intersection error 
in the low dimensional 2D case. 
In that case, 
the intersection estimate in Lemma \ref{product estimate lemma} 
is important to control the intersection error. 

Since
\[
\overline T_{q+2} <\frac{T_{q+2}+T_{q+1}}2 <T_{q+1},
\]
we have $\chi_{q+1}=0$ on $[0,\overline T_{q+2}]$. By
\eqref{q+1 iterate} and \eqref{rucom}--\eqref{RBcut-def},
\[
u_{q+1}=u_q, \qquad \mathring R_{q+1}=\mathring R_q \quad\text{on }[0,\overline T_{q+2}].
\]
Using \eqref{rq=r0}, we obtain
\[
u_{q+1}=u_0\quad\text{on }[0,T_{q+2}], \qquad \mathring R_{q+1}=\mathring R_0
\quad\text{on }[0,\overline T_{q+2}],
\]
which implies \eqref{rq=r0} at level $q+1$.

\subsection{Decay estimate} \label{sucsec-est-r-s}
Let us first prove the decay estimate \eqref{rl1s}.  
In order to bound the Calder\'on-Zygmund operators in the Reynolds stress, 
instead of $L^1_x$-norm, 
we estimate the $L^p_x$-norm   
with 
\begin{align}\label{def-p}
 p: =\frac{6(d-1)}{6(d-1)-7\ve}\in (1,2),
\end{align}
where $\ve<10^{-2}\in \mathbb{Q}$ is a small parameter. Note that 
\begin{align*}
   ( \rs^{d-1})^{\frac1p-1}= \lambda_{q+1}^\ve.
\end{align*}

We consider two temporal regimes 
$[0,+\infty)=\mathcal{I}_1\cup \mathcal I_2$, 
where $\mathcal I_1:=[0, \frac{T_{q+2}+T_{q+1}}{2}]$ 
is the inner temporal regime
and $\mathcal I_2:=[\frac{T_{q+2}+T_{q+1}}{2},+\infty)$ 
is the outer temporal regime.  
To ease notations, 
we let $T^{\rm med}_{q+1} = \frac{T_{q+2}+T_{q+1}}{2}$.

\medskip 
The inner regime is easy to deal with. 
Since the cut-off function $\chi_{q+1}$ 
vanishes on this interval due to \eqref{chiq-def}, 
we have that for any $t\in \mathcal I_1$,
$\mathring{R}_{lin} (t) =\mathring{R}_{osc}  (t) =\mathring{R}_{cor}(t) =0$ and $\mathring{R}_{cut}(t) =\mathring{R}_{0}(t) $. 
It follows that 
\begin{align}\label{rq1u-3}
	\|\mathring{R}_{q+1} \|_{ C_{\mathcal I_1} L^1_{x}}&= \|\mathring{R}_{cut} \|_{  C_{\mathcal I_1}L^1_x}= \|\mathring{R}_0 \|_{  C_{\mathcal I_1}L^1_x} \leq \frac12 c_*\delta_{q+ 3},
\end{align}
which verifies \eqref{rl1s} on $\mathcal I_1$.

In the following, 
we treat the outer temporal regime $\mathcal I_2$. 
Let us consider the four components of the Reynolds stress separately.

\medskip
\paragraph{\bf Linear error.}
Using Lemmas \ref{buildingblockestlemma} and \ref{mae}, \eqref{larsrp} and \eqref{div free velocity}, we get
\begin{align}\label{Rlin-t}
	\| \cq \mathcal{R}\partial_t( w_{q+1}^{(p) }+w_{q+1}^{(c)})\|_{C_{\mathcal I_2}L_x^p}
	&\lesssim \sum_{k\in\mathcal K }\| \mathcal{R} \div\partial_t( a_{(k)} W^c_{(k)})\|_{C_{\mathcal I_2}L_x^p}  \notag \\  
	&\lesssim \sum_{k\in\mathcal K }  \|  a_{(k)} \|_{C_{t,x}^1}\| W^c_{(k)} \|_{L^{p}} \notag\\
	&\lesssim \ell_q^{-4(d+5)} \lambda_{q+1}^{-\frac{3}{7}(d-1)-1+\ve}.
\end{align}
For the nonlinear terms in \eqref{rbp}, we have
\begin{align} \label{linear estimate1}
	&\norm{  ( u_{q} \mathring\otimes w_{q+1} + w_{q + 1} \mathring\otimes u_{q})}_{ C_{\mathcal I_2}L^p_x} 
	\lesssim\, \norm{u_{q}}_{C_{\mathcal I_2}L^\infty_x} \norm{ w_{q+1}}_{C_{\mathcal I_2} L^p_x} 
	\lesssim\, \lambda_q^{d+1} \ell_q^{-\frac{d+1}{2}}  \lambda_{q+1}^{-\frac{3}{7}(d-1)+\ve}.
\end{align}
Thus, combining \eqref{Rlin-t} and \eqref{linear estimate1} together we arrive at
\begin{align}   \label{linear estimate}
\norm{\mathring{R}_{lin}}_{C_{\mathcal I_2}L^1_x}
	& \lesssim \lambda_{q+1}^{-\frac{3}{7}(d-1)+2\ve}.
\end{align}

\paragraph{\bf Oscillation error.} 

We recall the following stationary phase lemma to handle the errors of Reynolds stress.
\begin{lemma}[\cite{lt20}, Lemma 6; see also \cite{bv19b}, Lemma B.1] \label{commutator estimate1}
	Let $a \in C^{2}\left(\mathbb{T}^{d}\right)$. For all $1<p<+\infty$, it follows that 
	$$
	\left\||\nabla|^{-1} \P_{\neq 0}\left(a \P_{\geq k} f\right)\right\|_{L^{p}\left(\mathbb{T}^{d}\right)} \lesssim k^{-1}\left\|\nabla^{2} a\right\|_{L^{\infty}\left(\mathbb{T}^{d}\right)}\|f\|_{L^{p}\left(\mathbb{T}^{d}\right)} 
	$$
 for any smooth function $f \in L^{p}\left(\mathbb{T}^{d}\right)$.
\end{lemma}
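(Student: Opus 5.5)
We would follow the argument behind \cite{bv19b}, Lemma B.1: split $a$ into low and high frequencies relative to $k$, and handle the low part by an exact frequency-support observation and the high part by Bernstein-type decay. The main issue, discussed at the end, is the zeroth-order contribution of $a$, which the plan cannot remove.

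\emph{Frequency splitting.} Write $a=\P_{<k/2}a+\P_{\ge k/2}a=:a_{\rm lo}+a_{\rm hi}$, with the same smooth Littlewood--Paley type cut-offs as in the definition of $\P_{\geq\Lambda(t)}$. Since $\widehat{\P_{\geq k}f}$ is supported in $\{|\xi|\ge k\}$, the product $a_{\rm lo}\P_{\geq k}f$ has Fourier support in $\{|\xi|\ge k/2\}$. On this set the multiplier $|\xi|^{-1}1_{|\xi|\ge k/2}$ is a smooth Mikhlin-type symbol of size $k^{-1}$, so it is bounded on $L^p(\T^d)$, $1<p<\infty$, with norm $\lesssim k^{-1}$. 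This is checked exactly as the symbol estimates in Lemma~\ref{bd-m}, via Poisson summation \eqref{tr-tran}. Hence
\[
\big\||\nabla|^{-1}\P_{\neq0}(a_{\rm lo}\P_{\geq k}f)\big\|_{L^p}\lesssim k^{-1}\|a_{\rm lo}\|_{L^\infty}\|f\|_{L^p},
\]
where we also use the $L^p$-boundedness of $\P_{\ge k}$.

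\emph{High-frequency part.} For $a_{\rm hi}$ we use that $|\nabla|^{-1}\P_{\neq0}$ is bounded on $L^p$ on mean-free functions. Bernstein's inequality for the high-pass piece then gives
\[
\|a_{\rm hi}\|_{L^\infty}\lesssim k^{-2}\|\nabla^2a\|_{L^\infty}.
\]
This is obtained by writing $a_{\rm hi}=\Delta^{-1}\P_{\ge k/2}\Delta a$, where $\Delta^{-1}\P_{\ge k/2}$ has an $L^1$ kernel of norm $\lesssim k^{-2}$. Consequently
\[
\big\||\nabla|^{-1}\P_{\neq0}(a_{\rm hi}\P_{\ge k}f)\big\|_{L^p}\lesssim k^{-2}\|\nabla^2 a\|_{L^\infty}\|f\|_{L^p}\le k^{-1}\|\nabla^2a\|_{L^\infty}\|f\|_{L^p}.
\]

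\emph{Main obstacle.} The difficulty is the low-frequency term, which carries $\|a_{\rm lo}\|_{L^\infty}$ rather than $\|\nabla^2 a\|_{L^\infty}$. Its mean $\int a$ cannot be absorbed: the constant part of $a$ contributes $\||\nabla|^{-1}\P_{\ge k}f\|_{L^p}\sim k^{-1}\|f\|_{L^p}$, and this is generally nonzero. So the displayed homogeneous bound can hold only once $\|\nabla^2a\|_{L^\infty}$ is read as the full $C^2$ size of $a$, which is the convention of \cite{lt20,bv19b}. With that reading, the mean-free part of $a_{\rm lo}$ is controlled by $\|\nabla^2a\|_{L^\infty}$ through Poincaré on $\T^d$, and the two displays combine to the claim. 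This is the only form in which the lemma is used later: in $\mathring R_{osc.1}$ it is applied with $a=\nabla(a_{(k)}^2)$, whose full $C^2$ norm is controlled by Lemma~\ref{mae}.
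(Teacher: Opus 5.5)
The paper gives no proof of this lemma; it only cites it. Your splitting of $a$ into low and high frequencies relative to $k$ is the standard argument behind the cited estimates.

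\textbf{The statement as displayed is false, as you say.} A periodic $a$ with $\nabla^2 a\equiv 0$ is constant. For $a\equiv 1$ the right-hand side vanishes, while the left-hand side is $\||\nabla|^{-1}\P_{\geq k}f\|_{L^p}$. For $f=\cos(2\pi m x_1)$ with $m\geq 2k$ this equals $(2\pi m)^{-1}\|f\|_{L^p}\neq 0$. What your argument actually proves is
$\||\nabla|^{-1}\P_{\neq0}(a\P_{\geq k}f)\|_{L^p}\lesssim k^{-1}\bigl(|\int_{\T^d}a\,\d x|+\|\nabla^2a\|_{L^\infty}\bigr)\|f\|_{L^p}$.
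This contains the bound with $\|a\|_{C^2}$, and the homogeneous bound when $a$ has zero mean.

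\textbf{The low-frequency support claim needs a small repair.} In the paper's normalization, $\P_{<k/2}=\Id-\P_{\geq k/2}$ has symbol $1-\varphi(2\xi/k)$. That symbol is supported in $\{|\xi|<k\}$, not $\{|\xi|<k/2\}$. So $a_{\rm lo}\P_{\geq k}f$ is only known to avoid $\xi=0$, and $|\nabla|^{-1}$ gains nothing.

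The fix is as follows.
\begin{itemize}
\item Split at $\P_{<k/4}$ instead, whose symbol is supported in $\{|\xi|<k/2\}$. The low product then lives in $\{|\xi|>k/2\}$.
\item Replace $1_{\{|\xi|\geq k/2\}}$ by a smooth cutoff such as $\varphi(4\xi/k)$, so the symbol $|\xi|^{-1}\varphi(4\xi/k)$ is genuinely smooth. Its dyadic pieces on $|\xi|\sim 2^jk$ have $L^1$ kernels of norm $\lesssim (2^jk)^{-1}$, which sum to $k^{-1}$.
\item The high part $\P_{\geq k/4}a$ obeys the same $k^{-2}\|\nabla^2 a\|_{L^\infty}$ bound as before.
\end{itemize}

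\textbf{Your last paragraph mixes two readings.} If $\|\nabla^2a\|_{L^\infty}$ were read as $\|a\|_{C^2}$, no Poincar\'e step would be needed. The Poincar\'e step is what gives the mean-free version, and that is the version the paper actually uses. In \eqref{I1-esti} the lemma is applied with $a=\partial_j(a_{(k)}^2)$, which has zero spatial mean. Poincar\'e applied twice gives $\|\partial_j(a_{(k)}^2)\|_{L^\infty}\lesssim\|\nabla^3(a_{(k)}^2)\|_{L^\infty}$. Hence the estimate written there, involving only $\|\nabla^3(a^2_{(k)})\|_{C_{t,x}}$, is correct as stated.
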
 

We estimate the three components $\mathring{R}_{osc.i}$, 
$1\leq i\leq 3$, below. 

First, for the low-high oscillation component $\mathring{R}_{osc.1}$, 
using Lemmas~\ref{buildingblockestlemma}, \ref{mae} and \ref{commutator estimate1} we obtain
\begin{align}  \label{I1-esti}
	\norm{\mathring{R}_{osc.1} }_{ C_{\mathcal I_2}L^p_x}
	& \lesssim  \sum_{ k\in\mathcal K } \sigma^{-1} \norm{ \cq^2}_{C_{\mathcal I_2}} \norm{ \na^3(a^2_{(k)})}_{C_{t,x}}
	\norm{\phi_{(k)}^2  }_{L^p_x }  \nonumber  \\
	& \lesssim  \ell_q^{-9(d+5)} \sigma^{-1} \norm{\phi^2_{(k)} }_{L^p_x }\lesssim \ell_q^{-9(d+5)}\lambda_{q+1}^{-\frac17+\ve}.
\end{align} 

Moreover, the mollification error $\mathring{R}_{osc.2}$ 
can be bounded easily by \eqref{la} and \eqref{rl1b-s} 
as follows 
\begin{align}  \label{I3-esti}
	\norm{\mathring{R}_{osc.2}  }_{ C_{\mathcal I_2}L_x^1}
	&\lesssim \norm{ \cq^2}_{C_{\mathcal I_2}}  \norm{\mathring{R}_{q}-\mathring{R}_{\ell_q}}_{C_{\mathcal I_2}L_x^1}  \notag \\ 
    &\lesssim  \ell_q ( \|\mathring{R}_{q} \|_{  C^{1}_{[T_{q+2},+\infty)}L^1_x }+	\| \mathring{R}_{q}  \|_{C_{[T_{q+2},+\infty)}W^{1,1}_x} )\lesssim \ell_q^{\frac12}.
\end{align}

Regarding the intersection error $\mathring{R}_{osc.3}$, 
as mentioned above, 
$\mathring{R}_{osc.3}\equiv 0$ in dimensions $d\geq 3$.  
In the 2D case, 
we apply the intersection estimate in Lemma~\ref{product estimate lemma} and Lemma \ref{mae} to get
\begin{align}\label{I4-esti}
\norm{\mathring{R}_{osc.3}}_{C_{\mathcal I_2}L^p_x} & \lesssim  \sum_{ k \neq k' \in \mathcal K} \norm{a_{(k)}a_{(k')}W_{(k')}\otimes W_{(k)}}_{C_{\mathcal I_2}L^p_x} \notag\\	
	& \lesssim \sum_{ k \neq k' \in \mathcal K} \norm{a_{(k)} }_{C_{t,x}}\norm{a_{(k')} }_{C_{t,x}}  \|\phi_{(k)}\phi_{(k')}\|_{L^p_x}\notag\\
	& \lesssim  \ell^{-3}_q\lambda^{-\frac67+2\ve}_{q+1}.
\end{align}

Thus, combining estimates 
\eqref{I1-esti}, \eqref{I3-esti} and \eqref{I4-esti} 
we obtain 
\begin{align}
	\label{oscillation estimate}
	\norm{\mathring{R}_{osc} }_{C_{\mathcal I_2}L^1_x} 
    \leq \sum\limits_{i=1}^3 \norm{\mathring{R}_{osc, i} }_{C_{\mathcal I_2}L^1_x} 
	&\lesssim \ell_q^{\frac12}.
\end{align}

\paragraph{\bf Corrector error.} Regarding the  corrector error, by the
H\"older inequality, Lemma \ref{totalest} and \eqref{e3.41} we have
\begin{align}\label{corrector estimate}
	\norm{\mathring{R}_{cor} }_{ C_{\mathcal I_2}L^{1}_x}
	&\lesssim \norm{  w_{q+1}^{(c)}}_{C_{\mathcal I_2}L^{2}_x} (\norm{  w^{(p)}_{q+1}  }_{C_{\mathcal I_2}L^{2}_{x}} + \norm{w_{q+1} }_{C_{\mathcal I_2}L^{2}_{x}}) \notag \\ 
	&\lesssim \ell_q^{-4(d+5)} \lambda_{q+1}^{-1} \delta_{q+ 1}^{\frac12}\lesssim \lambda_{q+1}^{-1+\ve} .
\end{align}

\paragraph{\bf Cut-off error.}  
We further divide the outer regime into two subregimes 
$\mathcal{I}_2 = [T^{\rm med}_{q+1}, \infty) =  \mathcal{I}_{21} \cup  \mathcal{I}_{22}$, 
where 
$\mathcal{I}_{21}:=[T_{q+1},+\infty)$ 
and 
$\mathcal{I}_{22}:=[T^{\rm med}_{q+1}, T_{q+1})$.  
For $t\in \mathcal{I}_{21}$, 
we have 
$\mathring{R}_{cut}(t)=0$.
Moreover, 
for $t\in  \mathcal{I}_{22}$, 
using \eqref{est-r0-0} we derive 
\begin{align}  \label{R-cut-esti-1} 
\|\mathring{R}_{cut} \|_{C_{ \mathcal{I}_{22}}L^1_x}
	&\lesssim \|\mathcal{R} \partial_t\chi_{q+1}(w^{(p)}_{q+ 1}+w^{(c)}_{q+ 1})\|_{C_{\mathcal{I}_{22}}L^p_x} + \|(1-\chi_{q+1}^2 )\mathring{R}_{\ell_q} \|_{C_{\mathcal{I}_{22}}L^1_x} \notag \\
    &\lesssim \ell_q^{-\frac{d+1}{2}} (\rs^{d-1} )^{\frac1p-\frac12} + \ell_q^{-5(d+5)}\lambda_{q+1}^{-1} (\rs^{d-1})^{\frac1p-\frac12} +\| \mathring R_{q}\|_{C_{[T_{q+2},T_{q+1}]}L^{1}_{x}}\notag \\
	&\lesssim \lambda_{q+1}^{-\frac{3}{7}(d-1)+2\ve}+\frac18 c_*\delta_{q+ 3}.
\end{align}
Now, combining  \eqref{linear estimate}-\eqref{R-cut-esti-1}  we arrive at
\begin{align} \label{rq1b}
\|\mathring{R}_{q+1} \|_{ C_{\mathcal I_2}L^1_{x}}
	&\leq \| \mathring{R}_{lin}  \|_{ C_{\mathcal I_2}L^1_x} +  \| \mathring{R}_{osc}  \|_{ C_{\mathcal I_2}L^1_x}
	+  \|\mathring{R}_{cor}  \|_{ C_{\mathcal I_2}L^{ 1}_x} +  \|\mathring{R}_{cut} (t) \|_{ C_{\mathcal I_2}L^1_x}\leq C\ell_q^{\frac12}+\frac18c_*\delta_{q+3},
\end{align}
where $C$ is a universal constant. Then, by \eqref{def-a-0}, we get
\begin{align} \label{rq1b-end}
\|\mathring{R}_{q+1} \|_{ C_{\mathcal I_2}L^1_{x}}
	&\leq \frac12c_*\delta_{q+3}.
\end{align}

\subsection{Growth estimate}  \label{sucsec-est-r-b}
In this part, we verify the growth estimate \eqref{rl1b-s} for the Reynolds stress. We divide the temporal regime into two 
subregimes 
$[T_{q+3},\infty) = \mathcal I_1' \cup \mathcal I_2'$,  
where $\mathcal I_1':=[T_{q+3},T_{q+2}]$
and $\mathcal I_2':=[T_{q+2},+\infty)$.

First, in the inner regime where $t\in \mathcal I_1':=[T_{q+3},T_{q+2}]$, 
since $\chi_{q+1}(t)\equiv 0$ due to \eqref{chiq-def}, 
no perturbations are added to the velocity fields. 
Hence, 
we have  $u_{q+1}(t)= u_{q}(t)$. By  \eqref{est-r0-c-end} and \eqref{est-r0-w-1}  we get
\begin{align}
    \|\mathring{R}_{q+1} \|_{ C^{1}_{\mathcal  I_1'} L^1_x}+ \|\mathring{R}_{q+1} \|_{C_{\mathcal  I_1'} W^{1,1}_x} = \| \mathring{R}_{0}  \|_{ C^{1}_{\mathcal  I_1'} L^1_x}+ \|  \mathring{R}_{0} \|_{C_{\mathcal  I_1'}  W^{1,1}_x} \lesssim \lambda_{q+1}^{2d+2}.
\end{align}

Below we prove estimate \eqref{rl1b-s} in the outer regime 
where $\mathcal  I_2'=[T_{q+2},+\infty)$. 
We consider the four components of the Reynolds stress separately.

\medskip 
\paragraph{\bf Linear error} 
We have 
\begin{align}\label{Rlin-t-c}
	\| \cq \mathcal{R}\partial_t( w_{q+1}^{(p) }+w_{q+1}^{(c)})\|_{C^{1}_{\mathcal  I_2'}L_x^1}
	&\lesssim  \sum_{k\in\mathcal K }\| \cq\mathcal{R} \div\partial_t( a_{(k)} W^c_{(k)}) \|_{C^{1}_{\mathcal  I_2'}L_x^p}\notag\\
    &\lesssim \sum_{k\in\mathcal K } \|\cq \|_{C^{1}_{\mathcal  I_2'} } \|a_{(k)} \|_{ C^{2}_{I_2',x}}\| W^c_{(k)}\|_{L_x^p}\notag\\
    &\lesssim \ell_q^{-5(d+2)} \lambda_{q+1}^{-1}(\rs^{d-1})^{\frac1p-\frac12}\notag\\
	&\lesssim   \lambda_{q+1}^{2d+2}
\end{align}
and, similarly, 
\begin{align}\label{Rlin-t-h}
	\| \cq \mathcal{R}\partial_t( w_{q+1}^{(p) }+w_{q+1}^{(c)})\|_{C_{\mathcal  I_2'}W_x^{1,1}} &\lesssim \sum_{k\in\mathcal K }\| \cq\mathcal{R} \div\partial_t( a_{(k)} W^c_{(k)}) \|_{C_{\mathcal  I_2'}W_x^{1 ,p}}\notag\\
    &\lesssim \sum_{k\in\mathcal K }\|\cq \|_{C_{\mathcal  I_2'}}\| a_{(k)}  \|_{C^2_{t,x}} \|W^c_{(k)}\|_{W_x^{1 ,p}} \notag\\
    &\lesssim \ell_q^{-5(d+5)}(\rs^{d-1})^{\frac1p-\frac12}\notag\\
	&\lesssim\lambda_{q+1}^{2d+2}.
\end{align}
For the second and third terms in \eqref{rbp}, we get
\begin{align} \label{linear estimate1-c}
	\norm{u_{q} \mathring\otimes  w_{q+1} +  w_{q + 1} \mathring\otimes u_{q}}_{C^{1}_{\mathcal  I_2'}L_x^1}  
	&\lesssim \norm{u_{q}}_{C^{1}_{\mathcal  I_2'}L_x^2}   \norm{  w_{q+1}}_{C^{1}_{t,x}} \lesssim \lambda_{q+1}^{2d+2}
\end{align}
and, similarly, 
\begin{align} \label{linear estimate1-h}
\norm{  u_{q} \otimes  w_{q+1} + w_{q + 1} \otimes u_{q}}_{C_{\mathcal  I_2'}W_x^{1,1}}   	
	\lesssim  \lambda_{q+1}^{2d+2}.
\end{align}

Therefore, combining \eqref{Rlin-t-c}-\eqref{linear estimate1-h} we get
\begin{align}   \label{linear estimate-c}
	\norm{\mathring{R}_{lin} }_{C^{1}_{\mathcal  I_2'}L_x^1}+ 	\norm{\mathring{R}_{lin} }_{C_{\mathcal  I_2'}W_x^{1,1}}
     & \lesssim \lambda_{q+1}^{2d+2}.
\end{align}

\paragraph{\bf Oscillation error.}
Let $p$ be given by \eqref{def-p}. The multiplier bounds
\[
\mathcal R\P_{\neq0}:L^p\to W^{1,p}, \qquad
\mathcal R\div:L^p\to L^p, \qquad
\mathcal R\div:W^{1,p}\to W^{1,p}
\]
give
\begin{align*}
\|\mathring R_{\mathrm{osc}.1}\|_{C^1_{\mathcal I_2'}L^p_x}
+\|\mathring R_{\mathrm{osc}.1}\|_{C_{\mathcal I_2'}W^{1,p}_x}
&\lesssim \ell_q^{-10(d+5)-1}\lambda_{q+1}^{1+\varepsilon}
\lesssim\lambda_{q+1}^{2d+2}
\end{align*}
and
\begin{align*}
\|\mathring R_{\mathrm{osc}.2}\|_{C^1_{\mathcal I_2'}L^1_x}
+\|\mathring R_{\mathrm{osc}.2}\|_{C_{\mathcal I_2'}W^{1,1}_x}
&\lesssim \ell_q^{-1}\lambda_q^{2d+2}
\lesssim\lambda_{q+1}^{2d+2}.
\end{align*}
For $d\geq3$, $\mathring R_{\mathrm{osc}.3}=0$. For $d=2$,
thanks to Lemma~\ref{product estimate lemma} we have
\[
\|\mathring R_{\mathrm{osc}.3}\|_{C^1_{\mathcal I_2'}L^p_x}
+\|\mathring R_{\mathrm{osc}.3}\|_{C_{\mathcal I_2'}W^{1,p}_x}
\lesssim
\ell_q^{-10(d+5)-1}\lambda_{q+1}^{\frac17+2\varepsilon}
\lesssim\lambda_{q+1}^{2d+2}.
\]
Since $|\T^d|=1$ and $p>1$,
\begin{align}\label{ro-esti-c}
\|\mathring R_{\mathrm{osc}}\|_{C^1_{\mathcal I_2'}L^1_x}
+\|\mathring R_{\mathrm{osc}}\|_{C_{\mathcal I_2'}W^{1,1}_x} &\lesssim\lambda_{q+1}^{2d+2}.
\end{align}

\paragraph{\bf Corrector error.} Using \eqref{principal c1 est} in Lemma  \ref{totalest} we derive
\begin{align}\label{corrector estimate-c}
	\norm{\mathring{R}_{cor}   }_{C^{1}_{\mathcal  I_2'}L_x^1}
	\lesssim \norm{\wt  w_{q+1}^{(c)}}_{C^{1}_tL_x^2}  (\norm{\wt  w^{(p)}_{q+1}  }_{C^{1}_tL_x^2}  + \norm{  w_{q+1} }_{C^{1}_tL_x^2} )
	 \lesssim \lambda_{q+1}^{2d+2}
\end{align}
and
\begin{align}\label{corrector estimate-h}
	\norm{\mathring{R}_{cor}   }_{C_{\mathcal  I_2'}W_x^{1,1}}
	\lesssim \norm{ \wt  w_{q+1}^{(c)}}_{C_tH_x^{1}}  (\norm{\wt  w^{(p)}_{q+1}  }_{C_tH_x^{1}}  + \norm{  w_{q+1} }_{C_tH_x^{1}} )
	\lesssim& \lambda_{q+1}^{2d+2}.
\end{align}

\paragraph{\bf Cut-off error.} 
Finally, we focus on the cut-off error.  Using \eqref{rl1b-s} 
 at level $q$,  \eqref{est-chiq}  and \eqref{principal c1 est},  we get 
\begin{align}\label{cut-est-c}
    \|\mathring{R}_{cut}\|_{C^{1}_{\mathcal  I_2'}L_x^1} 
    &\lesssim \|\partial_t\chi_{q+1}\|_{C^{1}_{\mathcal  I_2'}} 
    \|w^{(p)}_{q+ 1} + w^{(c)}_{q+ 1}\|_{C^{1}_{\mathcal  I_2'}L_x^p} 
    +\|(1-\chi_{q+1}^2 )\|_{C^{1}_{\mathcal  I_2'}} \|\mathring{R}_{q}\|_{C^{1}_{\mathcal  I_2'}L_x^1}\notag\\
    &\lesssim  \ell_q^{-2} \lambda_{q+1}^{d+1}+\ell_q^{-1} \lambda_{q}^{2d+2} \notag\\
    &\lesssim \lambda_{q+1}^{2d+2}
\end{align}
and, similarly
\begin{align}\label{cut-est-h}
    \|\mathring{R} _{cut}\|_{C_{\mathcal  I_2'}W_x^{1,1}} &\lesssim   \|\partial_t\chi_{q+1}\|_{C_{\mathcal  I_2'}} 
    \|w^{(p)}_{q+ 1} + w^{(c)}_{q+ 1}\|_{C_{\mathcal  I_2'}W_x^{1,p}} +\|(1-\chi_{q+1}^2 )\|_{C_{\mathcal  I_2'}} \|\mathring{R}_{q}\|_{C_{\mathcal  I_2'}W_x^{1,1}}\notag\\
     &\lesssim \lambda_{q+1}^{2d+2}.
\end{align}

Consequently, 
combining  estimates \eqref{linear estimate-c}-\eqref{cut-est-h} we obtain 
\begin{align} \label{rq1b-c}
\norm{\mathring{R}_{q+1} }_{C^{1}_{\mathcal  I_2'}L_x^1}+ 	\norm{\mathring{R}_{q+1} }_{C_{\mathcal  I_2'}W_x^{1,1}}
	& \lesssim \lambda_{q+1}^{2d+2},
\end{align}
thereby verifying \eqref{rl1b-s} at level $q+1$.

\section{Energy inductive estimates} \label{Sec-Energy}

This section deals with the
iterative estimate \eqref{energy-est} of the energy. Here, the argument follows in an analogous manner as in our previous paper \cite{czz25}.

First, in the inner temporal regime 
where $t\in \mathcal I_1 := [T_{q+2},T^{\rm med}_{q+1}]$ 
with 
$T^{\rm med}_{q+1} := \frac{T_{q+2}+ T_{q+1}}{2}$, 
no perturbations are added to the velocity field, 
so we have $u_{q+1}(t)=u_0(t)$. Hence,  
the energy estimate  \eqref{energy-est} 
follows directly from 
\eqref{asp-0-energy} and \eqref{asp-e-q}.

In the following, 
we treat the more delicate outer regime where 
$t\in \mathcal I_2 := [T^{\rm med}_{q+1},+\infty)$.  
By definition, 
we write 
\begin{align}\label{ver-en-diff}
& e(t)-\|u_{q+1}(t)\|_{L_x^2}^2-f_q(t)\notag\\
=&\Big(e(t)-\|u_q(t)\|_{L_x^2}^2-f_q(t)
-\|\widetilde w_{q+1}^{(p)}(t)\|_{L_x^2}^2\Big)\notag\\
&+\Big(
-2\int_{\T^d}(\widetilde w_{q+1}^{(p)}\cdot\widetilde w_{q+1}^{(c)})(t)\,\d x
-\|\widetilde w_{q+1}^{(c)}(t)\|_{L_x^2}^2
-2\int_{\T^d}u_q(t)\cdot w_{q+1}(t)\,\d x
\Big)\notag\\
=:& I_1+I_2 .
\end{align}
where $f_q$ is the temporal step function 
defined by \eqref{def-fq}.

For the estimate of $I_2$, using \eqref{e3.41}, \eqref{def-a-0} and Lemma~\ref{totalest} we get
\begin{align}\label{ver-en-2}
\|\wt  w^{(p)}_{q+1}\cdot\wt  w_{q+1}^{(c)}\|_{C_tL^1_x}
	&\lesssim   \|  w^{(p)}_{q+1} \|_{C_tL^2_x}  \| w_{q+1}^{(c)} \|_{C_tL^2_x}  \lesssim \delta_{q+1}^{\frac{1}{2}}\ell_q^{-4(d+5)}\lambda_{q+1}^{-1} 
 \leq \frac{1}{96}\delta_{q+3},
\end{align}
and, similarly,
\begin{align}\label{ver-en-3}
 \|\wt w_{q+1}^{(c)}\|_{C_tL^2_x}^2 \leq  \frac{1}{96}\delta_{q+3}.
\end{align}
For the third term in $I_2$, by \eqref{uc} and Lemma~\ref{totalest},
\begin{align}\label{ver-en-4}
	  \| u_q \cdot w _{q+1} \|_{C_{[T^{\rm med}_{q+1},+\infty)}L^1_x}
	& \lesssim \|  u_q\|_{C_{[T^{\rm med}_{q+1},+\infty),x}  }\| w_{q+1} \|_{C_tL^1_x }  \lesssim \lambda_q^{d+1}\ell_q^{-\frac{d+1}{2}}\laq^{-\frac37(d-1)} \leq  \frac{1}{96}\delta_{q+3}. 
\end{align}
Thus, combining \eqref{ver-en-2}-\eqref{ver-en-4} we arrive at
\begin{align}\label{ver-en-i2}
	\sup_{t\in [T^{\rm med}_{q+1},+\infty)}|I_2(t)|\leq \frac{1}{24}\delta_{q+ 3}.
\end{align}

Now, we estimate the first term $I_1$. 
Note that, by taking the trace in \eqref{vel oscillation cancellation calculation},
\begin{align*}
|\widetilde w_{q+1}^{(p)}(t)|^2
&=d\chi_{q+1}^2(\rho+\gamma_q)
+\chi_{q+1}^2\sum_{k\in\mathcal K}a_{(k)}^2(t)
P_{\neq0}\bigl(|W_{(k)}(t)|^2\bigr)\\
&\quad+\chi_{q+1}^2
\sum_{\substack{k,k'\in\mathcal K\\ k\ne k'}}
a_{(k)}a_{(k')}\bigl(W_{(k)}\cdot W_{(k')}\bigr)(t).
\end{align*}
Thus
\begin{align}\label{ver-en-diff-3}
I_1=I_{11}+I_{12}+I_{13},
\end{align}
where $I_{11}$ 
\begin{align*}
  I_{11}:= e(t) - \|u_{q}(t)\|_{L_x^2}^2-f_q(t)-\chi_{q+1}^2(t)\big( e - \|u_{q}\|_{L_x^2}^2-f_q\big)*_{t}  \varphi_{\ell_q} (t) 
\end{align*}
is the mollification term, 
\begin{align*}
I_{12}:=-\chi_{q+1}^2(t)\int_{\T^d}
\left(2d\varepsilon_u^{-1}(\ell_q^2+|\mathring R_{\ell_q}(t)|^2)^{1/2}
+\sum_{k\in\mathcal K}a_{(k)}^2(t)P_{\neq0}(|W_{(k)}(t)|^2)\right)\,\d x,
\end{align*}
and
\begin{align*}
I_{13}:=-\chi_{q+1}^2(t)
\sum_{\substack{k,k'\in\mathcal K\\ k\ne k'}}
\int_{\T^d}a_{(k)}a_{(k')}
\bigl(W_{(k)}\cdot W_{(k')}\bigr)\,\d x .
\end{align*}

We partition the outer interval as
\[
\mathcal I_2 =\mathcal I_{2,1}\mathbin{\dot\cup} \mathcal I_{2,2}\mathbin{\dot\cup}
\mathcal I_{2,3},
\]
where
\[
\mathcal I_{2,1}:=[T_{q+1}+\ell_q,+\infty), \qquad
\mathcal I_{2,2}:=[T_{q+1},T_{q+1}+\ell_q),
\]
and
\[
\mathcal I_{2,3}:=[T^{\rm med}_{q+1},T_{q+1}).
\]

\medskip 
{\bf $\bullet$ Estimate of the $\mathcal I_{2,1}$ regime.} 
For $t \in \mathcal I_{2,1}$, 
using \eqref{def-fq} and \eqref{chiq-def} 
we first see that 
\[
\chi_{q + 1}(t) = 1,\quad f_q(t) - (f_q *_{t} \varphi_{\ell_q})(t) = 0.
\] 
For the first term $I_{11}$, standard mollification estimates,
\eqref{def-a-0}, \eqref{bdd-e-c1}, and \eqref{uc} give
\begin{align}\label{est-moll-uqzq}
\left| \bigl(\|u_q\|_{L^2_x}^2*_{t}\varphi_{\ell_q}\bigr)(t)
-\|u_q(t)\|_{L^2_x}^2
\right|
&\lesssim \ell_q \|u_q\|_{C^1_{[t-\ell_q,t]}L^2_x}
\|u_q\|_{C_tL^2_x}
\notag\\
&\lesssim
\ell_q\lambda_q^{d+1} \leq  \frac1{48}\delta_{q+3},
\end{align}
and
\begin{align}\label{est-e}
\left|e(t)-\bigl(e*_{t}\varphi_{\ell_q}\bigr)(t)\right|
\lesssim \ell_q\|e\|_{C^1_{[T_{q+2},+\infty)}} \leq \frac1{48}\delta_{q+3}.
\end{align}
Therefore,
\begin{align}\label{est-j1}
\sup_{t\in\mathcal I_{2,1}}|I_{11}(t)| \leq \frac1{24}\delta_{q+3}.
\end{align}

Regarding the second term $I_{12}$, 
we recall the following lemma.
\begin{lemma}[\cite{bms21}, Proposition 2]\label{lem-mean}
	Let $a \in C^{\infty}\left(\mathbb{T}^d ; \mathbb{R}\right), v \in C_0^{\infty}\left(\mathbb{T}^d ; \mathbb{R}\right)$. Then for any $r \in[1, \infty]$ and $\lambda\in \mathbb{N}$,
	$$
	\left|\int_{\mathbb{T}^d} a v(\lambda x)\d x\right| \leq \lambda^{-1} C_r\|\nabla a\|_{L^r_x}\|v\|_{L^{r'}_x}.
	$$
\end{lemma}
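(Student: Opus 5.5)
The plan is to reduce the estimate to a one-dimensional integration by parts along the fast variable. Write $\lambda x$ coordinatewise and use that $v$ is smooth, periodic and mean-free. Then $v$ can be written as a divergence, $v=\div V$, with $V:=\nabla\Delta^{-1}v$ defined on $\T^d$ (the inverse Laplacian is well defined because $v$ has zero mean). Rescaling gives
\[
v(\lambda x)=\lambda^{-1}\,\div_x\bigl(V(\lambda x)\bigr),
\]
so integration by parts on the torus, with no boundary terms, yields
\[
\int_{\T^d}a(x)\,v(\lambda x)\,\d x=-\lambda^{-1}\int_{\T^d}\nabla a(x)\cdot V(\lambda x)\,\d x .
\]
By H\"older's inequality the modulus of this is at most $\lambda^{-1}\|\nabla a\|_{L^r_x}\|V(\lambda\cdot)\|_{L^{r'}_x}$.

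Next I would use that $\lambda\in\mathbb N$, so $x\mapsto V(\lambda x)$ is a $\lambda^{-1}$-periodic rescaling. The torus splits into $\lambda^d$ cells, each of which contributes the same amount, so $\|V(\lambda\cdot)\|_{L^{r'}(\T^d)}=\|V\|_{L^{r'}(\T^d)}$. It then remains to bound $\|\nabla\Delta^{-1}v\|_{L^{r'}}\le C_r\|v\|_{L^{r'}}$ for mean-free $v$.

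This last bound is the only nontrivial step, because $r'$ ranges over $[1,\infty]$ and Calder\'on--Zygmund theory alone does not cover the endpoints. For this reason I would not use the Riesz transform. The operator $\nabla\Delta^{-1}$ on mean-free functions is convolution with $\nabla G$, where $G$ is the periodic Green's function. Since $|\nabla G(x)|\lesssim|x|^{1-d}$ near the origin and $\nabla G$ is smooth elsewhere, we have $\nabla G\in L^1(\T^d)$. Young's convolution inequality then gives the bound for every $r'\in[1,\infty]$ with a constant independent of $r$, which is slightly stronger than what is claimed. Combining the three displays proves the lemma.
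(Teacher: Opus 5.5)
The paper does not prove this lemma; it quotes it from \cite{bms21}, so there is no in-text argument to compare with. Your proof is correct and complete. It rests on three steps:
\begin{enumerate}
\item the rescaling identity $v(\lambda x)=\lambda^{-1}\div_x\bigl(V(\lambda x)\bigr)$ with $V=\nabla\Delta^{-1}v$, which needs the mean-zero reading of $C_0^\infty(\T^d)$ that you correctly adopted;
\item the integration by parts on $\T^d$ and the identity $\|V(\lambda\cdot)\|_{L^{r'}}=\|V\|_{L^{r'}}$, both of which use $\lambda\in\mathbb N$;
\item Young's inequality with the integrable kernel $\nabla G$, which is exactly what covers the endpoints $r'\in\{1,\infty\}$.
\end{enumerate}
Together these give the bound with the $r$-independent constant $\|\nabla G\|_{L^1(\T^d)}$.

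An equally short alternative splits $\T^d$ into the $\lambda^d$ cubes of side $\lambda^{-1}$. On each cube you subtract the cube average of $a$, which is allowed because $\int_Q v(\lambda x)\,\d x=0$, and then apply the Poincar\'e inequality on each cube together with H\"older's inequality in the sum over cubes. That route avoids the Green's function asymptotics but yields the same estimate.
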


Applying Lemmas~\ref{mae} and \ref{lem-mean} 
we obtain 
\begin{align}\label{est-en-high}
\sum_{k\in\mathcal K} \int_{\mathbb{T}^d} a_{(k)}^2 P_{\neq 0}(|W_{(k)}|^2) \, \mathrm{d} x &\lesssim \sum_{k\in\mathcal K} \sigma^{-1} \|\nabla (a_{(k)}^2)\|_{C_{t,x}} \|P_{\neq 0}(|W_{(k)}|^2)\|_{L^1_x} \notag \\
&\lesssim \lambda_{q+1}^{-\frac{1}{7}} \ell_q^{-5(d+5)} \leq \frac{1}{48} \delta_{q + 3}.
\end{align}
Using \eqref{def-c8} and \eqref{rl1s} 
we also get
\begin{align}\label{prin-end}
2d \varepsilon_u^{-1} (\ell_q + \|\mathring{R}_{\ell_q}\|_{C_{[T_{q+1},+\infty)}L^1_x}) &\leq 2d \varepsilon_u^{-1} (\ell_q + c_* \delta_{q + 2}) \leq \frac{1}{48} \delta_{q + 2}.
\end{align} 
Thus, 
we obtain 
\begin{align}\label{est-j2}
\sup_{t\in [T_{q+1},+\infty)}|I_{12}(t)| \leq \frac{1}{24} \delta_{q + 2}.
\end{align}
Moreover, $I_{13}\equiv0$ for $d\ge3$. For $d=2$, Lemmas~\ref{product estimate lemma}
and~\ref{mae} give
\begin{align}\label{est-i13}
\sup_{t\in[T^{\rm med}_{q+1},+\infty)}|I_{13}(t)|
&\lesssim
\sum_{\substack{k,k'\in\mathcal K\\ k\ne k'}}
\|a_{(k)}\|_{C_{t,x}}\|a_{(k')}\|_{C_{t,x}}
\|\phi_{(k)}\phi_{(k')}\|_{L^1_x} \notag\\
&\lesssim \ell_q^{-3}\lambda_{q+1}^{-\frac67}
\le \frac1{48}\delta_{q+3}.
\end{align}
Combining \eqref{ver-en-diff}, \eqref{ver-en-i2}, \eqref{ver-en-diff-3}, \eqref{est-j1},
\eqref{est-j2},  and the estimate for $I_{13}$, we obtain, for
$t\in\mathcal I_{2,1}$,
\[
\left|e(t)-\|u_{q+1}(t)\|_{L_x^2}^2-\frac12\delta_{q+2}\right|
\le \frac1{24}\delta_{q+2}+\frac18\delta_{q+3}.
\]
Consequently,
\begin{align}\label{e-t-2}
\frac14\delta_{q+2} \le e(t)-\|u_{q+1}(t)\|_{L_x^2}^2
\le \delta_{q+2}.
\end{align}

\medskip
{\bf $\bullet$ Estimate of the $\mathcal I_{2,2}$ regime.}
Let
\[
h_q(t):=e(t)-\|u_q(t)\|_{L^2_x}^2, \qquad
A_q:=\frac12\delta_{q+2}, \qquad
B_q:=\frac34\delta_{q+3}.
\]
For $t\in\mathcal I_{2,2}$, we have $\chi_{q+1}(t)=1$, and thanks to
\eqref{ver-en-diff}--\eqref{ver-en-diff-3},
\[
e(t)-\|u_{q+1}(t)\|_{L^2_x}^2 =
\bigl(f_q*_{t}\varphi_{\ell_q}\bigr)(t) +h_q(t)-\bigl(h_q*_{t}\varphi_{\ell_q}\bigr)(t)
+I_{12}+I_{13}+I_2.
\]
By the definition of the time-scale,
\[
T_{q+1}-\ell_q>T_{q+2}+\ell_q.
\]
By \eqref{def-fq},
\[
B_q \leq \bigl(f_q*_{t}\varphi_{\ell_q}\bigr)(t) \leq  A_q.
\]
Thanks to \eqref{est-moll-uqzq} and \eqref{est-e},
\[
\left|
h_q(t)-\bigl(h_q*_{t}\varphi_{\ell_q}\bigr)(t) \right| \leq \frac1{24}\delta_{q+3}.
\]
Moreover, \eqref{est-r0-0} and \eqref{rq=r0}, with $q+1$ in place of $q$, imply
\[
\|\mathring R_{\ell_q}(t)\|_{L^1_x}
\le\frac18c_*\delta_{q+3},
\qquad t\in\mathcal I_{2,2}.
\]
Consequently, by \eqref{def-c8} and \eqref{est-en-high}, 
\[
|I_{12}(t)| \leq \frac1{48}\delta_{q+3}
+2d\ve_u^{-1} \left(\ell_q+\frac18c_*\delta_{q+3}\right)
\leq \frac1{24}\delta_{q+3}.
\]
Together with \eqref{ver-en-i2} and \eqref{est-i13}, this yields
\[
\left|
h_q(t)-\bigl(h_q*_{t}\varphi_{\ell_q}\bigr)(t)
+I_{12}(t)+I_{13}(t)+I_2(t)
\right|
\le\frac7{48}\delta_{q+3}.
\]
Therefore,
\begin{align}
e(t)-\|u_{q+1}(t)\|_{L^2_x}^2
&\le \frac12\delta_{q+2} +\frac7{48}\delta_{q+3}
\le\frac{55}{96}\delta_{q+2} <\delta_{q+2}, \label{e-t-3.1}\\
e(t)-\|u_{q+1}(t)\|_{L^2_x}^2 &\ge \frac34\delta_{q+3} -\frac7{48}\delta_{q+3}
=\frac{29}{48}\delta_{q+3} >\frac12\delta_{q+3}.
\label{e-t-3.2}
\end{align}

\medskip 
{\bf $\bullet$ Estimate of the $\mathcal I_{2,3}$ regime.} 
For $t\in \mathcal I_{2,3}$,  
since there are no 
perturbations added to the initial step $u_{0}$ before $q+1$ step 
on $[0,T_{q+1}]$, 
we have $\rr_q(t)= \rr_0(t)$ and $u_q(t) = u_0(t)$ for all $t\in \mathcal I_{2,3}$.

Hence, 
by \eqref{asp-0-energy} and \eqref{asp-e-q},
\begin{align}\label{est-e-133}
    \frac{3}{4}\delta_{q+3}\leq e(t) - \|u_{q}(t) \|_{L_x^2}^2\leq \delta_{q+2} .
\end{align}
We note that by the definition of $f_q$ in \eqref{def-fq}, 
\begin{align*}
    f_q(t)\equiv \frac34\delta_{q+ 3}, \quad (f_q*_{t}  \varphi_{\ell_q})(t)\equiv \frac34\delta_{q+ 3},\quad t\in \mathcal I_{2,3}.
\end{align*}
Then, 
using \eqref{est-e-133}, \eqref{def-fq}, \eqref{est-moll-uqzq} and \eqref{est-e}  we obtain
\begin{align}\label{j1-2-low}
	I_{11}& = (1-\chi_{q+1}^2)(e(t) - \|u_{q}(t) \|_{L_x^2}^2)- \frac34\delta_{q+ 3} +\chi_{q+1}^2 (f_q*_{t}  \varphi_{\ell_q})(t)\notag\\
	&\quad+\chi_{q+1}^2( e(t) - \|u_{q}(t) \|_{L_x^2}^2 - ( e - \|u_{q} \|_{L_x^2}^2)*_{t}  \varphi_{\ell_q}(t))\notag\\
	&\geq(1-\chi_{q+1}^2) \frac{3}{4}\delta_{q+3} - \frac34\delta_{q+ 3} +\chi_{q+1}^2 \frac34\delta_{q+ 3}- \chi_{q+1}^2\frac{1}{24} \delta_{q+3}\geq -\frac{1}{24} \delta_{q+3}.
\end{align}
We also have
\begin{align}\label{j1-2-up}
	I_{11}
 \leq(1-\chi_{q+1}^2) \delta_{q+2}  - \frac34\delta_{q+ 3} +\chi_{q+1}^2 \frac34\delta_{q+ 3}+ \chi_{q+1}^2\frac{1}{24} \delta_{q+3}\leq \delta_{q+2}.
\end{align}
By \eqref{est-r0-0}, we infer that for $t\in [T_{q+2}, T_{q+1}+\ell_q]$,
\begin{align*}
	\|\mathring{R}_{\ell_q}\|_{ C_{[T_{q+2}, T_{q+1}+\ell_q]}L^{1}_x} \leq \frac18 c_*\delta_{q+ 3}. 
\end{align*}  
Hence, using Lemmas~\ref{mae} and \ref{lem-mean} we get
\begin{align}\label{est-i12-3}
   |I_{12}| &\leq \chi_{q+1}^2(  \sum_{k\in\mathcal K   } \int_{\T^d} a_{(k)}^2 P_{\neq 0}( \left|W_{(k)}\right|^2) \d x+ 2d\ve_u^{-1}( \ell_q+   \|\mathring{R}_{\ell_q}(t)\|_{L^1_x}))\notag\\
   &\leq \chi_{q+1}^2(\frac{1}{48}\delta_{q+3} +2d\ve_u^{-1}(  \ell_q+ c_*\delta_{q+3}))\notag\\
   &\leq \frac{1}{24}\delta_{q+3}.
\end{align}

Thus, plugging \eqref{ver-en-i2}, \eqref{est-i13}, \eqref{j1-2-low}, \eqref{j1-2-up} and \eqref{est-i12-3} into \eqref{ver-en-diff} we arrive at 
for $t\in \mathcal I_{2,3}$, 
\begin{align}\label{e-t-4.1}
   e(t) - \|u_{q+1}(t)\|_{L_x^2}^2
   & \leq f_q+ I_{11}+|I_{12}|+|I_{13}|+|I_2|  \notag \\ 
    &   \leq \frac34\delta_{q+ 3}+\delta_{q+2}+ \frac{1}{24}\delta_{q+3}+\frac1{48}\delta_{q+3}+\frac{1}{24}\delta_{q+3}
   \leq  \frac32\delta_{q+2}.
\end{align}
Moreover, we also have
\begin{align}\label{e-t-4.2} 
e(t)- \|u_{q+1}(t)\|_{L_x^2}^2 
&\geq  f_q+ I_{11}-|I_{12}|-|I_{13}|-|I_2| \notag \\ 
& \geq \frac34\delta_{q+ 3}-\frac{1}{24} \delta_{q+3}- \frac{1}{24}\delta_{q+3}-\frac1{48}\delta_{q+3}-\frac{1}{24}\delta_{q+3} \geq \frac12 \delta_{q+3}. 
\end{align}

Finally, 
combining \eqref{e-t-2}, \eqref{e-t-3.1},  
\eqref{e-t-3.2}, \eqref{e-t-4.1} 
and \eqref{e-t-4.2} we verify \eqref{energy-est} at level $q+1$. 
Consequently, 
the proof of Proposition~\ref{Prop-Iterat} is complete.

\section{Proof of Theorem \ref{Thm-Non-euler-Local} } \label{Sec-prf-thm}

We are now in state to prove the main results 
in Theorem \ref{Thm-Non-euler-Local}
concerning the local dissipative energy solutions
with prescribed initial data 
satisfying {\rm (H1)} and {\rm (H2)}.

\medskip 
\paragraph{\bf Verification of initial estimates}
Let us start with verifying the inductive estimates \eqref{ul2}-\eqref{energy-est} 
at the initial step $q=0$. 
Recall that the initial relaxed solution $u_0$ is the solution $\wt u$ 
given by \eqref{def-solu} 
to the approximate $\Lambda$-Euler equations 
\eqref{equa-nse-2}, 
and the initial Reynolds stress 
$\mathring{R}_{0}$ is given by 
\eqref{r0}.  

First, 
estimate 
\eqref{ul2} follows easily from 
the choice of $\delta_0$ in \eqref{delta0}, and \eqref{rq=r0} is automatically satisfied.

Moreover, since $\widetilde u$ is given by the explicit multiplier,
\[
\partial_t\widetilde u
=-\nu(-\Delta)^\alpha\P_{\geq\Lambda(t)}^2\widetilde u.
\]
Because $2\alpha<1$, using Sobolev embedding and \eqref{spa-regu}, we obtain
\begin{align}\label{est-u0-c1}
\|\widetilde u\|_{C^1_{[T_2,+\infty),x}}
&\lesssim \|\widetilde u\|_{C_{[T_2,+\infty)}H^{d+2}_x}
+\|\partial_t\widetilde u\|_{C_{[T_2,+\infty)}H^{d+1}_x}\notag\\
&\lesssim \left(1+T_2^{-\frac{d+2}{2\alpha}}\right)\|v_0\|_{L^2_x}
\lesssim \lambda_0^{d+1},
\end{align}
thereby verifying \eqref{uc} at step $q=0$. 

Regarding the initial Reynolds stress,  using estimate  \eqref{est-r0-l1} 
and \eqref{def-delta2} we have 
\begin{align}
    \|\mathring{R}_{0}\|_{C_tL^{1}_x} \leq C_0 \|v_0\|_{L^2_x}\leq \frac{1}{8}c_* \delta_2,
\end{align}
which justifies \eqref{rl1s}. 
Moreover, by the explicit expression \eqref{r0} 
of the initial Reynolds stress, 
\begin{align}\label{est-r0-c-1}
\| \mathring{R}_{0}\|_{C^{1}_{[T_{q+2},+\infty)}L^1_x}& \lesssim \| \p_t(\varphi_\Lambda^2\widehat{\wt u}) \|_{C_{[T_{q+2},+\infty)}L^{2}_{\xi}} +\| \wt u \|_{C_{[T_{q+2},+\infty)}L^{2}_x}\notag\\
&\lesssim \| \Lambda(t)\|_{C^{1}_{[T_{q+2},+\infty)}}\| \wt u \|_{C_{[T_{q+2},+\infty)}L^{2}_x}+\| \wt u \|_{C^1_{[T_{q+2},+\infty)}L^{2}_x}\notag\\
&\lesssim \eta t^{-\eta-1}\| \wt u \|_{C_{[T_{q+2},+\infty)}L^{2}_x}+\| \wt u \|_{C^1_{[T_{q+2},+\infty)}L^{2}_x},
\end{align} 
which along with \eqref{est-wtu-c} 
yields that 
\begin{align}\label{est-r0-c-end}
    \| \mathring{R}_{0} \|_{C^{1}_{[T_{q+2},+\infty)}L^1_x} &\lesssim \lambda_{q}^{2d+2}.
\end{align}
Similarly, one also has 
\begin{align}\label{est-r0-w-1}
\| \mathring{R}_{0}\|_{C_{[T_{q+2},+\infty)}W^{1,1}_x}\lesssim \|\wt u \|_{C_{[T_{q+2},+\infty)}H^{1}_x}\lesssim  \lambda_{q}^{2d+2}. 
\end{align} 
Thus,  \eqref{rl1b-s} is justified. 
At last, 
the energy estimate 
\eqref{energy-est} 
follows directly from \eqref{asp-0-energy}. 

Hence,  estimates \eqref{ul2}-\eqref{energy-est} are satisfied at level $q=0$. 

\medskip 
\paragraph{\bf Continuous energy solution} 
Now, applying Proposition~\ref{Prop-Iterat} we obtain
a sequence of relaxed solutions $(u_{q},\rr_{q})$ to \eqref{equa-nsr} satisfying the iterative estimates \eqref{ul2}-\eqref{energy-est}, and, by \eqref{u-B-L2tx-conv},
\begin{align}\label{interpo}
 \sum_{q \geq 0} \| u_{q+1} - u_q \|_{C_t L^2_{ x}}
	\lesssim  &\,  \sum_{q \geq 0}   \delta_{q+1}^{\frac{1}{2}} <\infty.
\end{align}
It follows that $\{u_{q}\}$ is a Cauchy sequence in $C_tL^2_x$ converging to some $u$ in $C_tL^2_x$. 
Taking into account \eqref{rl1s} 
we infer that 
$\lim_{q \to \infty}  \mathring{R}_{q}  = 0$, 
and thus, 
$u$ solves \eqref{equa-euler} in the distributional sense. Moreover, 
by \eqref{energy-est}, for any $t\in (0,+\infty)$  
letting $q\to \infty$ we obtain 
\begin{align}\label{ener-tb0}
	e(t)-  \|u(t)\|_{L^2_x}^2  =\lim_{q \to +\infty}( e(t)- \|u_q(t)\|_{L^2_x}^2)= 0.
\end{align} 
In particular, 
$u$ is a solution to \eqref{equa-euler} 
with continuous energy profile.

\medskip 
\paragraph{\bf Decreasing energy channel}
We construct the energy channel 
$(e^{\mathrm{h}},  e^{\mathrm{l}})$ 
as in \eqref{upe-undere-def}. 
As pointed out in the introduction, 
it is very delicate to 
control the competition 
between the dissipation energy of $u_0$ 
and the perturbation energy of $\overline f, 
\underline f$.   

For this purpose, 
let us first derive the quantitative estimate of the dissipation energy,  
which is closely 
related to the intermittency 
and the regularity roughness of the initial data. 

More precisely, 
using the expression \eqref{def-solu} 
and the energy balance \eqref{eq-e-2}, 
we get that for $t\in (0,t_0]$, where $t_0$ is given by Proposition~\ref{Prop-R0}, 
\begin{align}\label{eq-e-u0-1}
	\frac12 \frac{\d}{\d t} \|  u_0(t) \|_{L^2_x}^2& = -\nu  \| |\nabla|^\alpha \P_{\geq \Lambda(t)} u_0(t) \|_{L^2_x}^2 \notag\\
        &\lesssim -\Lambda^{2\alpha}(t) \sum_{\xi\in \mathbb{Z}^d}| \varphi_{\Lambda(t)} (\xi) e^{-\nu\int_{\tau(\xi)}^t\varphi^2_{\Lambda(s)}(\xi)\d s (2\pi|\xi|)^{2\alpha}}\widehat{v_0}(\xi)|^2.
\end{align}
Let $n_* = n_*(t)$ and
\begin{align*}
    m_*(t):=n_*(t)+3.
\end{align*}
Since 
\[
\mathrm{supp}\,\chi_{m_*(t)} \subset [2^{m_*(t)-1},2^{m_*(t)+1}]
=[2^{n_*(t)+2},2^{n_*(t)+4}]
\]
and $2^{n_*(t)}\leq \Lambda(t)<2^{n_*(t)+1}$, we have
\begin{align*}
    2\Lambda(t)<2^{n_*(t)+2}\leq |\xi|
\end{align*}
for every $\xi$ in the support of $\chi_{m_*(t)}(|\cdot|)$. Therefore,
\begin{align*}
  \chi_{m_*(t)}(|\xi|)\varphi_{\Lambda(t)}(\xi) =\chi_{m_*(t)}(|\xi|).
\end{align*}
Moreover, on the same support,
\begin{align*}
    |\xi|\leq 2^{m_*(t)+1}\leq 16\Lambda(t).
\end{align*}
Hence,
\begin{align}\label{est-en-v0-1}
 \sum_{\xi\in \mathbb{Z}^d}| \varphi_{\Lambda(t)}  e^{-\nu\int_{\tau(\xi)}^t\varphi^2_{\Lambda(s)}(\xi)\d s (2\pi|\xi|)^{2\alpha}}\widehat{v_0 }(\xi)|^2 
 &\geq \sum_{\xi\in \mathbb{Z}^d}| \chi_{m_*(t)}(|\xi|)\varphi_{\Lambda(t)}(\xi)  e^{-\nu\int_{\tau(\xi)}^t\varphi^2_{\Lambda(s)}(\xi)\d s (2\pi|\xi|)^{2\alpha}}\widehat{v_0 }(\xi)|^2 \notag\\
 &\geq \sum_{\xi\in \mathbb{Z}^d}| \chi_{m_*(t)}(|\xi|)  e^{-\nu(t-\tau(\xi)) (2\pi|\xi|)^{2\alpha}}\widehat{v_0 }(\xi)|^2 \notag\\
 &\geq e^{-16^{2\alpha}\nu t_0^{1-2 \alpha\eta}}
 \sum_{\xi\in \mathbb{Z}^d}| \chi_{m_*(t)}(|\xi|)\widehat{v_0 }(\xi)|^2.
\end{align}
 Plugging \eqref{est-en-v0-1} into \eqref{eq-e-u0-1}, we arrive at
\begin{align}\label{eq-e-u0}
	\frac12 \frac{\d}{\d t} \|  u_0(t) \|_{L^2_x}^2
          &\lesssim -\Lambda^{2\alpha}(t)\sum_{\xi\in \mathbb{Z}^d}| \chi_{m_*(t)} (|\xi|)\widehat{v_0 }(\xi)|^2\lesssim - \Lambda^{2\alpha} (t) \|\Delta_{m_*(t)} v_0  \|_{L_x^2}^2.
\end{align}
Then, let 
\begin{align}\label{def-sn}
    s_{n}:=\frac{d-D_{n,2,\infty}}{2}+1-\kappa_2.
\end{align} 
By the roughness condition \eqref{asp-rou} of the initial data,  
we obtain the dissipation estimate 
\begin{align}\label{eq-e-u0-2}
	\frac12 \frac{\d}{\d t} \|  u_0(t) \|_{L^2_x}^2 &\lesssim -  \Lambda^{2\alpha} (t)2^{ -(d-D_{m_*(t),2,\infty})m_*(t)} \|\Delta_{m_*(t)} v_0  \|_{L_x^\infty}^2\notag\\
		 &\lesssim   - \Lambda^{2\alpha}  (t)2^{-2s_{m_*(t)}m_*(t)} \notag\\
 &\lesssim - \Lambda^{2(\alpha-s_{m_*(t)})}(t), \quad 
 t\in (0,t_0]. 
\end{align}

Regarding the perturbation energy, 
we estimate 
for $t\in [\ot_{q+1}, \ot_q]$, 
\begin{align}\label{of-ineq-1}
	\overline{f}'(t)  &\leq
    \frac{\delta_{q+2}-\delta_{q+1}}{\ot_{q+1}-\ot_q}.
\end{align}  
Using our choice of 
the backward times in \eqref{Tq-def} 
and the amplitude of Reynolds stress in \eqref{def-delta} 
we then derive 
\begin{align} \label{energy-up-per}
   \overline{f}'(t) \leq 
   \frac43\frac{\delta_{q+1}}{ \ot_q} 
   = \frac{32C_R}{3c_*}4^{k_{q-1}}\ot_q^{k_{q-1}-1}
   \leq \frac{32C_R}{3c_*}4^{K_*}\ot_q^{k_{q-1}-1}
   \lesssim \ot_q^{k_{q-1}-1}, \quad 
   t\in [\ot_{q+1}, \ot_q], \quad q\geq 2,
\end{align} 
where in the third step we used the uniform upper bound $k_{q-1}\leq K_*$ in \eqref{eq:k_q_upper_bound}. 

Thus, combining both estimates \eqref{eq-e-u0-2} and \eqref{energy-up-per} together 
we obtain that   
for the upper energy profile  
$e^{\mathrm{h}}$ 
defined in \eqref{upe-undere-def}, 
\begin{align}\label{oe-ineq-1}
	\frac{\d e^{\mathrm{h}}}{\d t} (t) &\leq  -C_1 t^{ 2\eta(s_{m_*(t)}-\a) }
    + C_2 \ot_q^{k_{q-1}-1},  \quad 
    t\in [\ot_{q+1}, \ot_q], \quad q\geq 2,
\end{align}
 where $C_1$ and $C_2$ are constants depending on $c_*$, $\nu$, $\alpha$, $\eta$, $t_0$ and $v_0$, but independent of $q$ and $\theta$. 

Estimating in an analogous manner 
we also have for the lower energy profile $e^{\mathrm{l}}$ in \eqref{upe-undere-def}, 
\begin{align}\label{ue-ineq}
	\frac{\d e^{\mathrm{l}}}{\d t} (t) &\leq -C_3 t^{2\eta(s_{m_*(t)}-\a)} +C_4\ot_q^{k_{q-1}-1}, 
    \quad t\in [\ut_{q+1}, \ut_q],\quad q\geq 2,
\end{align} 
where $C_3$ and $C_4$ are constants depending on $c_*$, $\nu$, $\alpha$, $\eta$, $t_0$ and $v_0$, but independent of $q$ and $\theta$. Here we also used that $\underline{T}_{q+1}\simeq \overline{T}_{q+1}=4^{-(\theta+q)}$, since $\ell_q=\lambda_q^{-30d}$ is much smaller than $4^{-(\theta+q)}$ by \eqref{la} and \eqref{def-a-0}.

Now, 
in order to show the dissipative property of solutions, we need 
\begin{align}\label{oe-de}
	\frac{\d e^{\mathrm{h}}}{\d t} (t)<0\quad \text{and }\quad \frac{\d e^{\mathrm{l}}}{\d t} (t)<0,\quad\forall\, t\in (0,T^*],
\end{align}
for some $T^*\in (0,+\infty)$. In view of estimates \eqref{oe-ineq-1} 
and \eqref{ue-ineq},  
this in turn requires a uniform gap between the dissipation exponent and the perturbation exponent. 
Since $4^{-\theta}\leq t_0$ and
\[
T_{q-1}+\ell_{q-2}=4^{-(\theta+q-2)}\leq 4^{-\theta},
\qquad q\geq 2,
\]
we have $n_{q-1}\geq N$ for all $q\geq 2$ by the definition \eqref{def-nq} and the choice of $t_0$ in Proposition~\ref{Prop-R0}. Moreover, since $t\in (0,t_0]$, the definition of
$n_*(t)$ and the choice of $t_0$ also imply that $m_*(t)\geq N$. 
Moreover, by \eqref{def-eta} and \eqref{def-alpha},
\begin{align*}
    \frac1\eta<4\alpha<2\kappa_2-\kappa_0-\kappa_1.
\end{align*} 
Hence, by the intermittency condition \eqref{est-1.8-2},
\begin{align}\label{en-dis-inq-5}
D_{m_*(t),2,\infty} -D^\sharp_{n_{q-1},1,\infty}
+2\kappa_2-\kappa_0 > \frac1\eta, \qquad
t\in(0,\ot_2],\quad q\geq2.
\end{align} 
Equivalently,
\begin{align}\label{en-dis-inq}
  2\eta(s_{m_*(t)}-\a)< k_{q-1}-1,
  \qquad t\in (0,\ot_2],\quad q\geq 2.
\end{align}
More precisely, choosing
\begin{align*}
    \sigma_*:=\eta(2\kappa_2-\kappa_0-\kappa_1)-1>0,
\end{align*}
we obtain from \eqref{asp-int} that
\begin{align}\label{e7.20}
    2\eta(s_{m_*(t)}-\a)\leq k_{q-1}-1-\sigma_*,
    \qquad t\in (0,\ot_2],\quad q\geq 2.
\end{align}
Let $\gamma_q:=k_{q-1}-1$. By \eqref{kq-1} and the bound $k_{q-1}\leq K_*$,
we have
\[
0< \gamma_q\le K_*-1, \qquad q\geq 2.
\]
Moreover, for
$t\in [\ot_{q+1},\ot_q]\cup[\ut_{q+1},\ut_q]$, we have
$\ot_q\lesssim t$, which implies
\[
\ot_q^{\gamma_q}\lesssim t^{\gamma_q},
\]
where the implicit constant is uniform in $q$ because $0<\gamma_q\le K_*-1$.
Hence, using \eqref{oe-ineq-1}, \eqref{ue-ineq} and \eqref{e7.20}, we infer that  
\begin{align*}
    \frac{\d e^{\mathrm{h}}}{\d t} (t)
    &\leq -C_1 t^{\gamma_q-\sigma_*}+C_2'\, t^{\gamma_q}
    =\big(-C_1 t^{-\sigma_*}+C_2'\big)t^{\gamma_q},
    \qquad t\in [\ot_{q+1},\ot_q],
\\
    \frac{\d e^{\mathrm{l}}}{\d t} (t)
    &\leq -C_3 t^{\gamma_q-\sigma_*}+C_4'\, t^{\gamma_q}
    =\big(-C_3 t^{-\sigma_*}+C_4'\big)t^{\gamma_q},
    \qquad t\in [\ut_{q+1},\ut_q],
\end{align*}
where $C_2'$ and $C_4'$ are independent of $q$ and $\theta$. Since $\sigma_*>0$,
the factors $t^{-\sigma_*}$ blow up as $t\to 0^+$. Thus, letting $T^*=\ut_2$ and
choosing $\theta$ sufficiently large so that
\begin{align}\label{theta-local-end}
    C_1 (T^*)^{-\sigma_*}>C_2'
\qquad\text{and}\qquad
C_3 (T^*)^{-\sigma_*}>C_4', 
\end{align}
we conclude that
\begin{align}\label{oe-de-end}
	\frac{\d e^{\mathrm{h}}}{\d t} (t)<0\quad \text{and }\quad \frac{\d e^{\mathrm{l}}}{\d t} (t)<0,\quad\forall\, t\in (0,T^*].
\end{align}
Therefore, we obtain a pair of continuous functions satisfying \eqref{el-eh}-\eqref{el0-eh0}
and decreasing on $[0,T^*]$. The exact-energy realization for every
$e\in\mathcal E_{\mathrm{ch}}(v_0)$ was established above in the paragraph
`Continuous energy solution', and the non-emptiness of
$\mathcal E_{\mathrm{ch}}(v_0)$ was noted after \eqref{bdd-e-c1}. 
The proof of Theorem \ref{Thm-Non-euler-Local} is complete. \hfill $\square$ 

\begin{remark}
    In the case where $v_0\in W^{1,p}_x$,  
    the proof proceeds similarly as above. 
    With the same choice $m_*(t):=n_*(t)+3$, 
    we can choose 
    \begin{align*}
      s_n= (d-D_{n,2,p})(\frac{1}{2}-\frac{1-C_*}{p})+1
    \end{align*}
    and replace 
    \eqref{en-dis-inq} by 
    \begin{align*}
\eta(2-2 \alpha+(d-D_{m_*(t), 2, p})(1-\frac{2-2 C_*}{p}))<\eta(2-2 \alpha+(d-D_{n_{q-1}, 1, p}^{\sharp})(1-\frac{1}{p})-\frac{\kappa_* C_*}{p})-1, \\
t \in(0, \bar{T}_2], \quad q \geq 2 .
\end{align*}
Equivalently,
\begin{align}  \label{DDsharp-eta}
(D_{m_*(t), 2, p}-D_{n_{q-1}, 1, p}^{\sharp})
(1-\frac1p)
+(d-D_{m_*(t), 2, p})
\frac{1-2 C_*}{p}-\frac{\kappa_* C_*}{p}>\frac{1}{\eta},
\qquad
t\in(0,\ot_2],\quad q\geq2.
\end{align} 
Indeed, by \eqref{1.11-2},
\[
D_{m_*(t),2,p}
-D^\sharp_{n_{q-1},1,p}
\geq
-\frac{\kappa_*C_*}{p}.
\]
and by \eqref{asp-reg-2}, 
\[
d-D_{m_*(t),2,p}\geq\kappa_*.
\]
Thus, the left-hand side of \eqref{DDsharp-eta} is bounded from below by
\[
\frac{\kappa_*}{p}
(1-4C_*+\frac{C_*}{p})>
\frac{\kappa_*(1-4C_*)}{p}, 
\] 
which yields \eqref{DDsharp-eta}  
by the choice of $\eta$ and $\alpha$ in property {\rm (W3)'}. 

\end{remark}

\section{A criterion for global dissipative profiles}
\label{Sec-global-diss}

Sections~\ref{Sec-Lam-Euler}--\ref{Sec-prf-thm} already provide the local ingredients for dissipativity. The background approximate solution $u_0=\widetilde u$ from
Section~\ref{Sec-Lam-Euler} satisfies the energy identity \eqref{eq-e-2}, and
Section~\ref{Sec-prf-thm} proves that the local comparison profiles decrease on
$(0,\ut_2]$. The purpose of Sections~\ref{Sec-global-diss}--\ref{Sec-sf-closure}
is to convert these local results into a global statement. Section~\ref{Sec-global-diss} shows that if we modify only the first outer
transition interval $[\ot_2,\ut_1]$ while preserving the $q=0$ estimates needed
for Proposition~\ref{Prop-Iterat}, then we obtain global dissipative weak
Euler solutions. Section~\ref{Sec-frozen-outer} applies this abstract result
to a smooth stabilized cutoff, and
Section~\ref{Sec-sf-closure} verifies the resulting hypotheses 
for data satisfying the assumption of Theorem~
\ref{Thm-Non-euler}, 
in particular for parallel shear flows.

\medskip 
Choose a function $\vartheta_{\mathrm{out}}\in C^{\infty}([0,1])$
such that
\begin{align}
\vartheta_{\mathrm{out}}(s)=0\ \text{for } s\le \tfrac14, \qquad
\vartheta_{\mathrm{out}}(s)=1\ \text{for } s\ge \tfrac34, \qquad
 0\le \vartheta_{\mathrm{out}}'(s)\le 3. 
\label{def-vartheta-br}
\end{align}
For $r\in [0,\tfrac12]$, we define
\begin{align}
\psi_r(s):=\vartheta_{\mathrm{out}}(s)+r\,\vartheta_{\mathrm{out}}(s)
\bigl(1-\vartheta_{\mathrm{out}}(s)\bigr),
\qquad s\in [0,1].
\label{def-psi-r}
\end{align}
Then each $\psi_r$ belongs to $C^{\infty}([0,1])$, satisfies
 \begin{align}
\psi_r(0)=0, \qquad \psi_r(1)=1, \qquad 0\le \psi_r'\le 5,
\label{psi-r-basic}
\end{align}
and $\psi_r'$ vanishes near $0$ and $1$. Moreover, if $r_1\neq r_2$, then
$\psi_{r_1}\neq \psi_{r_2}$.

We recall from Subsection \ref{Subsec-Ext-Global} 
the set of 
energy profiles 
$\mathcal E_{\mathrm{ch}}[U_0,\delta_2^\sharp](v_0)$ 
defined in \eqref{def-Ech-sharp}, 
where 
$U_0$ is a divergence-free background approximate solution
$U_0\in C([0,+\infty);L^2_x)\cap C^1((0,+\infty);C_x^{\infty})$, 
$\delta_2^\sharp>0$ 
and $v_0$ is the given initial datum. 
Moreover, 
recall that 
\begin{align*}
    T_*:=T_1+\ell_0=4^{-\theta}.
\end{align*}
Now we construct monotone channel profiles with a modified outer transition.

\begin{proposition}
\label{prop-global-diss}
Assume the hypotheses of Theorem~\ref{Thm-Non-euler}. Let
$u_0^\sharp\in C([0,+\infty);L^2_x)\cap C^1((0,+\infty);C_x^{\infty})$ be a 
background approximate solution and let
\[
E_0^\sharp(t):=\|u_0^\sharp(t)\|_{L^2_x}^2,
\qquad t\ge 0.
\]
Assume that
\begin{align}
u_0^\sharp(t)=u_0(t), \qquad 0\le t\le T_*,
\label{gd-agree}
\end{align}
and that $E_0^\sharp$ is non-increasing on $[0,+\infty)$. Assume moreover that
\begin{align}
\|(E_0^\sharp)'\|_{L^{\infty}([T_2,+\infty))} \lesssim \lambda_0^{d+1}, 
\label{gd-E0-prime-ass}
\end{align}
and that there exists $\delta_2^\sharp\ge \delta_3$ such that
\begin{align}
\delta_1\ge \frac34\delta_2^\sharp,
\label{gd-delta1}
\end{align}
and
\begin{align}
E_0^\sharp(\ot_2)-E_0^\sharp(\ut_1)
\ge 4(\delta_2^\sharp-\delta_3).
\label{gd-bridge}
\end{align}
Then, for every $r\in [0,\tfrac12]$, there exists a non-increasing profile
$e_{\mathrm{gd},r}^\sharp\in \mathcal E_{\mathrm{ch}}[u_0^\sharp,\delta_2^\sharp](v_0)$.
If in addition $\delta_2^\sharp>\delta_3$, then
$e_{\mathrm{gd},r_1}^\sharp\neq e_{\mathrm{gd},r_2}^\sharp$ whenever
$r_1\neq r_2$.
\end{proposition}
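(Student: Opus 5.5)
The profile will be built as $e^\sharp_{\mathrm{gd},r}:=E_0^\sharp+F_r$. Here $F_r$ is a piecewise defined excess function. It coincides with the local construction on $[0,\ot_2]$. On the first outer transition interval $[\ot_2,\ut_1]$ it is modified by a $\psi_r$-interpolation, and on $[\ut_1,+\infty)$ it is frozen at a constant. Concretely, on $[0,\ot_2]$ I take $F_r$ to be any smooth non-increasing excess with $\frac34\delta_{q+2}\le F_r\le\delta_{q+1}$ on $[\ut_{q+1},\ot_q]$, $q\ge 2$. For instance, a smoothing of the average $\frac12(\overline f+\underline f)$ from \eqref{upf-def}--\eqref{underf-def} works: on $(0,\ot_2]$ one has $u_0^\sharp=u_0$ by \eqref{gd-agree}, so $e^\sharp$ there lies between $e^{\mathrm l}$ and $e^{\mathrm h}$. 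Taking $\theta$ large as in \eqref{theta-local-end}, $e^\sharp$ is decreasing there by the local argument of Section~\ref{Sec-prf-thm}. On $[\ot_2,\ut_1]$, with $s:=(t-\ot_2)/(\ut_1-\ot_2)$, I set
\[
F_r(t):=F_r(\ot_2)+\bigl(\delta_2^\sharp-F_r(\ot_2)\bigr)\psi_r(s),
\]
and $F_r\equiv\delta_2^\sharp$ on $[\ut_1,+\infty)$. Since $\psi_r'$ vanishes near $0$ and $1$, the glued function is $C^1$ (indeed smooth) on $(0,\infty)$. It is continuous at $0$ with $e^\sharp(0)=\|v_0\|_{L^2_x}^2$, because $F_r(t)\le\delta_{q+1}\to0$ and $E_0^\sharp(t)\to\|v_0\|^2_{L^2_x}$ by \eqref{s-con-v0}.

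Next I verify that $e^\sharp\in\mathcal E_{\mathrm{ch}}[u_0^\sharp,\delta_2^\sharp](v_0)$. For \eqref{asp-e-q} with $q\ge2$, the bounds hold by construction. For $q=1$, i.e.\ on $[\ut_2,\ot_1]$, I need $\frac34\delta_3\le F_r\le\delta_2^\sharp$. Since $F_r(\ot_2)\in[\frac34\delta_3,\delta_3]\subset[\frac34\delta_3,\delta_2^\sharp]$ and $\psi_r\in[0,1]$, the value $F_r$ is a convex combination of $F_r(\ot_2)$ and $\delta_2^\sharp$, which gives the bounds. For \eqref{asp-0-energy} on $[\ut_1,\infty)$, the excess equals $\delta_2^\sharp$, which lies in $[\frac34\delta_2^\sharp,\delta_1]$ by \eqref{gd-delta1}. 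The bound \eqref{bdd-e-c1} follows on $[T_2,\infty)$ from \eqref{gd-E0-prime-ass} together with $|F_r'|\le 5\delta_2^\sharp/(\ut_1-\ot_2)\lesssim 4^{\theta}\delta_2^\sharp\lesssim\lambda_0^{d+1}$ by \eqref{def-a-0}. On the deeper intervals it follows from the slope estimate in the remark after \eqref{bdd-e-c1}.

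The main obstacle is monotonicity on $[\ot_2,\ut_1]$, since there $F_r$ \emph{increases} by up to $\delta_2^\sharp-\frac34\delta_3$. My first attempt is an integral argument rather than a pointwise one. The increment of $F_r$ over $[\ot_2,t]$ is at most $5(\delta_2^\sharp-F(\ot_2))s$, so I would need $E_0^\sharp(\ot_2)-E_0^\sharp(t)$ to dominate this. Hypothesis \eqref{gd-bridge} only controls the total drop, not the drop profile, so a linear profile does not suffice. I would therefore reparametrize by the energy drop instead of by time, setting
\[
s(t):=\frac{E_0^\sharp(\ot_2)-E_0^\sharp(t)}{E_0^\sharp(\ot_2)-E_0^\sharp(\ut_1)}\in[0,1],
\]
which is non-decreasing because $E_0^\sharp$ is non-increasing. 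Then
\[
\frac{d}{dt}e^\sharp=(E_0^\sharp)'\Bigl(1-\tfrac{(\delta_2^\sharp-F(\ot_2))\psi_r'(s)}{E_0^\sharp(\ot_2)-E_0^\sharp(\ut_1)}\Bigr)\le0,
\]
because the fraction is at most $5(\delta_2^\sharp-\frac34\delta_3)/\bigl(4(\delta_2^\sharp-\delta_3)\bigr)$. That bound is still not below $1$, so I expect to tighten it. One option is to use the extra slack $F(\ot_2)$ close to $\delta_3$, making the increment $\le\delta_2^\sharp-\delta_3$ plus a small error absorbed into the local decrease. The other is to replace the bound $\psi_r'\le5$ by $\le 4$ via a sharper $\vartheta_{\mathrm{out}}$. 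Either way the fraction becomes $\le1$ and $e^\sharp$ is non-increasing. If $\delta_2^\sharp=\delta_3$ the middle piece is flat up to the local part, so monotonicity is trivial.

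Finally, distinctness for $\delta_2^\sharp>\delta_3$ follows because $\psi_{r_1}\ne\psi_{r_2}$ at some $s\in(0,1)$. This $s$ is attained by some $t$, since $s(\cdot)$ is continuous and onto $[0,1]$. The factor $\delta_2^\sharp-F(\ot_2)$ is positive, so $e^\sharp_{\mathrm{gd},r_1}(t)\ne e^\sharp_{\mathrm{gd},r_2}(t)$.
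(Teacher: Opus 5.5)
There is a genuine gap, and it sits exactly in the step you flag as the main obstacle. Neither of your proposed fixes closes it on its own.

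\textbf{The bridge and the tail.} The $q=2$ case of \eqref{asp-e-q} on $[\ut_3,\ot_2]$ forces $F_r(\ot_2)\le\delta_3$. So your increment $\delta_2^\sharp-F_r(\ot_2)$ is at least $\delta_2^\sharp-\delta_3$. With $\psi_r'\le 5$ and \eqref{gd-bridge}, your bound on the ratio is therefore at least $\frac54$, however close $F_r(\ot_2)$ is to $\delta_3$. With $\psi_r'\le 4$ the bound becomes $(\delta_2^\sharp-F_r(\ot_2))/(\delta_2^\sharp-\delta_3)$. This is $\le 1$ only if $F_r(\ot_2)=\delta_3$ exactly. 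For your averaged profile, $F_r(\ot_2)\approx\frac78\delta_3$, so the ratio is even unbounded as $\delta_2^\sharp\downarrow\delta_3$. In the degenerate case $\delta_2^\sharp=\delta_3$ your middle piece is then not flat, while \eqref{gd-bridge} guarantees no energy drop at all.

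There is also no ``small error'' to absorb. In the energy parametrization, $F_r'$ is a multiple of $-(E_0^\sharp)'$, so any excess of the ratio over $1$ is a fixed fraction of the decrease itself.

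Independently, your tail $F_r\equiv\delta_2^\sharp$ on $[\ut_1,+\infty)$ needs $\delta_2^\sharp\le\delta_1$ for \eqref{asp-0-energy}. But \eqref{gd-delta1} only gives $\delta_1\ge\frac34\delta_2^\sharp$.

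The missing idea repairs both problems at once: run the bridge at amplitude $\frac34$. Set $F_r=\frac34\delta_3+\frac34(\delta_2^\sharp-\delta_3)\psi_r(s(t))$ on $[\ot_2,\ut_1]$ and freeze the tail at $\frac34\delta_2^\sharp$. Then:
\begin{enumerate}
\item the ratio is at most $\frac34\cdot\frac54=\frac{15}{16}$, which is what the constant $4$ in \eqref{gd-bridge} is calibrated for;
\item the tail bound is exactly \eqref{gd-delta1};
\item the degenerate case is genuinely constant;
\item $\frac34\delta_3\le F_r\le\delta_2^\sharp$ on $[\ut_2,\ot_1]$ is immediate.
\end{enumerate}

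\textbf{The inner region.} Your inner profile also has to be built differently. The average $\frac12(\overline f+\underline f)$ contains the original, $\theta$-independent $\delta_2$ through $\underline f$ on $[\ut_2,\ut_1]$. So on $[\ut_2,\ot_2]$ its slope is of order $\delta_2 4^{\theta}$, not $\lesssim t^{k_1-1}$. This is precisely why Section~\ref{Sec-prf-thm} obtains monotonicity only on $(0,\ut_2]$. The ``local argument'' therefore does not give monotonicity of your $e^\sharp$ on $(0,\ot_2]$.

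Moreover, a smoothed average has nonzero slope at $\ot_2$. Gluing it to $\psi_r\circ s$, which is flat near $s=0$, is then not $C^1$.

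What works is an excess with:
\begin{enumerate}
\item exact plateaus $\frac34\delta_{q+2}$ on $[\ut_{q+1},\ot_{q+1}]$;
\item smooth ramps from $\frac34\delta_{q+2}$ to $\frac34\delta_{q+1}$ on $[\ot_{q+1},\ut_q]$, $q\ge2$, of slope $\lesssim\delta_{q+1}/\ot_q\lesssim t^{k_{q-1}-1}$.
\end{enumerate}
Then \eqref{eq-e-u0-2} together with the gap \eqref{e7.20} gives monotonicity on all of $[0,\ot_2]$ after enlarging $\theta$. The plateau value $\frac34\delta_3$ on $[\ut_2,\ot_2]$ also matches the starting value of the rescaled bridge.

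Finally, on the bridge the $C^1$ bound should come from $|F_r'|\lesssim|(E_0^\sharp)'|$ together with \eqref{gd-E0-prime-ass}. Your estimate $5\delta_2^\sharp/(\ut_1-\ot_2)$ belongs to a time parametrization you no longer use. Your distinctness argument is fine once these points are repaired.
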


\begin{proof}
Fix $r\in [0,\tfrac12]$. We keep the original channel architecture from
Section~\ref{Subsec-Energy-Chan}: constant plateaux on the overlap intervals
$[\ut_{q+1},\ot_{q+1}]$, transition intervals
$[\ot_{q+1},\ut_q]$, and only the first outer transition is replaced.

\medskip
\noindent
{\bf Step 1: the inner region $[0,\ot_2]$.}
The choice of $a$ in \eqref{def-a-0} ensures that
$\ot_{q+1}<\ut_q$ for every $q\ge 1$, so the intervals below are non-empty.
Define $f_{\mathrm{in}}:[0,\ot_2]\to [0,+\infty)$ by $f_{\mathrm{in}}(0):=0$ and
\begin{align}
&f_{\mathrm{in}}(t):=\frac34\delta_{q+2},
\qquad t\in [\ut_{q+1},\ot_{q+1}],\quad q\ge 1,
\label{gd-fin-plateau}
\\
&f_{\mathrm{in}}(t) :=\frac34\delta_{q+2}
+\frac34(\delta_{q+1}-\delta_{q+2})
\vartheta_{\mathrm{out}}\!\left(\frac{t-\ot_{q+1}}{\ut_q-\ot_{q+1}}\right),
\qquad t\in [\ot_{q+1},\ut_q],\quad q\ge 2.
\label{gd-fin-transition}
\end{align}
Then $f_{\mathrm{in}}\in C([0,\ot_2])\cap C^1((0,\ot_2])$, it is
non-decreasing, and for every $q\ge 2$,
\begin{align}
\frac34\delta_{q+2} \le f_{\mathrm{in}}(t)
\le \frac34\delta_{q+1} <\delta_{q+1},
\qquad t\in [\ut_{q+1},\ot_q].
\label{gd-fin-bounds}
\end{align}
On each transition interval $[\ot_{q+1},\ut_q]$, using the same computation as in the remark after \eqref{bdd-e-c1}, we obtain
\begin{align}
0\le f_{\mathrm{in}}'(t)
\lesssim \frac{\delta_{q+1}}{\ut_q-\ot_{q+1}}
\lesssim \frac{\delta_{q+1}}{\ot_q}
\lesssim \ot_q^{k_{q-1}-1}
\lesssim t^{k_{q-1}-1},
\qquad t\in [\ot_{q+1},\ut_q],\quad q\ge 2,
\label{gd-fin-der}
\end{align}
where the implicit constants are independent of $q$.

Since $\ot_2<T_*$, assumption \eqref{gd-agree} implies
$E_0^\sharp(t)=E_0(t)$ on $[0,\ot_2]$. On each plateau interval
$[\ut_{q+1},\ot_{q+1}]$, we have $f_{\mathrm{in}}'\equiv 0$, hence
\begin{align}
\frac{\mathrm d}{\mathrm dt}\big(E_0^\sharp(t)+f_{\mathrm{in}}(t)\big)
=(E_0)'(t)\le 0,
\qquad t\in [\ut_{q+1},\ot_{q+1}],\quad q\ge 1,
\label{gd-inner-mon-plateau}
\end{align}
by the energy identity \eqref{eq-e-2}. On each transition interval
$t\in [\ot_{q+1},\ut_q]\subset (0,\ot_2]$, estimate \eqref{eq-e-u0-2} and the
exponent gap \eqref{e7.20} yield
\begin{align*}
(E_0^\sharp)'(t) =(E_0)'(t)
\le -c_1 t^{k_{q-1}-1-\sigma_*}
\end{align*}
for some constant $c_1>0$ independent of $q$. By \eqref{gd-fin-der},
\begin{align*}
f_{\mathrm{in}}'(t)\le C_{\mathrm{in}} t^{k_{q-1}-1}
\end{align*}
for a constant $C_{\mathrm{in}}>0$ independent of $q$. If necessary, we
incorporate the requirement
\begin{align}
c_1\ot_2^{-\sigma_*}\ge 2C_{\mathrm{in}}
\label{gd-theta-choice}
\end{align}
into the final lower bound on $\theta$ announced after \eqref{def-a-0}; after
this final choice of $\theta$, the parameter $a$ is chosen large enough so that
\eqref{def-a-0} still holds. Then, for $t\in [\ot_{q+1},\ut_q]$,
\begin{align}
\frac{\mathrm d}{\mathrm dt}\big(E_0^\sharp(t)+f_{\mathrm{in}}(t)\big)
\le \big(-c_1 t^{-\sigma_*}+C_{\mathrm{in}}\big)t^{k_{q-1}-1}
\le 0.
\label{gd-inner-mon-transition}
\end{align}
Combining \eqref{gd-inner-mon-plateau} and \eqref{gd-inner-mon-transition}, we
conclude that $E_0^\sharp+f_{\mathrm{in}}$ is non-increasing on $[0,\ot_2]$.

\medskip
\noindent
{\bf Step 2: modification of the first outer transition $[\ot_2,\ut_1]$.}
If $\delta_2^\sharp=\delta_3$, let
\begin{align}
 f_{\mathrm{out},r}^\sharp(t):=\frac34\delta_3,
 \qquad t\in [\ot_2,\ut_1].
 \label{gd-fbr-degenerate}
\end{align}
Otherwise \eqref{gd-bridge} implies
$E_0^\sharp(\ot_2)>E_0^\sharp(\ut_1)$, and we define
\begin{align}
\Theta^\sharp(t):=
\frac{E_0^\sharp(\ot_2)-E_0^\sharp(t)}{E_0^\sharp(\ot_2)-E_0^\sharp(\ut_1)},
\qquad t\in [\ot_2,\ut_1],
\label{gd-theta}
\end{align}
and
\begin{align}
 f_{\mathrm{out},r}^\sharp(t) :=\frac34\delta_3
 +\frac34(\delta_2^\sharp-\delta_3)\psi_r(\Theta^\sharp(t)),
 \qquad t\in [\ot_2,\ut_1].
 \label{gd-fbr}
\end{align}
Because $E_0^\sharp$ is non-increasing, $\Theta^\sharp$ is well-defined,
continuous, non-decreasing, and takes values in $[0,1]$. Using
\eqref{psi-r-basic} and \eqref{gd-bridge}, we obtain
\begin{align}
\frac34\delta_3\le f_{\mathrm{out},r}^\sharp(t)
\le \frac34\delta_2^\sharp<\delta_2^\sharp,
\qquad t\in [\ot_2,\ut_1],
\label{gd-fbr-bounds}
\end{align}
and
\begin{align}
\frac{\mathrm d}{\mathrm dt}\big(E_0^\sharp(t)+f_{\mathrm{out},r}^\sharp(t)\big)
\le 0,
\qquad t\in [\ot_2,\ut_1].
\label{gd-bridge-mon}
\end{align}
Indeed, if $\delta_2^\sharp>\delta_3$, then
\begin{align*}
(f_{\mathrm{out},r}^\sharp)'(t)
&=\frac34(\delta_2^\sharp-\delta_3)\psi_r'(\Theta^\sharp(t))
\frac{-(E_0^\sharp)'(t)}{E_0^\sharp(\ot_2)-E_0^\sharp(\ut_1)}
\\
&\le -(E_0^\sharp)'(t),
\end{align*}
and the degenerate case is immediate. Since $\psi_r'$ vanishes near $0$ and $1$,
$f_{\mathrm{out},r}^\sharp$ is constant near both endpoints of $[\ot_2,\ut_1]$.

\medskip
\noindent
{\bf Step 3: the outer tail.}
Let
\begin{align}
 f_{\mathrm{tail}}^\sharp(t):=\frac34\delta_2^\sharp,
 \qquad t\in [\ut_1,+\infty).
 \label{gd-ftail}
\end{align}
Since $E_0^\sharp$ is non-increasing,
\begin{align}
\frac{\mathrm d}{\mathrm dt}\big(E_0^\sharp(t)+f_{\mathrm{tail}}^\sharp(t)\big)
=(E_0^\sharp)'(t)\le 0,
\qquad t\in [\ut_1,+\infty),
\label{gd-tail-mon}
\end{align}
and \eqref{gd-delta1} implies
\begin{align}
\frac34\delta_2^\sharp\le f_{\mathrm{tail}}^\sharp(t)\le \delta_1,
\qquad t\in [\ut_1,+\infty).
\label{gd-tail-bounds}
\end{align}

\medskip
\noindent
{\bf Step 4: definition of the profile and verification of the channel bounds.}
Define $f_r^\sharp:[0,+\infty)\to [0,+\infty)$ by
\begin{align}
 f_r^\sharp(t):=
 \begin{cases}
  f_{\mathrm{in}}(t), & 0\le t\le \ot_2, \\
  f_{\mathrm{out},r}^\sharp(t), & \ot_2\le t\le \ut_1, \\
  f_{\mathrm{tail}}^\sharp(t), & t\ge \ut_1,
 \end{cases}
 \label{gd-fdef}
\end{align}
and then define
\begin{align}
 e_{\mathrm{gd},r}^\sharp(t):=E_0^\sharp(t)+f_r^\sharp(t),  \qquad t\ge 0.
 \label{gd-edef}
\end{align}
By construction, $f_r^\sharp$ belongs to $C([0,+\infty))\cap C^1((0,+\infty))$:
indeed, $f_{\mathrm{in}}\equiv \frac34\delta_3$ on $[\ut_2,\ot_2]$, the  first outer-transition interpolant
$f_{\mathrm{out},r}^\sharp$ is constant near $\ot_2$ with the same value, and it is
constant near $\ut_1$ with value $\frac34\delta_2^\sharp$, matching the outer
tail. Hence $e_{\mathrm{gd},r}^\sharp\in C([0,+\infty))\cap C^1((0,+\infty))$ and,
by \eqref{gd-inner-mon-plateau}, \eqref{gd-inner-mon-transition},
\eqref{gd-bridge-mon}, and \eqref{gd-tail-mon}, it is non-increasing on
$[0,+\infty)$.

We next verify the channel bounds. By \eqref{gd-fin-bounds}, for every $q\ge 2$
and every $t\in [\ut_{q+1},\ot_q]$,
\begin{align}
\frac34\delta_{q+2} \le e_{\mathrm{gd},r}^\sharp(t)-E_0^\sharp(t)
\le \frac34\delta_{q+1}
<\delta_{q+1}.
\label{gd-channel-inner}
\end{align}
On the first outer overlap interval $[\ut_2,\ot_1]$, the plateau on
$[\ut_2,\ot_2]$, the outer transition bound \eqref{gd-fbr-bounds}, and the tail bound
\eqref{gd-tail-bounds} give
\begin{align}
\frac34\delta_3 \le e_{\mathrm{gd},r}^\sharp(t)-E_0^\sharp(t)
\le \delta_2^\sharp,
\qquad t\in [\ut_2,\ot_1].
\label{gd-channel-first}
\end{align}
Finally, \eqref{gd-tail-bounds} yields
\begin{align}
\frac34\delta_2^\sharp
\le e_{\mathrm{gd},r}^\sharp(t)-E_0^\sharp(t)
\le \delta_1,
\qquad t\in [\ut_1,+\infty).
\label{gd-channel-tail}
\end{align}
Together with $e_{\mathrm{gd},r}^\sharp(0)=\|v_0\|_{L^2_x}^2$, this implies
\eqref{asp-0-energy} and \eqref{asp-e-q} with $U_0=u_0^\sharp$ and
$\delta_2^\sharp$ in place of $\delta_2$.

It remains to verify \eqref{bdd-e-c1}. First, since $E_0^\sharp$ is
non-increasing and $0\le f_r^\sharp\le \delta_1$ by \eqref{gd-tail-bounds}, we
have the uniform $C^0$-bound
\begin{align}
\|e_{\mathrm{gd},r}^\sharp\|_{C^0([0,+\infty))}
\le \|v_0\|_{L^2_x}^2+\delta_1
\lesssim \lambda_0^{d+1},
\label{gd-c0-energy}
\end{align}
where the last inequality follows from \eqref{def-a-0} after enlarging the universal
constant there if necessary.

On each transition interval $[\ot_{j+1},\ut_j]$, $j\ge 2$, we apply estimate
\eqref{gd-fin-der} to obtain
\begin{align}
\|f_r^\sharp\|_{C^1([\ot_{j+1},\ut_j])}\lesssim \lambda_{j-1}^{d+1}.
\label{gd-c1-transition}
\end{align}
On the inner plateaux $[\ut_{j},\ot_{j}]$, the derivative of
$f_r^\sharp$ vanishes. On the first outer transition interval $[\ot_2,\ut_1]$, formula
\eqref{gd-fbr} together with \eqref{psi-r-basic} and
\eqref{gd-E0-prime-ass} yields
\begin{align}
\|f_r^\sharp\|_{C^1([\ot_2,\ut_1])} 
\lesssim \lambda_0^{d+1}.
\label{gd-c1-bridge}
\end{align}
Since $f_r^\sharp$ is constant  
on $[\ut_2,\ot_2]$ and $[\ut_1,+\infty)$, we infer from
\eqref{gd-c1-bridge} that
\begin{align}
\|f_r^\sharp\|_{C^1([T_2,+\infty))} 
\lesssim \lambda_0^{d+1}.
\label{gd-c1-outer}
\end{align}

We next bound the derivative of $E_0^\sharp$ on the inner part
$[T_{q+2},\ot_2]$. By \eqref{eq-e-2}, \eqref{gd-agree} and the fact that
$\mathbb P_{\ge \Lambda(t)}$ is an $L^2_x$-bounded Fourier multiplier,
for every $t\in (0,\ot_2]$ we have
\begin{align*}
|(E_0^\sharp)'(t)| = |(E_0)'(t)|
= 2\nu\| |\nabla|^\alpha \mathbb P_{\ge \Lambda(t)}u_0(t)\|_{L^2_x}^2
\lesssim \|u_0(t)\|_{\dot H^\alpha_x}^2.
\end{align*}
Applying \eqref{spa-regu} with $s=\alpha$, we obtain
\begin{align}
\|(E_0^\sharp)'\|_{L^\infty([T_{q+2},\ot_2])}
\lesssim 1+T_{q+2}^{-1}.
\label{gd-E0-inner-1}
\end{align}
Moreover, by \eqref{Tq-def}, \eqref{la}, and \eqref{def-a-0}, we have
$T_{q+2}\simeq 4^{-(\theta+q+1)}$ and therefore
\begin{align}
1+T_{q+2}^{-1}
\lesssim 4^{\theta+q+1}
\lesssim \lambda_q^{d+1}, \qquad q\ge 0.
\label{gd-E0-inner-2}
\end{align}
Hence
\begin{align}
\|(E_0^\sharp)'\|_{L^\infty([T_{q+2},\ot_2])}
\lesssim \lambda_q^{d+1}.
\label{gd-E0-inner-c1}
\end{align}

Now fix $q\ge 0$. If a transition interval $[\ot_{j+1},\ut_j]$ with $j\ge 2$
meets $[T_{q+2},\ot_2]$, then necessarily $2\le j\le q+1$. Therefore, by
\eqref{gd-c1-transition} we have
\begin{align*}
\|f_r^\sharp\|_{C^1([T_{q+2},\ot_2])}\lesssim \lambda_q^{d+1}.
\end{align*}
Combining this with \eqref{gd-E0-prime-ass}, \eqref{gd-c0-energy}, \eqref{gd-c1-outer}, and
\eqref{gd-E0-inner-c1}, and using
$\lambda_q\ge \lambda_0$, we obtain
\begin{align*}
\|e_{\mathrm{gd},r}^\sharp\|_{C^1([T_{q+2},+\infty))}\lesssim \lambda_q^{d+1}.
\end{align*}
Hence \eqref{bdd-e-c1} holds, and so
$e_{\mathrm{gd},r}^\sharp\in \mathcal E_{\mathrm{ch}}[u_0^\sharp,\delta_2^\sharp](v_0)$.

If $\delta_2^\sharp>\delta_3$ and $r_1\neq r_2$, then
$\psi_{r_1}\neq \psi_{r_2}$. Since $\Theta^\sharp$ is continuous, non-decreasing,
and satisfies $\Theta^\sharp(\ot_2)=0$ and $\Theta^\sharp(\ut_1)=1$, its image is
$[0,1]$. Thus there exists $t\in [\ot_2,\ut_1]$ such that
$\psi_{r_1}(\Theta^\sharp(t))\neq \psi_{r_2}(\Theta^\sharp(t))$, and then
\eqref{gd-fbr} implies
$e_{\mathrm{gd},r_1}^\sharp(t)\neq e_{\mathrm{gd},r_2}^\sharp(t)$.
\end{proof}

\begin{proposition}[Iteration with a modified first outer amplitude]
\label{prop-iterat-sharp}
Assume the hypotheses of Theorem~\ref{Thm-Non-euler} and let
$\delta_2^\sharp$ satisfy
\begin{align}
\delta_3\le \delta_2^\sharp\le \frac32\delta_3.
\label{gd-delta2-sharp-range}
\end{align}
Define the modified amplitude sequence $(\widehat\delta_j)_{j\ge 0}$ by
\begin{align}
\widehat\delta_j:=
\begin{cases}
\delta_2^\sharp, & j=2,\\
\delta_j, & j\neq 2.
\end{cases}
\label{def-delta-hat}
\end{align}
Let $(u_0,p_0,\mathring R_0,e)$ be a level-$0$ Euler-Reynolds quadruple which
satisfies the $q=0$ versions of \eqref{ul2}--\eqref{energy-est} with
$\widehat\delta_j$ in place of $\delta_j$. Then one can construct a sequence
$(u_q,\mathring R_q)_{q\ge 0}$ solving \eqref{equa-nsr} such that, for every
$q\ge 0$, the estimates \eqref{ul2}--\eqref{energy-est} hold with
$\widehat\delta_j$ in place of $\delta_j$, and
\begin{align}
\|u_{q+1}-u_q\|_{C_tL^{2}_{x}}\lesssim \widehat\delta_{q+1}^{\frac12}.
\label{u-B-L2tx-conv-hat}
\end{align}
Consequently, the convergence argument in the proof of
Theorem~\ref{Thm-Non-euler-Local} remains valid with $\widehat\delta_j$ in place of
$\delta_j$.
\end{proposition}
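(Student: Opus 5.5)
We will rerun the proof of Proposition~\ref{Prop-Iterat} by induction on $q$, now with the sequence $(\widehat\delta_j)$ in place of $(\delta_j)$. At each step we use the same perturbation $w_{q+1}$, built from Mikado flows, amplitudes \eqref{akb}, and cutoffs $\chi_{q+1}$. The only change is that $f_q$ in \eqref{def-fq} and the energy gap $\gamma_q$ in \eqref{def-gamma} are formed from $\widehat\delta_j$. Since $\widehat\delta_j=\delta_j$ for all $j\ne2$, and $\delta_2^\sharp$ enters only through the $q=0$ and $q=1$ steps, the argument reduces to a bookkeeping check. We must confirm that every place where $\delta_2$ was used still works with $\delta_2^\sharp\in[\delta_3,\tfrac32\delta_3]$.

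\textbf{Properties of the modified sequence.} The iteration uses three facts about the amplitudes.

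First, the sequence must be non-increasing, in the form $\widehat\delta_{q+3}\le\widehat\delta_{q+2}\le\widehat\delta_{q+1}$. This holds for $\widehat\delta_3=\delta_3\le\delta_2^\sharp$. It also holds for $\delta_2^\sharp\le\frac32\delta_3<\delta_1$, by \eqref{sf-delta1-prep-11}.

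Second, the sum $\sum_j\widehat\delta_j^{1/2}$ must be finite. This is immediate, since only one term has changed.

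Third, the estimates that absorb errors into a multiple of $\widehat\delta_{q+2}$ or $\widehat\delta_{q+3}$ must still hold. These errors are powers of $\ell_q$ and $\lambda_{q+1}^{-\epsilon}$, as in \eqref{rq1b}, \eqref{ver-en-2}--\eqref{ver-en-4}, and \eqref{est-moll-uqzq}. They remain valid because $\delta_2^\sharp\ge\delta_3$: each threshold is bounded below by the corresponding unmodified one. The same holds for the smallness $\lambda_{q+1}^{-1/28}<\widehat\delta_{q+1}^{1/2}$ needed in \eqref{Lp decorr vel}.

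\textbf{Inner-time decay.} The inner-time bound \eqref{est-r0-0}, $\|\mathring R_0\|_{C_{[0,T_q+\ell_{q-1}]}L^1}\le\frac18 c_*\delta_{q+2}$, concerns only indices $q+2\ge3$, so it is unaffected. For $q=0$ we need $\|\mathring R_0\|_{C_tL^1_x}\le c_*\widehat\delta_2$ on the whole time axis. This is no longer automatic, because $\delta_2^\sharp$ is small. It is, however, exactly part of the hypothesis that the level-$0$ quadruple satisfies \eqref{rl1s} with $\widehat\delta_2$; in Theorem~\ref{thm-global-diss-abstract} this is guaranteed by \eqref{gd-floor}. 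Thus both the inner regime (via \eqref{rq=r0} and \eqref{est-r0-0}) and the outer regime (via the error bounds) close as in Section~\ref{Sec-Rey-stress}.

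\textbf{Energy step: the main obstacle.} We expect the energy step of Section~\ref{Sec-Energy} to be the delicate part. The constants there, such as $\frac{55}{96}\delta_{q+2}<\delta_{q+2}$ and $\frac{29}{48}\delta_{q+3}>\frac12\delta_{q+3}$, were derived for a ratio $\delta_{q+3}/\delta_{q+2}$ compatible with monotonicity. For $q=0$ the relevant pair is $(\widehat\delta_2,\widehat\delta_3)=(\delta_2^\sharp,\delta_3)$, with ratio at least $\frac23$.

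We would recheck the three regimes $\mathcal I_{2,1}$, $\mathcal I_{2,2}$, and $\mathcal I_{2,3}$ for $q=0$ and $q=1$.

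On $\mathcal I_{2,1}$ the inequalities involve only $\frac12\widehat\delta_{q+2}\pm\frac1{24}\widehat\delta_{q+2}\pm\frac18\widehat\delta_{q+3}$. These close whenever $\widehat\delta_{q+3}\le\widehat\delta_{q+2}$.

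On $\mathcal I_{2,3}$ we use \eqref{asp-e-q} with $\delta_2^\sharp$. This is the assumed level-$0$ channel bound, which is the reason the profile was taken in $\mathcal E_{\mathrm{ch}}[u_0^\sharp,\delta_2^\sharp]$. The resulting upper bound $\frac32\widehat\delta_{q+2}$ is admissible, since only $2\widehat\delta_{q+1}$ is required in \eqref{energy-est}.

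For $q=1$, the upper bound in \eqref{energy-est} becomes $2\widehat\delta_2=2\delta_2^\sharp\ge2\delta_3$. The estimates produce at most $\frac32\delta_3$, so this bound is satisfied.

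With these checks in place, the induction goes through. The Cauchy-sequence and limit argument of Section~\ref{Sec-prf-thm} then applies verbatim, with $\widehat\delta$ in place of $\delta$.
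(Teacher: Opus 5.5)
There is a genuine gap in the energy step at $q=0$, exactly where you anticipated trouble. You keep $f_0$ from \eqref{def-fq} with $\widehat\delta_2$ in place of $\delta_2$. Then on $[T_1,+\infty)$ the target gap is $\frac12\delta_2^\sharp$, while \eqref{energy-est} at level $1$ requires the lower bound $\frac12\widehat\delta_3=\frac12\delta_3$.

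Your claim that the $\mathcal I_{2,1}$ inequalities close whenever $\widehat\delta_{q+3}\le\widehat\delta_{q+2}$ is false. With your own constants and $\widehat\delta_{q+3}=\widehat\delta_{q+2}$, the lower bound is $\frac13\widehat\delta_{q+2}<\frac12\widehat\delta_{q+3}$. With the paper's error bookkeeping, $\frac1{24}\delta_2^\sharp+\frac5{48}\delta_3$, you get $e-\|u_1\|_{L^2_x}^2\ge\frac{11}{24}\delta_2^\sharp-\frac5{48}\delta_3$. This is $\ge\frac12\delta_3$ only if $\delta_2^\sharp\ge\frac{29}{22}\delta_3$.

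The original argument in fact uses $\frac14\delta_{q+2}\ge\frac12\delta_{q+3}$, i.e.\ an amplitude ratio of at least $2$, and the pair $(\delta_2^\sharp,\delta_3)$ never has this. In the endpoint case $\delta_2^\sharp=\delta_3$, which the statement allows and which gives the single-solution case, there is no margin at all. The target equals the required bound, the $\rho$-part of $I_{12}$ is strictly negative, and the remaining terms are controlled only in absolute value.

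You also did not check $\mathcal I_{2,2}$. There the sandwich $B_0\le f_0*_t\varphi_{\ell_0}\le A_0$ behind \eqref{e-t-3.1}--\eqref{e-t-3.2} is reversed, because $\frac12\delta_2^\sharp\le\frac34\delta_3$ means $f_0$ now jumps down at $T_1$. The lower bound there is again only $\frac12\delta_2^\sharp-\frac7{48}\delta_3$.

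The missing idea is to change the target in the $q=0$ step. The paper replaces $f_0$ by $f_0^\sharp:=\frac34\delta_3\,1_{[T_2+\ell_0,+\infty)}$. This is legitimate because \eqref{asp-0-energy}--\eqref{asp-e-q} with $\delta_2^\sharp\ge\delta_3$ give $e-\|u_0\|_{L^2_x}^2\ge\frac34\delta_3$ on $[\ut_2,+\infty)$. Hence $\gamma_0\ge0$ and the amplitude estimates are unaffected. Since $T_1-\ell_0>T_2+\ell_0$, one has $f_0^\sharp*_t\varphi_{\ell_0}=\frac34\delta_3$ on $\mathcal I_{2,1}\cup\mathcal I_{2,2}$, where the errors total at most $\frac16\delta_3$. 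This gives $\frac7{12}\delta_3\le e-\|u_1\|_{L^2_x}^2\le\frac{11}{12}\delta_3<2\delta_2^\sharp$.

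On $\mathcal I_{2,3}$ the old computation gives $\frac12\delta_3\le e-\|u_1\|_{L^2_x}^2\le\delta_2^\sharp+\frac{41}{48}\delta_3<2\delta_2^\sharp$. Note this upper bound is not $\frac32\widehat\delta_2$ as you wrote, and the level-$1$ requirement is $2\widehat\delta_{q+2}$, not $2\widehat\delta_{q+1}$. From $q=1$ on, the standard construction applies with $(\delta_2^\sharp,\delta_3,\delta_4)$.

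A minor point: \eqref{sf-delta1-prep-11} gives only $\frac98\delta_3<\delta_1$, not $\frac32\delta_3<\delta_1$. However, $\widehat\delta_2\le\widehat\delta_1$ is never actually needed.
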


\begin{proof}
The local construction in Sections~\ref{Sec-Interm-Flow},
\ref{Sec-Rey-stress}, and \ref{Sec-Energy} depends on the amplitude sequence at
level $q$ through the three consecutive values
$\delta_{q+1},\delta_{q+2},\delta_{q+3}$, together with the deduction of
\eqref{ul2} at level $q+1$ from the increment estimate
\eqref{u-B-L2tx-conv}; see \eqref{verubl2}. The extra appearance of
$\delta_{q+3}$ comes from the Reynolds-stress estimate in
Section~\ref{Sec-Energy}. Therefore, once the bounds
\eqref{ul2}--\eqref{energy-est} are available at level $q$, the same local
argument yields a next iterate $(u_{q+1},\mathring R_{q+1})$ satisfying
\eqref{ul2}--\eqref{energy-est} with $q+1$ in place of $q$ and the increment
estimate \eqref{u-B-L2tx-conv-hat}, provided the triple
$(\widehat\delta_{q+1},\widehat\delta_{q+2},\widehat\delta_{q+3})$ is used in
place of $(\delta_{q+1},\delta_{q+2},\delta_{q+3})$.

For the first two steps this gives the triples
$(\widehat\delta_1,\widehat\delta_2,\widehat\delta_3) =(\delta_1,\delta_2^\sharp,\delta_3)$ and
$(\widehat\delta_2,\widehat\delta_3,\widehat\delta_4)
=(\delta_2^\sharp,\delta_3,\delta_4)$. Throughout the $q=0$ construction, use
\[
f_0^\sharp(t):= \frac34\delta_3\,
1_{[T_2+\ell_0,+\infty)}(t)
\]
in place of $f_0$. By the channel bounds,
\[
e(t)-\|u_0(t)\|_{L^2_x}^2\ge f_0^\sharp(t),
\]
so $\gamma_0$ given by \eqref{def-gamma} is still non-negative. This removes the jump at $T_1$ and does not worsen
the amplitude or non-energy estimates.

We use the same partition 
$\mathcal I_{2,1}$, $\mathcal I_{2,2}$ and $\mathcal I_{2,3}$ 
as in the proof of the energy inductive estimates in Section \ref{Sec-Energy}. 
By the time-scale choice,
\[
T_1-\ell_0>T_2+\ell_0.
\]
Hence, on $\mathcal I_{2,1}\cup\mathcal I_{2,2}$,
\[
\bigl(f_0^\sharp*_{t}\varphi_{\ell_0}\bigr)(t) =\frac34\delta_3.
\]
The estimate on $e(t)$ yields
\[|e(t)-\|u_1(t)\|_{L^2_x}^2-\frac34\delta_3| \le
\begin{cases}
\frac1{24}\delta_2^\sharp+\frac5{48}\delta_3,
&t\in\mathcal I_{2,1},\\
 \frac7{48}\delta_3,
&t\in\mathcal I_{2,2},
\end{cases}
\]
In both cases the bound is less than or equal to $\frac{1}{6}\delta_3$
Therefore,
\[
\frac7{12}\delta_3 \le e(t)-\|u_1(t)\|_{L^2_x}^2 \le\frac{11}{12}\delta_3
<2\delta_2^\sharp.
\]
On $\mathcal I_{2,3}$, the calculation in
\eqref{e-t-4.1}--\eqref{e-t-4.2} is unchanged and implies
\[
\frac12\delta_3 \le e(t)-\|u_1(t)\|_{L^2_x}^2 \le\delta_2^\sharp+\frac{41}{48}\delta_3 <2\delta_2^\sharp.
\]
Thus the level-$1$ energy estimate closes. All remaining level-$1$
estimates, including \eqref{u-B-L2tx-conv-hat}, are unchanged. The
$q=1$ step is the standard construction with the triple $(\delta_2^\sharp,\delta_3,\delta_4)$.

Now let $q\ge 2$ and assume inductively that the iterates up to level $q$
are already constructed and satisfy \eqref{uc}--\eqref{energy-est} with
$\widehat\delta_j$ in place of $\delta_j$. Since
\[
\widehat\delta_{q+1}=\delta_{q+1}, \qquad
\widehat\delta_{q+2}=\delta_{q+2}, \qquad \widehat\delta_{q+3}=\delta_{q+3},
\]
the same local argument as in Proposition~\ref{Prop-Iterat} yields a level
$q+1$ iterate satisfying \eqref{uc}--\eqref{energy-est} and
\eqref{u-B-L2tx-conv-hat}. Thus the only point left is to verify the global
$L^2_x$ estimate \eqref{ul2} for the modified sequence.

The level-$0$ bound \eqref{ul2} is part of the hypotheses. For every $q\ge 0$,
we sum the increment estimates to obtain
\begin{align*}
\|u_q\|_{C_tL^2_x}
&\le \|u_0\|_{C_tL^2_x}+\sum_{n=0}^{q-1}\|u_{n+1}-u_n\|_{C_tL^2_x}\\
&\lesssim \widehat\delta_0^{\frac12}+\sum_{n=0}^{q-1}\widehat\delta_{n+1}^{\frac12}
\lesssim \sum_{n=0}^{q}\widehat\delta_n^{\frac12},
\end{align*}
where we used \eqref{u-B-L2tx-conv-hat} and the fact that
$\widehat\delta_0=\delta_0$. Hence \eqref{ul2} also holds for every level
$q\ge 1$.

Therefore, the whole iterative package \eqref{ul2}--\eqref{energy-est}, together
with \eqref{u-B-L2tx-conv-hat}, is valid for the modified amplitude sequence.
The convergence part of the proof of Theorem~\ref{Thm-Non-euler-Local} uses only the
summability of the increment bounds, the decay \eqref{rl1s}, and the energy
estimate \eqref{energy-est}, and therefore it remains valid as well.
\end{proof}

Now, we are ready to prove the abstract criterion Theorem \ref{thm-global-diss-abstract} 
for global dissipative solutions. 

\medskip 

\noindent 
{\bf Proof of Theorem \ref{thm-global-diss-abstract}.} 
Fix $r\in [0,\tfrac12]$. We first verify the additional hypothesis
\eqref{gd-E0-prime-ass} needed in Proposition~\ref{prop-global-diss}. By
\eqref{est-r0-0} and \eqref{gd-agree-full} with $q=1$,
\begin{align*}
\|\mathring R_0^\sharp\|_{C_{[T_2,T_1]}L^1_x}
\leq \|\mathring R_0\|_{C_{[0,T_1+\ell_0]}L^1_x} \leq \frac18 c_*\delta_3,
\end{align*}
while \eqref{gd-floor} implies
\begin{align*}
\|\mathring R_0^\sharp\|_{C_{[T_1,+\infty)}L^1_x}
\le \frac18 c_*\delta_2^\sharp.
\end{align*}
Hence
\begin{align}
\|\mathring R_0^\sharp\|_{C_{[T_2,+\infty)}L^1_x}<+\infty.
\label{gd-r0-l1-T2}
\end{align}
Since $(u_0^\sharp,p_0^\sharp,\mathring R_0^\sharp)$ solves the Euler-Reynolds
system \eqref{equa-nsr}, we have for every $t>0$
\begin{align}
\frac12 (E_0^\sharp)'(t)
= -\int_{\T^d} \nabla u_0^\sharp(t,x):\mathring R_0^\sharp(t,x)\,\mathrm dx.
\label{gd-energy-ER}
\end{align}
Therefore, by \eqref{gd-u0-c1} and \eqref{gd-r0-l1-T2},
\begin{align*}
\|(E_0^\sharp)'\|_{L^{\infty}([T_2,+\infty))}
\lesssim \|u_0^\sharp\|_{C^1_{[T_2,+\infty),x}}
\|\mathring R_0^\sharp\|_{C_{[T_2,+\infty)}L^1_x} 
\lesssim \lambda_0^{d+1}.
\end{align*}
Thus, Proposition~\ref{prop-global-diss} applies and yields a non-increasing
profile
$e_{\mathrm{gd},r}^\sharp\in
\mathcal E_{\mathrm{ch}}[u_0^\sharp,\delta_2^\sharp](v_0)$. We now verify the
level-$0$ hypotheses of Proposition~\ref{prop-iterat-sharp} for the quadruple
$(u_0^\sharp,p_0^\sharp,\mathring R_0^\sharp,e_{\mathrm{gd},r}^\sharp)$ and the
modified amplitude sequence $(\widehat\delta_j)_{j\ge 0}$ from
\eqref{def-delta-hat}.

Since $E_0^\sharp$ is non-increasing and $u_0^\sharp(0)=v_0$, we have
\begin{align*}
\|u_0^\sharp(t)\|_{L^2_x}\le \|v_0\|_{L^2_x},
\qquad t\ge 0.
\end{align*}
Together with the choice of $\delta_0$ in \eqref{delta0}, this verifies
\eqref{ul2} with $q=0$. Estimate \eqref{uc} at level $q=0$ is exactly
\eqref{gd-u0-c1}, and \eqref{rl1b-s} at level $q=0$ is exactly
\eqref{gd-r0-c1w11}. Moreover, since $T_*= \overline T_{1}$, assumption
\eqref{gd-agree-full} yields
\[
\mathring R_0^\sharp(t)=\mathring R_0(t), \qquad 0\le t\leq \overline T_{1},
\]
which is precisely \eqref{rq=r0} at level $q=0$.

For the $L^1_x$ smallness \eqref{rl1s}, we split $[0,+\infty)$ into
$[0,T_1]$ and $[T_1,+\infty)$. On $[0,T_1]$, \eqref{gd-agree-full} and
\eqref{est-r0-0} with $q=1$ give
\begin{align*}
\|\mathring R_0^\sharp\|_{C_{[0,T_1]}L^1_x}
\le \|\mathring R_0\|_{C_{[0,T_1+\ell_0]}L^1_x}
\le \frac18 c_*\delta_3
\le \frac18 c_*\delta_2^\sharp.
\end{align*}
On $[T_1,+\infty)$, assumption \eqref{gd-floor} yields
\begin{align*}
\|\mathring R_0^\sharp\|_{C_{[T_1,+\infty)}L^1_x} \leq \frac18 c_*\delta_2^\sharp.
\end{align*}
Hence
\begin{align*}
\|\mathring R_0^\sharp\|_{C_tL^1_x}
\le \frac18 c_*\delta_2^\sharp,
\end{align*}
which is stronger than \eqref{rl1s} at level $q=0$.

We next verify \eqref{energy-est} at level $q=0$. Since $T_1>\ut_1$,
Step~3 in the proof of Proposition~\ref{prop-global-diss} gives
\begin{align*}
e_{\mathrm{gd},r}^\sharp(t)-E_0^\sharp(t) = \frac34\delta_2^\sharp,
\qquad t\ge T_1.
\end{align*}
In particular,
\begin{align*}
e_{\mathrm{gd},r}^\sharp(t)-E_0^\sharp(t) \ge \frac12\delta_2^\sharp,
\qquad t\ge T_1,
\end{align*}
while the upper bound from Proposition~\ref{prop-global-diss} implies
\begin{align*}
e_{\mathrm{gd},r}^\sharp(t)-E_0^\sharp(t)
\le \delta_1 \le 2\delta_1, \qquad t\ge T_1.
\end{align*}
Thus, \eqref{energy-est} holds with $q=0$.

Proposition~\ref{prop-iterat-sharp} can therefore be used with the
initial quadruple
$(u_0^\sharp,p_0^\sharp,\mathring R_0^\sharp,e_{\mathrm{gd},r}^\sharp)$. This
produces a sequence $(u_{q,r},\mathring R_{q,r})_{q\ge 0}$ solving the
Euler-Reynolds system and satisfying \eqref{u-B-L2tx-conv-hat}. Since
$\sum_{q\ge 0}\widehat\delta_{q+1}^{1/2}<\infty$, the sequence
$(u_{q,r})_{q\ge 0}$ is Cauchy in $C_tL^2_x$, hence converges to some
$u_r\in C([0,+\infty);L^2_x)$. Moreover, \eqref{rl1s} yields
$\|\mathring R_{q,r}\|_{C_tL^1_x}\le c_*\widehat\delta_{q+2}\to 0$, so passing
to the limit in the Euler-Reynolds equations yields that $u_r$ is a weak Euler
solution with initial data $v_0$.

Finally, for every fixed $t>0$ and all sufficiently large $q$, we have
$t\geq T_{q+1}$. Then \eqref{energy-est} for the modified sequence implies
\begin{align*}
\frac12\widehat\delta_{q+2} \leq e_{\mathrm{gd},r}^\sharp(t)-\|u_{q,r}(t)\|_{L^2_x}^2 \leq 2\widehat\delta_{q+1}.
\end{align*}
Taking $q\to +\infty$ and using convergence in $C_tL^2_x$, we obtain
\eqref{gd-energy-exact} for every $t>0$. The identity at $t=0$ follows from the
continuity of $u_r$ and $e_{\mathrm{gd},r}^\sharp(0)=\|v_0\|_{L^2_x}^2$.
Since $e_{\mathrm{gd},r}^\sharp$ is non-increasing,
$u_r$ is admissible and therefore dissipative.

This proves the existence of at least one global dissipative solution. If
$\delta_2^\sharp>\delta_3$, then Proposition~\ref{prop-global-diss} shows that
$r\mapsto e_{\mathrm{gd},r}^\sharp$ is non-constant. Then choosing any infinite set of parameters $r\in [0,\tfrac12]$ yields infinitely many distinct exact
energy profiles, and hence infinitely many distinct global dissipative weak
Euler solutions.
\hfill $\square$

\section{Smooth stabilization of the frequency cutoff}
\label{Sec-frozen-outer}
We now apply Theorem~\ref{thm-global-diss-abstract} after smoothly
stabilizing the frequency cutoff. More precisely, we modify $\Lambda$
only after $T_*$ so that the resulting cutoff agrees with $\Lambda$ on
$(0,T_*]$, transitions smoothly to a constant value on a short interval,
and remains constant thereafter. Only the background approximate
solution from Section~\ref{Sec-Lam-Euler} is modified. The abstract argument from
Section~\ref{Sec-global-diss} will then convert the resulting $q=0$ estimates
into global dissipative weak Euler solutions.

\medskip 
Let $n_1\in \mathbb N$ be determined by \eqref{def-nq} with $q=1$, so that
\[
2^{n_1}\le \Lambda(T_1+\ell_0)=\Lambda(T_*)<2^{n_1+1}.
\]
Let 
\begin{align}
\beta_1:=\frac{k_1}{\eta}>0, 
\label{def-beta1-sf}
\end{align}
or, equivalently,
\begin{align*}
\beta_1=
\begin{cases}
2-2\alpha+d-D^\sharp_{n_1,1,\infty}-\kappa_0,
& \text{in the } C^{1-\kappa}\text{ case},\\[0.6ex]
2-2\alpha +(d-D^\sharp_{n_1,1,p}) \bigl(1-\frac1p\bigr)
-\dfrac{\kappa_*C_*}{p},
& \text{in the } W^{1,p}\text{ case}.
\end{cases}
\end{align*}

By our choice of $\theta$ in \eqref{cho-theta}, we have $2T_*\le1/2$ and
\begin{align}
\Lambda(T_*)\notin2^{\mathbb N}.
\label{sf-no-dyadic-hit}
\end{align}
Let
\[
\widetilde\ell_0:=a_0^{-30d},
\]
where $a_0$ is defined by \eqref{def-a-0}. Increasing the universal constant in
\eqref{def-a-0}, if necessary, we may assume
\[
\ot_2<T_*-2\widetilde\ell_0.
\]
By continuity and \eqref{sf-no-dyadic-hit}, there exists
$\varepsilon_0\in(0,T_*]$, independent of the final choice $a\ge a_0$, such that
\begin{align}
2^{n_1}<\Lambda(t)<2^{n_1+1}, \qquad
t\in[T_*-\widetilde\ell_0,T_*+\varepsilon_0].
\label{sf-same-shell}
\end{align}
Since $E_0$ is strictly decreasing on $(0,t_0]$, the number
\[
B_0:=E_0(\ot_2)-E_0(T_*-2\widetilde\ell_0)
\]
is positive. Choose $T_{\mathrm{fr}}\in(T_*,T_*+\varepsilon_0]$ sufficiently close to $T_*$ so that
\begin{align}
1<\mu_{\mathrm{fr}}
:=\left(\frac{\Lambda(T_{\mathrm{fr}})}{\Lambda(T_*)}\right)^{-\beta_1}
< \min\left\{\frac32,\,
1+\frac{B_0}{4\delta_3}\right\}.
\label{sf-close-delta2}
\end{align}
This choice is independent of the final value of $a$.

Fix a smooth cut-off
$\chi_{\mathrm{fr}}\in C^{\infty}([0,+\infty))$ such that
\begin{align}
0\le \chi_{\mathrm{fr}}\le 1, \qquad
\chi_{\mathrm{fr}}\equiv 1 \ \text{on } [0,T_*], \qquad
\chi_{\mathrm{fr}}\equiv 0 \ \text{on } [T_{\mathrm{fr}},+\infty), \qquad
\chi_{\mathrm{fr}}'\le 0,
\label{def-chi-fr}
\end{align}
and
\begin{align}\label{chi-fr-der-bdd}
\|\chi_{\mathrm{fr}}'\|_{L^\infty}
\le C(T_{\mathrm{fr}}-T_*)^{-1},
\end{align}
with a universal constant $C$.

Let
\begin{align}
\Lambda_{\mathrm{fr}}:=\Lambda(T_{\mathrm{fr}})
\label{def-Lambda-fr-val}
\end{align}
and define the \emph{smooth stabilized cutoff}
$\Lambda^{\mathrm{sf}}:(0,+\infty)\to [2,+\infty)$ by
\begin{align}
\Lambda^{\mathrm{sf}}(t) :=\chi_{\mathrm{fr}}(t)\Lambda(t)
+\bigl(1-\chi_{\mathrm{fr}}(t)\bigr)\Lambda_{\mathrm{fr}}.
\label{def-Lambda-sf}
\end{align} 
As in the case of $T_{\mathrm{fr}}$, 
$\Lambda^{\mathrm{sf}}$  
depends only on $\theta$ and $\ve_0$, but is independent of $a$. 
 
Since $\Lambda$ is decreasing and $\Lambda(t)\ge \Lambda_{\mathrm{fr}}$ for
$t\le T_{\mathrm{fr}}$, we have
\begin{align*}
\partial_t\Lambda^{\mathrm{sf}}(t)
=\chi_{\mathrm{fr}}'(t)\bigl(\Lambda(t)-\Lambda_{\mathrm{fr}}\bigr)
+\chi_{\mathrm{fr}}(t)\Lambda'(t)\le 0,
\end{align*}
so $\Lambda^{\mathrm{sf}}$ is smooth and non-increasing. Moreover,
$\Lambda^{\mathrm{sf}}=\Lambda$ on $(0,T_*]$ and
$\Lambda^{\mathrm{sf}}\equiv \Lambda_{\mathrm{fr}}$ on
$[T_{\mathrm{fr}},+\infty)$.

Moreover, 
for $\xi\in \mathbb Z^d$, define the activation time
\begin{align}
\tau_{\mathrm{sf}}(\xi)
:=\inf\bigl\{t\ge 0:\ \Lambda^{\mathrm{sf}}(t)\le |\xi|\bigr\}\in [0,+\infty],
\label{def-tau-sf}
\end{align}
with the convention that $\inf\varnothing=+\infty$. 
Let
$m_{\mathrm{sf}}(t,\xi)$ be the multiplier obtained from \eqref{multi-m} by
replacing $(\Lambda,\tau)$ with $(\Lambda^{\mathrm{sf}},\tau_{\mathrm{sf}})$.  
Note that  
\begin{align} \label{def-msf-full}
m_{\mathrm{sf}}(t,\xi)
=\exp\left(
-\nu (2\pi|\xi|)^{2\alpha}
\int_0^t \varphi^2_{\Lambda^{\mathrm{sf}}(s)}(\xi)\,\d s
\right), \quad \xi\in\mathbb Z^d,  
\end{align}
agrees with \eqref{multi-m} on the strictly decreasing part of the cutoff, because the integrand
vanishes before activation. 
Let 
\begin{align}
\widehat u_0^{\mathrm{sf}}(t,\xi):=m_{\mathrm{sf}}(t,\xi)\widehat v_0(\xi),
\qquad
\mathring R_0^{\mathrm{sf}}(t)
:= -\nu\mathcal R(-\Delta)^\alpha \P_{\ge \Lambda^{\mathrm{sf}}(t)}^2u_0^{\mathrm{sf}}(t),
\qquad
E_0^{\mathrm{sf}}(t):=\|u_0^{\mathrm{sf}}(t)\|_{L^2_x}^2.
\label{def-u0-sf}
\end{align}
We also write $\mathcal S^{\mathrm{sf}}(t)$ for the Fourier multiplier with symbol
$m_{\mathrm{sf}}(t,\xi)$, so that
\[u_0^{\mathrm{sf}}(t)=\mathcal S^{\mathrm{sf}}(t)v_0.\]

We show that stabilizing the cutoff does not affect the inner regime.

\begin{lemma}
\label{lem-sf-agree}
For the smooth stabilized cutoff \eqref{def-Lambda-sf}, we have
\begin{align}
u_0^{\mathrm{sf}}(t)=u_0(t), \qquad \mathring R_0^{\mathrm{sf}}(t)=\mathring R_0(t),
\qquad E_0^{\mathrm{sf}}(t)=E_0(t), \qquad 0\le t\le T_*.
\label{sf-agree-initial}
\end{align}
In particular,
\begin{align}
\|\mathring R_0^{\mathrm{sf}}\|_{C_{[0,T_*]}L^1_x}
\le \frac18 c_*\delta_3.
\label{sf-r0-initial}
\end{align}
\end{lemma}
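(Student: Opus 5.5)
The plan is to show that the two multipliers coincide on $[0,T_*]$ for every frequency. Then $u_0^{\mathrm{sf}}=u_0$ and $E_0^{\mathrm{sf}}=E_0$ there follow at once, and $\mathring R_0^{\mathrm{sf}}=\mathring R_0$ follows because the cutoffs agree as well. Since $\chi_{\mathrm{fr}}\equiv1$ on $[0,T_*]$, the definition \eqref{def-Lambda-sf} gives $\Lambda^{\mathrm{sf}}(s)=\Lambda(s)$ for all $s\in(0,T_*]$, with both equal to $+\infty$ at $s=0$. Consequently, for every $s\in[0,T_*]$,
\[
\varphi_{\Lambda^{\mathrm{sf}}(s)}=\varphi_{\Lambda(s)}
\quad\text{and}\quad
\P_{\ge\Lambda^{\mathrm{sf}}(s)}=\P_{\ge\Lambda(s)}.
\]

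Next I compare the symbols $m_{\mathrm{sf}}(t,\xi)$ and $m(t,\xi)$ for fixed $t\in[0,T_*]$ and $\xi\in\mathbb Z^d$. By \eqref{def-msf-full}, $m_{\mathrm{sf}}(t,\xi)$ is the exponential of $-\nu(2\pi|\xi|)^{2\alpha}\int_0^t\varphi^2_{\Lambda(s)}(\xi)\,\d s$, since the integrand only involves $s\le t\le T_*$. I then rewrite $m$ from \eqref{multi-m} in the same integral form, using the fact that $\varphi(x)=0$ for $|x|\le1$.

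If $t\le\tau(\xi)$, then for $s\le t$ monotonicity gives $\Lambda(s)\ge\Lambda(t)\ge|\xi|$ (or $|\xi|\le\inf\Lambda$). Hence $\varphi_{\Lambda(s)}(\xi)=0$, so the integral over $[0,t]$ vanishes and $m_{\mathrm{sf}}(t,\xi)=1=m(t,\xi)$.

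If $t>\tau(\xi)$, then for $s\le\tau(\xi)$ we have $\Lambda(s)\ge|\xi|$, so $\varphi_{\Lambda(s)}(\xi)=0$. The integral over $[0,t]$ therefore reduces to the integral over $[\tau(\xi),t]$, which is exactly the exponent in \eqref{multi-m}. This is the observation already recorded after \eqref{def-msf-full}; here I would make it rigorous by tracking the monotonicity of $\Lambda$ and the convention $\Lambda(0)=\infty$.

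The only slightly delicate point is that $\tau$ is defined through $\Lambda^{-1}$, which is strictly decreasing, whereas $\tau_{\mathrm{sf}}$ is an infimum. The argument above avoids any comparison of $\tau$ with $\tau_{\mathrm{sf}}$, because the formula \eqref{def-msf-full} does not use $\tau_{\mathrm{sf}}$.

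Hence $m_{\mathrm{sf}}(t,\cdot)=m(t,\cdot)$ for $t\in[0,T_*]$, which gives $\widehat u_0^{\mathrm{sf}}(t)=\widehat{\wt u}(t)$ by \eqref{def-solu}, and so $u_0^{\mathrm{sf}}(t)=u_0(t)$. Taking $L^2$ norms yields $E_0^{\mathrm{sf}}=E_0$. Combining $u_0^{\mathrm{sf}}=u_0$ with the equality of the projections, the definitions \eqref{r0} and \eqref{def-u0-sf} give $\mathring R_0^{\mathrm{sf}}(t)=\mathring R_0(t)$.

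Finally, \eqref{sf-r0-initial} follows from Claim 2, i.e.\ \eqref{est-r0-0}, with $q=1$. Since $T_1+\ell_0=T_*$ by \eqref{def-t_*}, that estimate reads
\[
\|\mathring R_0\|_{C_{[0,T_*]}L^1_x}\le\tfrac18c_*\delta_3,
\]
and this transfers directly to $\mathring R_0^{\mathrm{sf}}$ by the agreement just proved.
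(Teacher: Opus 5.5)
Your proposal is correct and follows the paper's proof. Since $\chi_{\mathrm{fr}}\equiv1$ on $[0,T_*]$, you get $\Lambda^{\mathrm{sf}}=\Lambda$ there, hence $m_{\mathrm{sf}}=m$ via \eqref{def-msf-full}, hence agreement of $u_0$, $\mathring R_0$ and $E_0$; \eqref{sf-r0-initial} then follows from \eqref{est-r0-0} with $q=1$ and $T_1+\ell_0=T_*$. Your case split $t\le\tau(\xi)$ versus $t>\tau(\xi)$ only spells out a step the paper leaves implicit: \eqref{multi-m} coincides with the integral form because $\varphi$ vanishes on $\{|x|\le1\}$.
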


\begin{proof} 
First note that 
since $\chi_{\mathrm{fr}}\equiv 1$ on $[0,T_*]$,  
\eqref{def-Lambda-sf} 
yields that 
\begin{equation}\label{eq:Lamsf-eq-Lam}
\Lambda^{\mathrm{sf}}(t)=\Lambda(t)\qquad\text{for all }t\in[0,T_*],
\end{equation}
which along \eqref{def-msf-full} yields that $m_{\mathrm{sf}}(t,\xi)=m(t,\xi)$ for all $t\in [0,T_*]$ and $\xi$. 
Thus, 
the associated propagation operators  coincide for all $t\in[0,T_*]$, 
i.e., $\mathcal S^{\mathrm{sf}}(t)=\mathcal S(t)$ on $[0,T_*]$, 
and so, 
\[
u_0^{\mathrm{sf}}(t)=\mathcal S^{\mathrm{sf}}(t)v_0=\mathcal S(t)v_0=u_0(t), \quad t\in [0,T_*]. 
\]  
Consequently, 
the remaining two identities in~\eqref{sf-agree-initial} follow directly from  the definitions of $\mathring R_0^{\mathrm{sf}}$ and $E_0^{\mathrm{sf}}$, 
and estimate~\eqref{sf-r0-initial} follows from~\eqref{sf-agree-initial} and~\eqref{est-r0-0} with $q=1$.  
The proof is complete. 
\end{proof}

As a consequence 
of Lemma \ref{lem-sf-agree} 
and the abstract criterion 
Theorem \ref{thm-global-diss-abstract}, 
we have the following conditional reduction to the case 
of smooth stabilized cutoff. 

\begin{theorem}
\label{thm-sf-conditional}
Assume the hypotheses of Theorem~\ref{Thm-Non-euler}. Let
$\Lambda^{\mathrm{sf}}$ be defined by \eqref{def-Lambda-sf}, and let
$u_0^{\mathrm{sf}}$, $\mathring R_0^{\mathrm{sf}}$, and $E_0^{\mathrm{sf}}$ be as in
\eqref{def-u0-sf}. Assume moreover that the following hold: 

\smallskip
\noindent
{\rm (i)} There exists a pressure $p_0^{\mathrm{sf}}$ such that
$(u_0^{\mathrm{sf}},p_0^{\mathrm{sf}},\mathring R_0^{\mathrm{sf}})$ solves the
Euler-Reynolds system \eqref{equa-nsr} at level $q=0$. Moreover,
$u_0^{\mathrm{sf}}\in C([0,+\infty);L^2_x)\cap C^1((0,+\infty);C_x^{\infty})$ and
\begin{align}
\frac12 E_0^{\mathrm{sf}}(t) +\nu\int_0^t
\|(-\Delta)^{\alpha/2}\P_{\ge \Lambda^{\mathrm{sf}}(s)}u_0^{\mathrm{sf}}(s)\|_{L^2_x}^2\,\mathrm ds
=\frac12\|v_0\|_{L^2_x}^2, \qquad t\ge 0.
\label{sf-energy-balance}
\end{align}

\smallskip
\noindent
{\rm (ii)} The $q=0$ regularity package needed in the base step holds for the
modified background approximate solution:
\begin{align}
\|u_0^{\mathrm{sf}}\|_{C_{[T_2,+\infty),x}^{1}}\lesssim \lambda_0^{d+1},
\label{sf-u0-c1}
\end{align}
and
\begin{align}
\|\mathring R_0^{\mathrm{sf}}\|_{C^{1}_{[T_2,+\infty)}L^1_x}
+\|\mathring R_0^{\mathrm{sf}}\|_{C_{[T_2,+\infty)}W^{1,1}_x}
\lesssim \lambda_0^{2d+2}.
\label{sf-r0-c1w11}
\end{align}

\smallskip
\noindent
{\rm (iii)} There exists $\delta_2^{\mathrm{sf}}$ such that
\begin{align}
&\delta_3\le \delta_2^{\mathrm{sf}}\le \frac32\delta_3,
\label{sf-delta-scale}
\\ 
&\delta_1\ge \frac34\delta_2^{\mathrm{sf}},
\label{sf-delta1}
\\ 
&\frac{8}{c_*}\|\mathring R_0^{\mathrm{sf}}\|_{C_{[T_1,+\infty)}L^1_x}
\le \delta_2^{\mathrm{sf}},
\label{sf-floor}
\\
& E_0^{\mathrm{sf}}(\ot_2)-E_0^{\mathrm{sf}}(\ut_1)
\ge 4\bigl(\delta_2^{\mathrm{sf}}-\delta_3\bigr).
\label{sf-bridge}
\end{align} 

Then, for every $r\in [0,\tfrac12]$, there exists a non-increasing profile
$e_{\mathrm{gd},r}^{\mathrm{sf}}\in
\mathcal E_{\mathrm{ch}}[u_0^{\mathrm{sf}},\delta_2^{\mathrm{sf}}](v_0)$ and a
global dissipative weak Euler solution $u_r\in C([0,+\infty);L^2_x)$ with
initial data $v_0$ and
\[
\|u_r(t)\|_{L^2_x}^2=e_{\mathrm{gd},r}^{\mathrm{sf}}(t),
\qquad t\ge 0.
\]
If $\delta_2^{\mathrm{sf}}>\delta_3$, then there are infinitely many distinct
global dissipative weak Euler solutions with initial data $v_0$.
\end{theorem}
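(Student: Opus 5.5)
The plan is to reduce Theorem~\ref{thm-sf-conditional} directly to the abstract criterion Theorem~\ref{thm-global-diss-abstract}, applied with the triple $(u_0^\sharp,p_0^\sharp,\mathring R_0^\sharp):=(u_0^{\mathrm{sf}},p_0^{\mathrm{sf}},\mathring R_0^{\mathrm{sf}})$ and with $\delta_2^\sharp:=\delta_2^{\mathrm{sf}}$. The proof then amounts to checking, hypothesis by hypothesis, that the assumptions (i)--(iii) together with Lemma~\ref{lem-sf-agree} supply every input of Theorem~\ref{thm-global-diss-abstract}. After that, the conclusions (exact energy $e^{\mathrm{sf}}_{\mathrm{gd},r}\in\mathcal E_{\mathrm{ch}}[u_0^{\mathrm{sf}},\delta_2^{\mathrm{sf}}](v_0)$, global dissipativity, and infinitely many solutions when $\delta_2^{\mathrm{sf}}>\delta_3$) are read off verbatim.

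The verification proceeds in this order.
\begin{enumerate}
\item[(a)] \textbf{Smoothness and Euler--Reynolds structure.} Hypothesis (i) gives that the triple solves \eqref{equa-nsr} at level $q=0$ and has the regularity $C([0,+\infty);L^2_x)\cap C^1((0,+\infty);C^\infty_x)$ required of the background. Since $m_{\mathrm{sf}}(0,\xi)=1$ by \eqref{def-msf-full}, we also get $u_0^{\mathrm{sf}}(0)=v_0$.
\item[(b)] \textbf{Agreement on $[0,T_*]$.} Condition \eqref{gd-agree-full} is exactly the first two identities of \eqref{sf-agree-initial} in Lemma~\ref{lem-sf-agree}.
\item[(c)] \textbf{Monotonicity of energy.} From the energy balance \eqref{sf-energy-balance}, $E_0^{\mathrm{sf}}$ is absolutely continuous with $(E_0^{\mathrm{sf}})'(t)=-2\nu\|(-\Delta)^{\alpha/2}\P_{\ge\Lambda^{\mathrm{sf}}(t)}u_0^{\mathrm{sf}}(t)\|_{L^2_x}^2\le0$. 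Hence $E_0^{\mathrm{sf}}$ is non-increasing on $[0,+\infty)$.
\item[(d)] \textbf{Scalar conditions.} The conditions \eqref{gd-delta-scale}, \eqref{gd-delta1-abstract}, \eqref{gd-floor}, \eqref{gd-bridge-abstract} are literally \eqref{sf-delta-scale}--\eqref{sf-bridge}. The regularity conditions \eqref{gd-u0-c1}, \eqref{gd-r0-c1w11} are \eqref{sf-u0-c1}, \eqref{sf-r0-c1w11}.
\end{enumerate}

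With every hypothesis matched, Theorem~\ref{thm-global-diss-abstract} yields, for each $r\in[0,\frac12]$, a weak solution $u_r\in C([0,+\infty);L^2_x)$ with data $v_0$ and $\|u_r(t)\|_{L^2_x}^2=e^{\sharp}_{\mathrm{gd},r}(t)$. By Proposition~\ref{prop-global-diss}, this profile is non-increasing and lies in $\mathcal E_{\mathrm{ch}}[u_0^{\mathrm{sf}},\delta_2^{\mathrm{sf}}](v_0)$, so we set $e^{\mathrm{sf}}_{\mathrm{gd},r}:=e^\sharp_{\mathrm{gd},r}$. Non-increasing energy with $e(0)=\|v_0\|^2_{L^2_x}$ gives the weak energy inequality \eqref{w-en-inq}, hence admissibility and dissipativity. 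If $\delta_2^{\mathrm{sf}}>\delta_3$, the last clause of Theorem~\ref{thm-global-diss-abstract} gives the distinct profiles $r\mapsto e^{\mathrm{sf}}_{\mathrm{gd},r}$, and therefore infinitely many distinct solutions.

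The statement is a conditional packaging, so I expect no genuine analytic obstacle here. The only delicate points are bookkeeping. First, the abstract criterion is stated for a ``smooth solution'', whereas (i) only gives $C^1$ in time and $C^\infty$ in space for $t>0$. I would check that the proofs of Theorem~\ref{thm-global-diss-abstract} and Proposition~\ref{prop-global-diss} only use \eqref{gd-energy-ER}, the $C^1$ bound of $E_0^\sharp$, and the level-$0$ package. All of these are available under (i)--(ii), and \eqref{gd-energy-ER} follows by testing \eqref{equa-nsr} with $u_0^{\mathrm{sf}}$. Second, the parameters $\theta$, $T_{\mathrm{fr}}$ and $\Lambda^{\mathrm{sf}}$ are fixed before $a$, so the implicit constants in \eqref{sf-u0-c1}--\eqref{sf-r0-c1w11} are compatible with the final choice of $a$ in \eqref{def-a-0}. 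The substantive work, namely verifying (i)--(iii) for the stabilized cutoff using radial stability and \eqref{sf-close-delta2}, is deferred to Section~\ref{Sec-sf-closure} and is not part of this proof.
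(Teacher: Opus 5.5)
Your proposal is correct and follows the paper's proof: both apply Theorem~\ref{thm-global-diss-abstract} with $(u_0^\sharp,\mathring R_0^\sharp,\delta_2^\sharp)=(u_0^{\mathrm{sf}},\mathring R_0^{\mathrm{sf}},\delta_2^{\mathrm{sf}})$, taking \eqref{gd-agree-full} from Lemma~\ref{lem-sf-agree}, the Euler--Reynolds structure and the monotonicity of $E_0^{\mathrm{sf}}$ from (i) and \eqref{sf-energy-balance}, and the remaining hypotheses \eqref{gd-delta-scale}--\eqref{gd-r0-c1w11} verbatim from (ii)--(iii). Your extra remarks are sound bookkeeping that the paper leaves implicit: that $u_0^{\mathrm{sf}}(0)=v_0$, that the non-increasing property of the profile comes from Proposition~\ref{prop-global-diss}, and that the ``smooth solution'' requirement enters only through \eqref{gd-energy-ER} and the level-$0$ package.
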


\begin{proof}
By Lemma~\ref{lem-sf-agree},
\begin{align*}
u_0^{\mathrm{sf}}(t)=u_0(t),
\qquad
\mathring R_0^{\mathrm{sf}}(t)=\mathring R_0(t),
\qquad 0\le t\le T_*.
\end{align*}
Hence \eqref{gd-agree-full} holds with
$(u_0^\sharp,\mathring R_0^\sharp)=(u_0^{\mathrm{sf}},\mathring R_0^{\mathrm{sf}})$.
Assumption {\rm (i)} yields the Euler-Reynolds structure at level $q=0$ and the
monotonicity of $E_0^{\mathrm{sf}}$ through \eqref{sf-energy-balance};
assumptions {\rm (ii)} and {\rm (iii)} are exactly \eqref{gd-delta-scale}--\eqref{gd-r0-c1w11} with
$(u_0^\sharp,\mathring R_0^\sharp,\delta_2^\sharp)$ replaced by
$(u_0^{\mathrm{sf}},\mathring R_0^{\mathrm{sf}},\delta_2^{\mathrm{sf}})$. Therefore we can apply Theorem~\ref{thm-global-diss-abstract}
to obtain the desired result.
\end{proof}

\begin{remark}
\label{rem-sf-honest}
Relative to the
abstract criterion in 
Theorem \ref{thm-global-diss-abstract}, the only additional inputs
are the Euler-Reynolds structure of the smooth stabilized background
approximate solution, the $q=0$ regularity package, and the  quantitative floor and energy-drop conditions in assumption {\rm (iii)} of
Theorem~\ref{thm-sf-conditional}.
\end{remark}

\section{Closure of the stabilized cutoff construction}
\label{Sec-sf-closure}

Theorem~\ref{thm-sf-conditional} reduces the global problem to three
properties of the smooth stabilized background approximate solution: the
Euler-Reynolds structure at level $q=0$, the positive-time $q=0$ regularity
package from Sections~\ref{Sec-Lam-Euler} and \ref{Sec-prf-thm}, and a first
outer floor at the same scale as $\delta_3$. In this section, we verify all three for initial data satisfying the assumption of Theorem~\ref{Thm-Non-euler}.

\medskip

\begin{lemma}
\label{lem-sf-closure-bg}
Assume that $v_0$ satisfies the assumption of Theorem~
\ref{Thm-Non-euler}, and let
$\Lambda^{\mathrm{sf}}$ be any smooth stabilized cutoff defined by
\eqref{def-Lambda-sf}. Then there exists a pressure $p_0^{\mathrm{sf}}$ such that
$(u_0^{\mathrm{sf}},p_0^{\mathrm{sf}},\mathring R_0^{\mathrm{sf}})$ solves the
Euler-Reynolds system \eqref{equa-nsr} at level $q=0$. Moreover,
$u_0^{\mathrm{sf}}=\mathcal S^{\mathrm{sf}}(t)v_0$ belongs to
$C([0,+\infty);L^2_x)\cap C^1((0,+\infty);C_x^{\infty})$ and
\begin{align}
\frac12 E_0^{\mathrm{sf}}(t) +\nu\int_0^t
\|(-\Delta)^{\alpha/2}\P_{\ge \Lambda^{\mathrm{sf}}(s)}u_0^{\mathrm{sf}}(s)\|_{L^2_x}^2\,\mathrm ds =\frac12\|v_0\|_{L^2_x}^2,
\qquad t\ge 0.
\label{sf-energy-balance-closure}
\end{align}
Moreover,
\begin{align}
\|u_0^{\mathrm{sf}}\|_{C_{[T_2,+\infty),x}^{1}}\lesssim \lambda_0^{d+1},
\label{sf-u0-c1-closure}
\end{align}
and
\begin{align}
\|\mathring R_0^{\mathrm{sf}}\|_{C^{1}_{[T_2,+\infty)}L^1_x}
+\|\mathring R_0^{\mathrm{sf}}\|_{C_{[T_2,+\infty)}W^{1,1}_x}
\lesssim \lambda_0^{2d+2}, 
\label{sf-r0-c1w11-closure}
\end{align} 
where the implicit constants 
are independent of $a$. 
\end{lemma}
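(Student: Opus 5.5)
The plan is to mirror the proof of Theorem~\ref{thm-solu-le} and the $q=0$ verification in Section~\ref{Sec-prf-thm}, with $(\Lambda,\tau)$ replaced by $(\Lambda^{\mathrm{sf}},\tau_{\mathrm{sf}})$.

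\textbf{Euler--Reynolds structure.} Since $m_{\mathrm{sf}}(t,\cdot)$ in \eqref{def-msf-full} is a bounded real radial symbol, Definition~\ref{def-radial-stability} shows that $u_0^{\mathrm{sf}}(t)=\mathcal S^{\mathrm{sf}}(t)v_0$ is, for each fixed $t$, a stationary weak Euler solution. Thus there is a pressure $p_0^{\mathrm{sf}}(t)$ with $\div(u_0^{\mathrm{sf}}\otimes u_0^{\mathrm{sf}})+\nabla p_0^{\mathrm{sf}}=0$. Differentiating \eqref{def-msf-full} in time gives
\[
\partial_t u_0^{\mathrm{sf}}=-\nu(-\Delta)^\alpha\P^2_{\ge\Lambda^{\mathrm{sf}}(t)}u_0^{\mathrm{sf}},
\]
which equals $\div\mathring R_0^{\mathrm{sf}}$ by the definition of $\mathcal R$; the zero mode is untouched because $\varphi$ vanishes near the origin. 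This gives \eqref{equa-nsr} at level $q=0$. The energy balance \eqref{sf-energy-balance-closure} follows by Plancherel, since $\frac{d}{dt}|m_{\mathrm{sf}}|^2=-2\nu(2\pi|\xi|)^{2\alpha}\varphi^2_{\Lambda^{\mathrm{sf}}(t)}|m_{\mathrm{sf}}|^2$. Continuity in $C_tL^2_x$ at $t=0$ follows by dominated convergence as before, and on $[0,T_*]$ it also follows from Lemma~\ref{lem-sf-agree}.

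\textbf{Smoothing.} Here $\Lambda^{\mathrm{sf}}$ is non-increasing but only eventually constant. I will redo the smoothing estimate \eqref{est-u-hs}. For $|\xi|>2\Lambda^{\mathrm{sf}}(t'/2)$ we have $\varphi_{\Lambda^{\mathrm{sf}}(r)}(\xi)=1$ for $r\in[t'/2,t']$, which gives the factor $e^{-\nu t'(2\pi|\xi|)^{2\alpha}}$. The low modes are finitely many, with $|\xi|\le 2\Lambda^{\mathrm{sf}}(t'/2)\le 2\max\{\Lambda(t'/2),\Lambda_{\mathrm{fr}}\}$. This yields the analogue of \eqref{spa-regu}, and the low/high splitting then gives $C^1((0,\infty);C^\infty_x)$ and smoothness in time. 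Smoothness of $\Lambda^{\mathrm{sf}}$ and flatness of $\varphi$ are what make each mode's coefficient smooth.

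\textbf{Regularity package.} For \eqref{sf-u0-c1-closure} I will follow \eqref{est-u0-c1}: Sobolev embedding plus the smoothing bound on $[T_2,\infty)$ give $\lesssim(1+T_2^{-(d+2)/(2\alpha)}+\Lambda(T_2)^{d+2})\|v_0\|_{L^2_x}$, and this is $\lesssim\lambda_0^{d+1}$ by \eqref{def-a-0}, because $T_2\gtrsim 4^{-\theta-1}$ and $\lambda_0=a$. For \eqref{sf-r0-c1w11-closure} I will follow \eqref{est-r0-c-1}–\eqref{est-r0-w-1}. The time derivative of $\mathring R_0^{\mathrm{sf}}$ involves $\partial_t\varphi^2_{\Lambda^{\mathrm{sf}}}$, hence $\partial_t\Lambda^{\mathrm{sf}}$. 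On $[T_2,T_*]$ this equals $\Lambda'$, of size $\eta t^{-\eta-1}$. On $[T_*,T_{\mathrm{fr}}]$ it contains $\chi_{\mathrm{fr}}'$, which by \eqref{chi-fr-der-bdd} is of size $(T_{\mathrm{fr}}-T_*)^{-1}$.

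\textbf{Main obstacle.} The delicate point is that the implicit constants must be independent of $a$. The factors $T_{\mathrm{fr}}-T_*$, $\Lambda_{\mathrm{fr}}$ and $\theta$ are fixed independently of $a$, as noted after \eqref{sf-close-delta2} and \eqref{def-Lambda-sf}. So all these bounds are constants $C(\theta,\varepsilon_0,v_0)$, and they can be absorbed into $\lambda_0^{d+1}=a^{d+1}$ once $a$ is large. Concretely, I will enlarge the universal constant in \eqref{def-a-0}, which is allowed since $\theta$ and $\Lambda^{\mathrm{sf}}$ are chosen before $a$. Once this ordering of parameter choices is made explicit, the remaining steps are routine.
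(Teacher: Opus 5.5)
Your proposal is correct and follows the same architecture as the paper's proof. You get the Euler--Reynolds structure from radial stability together with $\partial_t u_0^{\mathrm{sf}}=-\nu(-\Delta)^\alpha\P^2_{\ge\Lambda^{\mathrm{sf}}(t)}u_0^{\mathrm{sf}}=\div\mathring R_0^{\mathrm{sf}}$. You rerun the smoothing argument of Section~\ref{Sec-Lam-Euler} with a merely non-increasing cutoff, and you repeat the $q=0$ estimates of Section~\ref{Sec-prf-thm}. Your Plancherel derivation of the energy balance is slightly more direct than appealing to skew-symmetry, since $u_0^{\mathrm{sf}}$ is an explicit linear multiplier flow.

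The one real difference is how you control $(\Lambda^{\mathrm{sf}})'$ on $[T_*,T_{\mathrm{fr}}]$. The paper never bounds $\chi_{\mathrm{fr}}'$ on its own. It pairs it with $\Lambda(t)-\Lambda_{\mathrm{fr}}$, and by the mean value theorem
\[
|\Lambda(t)-\Lambda_{\mathrm{fr}}|\le (T_{\mathrm{fr}}-T_*)\sup_{[T_*,T_{\mathrm{fr}}]}|\Lambda'|.
\]
This cancels the factor $(T_{\mathrm{fr}}-T_*)^{-1}$ from \eqref{chi-fr-der-bdd}. Hence $\|(\Lambda^{\mathrm{sf}})'\|_{L^\infty([T_2,\infty))}\lesssim\sup_{[T_2,\infty)}|\Lambda'|\lesssim T_2^{-\eta-1}$, uniformly in $T_{\mathrm{fr}}$, and \eqref{def-a-0} as already stated, depending only on $\theta$, absorbs it.

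Your crude bound of order $(T_{\mathrm{fr}}-T_*)^{-1}\Lambda(T_*)$ is also finite and independent of the final $a$, so the argument still closes. The cost is a new largeness condition on $a$ that depends on $T_{\mathrm{fr}}$, and you should phrase it carefully. $T_{\mathrm{fr}}$ is chosen after $a_0$, because $\widetilde\ell_0=a_0^{-30d}$ enters $\varepsilon_0$ in \eqref{sf-same-shell} and $B_0$ in \eqref{sf-close-delta2}. So ``enlarging the universal constant in \eqref{def-a-0}'' would, read literally, make $a_0$ depend on $T_{\mathrm{fr}}$ and vice versa.

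The loop is harmless. Shrinking $\widetilde\ell_0$ only makes \eqref{sf-same-shell} easier and $B_0$ larger, so the same $\varepsilon_0$ and $T_{\mathrm{fr}}$ stay admissible. Alternatively, impose the extra condition only on the final $a\ge a_0$. Still, the paper's cancellation avoids the issue entirely, and it gives the stronger fact that the $q=0$ regularity package is uniform over all stabilized cutoffs.
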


\begin{proof} 
 For every fixed $t\ge 0$, the assumption of Theorem~
\ref{Thm-Non-euler} yields that
$u_0^{\mathrm{sf}}(t)=\mathcal S^{\mathrm{sf}}(t)v_0$ is a steady Euler solution. 
On the other hand, the symbol $m_{\mathrm{sf}}(t,\xi)$ satisfies the same scalar
ODE as in Section~\ref{Sec-Lam-Euler}, with $\Lambda^{\mathrm{sf}}$ in place of
$\Lambda$. 
Therefore, $u_0^{\mathrm{sf}}$ solves the approximate
$\Lambda^{\mathrm{sf}}$-Euler equation obtained from \eqref{equa-nse-2} by
replacing $\Lambda$ with $\Lambda^{\mathrm{sf}}$. Choosing at each time the Euler
pressure associated with the steady field $u_0^{\mathrm{sf}}(t)$, we obtain a
pressure $p_0^{\mathrm{sf}}$ such that
$(u_0^{\mathrm{sf}},p_0^{\mathrm{sf}},\mathring R_0^{\mathrm{sf}})$ solves
\eqref{equa-nsr} at level $q=0$.

The proof of Theorem~\ref{thm-solu-le} adapts directly to this modified cutoff.  
The arguments from Section~\ref{Sec-Lam-Euler} use only that the cutoff is
smooth, non-increasing, equal to $t^{-\eta}$ near $t=0$, and bounded below by
$2$. These properties are preserved by $\Lambda^{\mathrm{sf}}$. Hence the
positive-time smoothing, the low/high-frequency decomposition, and the $L^2$
continuity at $t=0$ are unchanged, which yields
\eqref{sf-energy-balance-closure} and
$u_0^{\mathrm{sf}}\in C([0,+\infty);L^2_x)\cap C^1((0,+\infty);C_x^{\infty})$.

To verify the $q=0$ regularity package, we repeat the proof of the initial
estimates in Section~\ref{Sec-prf-thm}. The analogue of \eqref{est-u0-c1}
therefore yields \eqref{sf-u0-c1-closure}. For the Reynolds stress, the proof of
\eqref{est-r0-w-1} is identical, and the only new point is the term
involving $\partial_t \P_{\ge \Lambda^{\mathrm{sf}}(t)}^2$. 
Since
\[
(\Lambda^{\mathrm{sf}})'(t) =\chi_{\mathrm{fr}}'(t)\bigl(\Lambda(t)-\Lambda_{\mathrm{fr}}\bigr)
+\chi_{\mathrm{fr}}(t)\Lambda'(t),
\]
and
\[
|\Lambda(t)-\Lambda_{\mathrm{fr}}| \le (T_{\mathrm{fr}}-T_*)\sup_{s\in [T_*,T_{\mathrm{fr}}]}|\Lambda'(s)|, \qquad
|\chi_{\mathrm{fr}}'|\lesssim (T_{\mathrm{fr}}-T_*)^{-1},
\]
we obtain the uniform bound
\begin{align}
\| (\Lambda^{\mathrm{sf}})'\|_{L^{\infty}([T_2,+\infty))}
\lesssim \sup_{s\in [T_2,+\infty)}|\Lambda'(s)|<\infty.
\label{sf-Lambda-prime-bdd}
\end{align}
Hence the proof of \eqref{est-r0-c-1}--\eqref{est-r0-w-1}, with
$\Lambda^{\mathrm{sf}}$ in place of $\Lambda$, implies \eqref{sf-r0-c1w11-closure}. 

Note that since $\Lambda^{\mathrm{sf}}$ is independent of $a$,  the implicit constants in \eqref{sf-u0-c1-closure} and \eqref{sf-r0-c1w11-closure} are also independent of $a$.
\end{proof}
Now we prove a dyadic multiplier bound for a stabilized cutoff.

\begin{lemma}\label{lem-sf-multiplier}
Assume
\[
2T_*\le\frac12,\qquad T_*<T_{\mathrm{fr}}\le2T_*.
\]
Let $\Lambda^{\mathrm{sf}}$ and $m_{\mathrm{sf}}$ be defined by
\eqref{def-Lambda-sf} and \eqref{def-msf-full}. Then for every $1\le p\le\infty$ there exists
$C_{p,d}>0$, independent of $\theta$, $a$, $T_{\mathrm{fr}}$, $n$, and $t$, such that
\begin{align}\label{sf-Lp-mult}
\|\mathcal S^{\mathrm{sf}}(t)\P_{\ge \Lambda^{\mathrm{sf}}(t)}^2\Delta_n f\|_{L^p_x}
\le C_{p,d}\|\Delta_n f\|_{L^p_x}
\end{align}
for all $n\ge1$, $t\ge0$, and mean-free $f\in L^p_x$.
\end{lemma}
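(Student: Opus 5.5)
We follow the proof of Lemma~\ref{bd-m}. By Young's inequality and the Poisson summation formula (see \eqref{est-snv}, \eqref{tr-tran}, \eqref{mvp-l1}, \eqref{mvp-l1-2}), it suffices to show that
\[
b^{\mathrm{sf}}_{n,t}(\zeta):=m_{\mathrm{sf}}(t,2^n\zeta)\,\varphi^2_{\Lambda^{\mathrm{sf}}(t)}(2^n\zeta)\,\Psi(\zeta)
\]
is bounded in $W^{d+1,1}(\R^d)$, uniformly in $n\ge n_0$ and $t\ge0$. The low shells $1\le n<n_0$ are handled exactly as before, using only $|m_{\mathrm{sf}}\varphi^2|\le1$ and the finiteness of $\Gamma_n$. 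We fix $n_0$ via \eqref{def-n0-bd}. This threshold is independent of $\theta$, since $\Lambda^{\mathrm{sf}}\ge\Lambda_{\mathrm{fr}}$, and $\Lambda_{\mathrm{fr}}$ may be large. To avoid $\theta$-dependence, we instead take $n_0$ such that $2^{n_0-2}\ge 2\cdot 2$. Every shell with $|\xi|\gtrsim\Lambda^{\mathrm{sf}}$ will be treated uniformly, and shells with $|\xi|<\Lambda^{\mathrm{sf}}(t)$ contribute zero, since there $\varphi_{\Lambda^{\mathrm{sf}}(t)}=0$.

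\textbf{Derivatives of $\varphi^2_{\Lambda^{\mathrm{sf}}(t)}$.} On the support, $|\xi|\sim\Lambda^{\mathrm{sf}}(t)$, so \eqref{est-phi-2} gives $|\nabla^N\varphi^2_{\Lambda^{\mathrm{sf}}(t)}|\lesssim|\xi|^{-N}$ with no change.

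\textbf{Derivatives of $m_{\mathrm{sf}}$.} The key step is bounding the derivatives of
\[
A^{\mathrm{sf}}(t,\xi)=\int_0^t\varphi^2_{\Lambda^{\mathrm{sf}}(s)}(\xi)\,\d s .
\]
Differentiating under the integral gives
\[
|\nabla^N A^{\mathrm{sf}}|\lesssim |\xi|^{-N}\,\bigl|\{s\le t:\ \Lambda^{\mathrm{sf}}(s)\in[|\xi|/2,|\xi|]\}\bigr| .
\]
Since $\Lambda^{\mathrm{sf}}$ is non-increasing, this set is an interval. We split into cases.
\begin{enumerate}
\item[(a)] The interval lies in $(0,T_*]$. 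There $\Lambda^{\mathrm{sf}}=s^{-\eta}$, and its length is $\lesssim\tau(\xi)\lesssim|\xi|^{-1/\eta}$, exactly as in \eqref{est-N}.
\item[(b)] The interval meets $[T_*,T_{\mathrm{fr}}]$. There $\Lambda^{\mathrm{sf}}\in[\Lambda_{\mathrm{fr}},\Lambda(T_*)]$, and by \eqref{sf-same-shell} this range lies inside one dyadic shell. Since $T_{\mathrm{fr}}\le2T_*$, the interval length is $\lesssim T_*\lesssim \Lambda(T_*)^{-1/\eta}\sim|\xi|^{-1/\eta}$.
\item[(c)] The interval extends past $T_{\mathrm{fr}}$, i.e.\ $\Lambda_{\mathrm{fr}}\in[|\xi|/2,|\xi|)$ and the cutoff is frozen. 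Then the length can be as large as $t$, and the bound $|\nabla^N A^{\mathrm{sf}}|\lesssim|\xi|^{-N}t$ is not uniform.
\end{enumerate}

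Case (c) is the main obstacle. We handle it by using the exponential factor. Write $G=\nu(2\pi|\xi|)^{2\alpha}A^{\mathrm{sf}}$. For $s\ge T_{\mathrm{fr}}$ the integrand equals $\varphi^2(\xi/\Lambda_{\mathrm{fr}})$, so a derivative landing on that portion of $A^{\mathrm{sf}}$ produces a factor $\lesssim |\xi|^{-1}\,\varphi^{2-2\varepsilon_M}(\xi/\Lambda_{\mathrm{fr}})\,(t-T_{\mathrm{fr}})$, by the flatness estimate \eqref{flat-est}. Meanwhile $G\ge c|\xi|^{2\alpha}(t-T_{\mathrm{fr}})\varphi^2(\xi/\Lambda_{\mathrm{fr}})$. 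Hence each derivative contributes a factor bounded by $(G+B)|\xi|^{-1}$, where $G$ is raised to a fractional power $h^{-\varepsilon_M}$-weighted. Applying Faà di Bruno as in \eqref{est-G-der}, a product of $m\le N$ such factors against $e^{-\nu G}$ is bounded via $e^{-G}G^{m}\lesssim1$. The leftover powers $h^{-m\varepsilon_M}$ are absorbed because $m\varepsilon_M<1$, using $\varepsilon_M<(M+1)^{-1}$ with $M=d+1$. This is precisely why the flat profile \eqref{flat-est} was imposed.

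Combining the three cases yields $|\nabla^N(m_{\mathrm{sf}}\varphi^2_{\Lambda^{\mathrm{sf}}})|\lesssim|\xi|^{-N}$ for $N\le d+1$, uniformly in $t$, $\theta$, $a$ and $T_{\mathrm{fr}}$. The scaling argument \eqref{high-deri-b}--\eqref{high-shell-bound} then gives the $W^{d+1,1}$ bound and concludes the proof.
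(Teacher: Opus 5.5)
Your proposal is essentially the paper's proof: you split the $s$-integral into the power-law part, the transition $[T_*,T_{\mathrm{fr}}]$, and the frozen tail. On the transition, $T_{\mathrm{fr}}\le 2T_*$ gives $\Lambda_{\mathrm{fr}}\ge 2^{-\eta}\Lambda(T_*)$, hence $|\xi|\sim\Lambda(T_*)$, so you do not need \eqref{sf-same-shell}, which is not a hypothesis of the lemma. On the frozen tail you use \eqref{flat-est} together with $e^{-\nu G}G^m\lesssim1$, exactly as the paper does when it treats the fixed-scale multiplier $H_{s,\lambda}$. In case (c), state explicitly that the leftover $h^{-m\varepsilon_M}$, and the extra $h^{-\varepsilon_M}$ that appears when derivatives fall on the cutoff, is absorbed by the factor $\varphi^2_{\Lambda^{\mathrm{sf}}(t)}=h(\cdot/\Lambda_{\mathrm{fr}})$ of the symbol; this is needed because $m_{\mathrm{sf}}$ alone is not uniformly Mihlin near $|\xi|=\Lambda_{\mathrm{fr}}$ once $t-T_{\mathrm{fr}}$ is large.
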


\begin{proof}
As in Lemma~\ref{bd-m}, it suffices to prove a uniform $W^{d+1,1}$ bound for
\begin{align*}
b^{\mathrm{sf}}_{n,t}(\zeta) :=m_{\mathrm{sf}}(t,2^n\zeta)
\varphi^2_{\Lambda^{\mathrm{sf}}(t)}(2^n\zeta)\Psi(\zeta),
\end{align*}
where $\Psi$ is given by \eqref{def-Psi-lem31}. For $t\le T_*$ this follows from Lemma~\ref{bd-m}, since
$\Lambda^{\mathrm{sf}}=\Lambda$ on $(0,T_*]$ and the corresponding multipliers agree up to time $t$.

Assume $t>T_*$, we let $\xi=2^n\zeta$. We only consider the support of the symbol and its $\zeta$-derivatives; elsewhere the
bounds are trivial. On this support, 
\begin{align}\label{sf-active-frequency}
|\xi|\gtrsim \Lambda^{\mathrm{sf}}(t)\ge \Lambda(T_{\mathrm{fr}}).
\end{align}
Write the pre-stabilized accumulated dissipation as
\begin{align*}
A_{\mathrm{sf}}^{\mathrm{pre}}(t,\xi) :=\int_0^{t\wedge T_{\mathrm{fr}}}
\varphi^2_{\Lambda^{\mathrm{sf}}(s)}(\xi)\,\d s .
\end{align*}

We estimate the $\xi$-derivatives of $A_{\mathrm{sf}}^{\mathrm{pre}}$. On $(0,T_*]$, the cutoff is the original
power-law cutoff. For every $1\le N\le d+1$, the derivative
$\nabla_\xi^N\varphi^2_{\Lambda(s)}(\xi)$ can be nonzero only when
$\Lambda(s)\simeq |\xi|$. Since $\Lambda(s)=s^{-\eta}$ on $(0,T_*]$, the set of such
$s$ has length $\lesssim |\xi|^{-1/\eta}$. On this set the integrand is bounded by
$\lesssim |\xi|^{-N}$. Hence the contribution from $(0,T_*]$ is bounded by
$\lesssim |\xi|^{-N-\frac1\eta}$.

On $[T_*,T_{\mathrm{fr}}]$, split into two cases. If $|\xi|>2\Lambda(T_*)$, then
$\varphi_{\Lambda^{\mathrm{sf}}(s)}(\xi)\equiv1$ throughout the transition, and this interval contributes no
$\xi$-derivatives. If $|\xi|\le2\Lambda(T_*)$, by \eqref{sf-active-frequency}
\begin{align*}
\Lambda(T_{\mathrm{fr}})\lesssim |\xi|\le2\Lambda(T_*).
\end{align*}
Since $T_{\mathrm{fr}}\le2T_*\le1/2$ and $\Lambda(t)=t^{-\eta}$ on $(0,1/2]$, this implies
$|\xi|\simeq T_*^{-\eta}$ and hence
\begin{align*}
T_{\mathrm{fr}}-T_*\lesssim |\xi|^{-1/\eta}.
\end{align*}
The transition contribution is therefore also bounded by
$\lesssim |\xi|^{-N-\frac1\eta}$. Consequently, for $1\le N\le d+1$,
\begin{align}\label{sf-A-der}
|\nabla_\xi^N A_{\mathrm{sf}}^{\mathrm{pre}}(t,\xi)|
\lesssim |\xi|^{-N-\frac1\eta}.
\end{align}

Let
\begin{align*}
G_{\mathrm{sf}}^{\mathrm{pre}}(t,\xi)
:=A_{\mathrm{sf}}^{\mathrm{pre}}(t,\xi)(2\pi|\xi|)^{2\alpha}.
\end{align*}
Exactly as in the corrected proof of Lemma~\ref{bd-m}, \eqref{sf-A-der} yields
\begin{align*}
|\partial_\xi^\beta G_{\mathrm{sf}}^{\mathrm{pre}}(t,\xi)|
\lesssim G_{\mathrm{sf}}^{\mathrm{pre}}(t,\xi)|\xi|^{-|\beta|}
+|\xi|^{2\alpha-\frac1\eta-|\beta|}, \qquad 1\le |\beta|\le d+1.
\end{align*}
Since $2\alpha-\frac1\eta<0$, it follows that
\begin{align}\label{sf-msf-transition-mihlin}
\left|\nabla_\xi^N e^{-\nu G_{\mathrm{sf}}^{\mathrm{pre}}(t,\xi)}\right|
\lesssim |\xi|^{-N},
\qquad 1\le N\le d+1.
\end{align}

If $T_*<t<T_{\mathrm{fr}}$, combining \eqref{sf-msf-transition-mihlin} with the standard derivative bounds
for $\varphi_{\Lambda^{\mathrm{sf}}(t)}^2$ and arguing as in the proof of \eqref{high-deri-b}-\eqref{high-shell-bound} give 
\begin{align*}
\|b^{\mathrm{sf}}_{n,t}\|_{W^{d+1,1}(\mathbb R^d)}\lesssim1.
\end{align*}

It remains to consider $t\ge T_{\mathrm{fr}}$. Then
\begin{align*}
b^{\mathrm{sf}}_{n,t}(\zeta)
=e^{-\nu G_{\mathrm{sf}}^{\mathrm{pre}}(T_{\mathrm{fr}},2^n\zeta)}
H_{s,\lambda}(2^n\zeta)\Psi(\zeta),
\end{align*}
where
\begin{align*}
H_{s,\lambda}(\xi)
:=e^{-\nu s(2\pi|\xi|)^{2\alpha}\varphi_\lambda^2(\xi)}\varphi_\lambda^2(\xi), \qquad
s=t-T_{\mathrm{fr}},\quad \lambda=\Lambda_{\mathrm{fr}}.
\end{align*}
We use the following fixed-scale dyadic bound: for every multi-index $\beta$ with
$|\beta|\le d+1$,
\begin{align}\label{fixed-scale-heat-mihlin}
\|\partial_\zeta^\beta\bigl(H_{s,\lambda}(2^n\zeta)\Psi(\zeta)\bigr)\|_{L^\infty_\zeta}
\lesssim_{\beta,d}1,
\end{align}
uniformly in $s\ge0$, $\lambda\ge2$, and $n\ge1$. Indeed, let $R:=2^n/\lambda$ and
$M:=d+1$. If $R$ lies outside a fixed compact sub-interval of $(0,\infty)$ (say, $[\frac18,16]$), then
$\varphi(R\zeta)$ is identically $0$ or $1$ on $\supp\Psi$, and the estimate follows directly from
$r^j e^{-cr}\lesssim_j1$. Otherwise $R$ ranges in a compact subset of $(0,\infty)$. On $\supp\Psi$,
$|\zeta|$ is bounded above and below. With
\[
h_R(\zeta):=\varphi^2(R\zeta), \qquad
a_R(\zeta):=|\zeta|^{2\alpha}h_R(\zeta),
\]
we use the flat-cutoff estimate \eqref{flat-est} for $\varphi$ to obtain, for $1\le|\gamma|\le M$,
\[
|\partial_\zeta^\gamma h_R(\zeta)|+|\partial_\zeta^\gamma a_R(\zeta)|
\le C_M a_R^{1-\varepsilon_M}(\zeta),
\]
uniformly in $R$. Expanding derivatives of $h_Re^{-r a_R}$ by the product rule and Faa di Bruno formula,
each term is bounded by either
\[
C r^j a_R^{1+j(1-\varepsilon_M)}e^{-r a_R}
\quad\text{or}\quad
C r^j a_R^{(j+1)(1-\varepsilon_M)}e^{-r a_R},
\]
with $0\le j\le M$. Since $\varepsilon_M<(M+1)^{-1}$ and $r^j e^{-cr}\lesssim_j1$, these expressions are uniformly bounded for
$r\ge0$. Taking $r=\nu s\,2^{2\alpha n}$ implies \eqref{fixed-scale-heat-mihlin}.

Combining \eqref{sf-msf-transition-mihlin}, \eqref{fixed-scale-heat-mihlin}, and using the product rule, we obtain
\begin{align*}
\|b^{\mathrm{sf}}_{n,t}\|_{W^{d+1,1}(\mathbb R^d)}\lesssim1
\end{align*}
uniformly in $n$ and $t$. The Poisson summation and Young inequality argument from
Lemma~\ref{bd-m} prove \eqref{sf-Lp-mult}.
\end{proof}

\begin{lemma}
\label{lem-sf-short-tail}
For every $T_{\mathrm{fr}}\in (T_*,T_*+\varepsilon_0]$, let
$\Lambda^{\mathrm{sf}}$ be defined by \eqref{def-Lambda-sf} and $\mu_{\mathrm{fr}}$  given by \eqref{sf-close-delta2}.  Then
\begin{align}
\|\mathring R_0^{\mathrm{sf}}\|_{C_{[T_1,+\infty)}L^1_x}
\le \frac18 c_*\mu_{\mathrm{fr}}\delta_3.
\label{sf-tail-d3}
\end{align}
Moreover,
\begin{align}
\mu_{\mathrm{fr}}\to 1 \qquad\text{as} \qquad T_{\mathrm{fr}} \to  T_*.
\label{sf-mu-to-one}
\end{align}
\end{lemma}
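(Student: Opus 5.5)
The idea is to repeat the proof of Proposition~\ref{Prop-R0} with $\Lambda$ replaced by $\Lambda^{\mathrm{sf}}$, using Lemma~\ref{lem-sf-multiplier} in place of Lemma~\ref{bd-m}. The first step is to rewrite the right-hand side of \eqref{sf-tail-d3}. By \eqref{def-delta} with $q=1$, together with $T_1+\ell_0=T_*$, $\Lambda(T_*)=T_*^{-\eta}$ and $k_1=\eta\beta_1$, we have $\delta_3=\frac{8C_R}{c_*}T_*^{k_1}=\frac{8C_R}{c_*}\Lambda(T_*)^{-\beta_1}$. By the definition of $\mu_{\mathrm{fr}}$ in \eqref{sf-close-delta2}, this gives
\begin{align*}
\tfrac18 c_*\mu_{\mathrm{fr}}\delta_3=C_R\Lambda(T_*)^{-\beta_1}\Bigl(\tfrac{\Lambda_{\mathrm{fr}}}{\Lambda(T_*)}\Bigr)^{-\beta_1}=C_R\Lambda_{\mathrm{fr}}^{-\beta_1}.
\end{align*}
So it suffices to show $\|\mathring R_0^{\mathrm{sf}}(t)\|_{L^1_x}\le C_R\Lambda_{\mathrm{fr}}^{-\beta_1}$ for every $t\ge T_1$.

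\textbf{Case $t\in[T_1,T_*]$.} Lemma~\ref{lem-sf-agree} and \eqref{est-r0-0} with $q=1$ give $\|\mathring R_0^{\mathrm{sf}}(t)\|_{L^1_x}\le C_RT_*^{k_1}=C_R\Lambda(T_*)^{-\beta_1}$. Since $\Lambda(T_*)\ge\Lambda_{\mathrm{fr}}$ and $\beta_1>0$, this is at most $C_R\Lambda_{\mathrm{fr}}^{-\beta_1}$.

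\textbf{Case $t>T_*$.} Here $\Lambda^{\mathrm{sf}}(t)$ is a convex combination of $\Lambda(t)$ and $\Lambda_{\mathrm{fr}}$, with $\chi_{\mathrm{fr}}(t)=0$ once $t\ge T_{\mathrm{fr}}$. Hence $\Lambda^{\mathrm{sf}}(t)\in[\Lambda_{\mathrm{fr}},\Lambda(T_*)]$. Since $T_{\mathrm{fr}}\le T_*+\varepsilon_0$, \eqref{sf-same-shell} places this interval inside $(2^{n_1},2^{n_1+1})$, so the dyadic index attached to $\Lambda^{\mathrm{sf}}(t)$ is exactly $n_1$. We then run the chain \eqref{est-r0-linfty}--\eqref{asp-reg} with $\Lambda^{\mathrm{sf}}(t)$ in place of $\Lambda(t)$. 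The multiplier step uses \eqref{sf-Lp-mult}, whose constant is independent of $\theta,a,T_{\mathrm{fr}},n,t$, as in Lemma~\ref{bd-m}. This gives
\begin{align*}
\|\mathring R_0^{\mathrm{sf}}(t)\|_{L^1_x}\le C_R\,\Lambda^{\mathrm{sf}}(t)^{-(2-2\alpha+d-D^\sharp_{n_1,1,\infty}-\kappa_0)}=C_R\Lambda^{\mathrm{sf}}(t)^{-\beta_1}\le C_R\Lambda_{\mathrm{fr}}^{-\beta_1}.
\end{align*}
In the $W^{1,p}$ case the same argument applies with \eqref{est-r0-lp} and the corresponding $\beta_1$.

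Finally, \eqref{sf-mu-to-one} follows from the continuity of $\Lambda$: $\Lambda(T_{\mathrm{fr}})\to\Lambda(T_*)$ as $T_{\mathrm{fr}}\to T_*$, so $\mu_{\mathrm{fr}}\to1$.

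The main obstacle is bookkeeping of the constant. We need the same $C_R$ as in \eqref{R0-0}, not merely a bound up to a constant. I would handle this by noting that $C_R$ was chosen to absorb the fixed universal multiplier constants, together with the Besov norm \eqref{asp-reg} and the Cauchy--Schwarz constant. Lemma~\ref{lem-sf-multiplier} provides a constant of the same universal type, independent of $T_{\mathrm{fr}}$ and $a$. So I would enlarge $C_R$ once, at the start, to dominate both multiplier constants. This is compatible with the later parameter choices, since $\theta$ and $a$ are fixed after $C_R$.
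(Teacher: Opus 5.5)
Your proposal is correct and follows the paper's proof essentially line by line. You split into $[T_1,T_*]$, handled by Lemma~\ref{lem-sf-agree} and \eqref{est-r0-0} with $q=1$, and $t\ge T_*$, where \eqref{sf-same-shell} freezes the dyadic index at $n_1$ and Lemma~\ref{lem-sf-multiplier} replaces Lemma~\ref{bd-m} in the Bernstein chain; you then rewrite via $\frac18 c_*\mu_{\mathrm{fr}}\delta_3=C_R\Lambda_{\mathrm{fr}}^{-\beta_1}$ from \eqref{def-delta}. Your handling of $C_R$ matches the paper's ``$C_R$ was chosen large enough in advance'', and the one hypothesis you left implicit, $T_{\mathrm{fr}}\le 2T_*$ for Lemma~\ref{lem-sf-multiplier}, follows from $\varepsilon_0\le T_*$.
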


\begin{proof}

On $[T_1,T_*]$, Lemma~\ref{lem-sf-agree} and \eqref{est-r0-0} with $q=1$ yield
\[
\|\mathring R_0^{\mathrm{sf}}\|_{C_{[T_1,T_*]}L^1_x}
=\|\mathring R_0\|_{C_{[T_1,T_*]}L^1_x}
\le \|\mathring R_0\|_{C_{[0,T_*]}L^1_x}
\le \frac18 c_*\delta_3.
\]

Now let $t\ge T_*$. Since
$\Lambda^{\mathrm{sf}}(t)\in[\Lambda(T_{\mathrm{fr}}),\Lambda(T_*)]$ and
\eqref{sf-same-shell} holds on the whole stabilized region, the dyadic index in the
proof of Proposition~\ref{Prop-R0} is constant and equals $n_1$. We use
Lemma~\ref{lem-sf-multiplier} in place of Lemma~\ref{bd-m}, the same Bernstein summation as in
\eqref{est-r0-linfty}--\eqref{asp-reg}, and in the Sobolev case the analogue
\eqref{est-r0-lp}--\eqref{asp-reg-3}, to obtain
\begin{align*}
\|\mathring R_0^{\mathrm{sf}}(t)\|_{L^1_x}
\le C_R\bigl(\Lambda^{\mathrm{sf}}(t)\bigr)^{-\beta_1}
\le C_R\Lambda(T_{\mathrm{fr}})^{-\beta_1},
\end{align*}
where $\beta_1$ is given by \eqref{def-beta1-sf} and $C_R$ was chosen large enough in advance.

Since $\Lambda(T_*)=T_*^{-\eta} 
= \overline{T}_1^{-\eta}$ and $\beta_1=k_1/\eta$,  
by \eqref{def-delta}, this becomes
\[
\|\mathring R_0^{\mathrm{sf}}(t)\|_{L^1_x}
\le C_R\Big(\frac{\Lambda(T_{\mathrm{fr}})}{\Lambda(T_*)}\Big)^{-\beta_1}
\Lambda(T_*)^{-\beta_1}
=\frac18 c_*\mu_{\mathrm{fr}}\delta_3.
\]
Taking the supremum over $t\ge T_*$ and combining with the previous estimate on
$[T_1,T_*]$ prove \eqref{sf-tail-d3}. Since $\Lambda$ is continuous and strictly
decreasing near $T_*$, \eqref{sf-mu-to-one} is immediate from
\eqref{sf-close-delta2}.
\end{proof}

Now we prove an estimate for smooth stabilized energy profile.

\begin{lemma} \label{prop-sf-closure}

Assume the hypotheses of Theorem~\ref{Thm-Non-euler}.  
Let 
\begin{align}
\delta_2^{\mathrm{sf}} :=\mu_{\mathrm{fr}}\delta_3 
\label{sf-close-delta2-2}
\end{align}
with $\mu_{\mathrm{fr}}$ as in \eqref{sf-close-delta2}. 
Then, 
\begin{align}
E_0^{\mathrm{sf}}(\ot_2)-E_0^{\mathrm{sf}}(\ut_1)
\ge 4\bigl(\delta_2^{\mathrm{sf}}-\delta_3\bigr). 
\label{sf-close-bridge}
\end{align}

\end{lemma}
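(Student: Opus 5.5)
The plan is to bound the left-hand side of \eqref{sf-close-bridge} from below by the fixed positive number $B_0$ defined before \eqref{sf-close-delta2}, and the right-hand side from above by $B_0$ using the choice of $\mu_{\mathrm{fr}}$.

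\textbf{Right-hand side.} Since $\delta_2^{\mathrm{sf}}=\mu_{\mathrm{fr}}\delta_3$, we have $4(\delta_2^{\mathrm{sf}}-\delta_3)=4(\mu_{\mathrm{fr}}-1)\delta_3$. The constraint $\mu_{\mathrm{fr}}<1+B_0/(4\delta_3)$ in \eqref{sf-close-delta2} then gives $4(\delta_2^{\mathrm{sf}}-\delta_3)<B_0$.

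\textbf{Left-hand side.} Lemma~\ref{lem-sf-closure-bg} gives the energy balance \eqref{sf-energy-balance-closure}, so $E_0^{\mathrm{sf}}$ is non-increasing on $[0,+\infty)$. The ordering of times is $\ot_2<T_*-2\widetilde\ell_0$ (arranged above) and $T_*-2\widetilde\ell_0\le T_*-\ell_0=T_1\le\ut_1$; the inequality $2\widetilde\ell_0\ge \ell_0$ holds because $a\ge a_0$ implies $\ell_0=a^{-30d}\le a_0^{-30d}=\widetilde\ell_0$. Monotonicity then yields
\[
E_0^{\mathrm{sf}}(\ot_2)-E_0^{\mathrm{sf}}(\ut_1)\ge E_0^{\mathrm{sf}}(\ot_2)-E_0^{\mathrm{sf}}(T_*-2\widetilde\ell_0).
\]
Both points $\ot_2$ and $T_*-2\widetilde\ell_0$ lie in $[0,T_*]$, so Lemma~\ref{lem-sf-agree} replaces $E_0^{\mathrm{sf}}$ by $E_0$ there, and the right-hand side equals $B_0$.

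Combining the two bounds gives \eqref{sf-close-bridge}, in fact with strict inequality.

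\textbf{Main obstacle.} The only delicate point is that $B_0$ must not depend on the final choice of $a$; otherwise the choice of $\mu_{\mathrm{fr}}$ in \eqref{sf-close-delta2} would be circular. The point $T_*-2\widetilde\ell_0$ depends only on $a_0$, but $\ot_2=T_2+\ell_1$ does depend on $a$ through $\ell_1$. This dependence is harmless: $E_0$ is non-increasing and $\ot_2$ only moves to the left as $a$ grows, so taking the value of $\ot_2$ at $a=a_0$ in the definition of $B_0$ gives a lower bound valid for all $a\ge a_0$. Alternatively, one can note that the argument is carried out after $a$ is fixed, since $\mu_{\mathrm{fr}}$ only needs to satisfy the strict inequality.

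Strict positivity of $B_0$ comes from $E_0$ being strictly decreasing on $(0,t_0]$, which is \eqref{eq-e-u0-2}.
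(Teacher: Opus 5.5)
Your argument is the paper's: use monotonicity of the energy to bound the left-hand side below by the fixed number $B_0$, then apply $\mu_{\mathrm{fr}}<1+B_0/(4\delta_3)$ from \eqref{sf-close-delta2}. Two points need correcting, though neither affects the conclusion.

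\textbf{The time ordering.} The chain $T_*-2\widetilde\ell_0\le T_1\le \ut_1$ is wrong in its last step. By \eqref{def-out}, $\ut_1=T_1-\ell_0=T_*-2\ell_0<T_1$. The inequality you actually need is $T_*-2\widetilde\ell_0\le \ut_1$, and it follows directly from $\ell_0=a^{-30d}\le a_0^{-30d}=\widetilde\ell_0$. You already derived this, but note that you need $\ell_0\le\widetilde\ell_0$, not just $\ell_0\le 2\widetilde\ell_0$. This is exactly the step the paper uses.

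\textbf{The ``main obstacle''.} It does not arise. By \eqref{Tq-def}, $\ot_2=T_2+\ell_1=4^{-(\theta+1)}$, because the $\ell_1$ terms cancel. So $\ot_2$ is independent of $a$, and $B_0$ depends only on $\theta$ and $a_0$, as the paper asserts when choosing $T_{\mathrm{fr}}$.

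\textbf{A simplification.} Both $\ot_2$ and $\ut_1$ lie in $[0,T_*]$. You can therefore replace $E_0^{\mathrm{sf}}$ by $E_0$ at both endpoints at once via Lemma~\ref{lem-sf-agree}, and then use the monotonicity of $E_0$ from \eqref{eq-e-2}. Invoking the energy balance for $E_0^{\mathrm{sf}}$ is unnecessary.
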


\begin{proof}
Using \eqref{sf-close-delta2},
\[
\ut_1=T_*-2\ell_0\ge T_*-2\widetilde\ell_0,
\]
and the monotonicity of $E_0$, we infer that
\[
B_*=E_0(\ot_2)-E_0(\ut_1) \ge E_0(\ot_2)-E_0(T_*-2\widetilde\ell_0) =B_0
\ge 4(\mu_{\mathrm{fr}}-1)\delta_3 =4\bigl(\delta_2^{\mathrm{sf}}-\delta_3\bigr),
\]
which implies \eqref{sf-close-bridge}.
\end{proof}

Now we can finish the proof of the main result Theorem \ref{Thm-Non-euler}. 

\medskip 

\noindent
{\bf Proof of Theorem~\ref{Thm-Non-euler}.} 
We only need to verify that assumptions (i)--(iii) of Theorem~\ref{thm-sf-conditional} hold under the present choice of $\Lambda^{\mathrm{sf}}$ and $\delta_2^{\mathrm{sf}}$. Assumptions (i) and (ii) follow directly from Lemma~\ref{lem-sf-closure-bg}, and \eqref{sf-bridge} is a consequence of Lemma~\ref{prop-sf-closure}. 

It remains to verify \eqref{sf-delta-scale}--\eqref{sf-floor} in assumption (iii).
For this purpose, 
we note that 
by our choice of $\theta$, 
\begin{align}
\frac98\delta_3<\delta_1.
\label{sf-delta1-prep}
\end{align}
Moreover, by \eqref{sf-close-delta2},
\begin{align}
\frac34\mu_{\mathrm{fr}}\delta_3\le \delta_1.
\label{sf-mu-choice-2}
\end{align}
Lemma~\ref{lem-sf-short-tail} then yields
\[
\frac{8}{c_*}\|\mathring R_0^{\mathrm{sf}}\|_{C_{[T_1,+\infty)}L^1_x}
\le \mu_{\mathrm{fr}}\delta_3 =\delta_2^{\mathrm{sf}},
\]
which is precisely \eqref{sf-floor}. Moreover, \eqref{sf-close-delta2} yields \eqref{sf-delta-scale}, and \eqref{sf-mu-choice-2} implies \eqref{sf-delta1}.

Therefore, 
all assumptions (i)--(iii) of Theorem~\ref{thm-sf-conditional} are verified.  By virtue of Theorem~\ref{thm-sf-conditional}, 
we infer that 
there exist infinitely many distinct global dissipative weak Euler solutions with initial data $v_0$.  
\hfill $\square$

\medskip 
At the end of this section, we consider a natural structural subclass of initial data, namely, parallel shear flows. 

\begin{lemma}[Parallel shear flows are radially stable]
\label{lem-shear-radial-stable}
Let
\[
v_0(x)=g(P_{e^\perp}x)e
\]
be a parallel shear flow on $\mathbb T^d$ in the sense of \eqref{parasf}, where $g\in L^2_x$. Then
$v_0$ is stable under radial multipliers.
\end{lemma}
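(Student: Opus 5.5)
The plan has three steps: show that $v_0$ has Fourier support in the hyperplane $e^\perp$, show that a radial multiplier preserves both this support and the direction $e$, and then check the Euler equation in the weak form directly.

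\textbf{Step 1 (Fourier support).} Since $v_0(x)=g(P_{e^\perp}x)e$ does not depend on the coordinate $x\cdot e$, its Fourier coefficients satisfy
\[
\widehat v_0(\xi)=\widehat g(P_{e^\perp}\xi)\,e\,1_{\{\xi\cdot e=0\}}.
\]
Here $e$ is a coordinate vector, so $P_{e^\perp}x$ is simply the remaining $d-1$ coordinates. Thus $v_0$ is supported on frequencies $\xi\perp e$, and every coefficient is parallel to $e$.

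\textbf{Step 2 (action of $\mathcal T_\rho$).} For a bounded real radial symbol $\rho$, $\mathcal T_\rho$ acts on $v_0$ componentwise. Hence
\[
\widehat{\mathcal T_\rho v_0}(\xi)=\rho(|\xi|)\widehat g(P_{e^\perp}\xi)\,e\,1_{\{\xi\cdot e=0\}}.
\]
This is again a field in the direction $e$, supported in $\{\xi\cdot e=0\}$. Equivalently, $\mathcal T_\rho v_0(x)=g_\rho(P_{e^\perp}x)e$, where $g_\rho$ is the $(d-1)$-dimensional Fourier multiplier of $g$ with symbol $\rho(|\cdot|)$ restricted to $\mathbb Z^{d-1}$. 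Since $\rho$ is bounded, Plancherel gives $g_\rho\in L^2$. Because $\rho$ is real and even in $\xi$, $g_\rho$ is real-valued.

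\textbf{Step 3 (stationary weak solution).} Write $w:=\mathcal T_\rho v_0=g_\rho(P_{e^\perp}x)e\in L^2$.

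For the divergence, in the distributional sense $\div w=\partial_e\bigl(g_\rho(P_{e^\perp}x)\bigr)=0$, because the function is independent of $x\cdot e$. On the Fourier side this is $\xi\cdot e\,\widehat w=0$.

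For the nonlinearity, $w\otimes w=g_\rho^2(P_{e^\perp}x)\,e\otimes e\in L^1$. Its distributional divergence is $e\,\partial_e\bigl(g_\rho^2(P_{e^\perp}x)\bigr)=0$, by the same independence from $x\cdot e$. Concretely, for a test field $\varphi$,
\[
\int w\otimes w:\nabla\varphi\,\d x=\int g_\rho^2(x')\Bigl(\int_{\T}\partial_e\varphi_e\,\d x_e\Bigr)\d x'=0
\]
by Fubini and periodicity. This gives \eqref{para-flow}, so $w$ is a stationary weak solution with zero pressure.

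The main point needing care is the regularity level. With only $g\in L^2$, the product $g_\rho^2$ is merely $L^1$, so the nonlinear term must be handled in the weak form above rather than pointwise. The Fubini argument covers this, using $g_\rho^2\in L^1(\T^{d-1})$.
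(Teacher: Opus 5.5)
Your proposal is correct and follows the same route as the paper: Fourier support in $e^\perp$ and the direction $e$ are preserved by $\mathcal T_\rho$, so $\mathcal T_\rho v_0=g_\rho(P_{e^\perp}x)e$, and the nonlinearity then vanishes. Your Fubini-based weak-form check of $\div(w\otimes w)=0$ (with zero pressure) makes rigorous the paper's formal identity $(g_\rho e\cdot\nabla g_\rho)e=0$, which for $g\in L^2$ only has meaning in the distributional sense.
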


\begin{proof}
Let $\rho$ be any bounded radial symbol and
\[
\mathcal T_{\rho}v_0(x)
=\sum_{\xi\in \mathbb Z^d}\rho(|\xi|)\widehat v_0(\xi)e^{i2\pi x\cdot \xi}.
\]
Since $v_0$ has Fourier support in the hyperplane $e^\perp$, the same is true for
$\mathcal T_{\rho}v_0$. Hence there exists a scalar function $g_{\rho}$ on
$e^\perp$ such that
\[
\mathcal T_{\rho}v_0(x)=g_{\rho}(P_{e^\perp}x)e.
\]
Exactly as in \eqref{para-flow},
\[
(\mathcal T_{\rho}v_0(x)\cdot \nabla)\mathcal T_{\rho}v_0(x)
=(g_{\rho}(P_{e^\perp}x)e\cdot \nabla g_{\rho}(P_{e^\perp}x))e=0,
\]
so $\mathcal T_{\rho}v_0$ is a steady Euler solution.
\end{proof}

As a consequence of Theorem~\ref{Thm-Non-euler} and Lemma~\ref{lem-shear-radial-stable}, 
we have the following corollary 
for global dissipative solutions with 
parallel shear data.

\begin{corollary}[Unconditional global dissipative solutions for parallel shear data]
\label{cor-sf-parallel}
Let $d\ge2$. Let either
\begin{align*}
 v_0\in C^{1-\kappa}(\T^d),\qquad 0<\kappa<1,
\end{align*}
with $v_0$ satisfying {\rm (H1)} and {\rm (H2)}, or let
\begin{align*}
v_0\in W^{1,p}(\T^d),\qquad 2<p<\infty,
\end{align*}
with $v_0$ satisfying {\rm (H1)'} and {\rm (H2)'}. Suppose in addition
that $v_0$ is a parallel shear flow on $\mathbb T^d$. Then there exist
infinitely many distinct global dissipative weak Euler solutions with initial
data $v_0$.
\end{corollary}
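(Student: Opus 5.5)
This corollary reduces to a single verification: parallel shear data lie in the class covered by Theorem~\ref{Thm-Non-euler}. The argument has two steps.

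\textbf{Step 1: the regularity and intermittency hypotheses.} Theorem~\ref{Thm-Non-euler} requires either $v_0\in C^{1-\kappa}(\T^d)$ with {\rm (H1)}--{\rm (H2)}, or $v_0\in W^{1,p}(\T^d)$ with {\rm (H1)'}--{\rm (H2)'}. These are exactly the standing assumptions of the corollary, so nothing needs to be checked here.

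\textbf{Step 2: stability under radial multipliers.} This is the only remaining hypothesis, in the sense of Definition~\ref{def-radial-stability}. For a parallel shear flow $v_0(x)=g(P_{e^\perp}x)e$ it is supplied by Lemma~\ref{lem-shear-radial-stable}.

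For completeness, I would recall the mechanism behind that lemma.
\begin{itemize}
\item Since $v_0$ depends only on $P_{e^\perp}x$, its Fourier support lies in $e^\perp$.
\item A radial multiplier $\rho$ preserves this support, so $\mathcal T_\rho v_0=g_\rho(P_{e^\perp}x)e$ for some $g_\rho\in L^2$.
\item This field is divergence-free, since $g_\rho$ does not depend on $x\cdot e$.
\item The nonlinearity vanishes, because $\div(\mathcal T_\rho v_0\otimes\mathcal T_\rho v_0)=e\,\partial_e(g_\rho^2)=0$ in the distributional sense.
\end{itemize}
Hence $\mathcal T_\rho v_0$ is a stationary weak Euler solution with zero pressure.

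Applying Theorem~\ref{Thm-Non-euler} then gives infinitely many distinct global dissipative weak solutions in $C([0,\infty);L^2_x)$ with initial data $v_0$.

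There is no genuine obstacle in this argument. The one point needing care is the distributional justification that the nonlinearity vanishes when $g_\rho$ is only in $L^2$ rather than smooth. I would handle it by noting that $v_0\otimes v_0=g_\rho^2\,e\otimes e$ lies in $L^1$ and is independent of the $e$-direction, so testing against any $\phi$ and integrating by parts in the $e$-variable gives zero.
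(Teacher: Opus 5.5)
Your proposal is correct and matches the paper's proof, which likewise invokes Lemma~\ref{lem-shear-radial-stable} to obtain stability under radial multipliers and then applies Theorem~\ref{Thm-Non-euler}. Your treatment of the nonlinearity in conservative form, $\div(g_\rho^2\,e\otimes e)=0$, checked by integrating out the $e$-variable against a test function, is slightly more careful than the paper's non-conservative computation for merely $L^2$ data.
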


\begin{proof}
By Lemma~\ref{lem-shear-radial-stable}, parallel shear data are
stable under radial multipliers. Hence all hypotheses of
Theorem~\ref{Thm-Non-euler} are satisfied, and the conclusion follows.
\end{proof}

\begin{remark}
\label{rem-sf-closure-scope}
It is obvious that the 
radial-multiplier stability assumption of Theorem~\ref{Thm-Non-euler}
implies 
the abstract hypothesis
\[
\mathcal S(t)v_0\ \text{is a steady Euler solution for every }t\ge 0. 
\] 
For the parallel shear examples
from SubSection~\ref{Subsec-Com-Main}, this extra property is automatic, and the same radiality
argument as in Appendix~\ref{App-TC-Flow} gives the corresponding Euclidean
statement for Taylor--Couette flows and the truncated power-law vortex.
\end{remark}

\section{Appendix: Taylor--Couette flows and the truncated power-law vortex} 
\label{App-TC-Flow}

This appendix contains the proof of Lemma~\ref{lem-tcflow} for Taylor-Couette flows in the Euclidean space $\mathbb{R}^2$, and the proof of Lemma~\ref{lem-m-ini} for the truncated power-law vortex on $\mathbb{R}^2$. 

\begin{proof}[Proof of Lemma~\ref{lem-tcflow}]
For $t=0$, the claim is trivial since $\mathcal{S}(0)=\mathrm{Id}$. Hence, let
$t>0$. Let $m(t,\xi)$ be a radial multiplier. 
Since $\mathcal{S}(t)$ is a Fourier multiplier operator, it is a convolution operator. Thus, 
\begin{align}\label{sv0-o}
    \mathcal{S}(t)v_0= \int_{\R^2} K_t(x-y)v_0(y)\d y, 
\end{align}
where 
$$K_t(x):=\int_{\mathbb{R}^2} m(t,\xi) e^{i 2\pi x \cdot \xi}\d \xi.$$ 

We note that $K_t$ is a radial function. Hence, there exists a scalar function
$\kappa_t:[0,+\infty)\to \R$ such that $K_t(x)=\kappa_t(|x|)$.
Actually, taking any rotation matrix 
$R\in O(2)$ we have
\begin{align*}
    K_t(Rx)=\int_{\mathbb{R}^2} m(t,\xi) e^{i2\pi Rx \cdot \xi}\d \xi= \int_{\mathbb{R}^2} m(t,\xi) e^{i2\pi x \cdot R^{-1}\xi}\d \xi= \int_{\mathbb{R}^2} m(t,\eta) e^{i2\pi x \cdot \eta}\d \eta= K_t(x),
\end{align*}
where we used the identity 
$Rx \cdot \xi =x\cdot R^T\xi= x \cdot R^{-1}\xi$ and $\eta=R^{-1}\xi$. Hence, \eqref{sv0-o} is equal to 
\begin{align}
    \mathcal{S}(t)v_0= \int_{\R^2} \kappa_t(|x-y|)v_0(y)\d y. 
\end{align}

Let $x=(r\cos\theta,r\sin\theta)$ and $y=(r'\cos\theta',r'\sin\theta')$ in polar coordinates. 
Since $v_0$ is a Taylor-Couette flow, there exists a scalar function $g$ such that 
$v_0(y)=g(r')e_{\theta'}$, where 
$e_{\theta'}=(-\sin\theta',\cos\theta')$, 
$e_\theta=(-\sin\theta,\cos\theta)$, 
and 
$e_r=(\cos\theta,\sin \theta)$. 
Since 
\begin{align*}
    e_{\theta'}\cdot e_r=\sin(\theta-\theta')=\sin\phi, \qquad
    e_{\theta'}\cdot e_\theta=\cos(\theta-\theta')=\cos\phi,
\end{align*}
with $\phi=\theta-\theta'$, it follows that 
\begin{align}
    \mathcal{S}(t)v_0= (\mathcal{S}(t)v_0)_re_r+(\mathcal{S}(t)v_0)_\theta e_\theta,
\end{align}
where 
\begin{align*}
    (\mathcal{S}(t)v_0)_r= \int_0^{\infty}\int_0^{2\pi} \kappa_t(\sqrt{r^2+(r')^2-2rr'\cos\phi}) g(r')\sin\phi \, r'\d\theta'\d r'
\end{align*}
and
\begin{align*}
    (\mathcal{S}(t)v_0)_\theta= \int_0^{\infty}\int_0^{2\pi} \kappa_t(\sqrt{r^2+(r')^2-2rr'\cos\phi}) g(r')\cos\phi \, r'\d\theta'\d r',
\end{align*}
with $\phi=\theta-\theta'$.

Note that by the change of variables $\phi=\theta-\theta'$ and periodicity, 
\begin{align}\label{int-svrt}
    \int_0^{2\pi} \kappa_t(\sqrt{r^2+(r')^2-2rr'\cos\phi})\sin\phi \, \d\theta'&=\int_{\theta-2\pi}^{\theta} \kappa_t(\sqrt{r^2+(r')^2-2rr'\cos\phi})\sin\phi \, \d\phi\notag\\
    &= \int_{-\pi}^{\pi} \kappa_t(\sqrt{r^2+(r')^2-2rr'\cos\phi})\sin\phi \, \d\phi=0,
\end{align}
which yields that $(\mathcal{S}(t)v_0)_r\equiv0$. 

Similarly to \eqref{int-svrt}, we have
\begin{align*}
    (\mathcal{S}(t)v_0)_\theta&= \int_0^{\infty}g(r')r'\int_0^{2\pi} \kappa_t(\sqrt{r^2+(r')^2-2rr'\cos\phi}) \cos\phi \d\theta'\d r'\notag\\
    &= \int_0^{\infty}g(r')r' f_t(r,r')\d r'=:G_t(r),
\end{align*}
where $f_t(r,r'):=\int_0^{2\pi} \kappa_t(\sqrt{r^2+(r')^2-2rr'\cos\phi}) \cos\phi \d\phi $ 
is independent of $\theta$ and $\theta'$.

Thus, 
we conclude that 
$\mathcal{S}(t)v_0= G_t(r) e_\theta$, 
which yields 
that 
$\mathcal{S}(t)v_0$ is still a Taylor-Couette flow.
\end{proof}

\begin{proof}[Proof of Lemma~\ref{lem-m-ini}]  
By abuse of notation, we still write $\Delta_n$ for the Euclidean Littlewood-Paley projection on $\R^2$, namely,
\begin{align}
    \widehat{\Delta_n g}(\xi)=\chi_n(\xi)\widehat g(\xi) , \quad \xi\in\R^2,
\end{align}
where, for $n\geq 1$, $\chi_n(\xi)=\chi(2^{-n}\xi)$ with $\chi$ a smooth radial
nonnegative Littlewood-Paley cutoff supported in an annulus away from the origin
and strictly positive on some open annulus. Applying this to $f$, we get
\begin{align}
    \Delta_n f(x)=\check{\chi}_n*f(x), \quad x\in\R^2,
\end{align}
where $\check{\chi}_n$ denotes the inverse Fourier transform of $\chi_n$ on $\R^2$.  
Let 
$z:=2^{n} y$ and $\wt x:= 2^n x$. 
Since $\check{\chi}_n(y)=2^{2n}\check{\chi}(2^{n}y)$, 
we get
\begin{align}\label{est-r1-1}
     \Delta_n f(x)=2^{-n(1-\alpha)} \int_{\mathbb{R}^{2}} \check{\chi}(\tilde{x}-z) \psi(2^{-n}z)|z|^{-\alpha}z^\perp \d z 
&=:2^{-n(1-\alpha)} F_n(\tilde{x}).
\end{align}
Let
\begin{align*}
f_*(z):=|z|^{-\alpha}z^\perp, \qquad F_*(\tilde x):=(\check\chi*f_*)(\tilde x).
\end{align*}

We divide the space into two regimes  $\R^2=\mathcal{O}_1\cup \mathcal{O}_2$ 
with 
$\mathcal{O}_1=\{x: |x|\leq  2^{1-n}\}$ and $\mathcal{O}_2=\R^2\setminus \mathcal{O}_1$.

\medskip 
{\bf $(i)$ The case $x\in \mathcal{O}_1$:} 
In the case where $x\in \mathcal{O}_1$, 
we have $f(x)=|x|^{-\alpha}x^{\perp}$.
In order to estimate the $L^\infty$-norm of $\Delta_n f$, we first
show that $\sup_{|\wt x|\leq 2}|F_n(\wt x)|$ is uniformly bounded from below and above.  

Regarding the upper bound of $F_n$ on $|\wt x|\leq 2$, we have
\begin{align}\label{est-F}
    F_n(\tilde{x})=\int_{|\tilde{x}-z|\geq4} \check{\chi}(\tilde{x}-z) \psi(2^{-n}z)|z|^{-\alpha}z^\perp \d z+\int_{|\tilde{x}-z|<4} \check{\chi}(\tilde{x}-z) \psi(2^{-n}z)|z|^{-\alpha}z^\perp \d z.
\end{align}
For the first term on the right-hand side of \eqref{est-F}, since $\check{\chi}$ is a Schwartz function, we deduce that 
\begin{align*}
   |\int_{|\tilde{x}-z|\geq4} \check{\chi}(\tilde{x}-z) \psi(2^{-n}z)|z|^{-\alpha}z^\perp \d z| &\lesssim \int_{|\tilde{x}-z|\geq4} | \tilde{x}-z|^{-3} |z|^{1-\alpha} \d z\lesssim \int_{|z|\geq2}  |z|^{-2-\alpha} \d z
    \leq C
\end{align*}
for some universal constant $C$. 
For the second term on the right-hand side of \eqref{est-F}, we get
\begin{align}
    |\int_{|\tilde{x}-z|<4} \check{\chi}(\tilde{x}-z) \psi(2^{-n}z)|z|^{-\alpha}z^\perp \d z| &\lesssim \int_{|\tilde{x}-z|< 4} |\check{\chi}(\tilde{x}-z)| |z|^{1-\alpha} \d z
    \lesssim \int_{|z|< 6} |z|^{1-\alpha}  \d z\leq C,
\end{align}
where $C$ is a universal constant.  
Therefore,
\begin{align*}
    \sup_{|\tilde x|\leq 2}|F_n(\tilde x)|\leq C.
\end{align*}

Next, we prove a uniform lower bound. Since
\begin{align*}
    \sup_{|\tilde x|\leq 2}|\check\chi(\tilde x-z)|\,|z|^{1-\alpha}\in L^1_z(\R^2),
\end{align*}
the dominated convergence theorem yields
\begin{align}\label{Fn-conv}
    \sup_{|\tilde x|\leq 2}|F_n(\tilde x)-F_*(\tilde x)|\to 0
    \qquad\text{as }n\to \infty.
\end{align}
Moreover,
\begin{align*}
    f_*=\nabla^\perp\Big(\frac{1}{2-\alpha}|x|^{2-\alpha}\Big)
\end{align*}
in the sense of tempered distributions. Hence,
\begin{align*}
    \widehat{f_*}(\xi)=c_\alpha i\xi^\perp |\xi|^{\alpha-4},\qquad \xi\neq 0,
\end{align*}
for some nonzero constant $c_\alpha$. Therefore,
\begin{align*}
    \widehat{F_*}(\xi)=\chi(|\xi|)\widehat{f_*}(\xi)\not\equiv 0.
\end{align*}
Since $\widehat{F_*}\in C_c^\infty(\R^2)$, the function $F_*$ is real analytic. If $F_*$ vanished on $\{|\tilde x|\leq 2\}$, then by the identity theorem for real analytic functions we would have $F_*\equiv 0$, a contradiction. Thus,
\begin{align*}
    \sup_{|\tilde x|\leq 2}|F_*(\tilde x)|>0.
\end{align*}
Combining this with \eqref{Fn-conv}, we find $n_0\in \mathbb{N}$ and $c_0>0$ such that
\begin{align*}
    \sup_{|\tilde x|\leq 2}|F_n(\tilde x)|\geq c_0,\qquad n\geq n_0.
\end{align*}
For the finitely many indices $1\leq n<n_0$, assume by contradiction that $\sup_{|\tilde x|\leq 2}|F_n(\tilde x)|=0$. Then, by \eqref{est-r1-1},
\begin{align}\label{dnf0}
    \Delta_nf(x)=0\quad \text{for all}\quad |x|\leq 2^{1-n}.
\end{align}
Since $f\in L^1_c(\R^2)$, the Fourier transform $\widehat f$ extends to an entire function. Thus $\widehat{\Delta_nf}=\chi_n \widehat f\in C_c^\infty(\R^2)$ and $\Delta_n f$ is real analytic. In view of \eqref{dnf0} and \cite[Corollary 1.2.6]{KP02}, we infer that $\Delta_n f\equiv 0$ on $\R^2$, namely, $\chi_n\widehat f\equiv 0$. Because $\chi_n$ is strictly positive on a nonempty open annulus, this forces $\widehat f$ to vanish on an open set and hence everywhere, contradicting \eqref{mi}. Therefore, after possibly decreasing $c_0$, we obtain
\begin{align*}
    c_0\leq \sup_{|\tilde x|\leq 2}|F_n(\tilde x)|\leq C,\qquad n\geq 1.
\end{align*}
Putting them together we obtain 
\begin{align}\label{est-inf--1}
& \|\Delta_n f\|_{L^{\infty}(\mathcal{O}_1)} = 2^{-n(1-\alpha)} \sup _{|\tilde{x}|\leq 2}|F_n(\tilde{x})| \simeq 2^{-n(1-\alpha)}.
\end{align}
	
Moreover, for $p\in [1,\infty)$, using \eqref{est-r1-1}, we get
\begin{align}
\int_{\mathcal{O}_1} 
|\Delta_n f |^{p} \d x= 2^{-n p(1-\alpha)} \int_{|\tilde{x}| \leq 2}|F_n(\tilde{x})|^p \cdot 2^{-2n} \d \tilde{x}  =2^{-n(p(1-\alpha)+2)} \int_{|\tilde{x}| \leq 2}|F_n(\tilde{x})|^p \d \tilde{x}. 
\end{align}
By \eqref{Fn-conv}, the integral above converges to $\int_{|\tilde x|\leq 2}|F_*(\tilde x)|^p\d \tilde x>0$ as $n\to\infty$. For the finitely many indices $1\leq n<n_0$, the same contradiction argument as above shows that $\int_{|\tilde x|\leq 2}|F_n(\tilde x)|^p\d \tilde x>0$. Hence, after adjusting the constants if necessary, 
\begin{align}\label{est-inf-lp-1}
 \|\Delta_n f\|_{L^p(\mathcal{O}_1)}\simeq 2^{-n\left(1-\alpha+\frac{2}{p}\right)}. 
\end{align}

\medskip 
{\bf $(ii)$ The case $x\in \mathcal{O}_2$:} 
Next we consider the case where $x\in \mathcal{O}_2$. Since $|x|>2^{1-n}$, we have
$x\neq 0$, and $f$ is smooth in a neighborhood of $x$. For $|y|\leq \frac12|x|$,
Taylor's expansion around $x$ yields
\[
f(x-y)=\sum_{k=0}^M \frac{(-1)^k}{k!} \nabla^k f(x) y^{\otimes k} +R_M(x, y), 
\]
where $M\geq 1$,
\[
R_M(x, y):=\frac{(-1)^{M+1}}{M!} \int_0^1(1-t)^M \nabla^{M+1} f(x-t y) y^{\otimes(M+1)} d t,
\]
and $\nabla^k f(x) y^{\otimes k}=  \sum_{i_1,i_2,\dots,i_k=1}^2 \p_{i_1}\p_{i_2}\dots\p_{i_k}f(x)y_{i_1}y_{i_2}\dots y_{i_k}$.
For general $y\in \R^2$, we define
\begin{align*}
    R_M(x,y):=f(x-y)-\sum_{k=0}^M \frac{(-1)^k}{k!} \nabla^k f(x) y^{\otimes k}.
\end{align*}

Note that $\nabla^{k}{\chi}_n(0)\equiv 0$ for any $k=0,1,\dots,M$. 
Then, by Fourier inversion, 
\begin{align*}
    \int_{\R^2} \check{\chi}_n(y) \d y   =\chi_n(0)=0,
\end{align*}
and, similarly, 
for any multi-index $\beta$ with $|\beta|\le M$, there exists a universal constant $c_\beta$ such that 
\begin{align*}
   \int_{\R^2} y^\beta\check{\chi}_n(y) \d y 
   = c_\beta \partial^\beta\chi_n(0)=0.
\end{align*} 
It follows that 
\begin{align}  \label{Deltanf-RM}
\Delta_n f(x)=\int_{\R^2} \check{\chi}_n(y) f(x-y)\d y=\int_{\R^2} \check{\chi}_n(y) R_M(x, y) \d y.
\end{align}

First note that 
for $|x|\in [2^{-n},1]$, 
$f(x)= |x|^{-\alpha}x^{\perp}$, 
and thus, 
by Euler's Homogeneous Function Theorem (see, e.g., \cite[p287]{Apostol}) we get
\begin{align}\label{est-nkf-1}
    |\nabla^k f(x) |\lesssim |x|^{1-\alpha-k}, \quad |x|\in [2^{-n},1].
\end{align}
For $|x|\in (1,2]$, we have
\begin{align*}
    |\nabla^k f(x) |\lesssim \sum_{i=0}^k |\nabla^i \psi| |\nabla^{k-i} (|x|^{-\alpha}x^{\perp})|
    \lesssim \sum_{i=0}^k|x|^{1-\alpha-k+i} \lesssim |x|^{1-\alpha-k}, \quad |x|\in (1,2],
\end{align*}
which along with \eqref{est-nkf-1} yields 
\begin{align}\label{est-nkf}
    |\nabla^k f(x) |\lesssim |x|^{1-\alpha-k}, \quad |x|\geq 2^{-n}.
\end{align}

Now, for $x\in \mathcal{O}_2$ 
we split \eqref{Deltanf-RM} into 
$$
\Delta_n f(x)=2^{2n}\int_{|y| \leq \frac12|x|}\check{\chi} (2^{n}y) R_M(x, y) \d y+2^{2n}\int_{|y|> \frac12|x|}\check{\chi}(2^{n} y) R_M(x, y) \d y
=:J_1+J_2.
$$
For the first term $J_1$,
since $|y| \leq \frac12|x|$, 
$|x-ty|\geq \frac12|x|> 2^{-n}$. 
Using the integral remainder formula above and \eqref{est-nkf} we get 
$$
 |R_M(x, y) |\lesssim \sup_{t\in [0,1]}|\nabla^{M+1} f(x-ty) | |y|^{M+1}\lesssim  \sup_{t\in [0,1]}|x-ty|^{-\alpha-M}|y|^{M+1}\lesssim |x|^{-\alpha-M}|y|^{M+1},
$$
which yields that 
\begin{align}\label{p-est-j1}
    |J_1| &\lesssim|x|^{-\alpha-M} 2^{2 n} \int_{|y| \leq \frac12|x|} |\check{\chi} (2^n y ) ||y|^{M+1} d y\notag\\
    &\lesssim 2^{-n(M+1)}|x|^{-\alpha-M}   \int_{\R^2} |\check{\chi} (z ) ||z|^{M+1} d z\notag\\
    &\lesssim 2^{-n(M+1)}|x|^{-\alpha-M}.
\end{align}

Regarding the second term $J_2$, since $|y|>\frac12|x|\geq 2^{-n}$, for any $N\geq 1$ we have
\begin{align}\label{est-j2-chin} 
    |\check{\chi}_n(y) |=2^{2 n} |\check{\chi}\left(2^n y\right) | \lesssim 2^{2 n} (1+2^n|y|)^{-N}.
\end{align}
where the implicit constant is independent of $n$. Moreover, using the algebraic definition of
$R_M$ and \eqref{est-nkf} we get
\begin{align}\label{est-j2-rm}
    |R_M(x, y)| &\lesssim|f(x-y)|+|\sum_{k=0}^M \frac{(-1)^k}{k!} \nabla^k f(x) y^{\otimes k}|\notag \\
    &\lesssim |x-y|^{1-\alpha}+ \sum_{k=0}^M |x|^{1-\alpha-k}|y|^k  \notag \\
&\lesssim (|x|+|y|)^{1-\alpha}+ |x|^{1-\alpha}(1+(\frac{|y|}{|x|})^M).
\end{align}
Hence, combining  \eqref{est-j2-chin} and \eqref{est-j2-rm} together we deduce that
\begin{align}\label{aest-j2-1}
|J_2| &\lesssim 2^{2n} \int_{|y|>\frac12|x|}\left(1+2^n|y|\right)^{-N}\left( (|x|+|y|)^{1-\alpha}+|x|^{1-\alpha} (1+ (\frac{|y|}{|x|} )^M )\right) \d y\notag\\
& \lesssim \int_{|z|> 2^{(n-1)}|x|}(1+|z|)^{-N}\left((|x|+2^{-n}|z| )^{1-\alpha}+|x|^{1-\alpha} (1+2^{-n M} \frac{|z|^M}{|x|^M})\right) \d z,
\end{align}
where $z=2^n y$. 

Concerning the right-hand side of \eqref{aest-j2-1}, 
we have 
$$
J_{21}:=\int_{|z|>2^{(n-1)}|x|}(1+|z|)^{-N}\left(|x|+2^{-n}|z|\right)^{1-\alpha} \d z\lesssim 2^{-n(1-\alpha)} \int_{|z|>2^{(n-1)}|x|}(1+|z|)^{-N}|z|^{1-\alpha} \d z.
$$
Choosing $N\geq 4$ we get 
\begin{align}\label{p-est-j21}
    J_{21}\lesssim 2^{-n(1-\alpha)}\int_{|z|>2^{(n-1)}|x|}|z|^{1-\alpha-N} \d z\lesssim 2^{-n(N-2)}|x|^{-(N-3+\alpha)}.
\end{align}
Moreover, we have
\begin{align}\label{p-est-j22}
    J_{22}:= \int_{|z|> 2^{(n-1)}|x|}(1+|z|)^{-N} |x|^{1-\alpha}  \d z\lesssim 2^{-n(N-2)}|x|^{-(N-3+\alpha)}.
\end{align}
Finally, we estimate
\begin{align*}
    J_{23}:= \int_{|z|> 2^{(n-1)}|x|}(1+|z|)^{-N} |x|^{1-\alpha}  2^{-n M} \frac{|z|^M}{|x|^M}\d z=2^{-n M}|x|^{1-\alpha-M}   \int_{|z|>2^{(n-1)}|x|}(1+|z|)^{-N}|z|^M \d z. 
\end{align*}
Choosing $N$ even larger such that $N>M+2$ we have
\begin{align}\label{p-est-j23}
    J_{23}\lesssim 2^{-n M}|x|^{1-\alpha-M}\left(2^n|x|\right)^{-(N-M-2)}=2^{-n(N-2)}|x|^{-(N-3+\alpha)}.
\end{align}
Hence, plugging \eqref{p-est-j21}, \eqref{p-est-j22} and \eqref{p-est-j23} into \eqref{aest-j2-1} we arrive at
\begin{align}\label{p-est-j2}
    J_{2}\lesssim 2^{-n(N-2)}|x|^{-(N-3+\alpha)}.
\end{align}

Thus, combining \eqref{p-est-j1} and \eqref{p-est-j2} together and choosing $N=M+3$ we get
\begin{align}
    |\Delta_n f(x)|\leq |J_1|+|J_2|\lesssim 2^{-n(M+1)} |x|^{-\alpha-M},  
    \ \ x\in \mathcal{O}_2. 
\end{align}
In particular, we have
\begin{align}\label{est-inf-2}
& \|\Delta_n f\|_{L^\infty(\mathcal{O}_2)}\lesssim 2^{-n(1-\alpha)}. 
\end{align}
Moreover, choosing $M$ large enough such that $p(\alpha+M)>2$, we have
\begin{align}
    \int_{\mathcal{O}_2} |\Delta_n f(x)|^p\d x\lesssim 2^{-np(M+1)}    \int_{\mathcal{O}_2} |x|^{-p(\alpha+M)}\d x \lesssim 2^{-n(2-p\alpha+p)},
\end{align}
which yields that 
\begin{align}\label{est-lp-2}
& \|\Delta_n f\|_{L^p(\mathcal{O}_2)}\lesssim 2^{-n(1-\alpha+\frac{2}{p})}. 
\end{align}

\medskip 
Therefore, 
combining \eqref{est-inf--1}, \eqref{est-inf-lp-1}, \eqref{est-inf-2} and \eqref{est-lp-2}, we arrive at
\begin{align}\label{est-inf-lp-end}
\|\Delta_n f\|_{L^\infty(\R^2)}\simeq 2^{-n(1-\alpha)}, \quad  \|\Delta_n f\|_{L^p(\R^2)}\simeq 2^{-n\left(1-\alpha+\frac{2}{p}\right)},\quad p\in [1,\infty), 
\end{align}
thereby finishing the proof of Lemma~\ref{lem-m-ini}.
\end{proof}

\medskip
\noindent{\bf  Acknowledgment.}  
This manuscript is based upon work supported by the National Science Foundation under Grant No. DMS-1928930, while D.\ Zhang was in residence at the Simons Laufer Mathematical Sciences Institute in Berkeley, California, during the fall semester of 2025. 
Z. Zeng is partially supported by the NSFC grants (No. 12501281) and the Natural Science Foundation of Jiangsu Province (No. SBK2024043113). D.\ Zhang is also grateful for the NSFC grants (No. 12625110, 12322108, 12271352) 
and Shanghai Frontiers Science Center of Modern Analysis.

\end{document}